\documentclass{article}
\usepackage{amsmath, amsthm, amssymb, mathtools, leftindex, tikz-cd, verbatim, bbm}
\usepackage{enumitem}
\usepackage[parfill]{parskip}
\usepackage[a4paper, total={7in, 9in}]{geometry}
\usepackage[hidelinks]{hyperref}

\usepackage[maxbibnames=99, backend= biber, style = alphabetic]{biblatex}
\mathtoolsset{showonlyrefs}

\newtheorem{thm}{Theorem}[section]
\newtheorem{lem}[thm]{Lemma}
\newtheorem{prop}[thm]{Proposition}

\newtheorem{defi}[thm]{Definition}
\newtheorem{cor}[thm]{Corollary}

\newtheorem{remark}[thm]{Remark}

\newtheoremstyle{named}{}{}{\itshape}{}{\bfseries}{.}{.5em}{\thmnote{#3}}

\theoremstyle{named}

\newcommand{\psum}{\sideset{}{^*}\sum}

\newcommand{\C}{\mathcal{C}}

\newcommand{\E}{\mathcal{E}}

\newcommand{\HH}{\mathcal{H}}

\newcommand{\Z}{\mathbb{Z}}
\renewcommand{\L}{\mathcal{L}}

\newcommand{\A}{\mathbb{A}}
\newcommand{\Q}{\mathbb{Q}}
\renewcommand{\S}{\mathcal{S}}

\newcommand{\q}{\mathfrak{q}}

\newcommand{\CC}{\mathbb{C}}
\newcommand{\Hyp}{\mathbb{H}}
\newcommand{\R}{\mathbb{R}}
\newcommand{\T}{\mathcal{T}}
\renewcommand{\a}{\mathfrak{a}}
\renewcommand{\b}{\mathfrak{b}}

\newcommand{\N}{\mathcal{N}}

\renewcommand{\O}{\mathcal{O}}

\renewcommand{\P}{\mathbb{P}}

\DeclareMathOperator{\spn}{span}
\DeclareMathOperator{\Gal}{Gal}
\DeclareMathOperator{\Frob}{Frob}

\DeclareMathOperator{\SL}{SL}
\DeclareMathOperator{\GL}{GL}

\DeclareMathOperator{\Ind}{Ind}
\DeclareMathOperator{\sgn}{sgn}
\DeclareMathOperator{\vol}{Vol}

\DeclareMathOperator{\Ad}{Ad}
\DeclareMathOperator{\Tr}{Tr}
\DeclareMathOperator{\modulo}{mod}

\DeclareMathOperator{\nrd}{nrd}
\DeclareMathOperator{\lcm}{lcm}

\begin{document}
\title{Chebotarev geodesic theorem: split case.}
\author{Alberto Acosta Reche\thanks{This work was supported by the Engineering and Physical Sciences Research Council [EP/S021590/1], via the
EPSRC Centre for Doctoral Training in Geometry and Number Theory (The London School of Geometry and Number
Theory), University College London.}}
\date{}

\maketitle

\begin{abstract}
We study the prime geodesic theorem (PGT) in congruence classes of $\SL_2(\Z)$. We generalize previous work of Luo and Sarnak \cite{luosarnak} and of Soundararajan and Young \cite{Soundararajan_2013}, and prove that the geodesic analogue of the Chebotarev density theorem holds with exponent $25/36 + \varepsilon$. In particular, we deduce that the PGT holds with exponent $25/36 + \varepsilon$ for any congruence subgroup of $\SL_2(\Z)$. 
\end{abstract}

\section{Introduction.}\label{sec:introduction}
\subsection{Prime geodesic theorem.}
Let $\{\pm I\} \subset \Gamma \subset \SL_2(\R)$ be a discrete cofinite subgroup. Let $\{P\}_{\Gamma}$ range over hyperbolic $\Gamma$-conjugacy classes and let $X \geq 1$ be a parameter, to be taken large. The prime geodesic theorem (PGT), is concerned with the asymptotics as $X \to \infty$ of the sum 
\begin{equation}\label{eq:defiofprimegeodesicsum}
    \Psi_{\Gamma}(X) := \sum_{\substack{\{P\}_{\Gamma}\\N(P)\leq X\\
    \Tr(P) > 2}}\frac{\log N(P)}{\nu_\Gamma(P)}.
\end{equation}
Here $\nu_\Gamma(P)$ is the positive integer such that $P = P_0^{\nu(P)}$ for a $\Gamma$-primitive hyperbolic element $P_0 \in \Gamma$. The quantities $\log N(P)$ are the lengths of the closed geodesics of $\Gamma\backslash \Hyp$, and the conjugacy classes $\{P\}_\Gamma$ can be considered as analogues of rational prime powers. In this sense, $\Psi_{\Gamma}(X)$ can be considered as a geodesic analogue of the second Chebyshev function $x \mapsto \sum_{p^k \leq x} \log p$. 

In analogy with the prime number theorem, the PGT asserts that 
\begin{equation}\label{eq:weakprimegeodesic}
\Psi_\Gamma(X) \sim X \quad \text{ as }\quad X \to \infty.
\end{equation}
When $\Gamma\backslash \Hyp$ is compact this result goes back to Huber, see \cite[Satz 9]{huberI}. We can be much more precise in \eqref{eq:weakprimegeodesic}. Recall that the small eigenvalues of the Laplacian of $\Gamma\backslash \Hyp$ are those eigenvalues which lie in $[0, 1/4)$. Let $\{\lambda_j\}_j$ denote the finitely-many small eigenvalues of $\Gamma\backslash \Hyp$ counted with multiplicity. In particular, $0$ is always an eigenvalue with multiplicity one. Then we can write 
\begin{equation}\label{eq:primegeodesictheorem}
    \Psi_{\Gamma}(X) = \sum_{j}\frac{X^{1/2 + \sqrt{1/4 - \lambda_j}}}{1/2 + \sqrt{1/4 - \lambda_j}} +  E_{\Gamma}(X).
\end{equation}
When $\Gamma\backslash \Hyp$ is compact Huber proved that $E_\Gamma(X) = O(X^{5/6})$, see \cite[Satz III]{huberII}. Selberg observed \cite[page 179]{selberg63} that his trace formula could be used to derive results about the distribution of lengths of prime geodesics, but he did not publish a proof of the PGT. Sarnak followed Selberg's idea and provided a direct proof that $E_\Gamma(X) = O(X^{3/4}(\log X)^3)$ for any cofinite discrete group $\Gamma$, see \cite[Theorem 3.4]{sarnakthesis}. Iwaniec later refined this proof and obtained $E_\Gamma(X) = O(X^{3/4})$, see \cite[Theorem 10.5]{iwaniec}. By a less direct argument using the theory of the Selberg zeta function, Hejhal treated the general case of a multiplier system of arbitrary real weight and proved that $E_\Gamma(X) = O(X^{3/4}(\log X)^{1/2})$ \cite[Theorem 3.4]{hejhal2} in that generality. 

We consider the estimate $E_\Gamma(X) = O(X^{3/4})$ to be the baseline bound for the PGT. For a general cofinite group $\Gamma$ this estimate is yet to be improved. By analogy with the prime number theorem, one is lead to expect that the bound $E_{\Gamma}(X) = O_\varepsilon(X^{1/2 + \varepsilon})$ holds for any $\varepsilon$. This is believed to be true, but a proof is far from reach at the moment. If proven, this estimate would be optimal up to an arbitrarily small power of $X$. Indeed, it is known that 
\begin{equation*}
    E_\Gamma(X) = \Omega_{\pm}(X^{1/2-\varepsilon})
\end{equation*}
for any $\varepsilon > 0$, see \cite[note 18 in page 503]{hejhal2}. 

The baseline bound on $E_\Gamma(X)$ has been improved in particular cases of $\Gamma$. The case that has attracted the most attention is $\Gamma = \SL_2(\Z)$ because of its connection with class numbers of quadratic orders. Indeed, Sarnak showed \cite{sarnak82} that
\begin{equation}\label{eq:connectiontoclassnumbers}
    \Psi_{\SL_2(\Z)}(X) = \sum_{2 < t \leq X^{1/2} + X^{-1/2}}\sum_{du^2 = t^2 - 4} h(d) \log(\epsilon_d^2) = 2\sum_{2 < t \leq X^{1/2} + X^{-1/2}} \sum_{du^2 = t^2 - 4}  \sqrt{d}L(1; \chi_d),
\end{equation}
where the inner sums are over positive integers $d$ dividing $t^2 - 4$ and such that the quotient is a square. Here, $h(d)$ and $\log \epsilon_d$ are the class number and the regulator, respectively, of the unique quadratic order of discriminant $d$, $\chi_d$ is the quadratic character (not necessarily primitive) associated to the discriminant $d$, and $L(s, \chi_d)$ is the corresponding Dirichlet series. The second equality follows from the class number formula. 

When $\Gamma = \SL_2(\Z)$, improvements to the baseline bound were given, chronologically, by Iwaniec, by Luo and Sarnak, and by Cai, see \cite{Iwaniec1984}, \cite{luosarnak} and \cite{Cai2002}, respectively. The bounds they proved for $E_{\SL_2(\Z)}(X)$ were $O_\varepsilon(X^{35/48 + \varepsilon}), O_\varepsilon(X^{7/10 + \varepsilon})$ and $O_\varepsilon(X^{71/102 + \varepsilon})$, respectively. 

The last improvement was due to Soundararajan and Young, see \cite[Theorem 1.1]{Soundararajan_2013}. Combining the work of Luo and Sarnak with a clever use of Equation \eqref{eq:connectiontoclassnumbers}, Soundararajan and Young proved that $E_{\SL_2(\Z)}(X) = O_\varepsilon(X^{2/3 + \theta/6 + \varepsilon})$. Here $\theta > 0$ is any subconvexity exponent in conductor aspect for quadratic Dirichlet characters. By work of Conrey and Iwaniec \cite[Corollary 1.5]{conreyiwaniec} we can take $\theta = 1/6 + \varepsilon$ for any $\varepsilon > 0$. Thus, Soundararajan and Young were able to prove that $E_{\SL_2(\Z)}(X) = O_\varepsilon(X^{25/36 + \varepsilon})$ for any $\varepsilon > 0$. This estimate remains the best unconditional bound for the error term in the PGT at the moment of writing this paper.

Since there are no nonzero small eigenvalues in the case of $\Gamma = \SL_2(\Z)$, the result of Soundararajan and Young simply says that 
\begin{equation*}
    \Psi_{\SL_2(\Z)} = X + O_\varepsilon(X^{25/36 + \varepsilon}), \quad \text{ for any }\varepsilon > 0.
\end{equation*}

Regarding other cofinite groups, Luo, Rudnick and Sarnak claimed in the last paragraph of \cite{sarnakluorudnick1995} that the arguments in \cite{luosarnak} could be generalized to prove that $E_\Gamma(X) = O(X^{7/10 + \varepsilon})$ holds for any congruence subgroup $\Gamma$, but they did not provide further details. 

Later on Cherubini, Wu and Zábrádi generalized the arguments of Soundararajan and Young to the case where $\Gamma = \Gamma(q)$ is a principal congruence subgroup of $\SL_2(\Z)$. Thus, assuming the claim of Luo, Rudnick and Sarnak, they proved that $E_{\Gamma(q)}(X) = O_{q, \varepsilon}(X^{2/3 + \theta/6 + \varepsilon})$ for any $q \geq 1$, see \cite[Theorem 1.10]{cherubini_wu}.

Finally, the baseline bound has also been improved for certain cocompact groups built from quaternion algebras. Let $\O$ be a maximal order in $B$, an indefinite quaternion division algebra over $\Q$. Let $\nrd: B^\times \rightarrow \Q^\times$ denote the reduced norm map, and let $\O^1 := \{x \in \O : \nrd(x) = 1\}$. Via the isomorphism $B \otimes_\Q \R \simeq M_2(\R)$ we can view $\O^1$ as a cocompact discrete group of $\SL_2(\R)$. Using the Jacquet-Langlands correspondence, Koyama deduced from the result of Luo, Rudnick and Sarnak that $E_{\O^1}(X) = O_\varepsilon(X^{7/10 + \varepsilon})$ holds for any $\varepsilon > 0$, see \cite{koyama}. More recently, Tang, Wu, Yang and Yang have generalized the bound of Koyama to principal congruence subgroups of $\O^1$, see \cite{tang2025}.

\subsection{Chebotarev geodesic theorem.}\label{subsec:chebotarevGL2}
Let $\{\pm I\} \subset \Gamma' \subset \Gamma$ be two cofinite discrete groups, such that $\Gamma'$ is normal in $\Gamma$. In his PhD thesis Sarnak compared the normal covering $\Gamma'\backslash \Hyp \rightarrow \Gamma\backslash \Hyp$ to a Galois extension of number fields $K/F$, see \cite[section 3]{sarnakthesis}. The analogy is summarized in the following table. 
\begin{center}
\begin{tabular}{|c|c|c|c|c|c|}
    \hline
Classical &$F$ & $K$ & $\Gal(K/F)$ & unr. prime $\mathfrak{p}$ & $\mathfrak{p} \mapsto \{\Frob_\mathfrak{p}\}_{\Gal(K/F)}$\\
\hline
Geodesic & $\Gamma \backslash \Hyp$ & $\Gamma' \backslash \Hyp$ & $\Gamma'\backslash \Gamma$ &  prim. hyp. $\{P_0\}_\Gamma$ & $\{P_0\}_\Gamma \mapsto \{P_0 \modulo \Gamma'\}_{\Gamma'\backslash \Gamma}$\\
\hline
\end{tabular}
\end{center}
In particular, the geodesic analogue of the Frobenius map simply sends a $\Gamma$-primitive hyperbolic conjugacy class $\{P\}_\Gamma \subset \Gamma$ to the conjugacy class of $P \modulo \Gamma'$ inside $\widetilde{\Gamma} := \Gamma'\backslash \Gamma$. Let $\mathcal{C}$ be a conjugacy class of $\widetilde{\Gamma}$. In view of the table above, the geodesic analogue of the Chebotarev density theorem is the asymptotic
\begin{equation}\label{eq:weakchebotarev}
    \sum_{\substack{\{P\}_{\Gamma}\\N(P)\leq X\\
    \Tr(P) > 2}}\frac{\log N(P)}{\nu_\Gamma(P)} \mathbbm{1}_{\C}(P \modulo \Gamma') \sim \frac{|\C|}{|\widetilde{\Gamma}|}X
\end{equation}
as $X \to \infty$. Following Sarnak we call a result like this a \emph{Chebotarev geodesic theorem} (CGT). As with the PGT, one can give a much more precise version of \eqref{eq:weakchebotarev}.

Let $f$ be a class function on $\widetilde{\Gamma}$ and introduce the twisted Chebyshev sum 
\begin{equation}\label{eq:twistedgeodesicsum}
    \Psi_{\Gamma}(X; f) := \sum_{\substack{\{P\}_{\Gamma}\\N(P)\leq X\\
    \Tr(P) > 2}}\frac{\log N(P)}{\nu_\Gamma(P)} f(P \modulo \Gamma').
\end{equation}
If we define the quantity
\begin{equation*}
    \langle f, \mathbbm{1}_{\widetilde{\Gamma}} \rangle_{\widetilde{\Gamma}} := \frac{1}{|\widetilde{\Gamma}|}\sum_{g \in \widetilde{\Gamma}} f(g),
\end{equation*}
then Equation \eqref{eq:primegeodesictheorem} can be generalized to 
\begin{equation}\label{eq:chebotarevgeodesictheorem}
    \Psi_{\Gamma}(X; f) = \langle f, \mathbbm{1}_{\widetilde{\Gamma}}\rangle_{\widetilde{\Gamma}} X+ \sum_j c_j(f) \frac{X^{1/2 + \sqrt{1/4 - \lambda_j}}}{1/2 + \sqrt{1/4 - \lambda_j}} + E_{\Gamma}(X; f),
\end{equation}
where the $j$-sum is over the nonzero small eigenvalues of $\Gamma'$, and each of the coefficients $c_j(f)$ can be expressed as a sum of inner products of $f$ with characters of representations of $\widetilde{\Gamma}$. Sarnak proved that $E_{\Gamma}(X; f) = O_f(X^{3/4}\log(X)^3)$, see \cite[Theorem 3.16]{sarnakthesis}. Following the proof of Theorem 10.5 in \cite{iwaniec} one can remove the factor $\log(X)^3$ without any difficulty. As with the PGT, we consider the estimate $E_\Gamma(X; f) = O_f(X^{3/4})$ to be the baseline bound for the CGT. It seems that even in the case of congruence subgroups of $\SL_2(\Z)$ the baseline bound has not been improved. 
\subsection{Results.}

In order to state our main result, we need to introduce a somewhat ad-hoc notion of subconvexity exponent.

\begin{defi}\label{defi:admissibleexponent}
We say that $\theta > 0$ is \emph{admissible} if for any $q \geq 1$ and any Dirichlet character $\chi_1$ of level $q$, there exists $A > 0$, possibly depending on $q$, such that 
\begin{equation*}
    |L(s; \chi_1 \chi_2)| \ll_{q, \theta} \q(\chi_2)^\theta |s|^A
\end{equation*}
holds for $\text{Re}(s) = 1/2$ and any quadratic Dirichlet character $\chi_2$.
\end{defi}
In this definition $\q(\chi_2)$ is the conductor of $\chi_2$. By the work of Petrow and Young we know that the exponent $\theta = 1/6 + \varepsilon$ is admissible for any $\varepsilon > 0$, see \cite{petrowyoung20} and \cite{petrowyoung}. The generalized Lindelöf hypothesis predicts that $\theta = \varepsilon$ is admissible for any $\varepsilon > 0$.  

Let $q$ be a positive integer, $\Gamma$ a congruence subgroup of level $q$, $\Gamma' = \Gamma(q)$ and $f$ a class function on $\widetilde{\Gamma} = \Gamma'\backslash \Gamma$. In this case the geodesic analogue of the Frobenius map is simply reduction modulo $q$. Note that, by work of Kim and Sarnak \cite[second appendix]{kimramakrishnansarnak2003} we know that the smallest nonzero eigenvalue of $\Gamma(q)$ satisfies $\lambda_1 \geq 1/4 - (7/64)^2$. Similarly as before define 
\begin{equation}\label{eq:chebyshevsumsgeneral}
    \Psi_{\Gamma}(X; f) := [\{\pm I\}\Gamma: \Gamma] \times \sum_{\substack{\{P\}_\Gamma\\
    N(P) \leq X\\
    \Tr(P) > 2}} \frac{\log N(P)}{\nu_\Gamma(P)}f(P \modulo q),
\end{equation}
where the factor $[\{\pm I\}\Gamma: \Gamma]$ accounts for the fact that the group $\Gamma$ does not necessarily contain $-I$. Our main result is the following theorem.
\begin{thm}\label{thm:maintheorem1}
For any congruence subgroup $\Gamma$, any class function $f$ on $\widetilde{\Gamma}$ and any admissible exponent $\theta > 0$ we have
\begin{equation*}
    \Psi_\Gamma(X; f) = \langle f, \mathbbm{1}_{\widetilde{\Gamma}}\rangle_{\widetilde{\Gamma}} X + O_{f, \theta, \varepsilon}(X^{2/3 + \theta/6 + \varepsilon}).
\end{equation*}
\end{thm}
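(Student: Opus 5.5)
First I reduce the statement to a single group and twisted sums. Set $G = \SL_2(\Z/q\Z)$. Since $\Gamma \supset \Gamma(q)$, a $\Gamma$-conjugacy class $\{P\}_\Gamma$ determines a $\widetilde{\Gamma}$-conjugacy class of $P \bmod q$. Unfolding $\Gamma$-classes into $\SL_2(\Z)$-classes by the usual induction (Frobenius reciprocity) identity from the number-field Chebotarev setting, I write $\Psi_\Gamma(X;f)$ as $\Psi_{\SL_2(\Z)}(X;F)$, where $F = \Ind_{\widetilde\Gamma}^{G} f$ is a class function on $G$. The index factor $[\{\pm I\}\Gamma:\Gamma]$ is there precisely to make this step exact. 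The primitive versus non-primitive bookkeeping changes only terms with $\nu \ge 2$, and these contribute $O(X^{1/2}\log^2 X)$. I also check that $\langle F,\mathbbm{1}\rangle_G = \langle f,\mathbbm 1\rangle_{\widetilde\Gamma}$, so the main term matches. This reduces the theorem to $\Gamma=\SL_2(\Z)$ with an arbitrary class function on $\SL_2(\Z/q\Z)$. By linearity it then suffices to take $f$ equal to the indicator of a single conjugacy class, or to a function that depends only on the trace and a finer invariant modulo $q$.

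Next I get an arithmetic expression analogous to Sarnak's formula \eqref{eq:connectiontoclassnumbers}. Hyperbolic classes of trace $t$ correspond to classes of binary quadratic forms of discriminant $t^2-4$. Reducing modulo $q$ splits these by the class of $P \bmod q$. For $p\nmid t^2-4$ (the split/regular case), the class of $P \bmod q$ is determined by $t \bmod q$ and by local data of the form, i.e. by genus or character conditions. So the contribution of fixed $t$ is a sum over $du^2 = t^2-4$ of $\sqrt d\, L(1,\chi_d\psi)$, with $\psi$ running over finitely many characters of level dividing a power of $q$, weighted by congruence conditions on $t$ and $u$. The goal here is the identity
\begin{equation*}
\Psi_{\SL_2(\Z)}(X;f) = \sum_{2<t\le X^{1/2}+X^{-1/2}} \sum_{du^2=t^2-4} w_f(t,u,d)\, \sqrt d\, L(1;\chi_d\psi_{t,u})
\end{equation*}
with explicit local weights $w_f$, together with the Luo–Sarnak spectral formula for the same sum via the trace formula on $\Gamma(q)$.

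Then I run the Soundararajan–Young argument. I smooth the sum over $t$. I write $L(1,\chi_d\psi)$ using an approximate functional equation and truncate at length $Y$. The long part is controlled by the subconvexity bound of Definition \ref{defi:admissibleexponent} for $L(s,\chi_1\chi_2)$ with $\chi_1$ of level $q$; this is why that ad hoc definition is needed. The short part reduces to counting solutions of $t^2 \equiv 4 \pmod{\ell}$ with $t$ in congruence classes modulo $q$. Poisson summation handles this with power saving, giving the main term $\langle f,\mathbbm 1\rangle X$.

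In parallel I need a Luo–Sarnak type bound for short intervals: $\Psi(X+H;f)-\Psi(X;f) \ll H X^{\varepsilon}$-type, i.e. the smoothed spectral error for $\Gamma(q)$. This requires the mean Lindelöf estimate $\sum_{t_j\le T} |L(1/2,\mathrm{sym}^2 u_j)|\ll T^{2+\varepsilon}$ for Maass forms on $\Gamma(q)$ (twisted by representations of $G$), together with the Kim–Sarnak bound to discard exceptional eigenvalues. Balancing the smoothing loss against the subconvexity error yields $X^{2/3+\theta/6+\varepsilon}$ as in Soundararajan–Young.

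I expect the main obstacle to be the second step: making the correspondence between $P \bmod q$ and the arithmetic data $(t,u,d)$ precise at primes dividing $q$, especially when $p\mid t^2-4$ (non-semisimple or non-split reductions) and for $p=2$. The Luo–Sarnak spectral input for $\Gamma(q)$ I would take from the literature, since \cite{cherubini_wu} already adapts it.
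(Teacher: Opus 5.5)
\textbf{Main gap: the spectral input.} You treat it as available in the literature, but it is not. To reach $2/3+\theta/6$ you need two ingredients. The first is the short-interval asymptotic that your arithmetic step would give (the analogue of Theorem \ref{thm:soundyoungshortintervals}). The second is the Luo--Sarnak bound $\sum_{0\le t_j\le T}X^{it_j}\ll T^{5/4}X^{1/8+\varepsilon}$ (Theorem \ref{thm:luosarnakbound}) for the spectrum that actually governs $\Psi(X;f)$. With only the Weyl-law bound $T^2$, the same balancing ($u=X^{5/6-\theta/3}$) stops at $X^{2/3+\theta/3+\varepsilon}$. You also have the roles mixed up: Luo--Sarnak does not give a short-interval bound $\ll HX^{\varepsilon}$; it bounds the spectral exponential sum in the smoothed explicit formula.

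\cite{cherubini_wu} does not prove this bound. It assumes the unproved claim of Luo, Rudnick and Sarnak, and that claim covers only the full weight-$0$ spectrum of $\Gamma(q)$. A general class function needs the bound separately for each isotypic piece, since a bound for the total sum says nothing about the pieces. This includes the odd pieces, which are weight-$1$ forms because $-I\notin\Gamma(q)$ for $q\ge 3$. The underlying obstacle is that these forms are not Hecke newforms, so the naive Rankin--Selberg series $\L(s;|u_j|^2)$ produced by Kuznetsov has no a priori $L$-function structure.

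Overcoming this is the paper's main work. Artin's induction theorem reduces to one-dimensional congruence characters $(\Gamma,\chi)$; inducing up to $\SL_2(\Z)$, as you do, leaves you with higher-dimensional representations instead. The paper then builds a basis of $L^2_0(\Gamma,\chi)$ inside sums of oldspaces of twist classes of newforms (Corollary \ref{cor:basisaslinearcombinationoftwists}, using that $R(\Gamma(q),\Delta(q))$ commutes with $\Gamma$). For this basis, $\omega(t_j)\L(s;|u_j|^2)$ is essentially $\sum_\psi L(s;\psi)L(s;\Ad(f_j)\otimes\psi)$, up to bad-prime factors controlled by the Schulze-Pillot--Yenirce oldform basis. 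It proves the mean-square bound \eqref{eq:lindelofonaverage} along the whole line $\mathrm{Re}(s)=1/2$, using the large sieve and the sparsity of exceptional (dihedral) forms. Your first moment at $1/2$ would not suffice. This bound is then fed into a weight-$0$/weight-$1$ Kuznetsov formula together with Weil's bound.

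\textbf{Second step.} You leave it open, as you say, but it can be bypassed rather than solved. Instead of classifying $P\bmod q$ by genus data of binary forms, the paper unfolds a hyperbolic kernel against the Eisenstein series of $\Gamma$. This produces the Bykovskii--Zagier series $Z(s;t)$, whose residues at $s=1$ reproduce $\Psi_\Gamma(X;\chi)$ via \eqref{eq:secondformulaforprimegeodesicasresidues}. Grouping by $\widetilde\gamma=P\bmod q$ and by $c\bmod q$ writes $Z^{+}(s;t)$ as a combination of $\frac{L(s;\psi)}{L(2s;\psi)}L(s;\psi_D\psi)$ times $q$-local factors. Those factors are bounded simply by counting roots of $a^2-at+1\equiv -b(\widetilde\gamma)c \pmod{qc}$. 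This is where twists by characters of level $q$ enter, which explains the form of Definition \ref{defi:admissibleexponent}. The main term comes from averaging $a(c;t)$ over $t$ via Kloosterman sums (Lemmas \ref{lem:sumofcoefficients} and \ref{lem:zerothsumofkloostermansumevaluation}), so no explicit local weights are needed. You would also need Iwaniec's explicit formula for the twisted sums (Lemma \ref{lem:iwaniecslemma}), which your outline uses implicitly.
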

Since $\theta = 1/6 + \varepsilon$ is admissible we deduce unconditionally that
\begin{equation*}
    \Psi_{\Gamma}(X; f) = \langle f, \mathbbm{1}_{\widetilde{\Gamma}}\rangle_{\widetilde{\Gamma}} X + O_{f, \varepsilon}(X^{25/36 + \varepsilon}).
\end{equation*}
This result extends the estimate of Soundararajan and Young \cite[Theorem 1.1]{Soundararajan_2013} to an arbitrary class function $f$ of $\widetilde{\Gamma}$, whereas the original result applied only to constant functions on $\SL_2(\Z/q\Z)$. To illustrate the generality of Theorem \ref{thm:maintheorem1}, let us consider two types of class functions.

Suppose that $\{\pm I\} \subset \Gamma$ and let $f = \mathbbm{1}_{\widetilde{\Gamma}}$. Then $\Psi_{\Gamma}(X; \mathbbm{1}_{\widetilde{\Gamma}}) = \Psi_\Gamma(X)$ as defined earlier in \eqref{eq:defiofprimegeodesicsum}.
\begin{cor}\label{cor:pgtforanycongruencesubgroup}
Let $\{\pm I\} \subset \Gamma \subset \SL_2(\Z)$ be a congruence subgroup. Then for any admissible $\theta>0$ we have 
\begin{equation*}
    \Psi_\Gamma(X) = X + O_{\Gamma, \theta, \varepsilon}(X^{2/3 + \theta/6 + \varepsilon}).
\end{equation*}
\end{cor}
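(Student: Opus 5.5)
Corollary \ref{cor:pgtforanycongruencesubgroup} follows from Theorem \ref{thm:maintheorem1} by specializing to the constant class function; the only work is to check that both sides match. Let $q$ be a level of $\Gamma$, so that $\Gamma(q) \subset \Gamma$. Set $\Gamma' = \Gamma(q)$ and $\widetilde{\Gamma} = \Gamma(q)\backslash \Gamma$, and take $f = \mathbbm{1}_{\widetilde{\Gamma}}$. This is the constant function $1$ on $\widetilde{\Gamma}$, so it is trivially a class function.

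\emph{The left-hand side.} Since $\{\pm I\} \subset \Gamma$, we have $\{\pm I\}\Gamma = \Gamma$, so the index $[\{\pm I\}\Gamma : \Gamma]$ in \eqref{eq:chebyshevsumsgeneral} equals $1$. Moreover $f(P \modulo q) = 1$ for every hyperbolic $P \in \Gamma$. Comparing \eqref{eq:chebyshevsumsgeneral} with \eqref{eq:defiofprimegeodesicsum} term by term therefore gives $\Psi_\Gamma(X; \mathbbm{1}_{\widetilde{\Gamma}}) = \Psi_\Gamma(X)$.

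\emph{The main term.} We have
\begin{equation*}
    \langle \mathbbm{1}_{\widetilde{\Gamma}}, \mathbbm{1}_{\widetilde{\Gamma}} \rangle_{\widetilde{\Gamma}} = \frac{1}{|\widetilde{\Gamma}|}\sum_{g \in \widetilde{\Gamma}} 1 = 1,
\end{equation*}
so the main term in Theorem \ref{thm:maintheorem1} is exactly $X$.

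\emph{The error term.} Theorem \ref{thm:maintheorem1} gives an error $O_{f,\theta,\varepsilon}(X^{2/3+\theta/6+\varepsilon})$. Here $f$ is determined by $\Gamma$ once $q$ is fixed: take $q$ to be the smallest level of $\Gamma$, for instance. Hence the implied constant depends only on $\Gamma$, $\theta$ and $\varepsilon$, which is the claimed bound.

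There is no real obstacle. The only points needing care are that the normalizing factor in \eqref{eq:chebyshevsumsgeneral} equals $1$ precisely because $-I \in \Gamma$, and that the choice of level is fixed so that the dependence on $f$ becomes dependence on $\Gamma$.
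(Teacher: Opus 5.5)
Your proof is correct and is essentially the paper's own argument. Since $-I\in\Gamma$, the normalizing index is $1$, so $\Psi_\Gamma(X;\mathbbm{1}_{\widetilde{\Gamma}})=\Psi_\Gamma(X)$, and $\langle \mathbbm{1}_{\widetilde{\Gamma}},\mathbbm{1}_{\widetilde{\Gamma}}\rangle_{\widetilde{\Gamma}}=1$ gives the main term $X$ in Theorem~\ref{thm:maintheorem1}.
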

Thus, the PGT holds with error term $E_\Gamma(X) = O_{\varepsilon}(X^{25/36 + \varepsilon})$ for an arbitrary congruence subgroup. This result is a strengthened version of \cite[Corollary 1.2]{sarnakluorudnick1995}.

Another interesting family of class functions $f$ is the one consisting of characteristic functions of fibers of the trace map. When $\Gamma = \SL_2(\Z)$, the resulting Chebyshev sums are particularly interesting, since they are averages of class numbers with congruence conditions. Indeed, from \eqref{eq:connectiontoclassnumbers} it follows that for any $a \in \Z/q\Z$ we have
\begin{equation}
    \Psi_{\SL_2(\Z)}(X; \mathbbm{1}_{\Tr^{-1}(a)}) = \sum_{\substack{2 < t \leq X^{1/2} + X^{-1/2}\\
    t = a \modulo q}}\sum_{du^2 = t^2 - 4} h(d) \log(\epsilon_d^2) = 2\sum_{\substack{2 < t \leq X^{1/2} + X^{-1/2}\\
    t = a \modulo q}}\ \sum_{du^2 = t^2 - 4}  \sqrt{d}L(1; \chi_d).
\end{equation}
Theorem \ref{thm:maintheorem1} allows us to evaluate these sums very accurately.

\begin{cor}\label{cor:geodesicdirichlet}
For any positive integer $q$, any congruence subgroup $\Gamma$ of level $q$, any $a \in \Z/q\Z$ and any admissible exponent $\theta>0$ we have 
\begin{equation*}
    \Psi_\Gamma(X; \mathbbm{1}_{\Tr^{-1}(a)}) = \frac{|\{\gamma \in \widetilde{\Gamma} \mid \Tr(\gamma) = a \modulo q\}|}{|\widetilde{\Gamma}|}X + O_{q, \theta, \varepsilon}(X^{2/3 + \theta/6 + \varepsilon}).
\end{equation*}
\end{cor}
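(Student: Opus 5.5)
This is a direct specialization of Theorem \ref{thm:maintheorem1}. We take $f = \mathbbm{1}_{\Tr^{-1}(a)}$, regarded as a function on $\widetilde{\Gamma} = \Gamma(q)\backslash\Gamma \subset \SL_2(\Z/q\Z)$, namely $f(\gamma) = 1$ if $\Tr(\gamma) \equiv a \modulo q$ and $0$ otherwise.

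The first step is to check that $f$ is a class function on $\widetilde{\Gamma}$. The trace is invariant under conjugation in $\SL_2(\Z/q\Z)$, hence a fortiori under conjugation by elements of the subgroup $\widetilde{\Gamma}$. Moreover $\Tr(P \modulo q) = \Tr(P) \modulo q$, so $f(P \modulo q)$ is exactly the congruence condition $\Tr(P) \equiv a \modulo q$. Thus the sum $\Psi_\Gamma(X; \mathbbm{1}_{\Tr^{-1}(a)})$ defined in \eqref{eq:chebyshevsumsgeneral} is precisely the twisted Chebyshev sum to which Theorem \ref{thm:maintheorem1} applies.

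The second step is to compute the main term:
\begin{equation*}
\langle f, \mathbbm{1}_{\widetilde{\Gamma}}\rangle_{\widetilde{\Gamma}} = \frac{1}{|\widetilde{\Gamma}|}\sum_{g\in\widetilde{\Gamma}} f(g) = \frac{|\{\gamma\in\widetilde{\Gamma} \mid \Tr(\gamma) = a \modulo q\}|}{|\widetilde{\Gamma}|}.
\end{equation*}
This is the stated coefficient of $X$.

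The last point is the uniformity of the error term. Theorem \ref{thm:maintheorem1} gives an implied constant depending on $f$, $\theta$ and $\varepsilon$, whereas the corollary claims dependence on $q$, $\theta$, $\varepsilon$. Once $q$ is fixed there are only finitely many congruence subgroups of level $q$ (they correspond to subgroups of $\SL_2(\Z/q\Z)$) and only finitely many residues $a \in \Z/q\Z$. Hence there are finitely many pairs $(\Gamma, f)$, and taking the maximum of the finitely many implied constants gives a bound depending only on $q$, $\theta$, $\varepsilon$. There is no genuine obstacle here; all the analytic difficulty is contained in Theorem \ref{thm:maintheorem1}.
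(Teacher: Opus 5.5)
Your proposal is correct and is exactly how the paper obtains this corollary: it specializes Theorem \ref{thm:maintheorem1} to the class function $f=\mathbbm{1}_{\Tr^{-1}(a)}$ on $\widetilde{\Gamma}$, whose mean $\langle f, \mathbbm{1}_{\widetilde{\Gamma}}\rangle_{\widetilde{\Gamma}}$ is the stated proportion. Your remark that the dependence on $f$ reduces to dependence on $q$ alone, because for fixed $q$ there are only finitely many pairs $(\Gamma, a)$, correctly fills in the one detail the paper leaves implicit.
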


Corollary \ref{cor:geodesicdirichlet} can be seen as the geodesic analogue of Dirichlet's theorem on arithmetic progressions. It generalizes and improves on the results of \cite{Chatzakos_2024} and \cite{deitmar2025distributionprimegeodesictraces} in the $X$-aspect. These two papers considered only the case $\Gamma = \SL_2(\Z)$ and obtained a worse bound for the error term in the $X$-aspect. 

\begin{remark}
If $q \geq 3$, then $\{\pm I\} \not \subset \Gamma(q)$. Thus, in the case of congruence subgroups of $\SL_2(\Z)$ our setup is a little bit more general than that of \cite[Theorem 3.16]{sarnakthesis}. The drawback of this small subtlety is that, in order to prove Theorem \ref{thm:maintheorem1}, we need to deal with Maass forms of both weights $0$ and $1$, whereas Sarnak worked only with Maass forms of weight $0$. The reward, however, is that we are able to deduce Corollary \ref{cor:geodesicdirichlet} as stated. If we had worked with $\Gamma'=\{\pm I\}\Gamma(q)$ instead of $\Gamma(q)$ we would only have been able to study the combined sums $\Psi_\Gamma(X; \mathbbm{1}_{\Tr^{-1}(a)}) + \Psi_\Gamma(X; \mathbbm{1}_{\Tr^{-1}(-a)})$, instead of each of them separately.
\end{remark}

Before stating other results it is convenient to introduce a definition.

\begin{defi}\label{defi:congruencepair}
We say that $(\Gamma, \chi)$ is a \emph{congruence character of level $q$} if $\{\pm I_2\}\Gamma(q)\subset \Gamma \subset \SL_2(\Z)$ and $\chi: \Gamma\rightarrow S^1$ is a character which is trivial on $\Gamma(q)$.
\end{defi}

By a formal argument, Theorem \ref{thm:maintheorem1} is reduced to the study of the sums $\Psi_\Gamma(X; \chi)$ where $(\Gamma, \chi)$ is a congruence character of level $q$, see Section \ref{sec:reductiontocongruencecharacters}. We let $k \in \{0, 1\}$ be defined implicitly by the equation $\chi(-I) = (-1)^k$. Let $\{u_j\}_{j\geq 1}$ be an orthonormal basis of Maass cusp forms of weight $k$ for $(\Gamma, \chi)$. Let the eigenvalue of $u_j$ be parametrized as $\lambda_j = 1/4 + t_j^2$, where either $t_j \geq 0$ or $\text{Im}(t_j) > 0$.

By adapting the proof of \cite[Lemma 1]{Iwaniec1984} we can show that
\begin{equation}\label{eq:iwaniecslemmaintro}
    \Psi_\Gamma(X; \chi) = \delta_{\chi = 1} X + \sum_{|t_j|\leq T}\left(\frac{X^{1/2 + it_j}}{1/2 + it_j} + \frac{X^{1/2 -it_j}}{1/2 - it_j}\right) + O_q\left(\frac{X}{T}\log^2 X\right)
\end{equation}
for parameters $X \geq 10$ and $1 \leq T \leq \sqrt{X}(\log X)^{-2}$, see Section \ref{sec:iwaniecslemma}. Thus, the Chebyshev sums $\Psi_{\Gamma}(X; \chi)$ are connected with the exponential sums 
\begin{equation}\label{eq:defspectralexponential}
    \S(T, X) := \sum_{0 \leq t_j \leq T} X^{it_j}.
\end{equation}
By a weak version of Weyl's law (see \eqref{eq:basicweyllaw} below) the trivial bound on $\S(T, X)$ is 
\begin{equation}\label{eq:trivialboundexponentialsum}
    \S(T, X) \ll_q T^2,
\end{equation}
which combined with \eqref{eq:iwaniecslemmaintro} recovers the baseline bound $E_\Gamma(X; \chi) = O_q(X^{3/4}\log X)$. 
Following the prediction of \cite[Conjecture 2.2]{petridisrisagerlocalaverage} we expect that for any $X, T \geq 1$ we have
\begin{equation*}
    \S(T, X) \ll_{q, \varepsilon}T(XT)^\varepsilon.
\end{equation*}
If true, this estimate would imply the Chebotarev geodesic theorem with the error term $O_{q, \varepsilon}(X^{1/2 + \varepsilon})$. Unfortunately, a bound of such strength is far from reach at the moment. On the positive side, we do have two very useful estimates on $\S(T, X)$. The first of them generalizes Equation (58) in \cite{luosarnak}. 

\begin{thm}\label{thm:luosarnakbound}
Let $(\Gamma, \chi)$ be a congruence character of level $q$. Then, for $1 \leq T \leq X^{1/2}$ we have 
\begin{equation}\label{eq:luosarnakbound}
   \S(T, X) \ll_{\varepsilon, q} T^{5/4}X^{1/8} (XT)^\varepsilon.
\end{equation}
\end{thm}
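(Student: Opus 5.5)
The plan is to follow Luo and Sarnak's argument for their Equation (58): apply the Kuznetsov trace formula for $(\Gamma,\chi)$ in weight $k$, combine it with a smoothed Weyl law, and bound the Kloosterman-sum side using Weil's bound together with the large sieve. First I reduce to a smooth, short-window version. By a dyadic decomposition and the weak Weyl law \eqref{eq:basicweyllaw}, which gives $\#\{T\le t_j\le T+1\}\ll_q T$, it suffices to bound $\sum_j h(t_j) X^{it_j}$. Here $h$ is a smooth bump localised to $t_j\in[T,2T]$. The finitely many exceptional $t_j$ contribute $O_q(1)$.

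The key device is to insert the harmonic weight $|\rho_j(1)|^2/\cosh(\pi t_j)$ artificially, via Luo--Sarnak's trick. Writing $\nu_j = |\rho_j(1)|^2/\cosh(\pi t_j)$, Hoffstein--Lockhart type bounds (adapted to level $q$ and nebentypus, where $\nu_j \gg (qt_j)^{-\varepsilon}$ for newforms) show it is bounded below. To avoid lower bounds on individual $\nu_j$ for oldforms, I would instead follow Luo--Sarnak more literally. They use the Selberg trace formula and the Kuznetsov formula together, expressing $X^{it}$ through a Bessel-kernel transform, and apply Cauchy--Schwarz. Concretely:
\begin{equation*}
\Big|\sum_j h(t_j)X^{it_j}\Big|^2 \le \Big(\sum_j h(t_j)\nu_j^{-1}\Big)\Big(\sum_j h(t_j)\nu_j X^{2it_j}\Big)\ \text{-type splitting},
\end{equation*}
or, more flexibly, a sum over $n\sim N$ of Hecke eigenvalues $\lambda_j(n)$ amplifying $X^{it_j}$. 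The first factor is bounded by $T^{2+\varepsilon}$ using Weyl's law and $\nu_j^{-1}\ll (qT)^\varepsilon$; for oldforms I handle this by working with an orthonormal basis adapted to newform theory in each level $d\mid q$. The second factor is treated with the Kuznetsov formula for the cusp at $\infty$ of $\Gamma$ with character $\chi$, where the multiplier is $\chi$ and the weight is $k\in\{0,1\}$. The diagonal term is $\ll T^2$. The off-diagonal term involves $\sum_c S_\chi(m,n;c)c^{-1}\check h(4\pi\sqrt{mn}/c)$, where the twisted Kloosterman sums $S_\chi$ satisfy Weil's bound $\ll_q c^{1/2+\varepsilon}$. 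With the Bessel transform of $h(t)X^{it}$ localising $c\approx\sqrt{mn}\,X^{\pm1/2}/T$-ish, this yields the $X^{1/8}$ savings after optimising the amplifier length.

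Concretely, the Luo--Sarnak estimate is
\begin{equation*}
\sum_{T\le t_j\le 2T}\nu_j X^{it_j}\ll T X^{1/2+\varepsilon}\quad\text{(from Kuznetsov)},
\end{equation*}
interpolated with the trivial bound $T^2$. The mean-square over short $t$-windows is then handled with the large sieve inequality of Deshouillers--Iwaniec type for Fourier coefficients, $\sum_j \nu_j|\sum_{n\le N} a_n\rho_j(n)|^2\ll (T^2+N^{1+\varepsilon})\|a\|^2$, valid for $(\Gamma,\chi)$ with constants depending on $q$. Removing the weights $\nu_j$ via Cauchy--Schwarz and the second-moment bound then gives $T^{5/4}X^{1/8}$, matching the exponents in \eqref{eq:luosarnakbound}. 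The balance comes from $(T^2)^{1/2}\cdot(TX^{1/2}\cdot T^{1/2})^{1/2}$-type terms after choosing the window length.

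The main obstacle is the uniform passage between the harmonic weights and the unweighted sum $\S(T,X)$ for a general congruence character rather than $\SL_2(\Z)$. This requires lower bounds $\nu_j\gg_q t_j^{-\varepsilon}$, or a way to bypass them, for all forms including oldforms and weight $1$ forms with nebentypus. I would first try to express each $(\Gamma,\chi)$-space, via conjugation by $\operatorname{diag}(q,1)$, as a subspace of forms on $\Gamma_1(q^2)$ with a Dirichlet character. There I would use the explicit Atkin--Lehner--Li orthonormal basis of Iwaniec--Luo--Sarnak, whose coefficients are controlled by newform coefficients and the Hoffstein--Lockhart--Iwaniec bound $L(1,\operatorname{sym}^2)\gg t^{-\varepsilon}$. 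The second difficulty is verifying the Kuznetsov formula and the Weil bound for the twisted Kloosterman sums at all cusps, where the dependence on $q$ is allowed.
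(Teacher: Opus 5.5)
There is a genuine gap. The proposal never supplies a mechanism that removes the harmonic weights, and the concrete steps it does state fail.

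\textbf{The stated steps.} The displayed ``splitting'' is not an inequality. Cauchy--Schwarz discards the phase, so the correct version is $|\sum_j h(t_j)X^{it_j}|^2\le(\sum_j h(t_j)\nu_j^{-1})(\sum_j h(t_j)\nu_j)$, which is just the trivial bound. For instance, if $X^{it_j}$ takes the values $1$ and $i$ equally often, your right-hand side is $0$ while the left-hand side is not. Pointwise lower bounds $\nu_j\gg(qt_j)^{-\varepsilon}$ are also unavailable here. We have $\rho_j(1)=0$ for oldforms such as $z\mapsto f(dz)$ with $d>1$, and a basis vector of $L_0^2(\Gamma,\chi)$ is in general a mixture of oldforms of several twists $f\otimes\eta$. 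Hoffstein--Lockhart therefore only controls the newforms themselves. The numerology does not close either:
\begin{itemize}
\item A Kuznetsov bound $\sum_j\nu_jX^{it_j}\ll TX^{1/2}$ is worse than the trivial $T^2$ when $T\le X^{1/2}$. The correct output for $m=n$, with the test function below, is $\ll n^{1/2}T^{1/2}X^{1/4}(nXT)^\varepsilon$.
\item A diagonal term of size $T^2$ would destroy any saving; with the right test function it is $O(X^{-(1+k)/2})$.
\item Your balancing gives $(T^2)^{1/2}(TX^{1/2}T^{1/2})^{1/2}=T^{7/4}X^{1/4}$, not $T^{5/4}X^{1/8}$.
\end{itemize}

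\textbf{The missing step: averaging over $n$ plus Rankin--Selberg.} Apply the Bruggeman--Kuznetsov formula with $m=n$ and $\varphi(x)=\frac{(-1)^k\sinh\beta}{4\pi}xe^{ix\cosh\beta}$, where $2\beta=\log X+i/T$. Then $h(t)=X^{i|t|}e^{-|t|/T}+O(e^{-2|t|(\pi-1/T)})$, and Weil's bound together with the Eisenstein bound give the per-$n$ estimate above. Next, average over $n\in\frac1q\Z_{\ge1}$, $n\asymp N$, with a smooth weight, and use Mellin inversion. The residue $\mathrm{Res}_{s=1}\L(s;|u_j|^2)=1/\omega(t_j)$ comes from unfolding against $E_\infty$. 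It is a norm identity, so no lower bound on any single coefficient is needed. The pole at $s=1$ therefore returns $\sum_j h(t_j)$, which is $\S_0(T,X)$ up to a negligible error. The line $\mathrm{Re}(s)=1/2$ contributes $N^{-1/2}T^{2+\varepsilon}$ by Cauchy--Schwarz, provided $\sum_{|t_j|\le T}\omega(t_j)^2|\L(s;|u_j|^2)|^2\ll T^{2+\varepsilon}|s|^A$. Choosing $N=T^{3/2}X^{-1/4}$ gives the theorem.

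\textbf{Where the real work is.} That mean-square bound is the real content. It needs an orthonormal basis of $L_0^2(\Gamma,\chi)$ with each $u_j\in V_{[f_j]}(\Gamma_d(q))$. Then $\L(s;|u_j|^2)$ factors into $L(s;\psi)L(s;\Ad(f_j)\otimes\psi)$ times bad-prime factors, which are controlled via the Schulze-Pillot--Yenirce bases. The large sieve is applied to $\lambda_{f_j}(d^2)$ inside the approximate functional equation for $L(s;\Ad(f_j)\otimes\psi)$, not to the spectral sum itself. Dihedral forms are counted separately as $O_q(T)$.

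Your suggestion of an Iwaniec--Luo--Sarnak basis on $\Gamma_1(q^2)$ does not provide such a basis. After conjugation, $L_0^2(\Gamma,\chi)$ cuts across the nebentypus components and is not spanned by vectors of that basis. The paper obtains the needed basis from the Hecke ring $R(\Gamma(q),\Delta(q))$, which commutes with $\Gamma$ and whose isotypic components are exactly the twist classes.
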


The second estimate generalizes \cite[Theorem 1.2]{balkanova_frolenkov}. 

\begin{thm}\label{thm:balkanovafrolenkovbound}
Let $(\Gamma, \chi)$ be a congruence character of level $q$. Let $\theta > 0$ be an admissible exponent. Then, for $1 \leq T \leq X^{1/2}$ we have 
\begin{equation}\label{eq:balkanovafrolenkovbound}
\S(T, X) \ll_{\varepsilon, q,\theta} X^{1/4 + \theta/2}T^{1/2} (XT)^\varepsilon.
\end{equation}
\end{thm}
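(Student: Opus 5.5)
We follow the strategy of Balkanova and Frolenkov, which in turn refines Soundararajan and Young. The plan has three steps: express $\S(T, X)$ through a Kuznetsov-type spectral identity, evaluate the geometric side in terms of Dirichlet $L$-functions attached to discriminants, and bound those $L$-functions using the admissible exponent $\theta$.

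\textbf{Step 1: smoothing and the spectral identity.} First smooth the sum. By a dyadic decomposition it suffices to bound $\sum_j h(t_j) X^{it_j}$ for a smooth weight $h$ localized to $t_j \asymp T$. The sharp cutoff is then recovered by a standard short-interval argument using the local Weyl law $\#\{t_j \in [T, T+1]\} \ll_q T$.

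Next, express $X^{it_j}$ spectrally. The natural tool is a pre-trace (or Kuznetsov) formula for $(\Gamma, \chi)$ whose geometric side is a sum over traces $n$. Concretely, we would use the generalized Kuznetsov/Zagier-type identity in the Soundararajan--Young form. Up to acceptable terms, it relates $\sum_j h(t_j)X^{it_j}$ (via the Selberg transform with $x = X^{1/2}+X^{-1/2}$) to the sum
\begin{equation*}
\sum_{n} \sum_{du^2 = n^2-4} \chi\text{-twisted weights}\; \sqrt{d}\, L(1,\chi_d).
\end{equation*}
Here $n$ runs over traces close to $X^{1/2}$, in a window of length about $X^{1/2}/T$.

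The congruence character $\chi$ enters only through the condition on $n$ modulo $q$ and through the local factors at primes dividing $q$. We would make this explicit by decomposing $\chi$ over residue classes of the trace and of the matrix modulo $q$. The resulting sums have the form $\sum_{n \equiv a \ (q)} \L_{n^2-4}(s)\, g(n)$, where $\L_{\delta}(s)$ is Zagier's $L$-function, twisted by a Dirichlet character $\chi_1$ of level $q$.

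\textbf{Step 2: the key identity for $\L_{n^2-4}$.} Following Balkanova--Frolenkov, write $\L_{n^2-4}(1/2+it)$ (at the critical line after a contour shift) as $L(1/2+it, \chi_1\chi_{d})$ times a finite Dirichlet polynomial in $u$, where $n^2-4 = du^2$. Then evaluate
\begin{equation*}
\sum_{n} g(n)\, \L_{n^2-4}(1/2+it)
\end{equation*}
by applying the admissible bound $|L(1/2+it;\chi_1\chi_d)| \ll \q(\chi_d)^{\theta}|t|^A$ termwise. Since $\q(\chi_d) \le n^2 \asymp X$, this gives $\ll X^{\theta/2}$ per term.

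Summed over the roughly $X^{1/2}/T$ values of $n$ in the window, and accounting for the normalization $X^{-1/4}$ and the transform size (about $T^{1/2}$ after stationary phase in the Selberg transform), this should produce $X^{1/4+\theta/2}T^{1/2}(XT)^\varepsilon$.

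\textbf{Main obstacle.} The hard step is controlling the oscillatory integral transform in the Kuznetsov/pre-trace identity. We need its size to be precisely $T^{1/2}$ times the sum length, with no loss. We also need to handle the weight-$1$ case ($\chi(-I)=-1$). In that case the kernel changes, and the identity for the geometric side involves odd characters, so $\chi_1$ may be odd. We would first adapt the Balkanova--Frolenkov stationary-phase analysis verbatim for weight $0$, then check that the weight-$1$ kernel has the same asymptotic shape up to a sign factor. The contribution of Eisenstein series and small eigenvalues is bounded trivially by $T^{1+\varepsilon}$. Since $T \le X^{1/2}$, this is at most $X^{1/4}T^{1/2}$ and is therefore acceptable.
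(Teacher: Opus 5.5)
Your proposal has a genuine gap at exactly the step where the saving over the trivial bound has to come from.

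In the identity you describe, the hyperbolic term of the trace formula is written through $\chi$-twisted class numbers. The paper realizes it as $\Lambda_{\text{Hyp}}(h)=\frac{\pi\vol(\Gamma\backslash\Hyp)}{2}\sum_{t>2}f(t)\,\text{Res}_{s=1}Z(s;t)$. The traces are not confined to a window of length $X^{1/2}/T$. Take the test function $h(t)=\sinh((\pi+2i\beta)t)/\sinh(\pi t)$ with $2\beta=\log X+i/T$. Then $f(t)=-i\sinh(2\beta)/(t^2-4\cosh^2\beta)$ has the following sizes:
\begin{itemize}
\item $T$ on the $X^{1/2}/T$ traces near $X^{1/2}$;
\item $1$ for $t\le X^{1/2}/2$;
\item $X/t^2$ beyond $X^{1/2}$.
\end{itemize}
So $\sum_t|f(t)|$ is of order $X^{1/2}$, up to a logarithm, for every $T$.

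Consequently any termwise estimate is at least of size $X^{1/2}$. Using $\text{Res}_{s=1}Z(s;t)\ll t^{\varepsilon}$ you get $X^{1/2+\varepsilon}$. After shifting to $\text{Re}(s)=1/2$, the admissible bound gives $Z(s;t)\ll t^{2\theta+\varepsilon}$, i.e.\ $X^{\theta}$ per trace near $X^{1/2}$, hence $X^{1/2+\theta+\varepsilon}$ in total. It is not $X^{\theta/2}$ per trace as you wrote, since $\q(\chi_d)$ can be as large as $t^2\asymp X$. The target $X^{1/4+\theta/2}T^{1/2}$ is smaller than $X^{1/2}$ as soon as $T<X^{1/2-\theta}$, so cancellation among the traces is indispensable.

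Moreover, when $\chi=1$ the spectral side contains the residual term $h(i/2)=\cosh\beta$, of size $X^{1/2}$. It must be matched by a main term on the hyperbolic side up to $O(X^{1/4+\theta/2}T^{1/2})$. That is an asymptotic evaluation, and no upper bound can produce it. Your count ($X^{1/2}/T$ terms, a factor $X^{-1/4}$, a transform of size $T^{1/2}$) is not derived from any identity. Even as written it multiplies out to $X^{1/4+\theta/2}T^{-1/2}$, and the exponent $\theta/2$ cannot come from a termwise bound at all.

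The missing idea is the Soundararajan--Young two-way evaluation, which is what the paper carries out. Write $Z(s;t)=\ell^{-s}\sum_{c}a(c;t)c^{-s}$ and introduce $\S_V=\frac{\pi\vol(\Gamma\backslash\Hyp)}{2V}\sum_{t>2}\sum_{c\ge1}a(c;t)f(t)\omega(c\ell/V)$.

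\textbf{Summing over $c$ first.} Mellin inversion and a shift to $\text{Re}(s)=1/2$ give $\S_V=\Lambda_{\text{Hyp}}(h)+O(V^{-1/2}X^{1/2+\theta+\varepsilon})$. This is the only place where admissibility enters, through Proposition \ref{prop:boundzagierzetafunction}. The factor $V^{-1/2}$ is precisely the saving your argument lacks.

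\textbf{Summing over $t$ first.} The function $t\mapsto a^{\pm}(c;t)e(\mp\alpha t/c\ell)$ is periodic modulo $c\ell$, with Fourier coefficients given by the Kloosterman sums $S_{(\Gamma,\chi)}(m,m;c)$. Weil's bound then yields Lemma \ref{lem:sumofcoefficients}. Partial summation with $\int_2^\infty|f'(t)|\,dt\ll T$, together with Lemma \ref{lem:zerothsumofkloostermansumevaluation}, gives $\S_V=\delta_{\chi=1}h(i/2)+O(X^{1/2}V^{-1}+V^{1/2+\varepsilon}T)$. That lemma produces exactly $\delta_{\chi=1}$ from the constant term of the Eisenstein series.

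Comparing the two evaluations and choosing $V=X^{1/2+\theta}T^{-1}$ yields $\S_0(T,X)\ll X^{1/4+\theta/2+\varepsilon}T^{1/2}$. The $\theta/2$ comes from the square-root balance in $V$.

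Two lesser points. First, you only gesture at the $\chi$-twisted analogue of the class number formula for a general congruence character. The paper obtains it by unfolding the hyperbolic kernel against the Eisenstein series (Lemma \ref{lem:hyperboliczetafunction}). Second, the obstacle you single out is not the real one. With this explicit $h$ the transforms $g$ and $f$ are elementary, and the hyperbolic term of the Selberg trace formula is identical for $k=0$ and $k=1$.
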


This result is closely connected to \cite[Theorem 3.2]{Soundararajan_2013}, which we are also able to generalize.
\begin{thm}\label{thm:soundyoungshortintervals}
Let $\Gamma$ be a congruence subgroup of level $q$. Let $f$ be a class function on $\widetilde{\Gamma}:= \Gamma(q)\backslash \Gamma$. Let $\theta > 0$ be an admissible exponent. Then, for any $u \geq 1$ we have
\begin{equation*}
    \Psi_\Gamma(X + u; f) - \Psi_\Gamma(X; f) = \langle f, \mathbbm{1}_{\widetilde{\Gamma}}\rangle_{\widetilde{\Gamma}} u + O_{f, \theta, \varepsilon}(u^{1/2} X^{1/4 + \theta/2 + \varepsilon}).
\end{equation*}
\end{thm}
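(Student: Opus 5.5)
The plan is to follow Soundararajan and Young \cite[Theorem 3.2]{Soundararajan_2013}: reduce to congruence characters and then combine the class-number expression for short-interval Chebyshev sums with a quadratic-character analysis. The reduction to congruence characters is formal, as in Section \ref{sec:reductiontocongruencecharacters}. Write $f$ as a linear combination of characters of irreducible representations of $\widetilde{\Gamma}$. Then use Frobenius reciprocity/Brauer induction to express each such twisted sum as a combination of sums $\Psi_{\Gamma_1}(X;\chi)$, where $(\Gamma_1,\chi)$ are congruence characters of level $q$. After this step the main term is $\langle f,\mathbbm{1}_{\widetilde{\Gamma}}\rangle_{\widetilde{\Gamma}}u$, coming only from trivial $\chi$. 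It therefore suffices to prove
\begin{equation*}
\Psi_{\Gamma}(X+u;\chi)-\Psi_{\Gamma}(X;\chi)=\delta_{\chi=1}u+O_{q,\theta,\varepsilon}(u^{1/2}X^{1/4+\theta/2+\varepsilon})
\end{equation*}
for each congruence character. We may assume $u\le X$, since otherwise the claim follows from the baseline bound. We may also assume $u\ge X^{1/2+\theta}$; otherwise the error term exceeds $u$, and the claim follows trivially from nonnegativity together with the upper bound we prove for the untwisted sum.

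The core step is an arithmetic description of $\Psi_\Gamma(X;\chi)$, generalizing \eqref{eq:connectiontoclassnumbers}. Hyperbolic conjugacy classes of $\SL_2(\Z)$ with trace $t$ correspond to narrow classes of binary quadratic forms of discriminant $t^2-4$. I would refine this by tracking the reduction mod $q$ of the matrix and summing $\chi$ over $\Gamma$-classes inside each $\SL_2(\Z)$-class. The goal is to write the $t$-summand as a sum over $du^2=t^2-4$ of $\sqrt d\,L(1,\chi_d\psi)$-type quantities times a weight depending only on $t$ and $u$ modulo a power of $q$. Here $\psi$ ranges over Dirichlet characters of modulus dividing a power of $q$. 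Concretely, I would express the count via a Zagier-type generating series $\sum_{u}\ldots L(s,\delta)$ with congruence-local factors. Then I would expand $L(1,\chi_d\psi)$ by an approximate functional equation with length about $\sqrt{d}$ up to a parameter $V$, writing it as $\sum_{n\le V}\chi_d\psi(n)/n$ plus a tail.

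Next comes the short-interval estimate. For the truncated sum, swap the order of summation: summing over $t$ in $[\sqrt X,\sqrt{X+u}]$, an interval of length about $u/\sqrt X$, the condition defining $\chi_{t^2-4}(n)$ depends on $t$ modulo $4n$ (times $q$-power). Counting solutions modulo $nq^k$ gives the main term, which must match $\delta_{\chi=1}u$; for nontrivial $\chi$ the local densities cancel by orthogonality. This matching is exactly as in Theorem \ref{thm:maintheorem1}'s main term. The error from this counting is $O(V\cdot X^\varepsilon)$ weighted by $\sqrt X$. The tail (large $n$) is handled by shifting contours to $\mathrm{Re}(s)=1/2$ and applying admissibility (Definition \ref{defi:admissibleexponent}) to $L(s,\chi_1\chi_2)$. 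With $\chi_2$ quadratic of conductor at most $X$, this gives a contribution of roughly $(u/\sqrt X)\cdot\sqrt X\cdot X^{\theta}V^{-1/2}$. Balancing the two contributions, $\sqrt X\,V$ against $u X^{\theta}V^{-1/2}$, with $V$ chosen optimally, should yield $u^{1/2}X^{1/4+\theta/2+\varepsilon}$ after bookkeeping, as in \cite{Soundararajan_2013}.

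I expect the main obstacle to be the arithmetic step: correctly attaching $\chi$ and the mod-$q$ reduction to quadratic-form classes. The aim is to obtain $L$-functions $L(s,\chi_d\psi)$ with $\psi$ of bounded level, together with explicit local weights whose averages over $t$ reproduce $\langle f,\mathbbm{1}\rangle$. I would first handle $\Gamma=\Gamma_1$, meaning the pullback of a full-level computation, using genus characters and a local Hecke-type computation at primes dividing $q$, before attempting the general case.
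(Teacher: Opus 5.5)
\textbf{Verdict: genuine gap.} Your architecture is the paper's, which follows Soundararajan--Young: reduce to congruence characters, smoothly truncate the generating Dirichlet series in $c$ at length $V$, shift to $\text{Re}(s)=1/2$ using an admissible exponent (error $\approx uX^{\theta}V^{-1/2}$), sum over $t$ first to get the main term, and balance in $V$. The sum-over-$t$ step fails as you have quantified it. A counting error $\sqrt{X}\,V$ balanced against $uX^{\theta}V^{-1/2}$ gives $V=(uX^{\theta-1/2})^{2/3}$ and a total error $u^{2/3}X^{1/6+2\theta/3}$. That is $V^{1/4}$ times larger than $u^{1/2}X^{1/4+\theta/2}$, so no bookkeeping recovers the theorem. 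You need square-root cancellation in each modulus: $\sum_{t\le z}a(c;t)=(\text{density})\cdot z+O_{q,\varepsilon}(c^{1/2+\varepsilon})$ (Lemma~\ref{lem:sumofcoefficients}). After weighting by $(t^2-4)^{1/2}$ and averaging over $c\asymp V$ this gives $V^{1/2}X^{1/2}$, and $V=uX^{\theta-1/2}$ yields $u^{1/2}X^{1/4+\theta/2+\varepsilon}$. The saving comes from Fourier expanding the $c\ell$-periodic function $t\mapsto a^{\pm}(c;t)e(\mp\alpha t/c\ell)$. Its nonzero frequencies are the Kloosterman sums $S_{(\Gamma,\chi)}(m,m;c)$ (Lemma~\ref{lem:fourierinversoncoeff}). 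Bounding these needs a Weil bound, which at primes dividing $q$ requires a separate $p$-adic stationary phase and Hensel argument (Lemma~\ref{lem:weilbound}).

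The arithmetic identity you flag as the main obstacle is left open, and the paper does not track $\chi$ through classes of binary quadratic forms at all. It sets $Z_P(s)=\ell^{-s}\sum_{\gamma\in C_\Gamma(P)\backslash\Gamma/\Gamma_\infty}|c(\gamma^{-1}P\gamma)|^{-s}$. It computes $\text{Res}_{s=1}Z_P(s)$ by unfolding the hyperbolic orbital kernel $K_P$ against the untwisted Eisenstein series of $\Gamma$ (Lemma~\ref{lem:hyperboliczetafunction}). This gives $\Psi_\Gamma(X;\chi)=\tfrac{\pi}{2}\vol(\Gamma\backslash\Hyp)\sum_t(t^2-4)^{1/2}\,\text{Res}_{s=1}Z(s;t)$ with $Z(s;t)=\ell^{-s}\sum_c a(c;t)c^{-s}$. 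Here $a(c;t)$ is a $\overline{\chi}$-weighted count, modulo $\Gamma_\infty$-conjugation, of elements of $\Gamma$ with lower-left entry $\pm c$ and trace $t$, which is what exposes the Kloosterman structure above. The bound $Z(s;t)\ll t^{2\theta+\varepsilon}|s|^A$ on the critical line (Proposition~\ref{prop:boundzagierzetafunction}) follows from a CRT split $c=c_0c_q$, Zagier's formula for the coprime part, and orthogonality mod $q$, which produce $L(s,\psi)L(s,\psi_D\psi)/L(2s,\psi)$.

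Two further problems:
\begin{itemize}
\item The main term does not cancel termwise for $\chi\neq 1$: $S_{(\Gamma,\chi)}(0,0;c)$ can be nonzero. Only its smoothed $c$-average equals $\delta_{\chi=1}+O(V^{-1})$. The paper proves this via the constant-term series $\psi_{(\Gamma,\chi)}(s)$, which has a pole at $s=1$ only when $\chi=1$ (Lemmas~\ref{lem:growthofconstantterm} and~\ref{lem:zerothsumofkloostermansumevaluation}).
\item Your reduction to $u\ge X^{1/2+\theta}$ is invalid. Positivity bounds the twisted difference by the untwisted one. But for $u<X^{1/2+\theta}$ your large-$u$ result bounds the latter only by $O(X^{1/2+\theta})$, not by $O(u^{1/2}X^{1/4+\theta/2})$. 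In fact a single trace already contributes $\gg X^{1/2-\varepsilon}$ (e.g.\ for $\SL_2(\Z)$), so the estimate fails for $u\le X^{1/2-\theta-\delta}$. For $X^{1/2}\ll u\le X$ the argument works directly, with no reduction.
\end{itemize}
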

\begin{proof}[Proof of Theorem \ref{thm:maintheorem1} assuming Theorems \ref{thm:luosarnakbound} and \ref{thm:balkanovafrolenkovbound}.]

Interpolating between bounds \eqref{eq:luosarnakbound} and \eqref{eq:balkanovafrolenkovbound} we deduce that 
\begin{equation*}
    \S(T, X) \ll_{q, \theta, \varepsilon} T X^{1/6 + \theta/6 + \varepsilon}
\end{equation*}
when $1 \leq T \leq X^{1/2}$. Integration by parts then leads to 
\begin{equation*}
    \sum_{|t_j|\leq T}\left(\frac{X^{1/2 + it_j}}{1/2 + it_j} + \frac{X^{1/2 -it_j}}{1/2 - it_j}\right) \ll_{q, \theta, \varepsilon} X^{2/3 + \theta/6 + \varepsilon}.
\end{equation*}
Therefore, letting $T = X^{1/3 - \theta/6}$ in \eqref{eq:iwaniecslemmaintro} we obtain  
\begin{equation*}
    \Psi_\Gamma(X; \chi) = \delta_{\chi = 1}X + O_{q, \theta, \varepsilon}(X^{2/3 + \theta/6 + \varepsilon}),
\end{equation*}   
which finishes the proof of Theorem \ref{thm:maintheorem1}. 

\end{proof}

\begin{remark}
By following the discussion in \cite[Section 3]{Soundararajan_2013}, one could also give a proof of Theorem \ref{thm:maintheorem1} that uses Theorem \ref{thm:soundyoungshortintervals} instead of Theorem \ref{thm:balkanovafrolenkovbound}. 
\end{remark}

We now sketch the proofs of Theorem \ref{thm:luosarnakbound} and Theorem \ref{thm:balkanovafrolenkovbound}. By an observation due to Iwaniec it is enough to prove the same bounds for the smoothed sums 
\begin{equation*}
    \S_0(T, X) := \sum_{t_j \geq 0}X^{it_j}\exp(-t_j/T),
\end{equation*}
see Section \ref{subsec:smoothingtheexponentialsum}. For the proof of Theorem \ref{thm:luosarnakbound} we follow the strategy of Luo and Sarnak from the appendix to \cite{luosarnak}. After applying the Bruggeman--Kuznetsov trace formula, using Weil's bound on Kloosterman sums and removing the harmonic weights we arrive at the estimate 
\begin{equation}\label{eq:estimateforS0afterapplyingkuznetsov}
\begin{aligned}
    \S_0(T, X) & \ll_\varepsilon N^{1/2}T^{1/2}X^{1/4}(NX)^\varepsilon\\
    & +  N^{-1/2}\int_{(1/2)} |s|^{-A} \left(\sum_{t_j \geq 0} |\L(s; |u_j|^2)| \omega(t_j) \exp(-|t_j|/T) \right)\, d|s|,
\end{aligned}
\end{equation}
where $A > 0$ is arbitrary, see Section \ref{sec:proofofluosarnakbound}. Here $N > 0$ is a parameter to be chosen later, $\omega(t_j)$ is a spectral weight satisfying $\omega(t_j) \asymp (1 + |t_j|)^k \exp(-|t_j|/T)$ and
\begin{equation}\label{eq:naiverankinselbergintro}
    \L(s; |u_j|^2) := \sum_{n \in \frac{1}{q}\Z_{\geq 1}} \frac{|\rho_j(n)|^2}{n^s}
\end{equation}
is the \emph{naive Rankin--Selberg series} studied in Section \ref{sec:rankinselbergandadjointlfunction}. In this definition the numbers $\rho_j(n)$ are Fourier coefficients of $u_j$ defined in \eqref{eq:generalfourierexpansion}.

The task now is to give, for some choice of orthonormal basis $\{u_j\}_j$, a good estimate for the spectral average of $|\L(s; |u_j|^2)|$ on the critical line. In the case $\Gamma = \SL_2(\Z)$, which is the setup in which Luo and Sarnak worked, we can assume that the $u_j$ are eigenforms of all the Hecke operators. Thus, for each $j$ there exists an automorphic cuspidal representation $\pi_j$ of $GL(2)$, unramified everywhere, such that 
\begin{equation*}
    \L(s; |u_j|^2) = |\rho_j(1)|^2 \frac{L(s; \pi_j \otimes \overline{\pi_j})}{\zeta(2s)},
\end{equation*}
where $\rho_j(1)$ is the first Fourier coefficient of $u_j$,  $\zeta(s)$ is the Riemann zeta function and $L(s; \pi_j \otimes \overline{\pi_j})$ is the Rankin--Selberg $L$-function of $\pi_j$ and $\overline{\pi}_j$ in the sense of \cite{jacquetrankinselberg}. By work of Iwaniec \cite{iwaniecsmalleigenvalues} and of Hoffstein and Lockhart \cite{hoffsteinlockhart} we know that 
\begin{equation}\label{eq:estimationfirstfouriercoefficientintro}
    (1 + |t_j|)^{-\varepsilon} \ll_{\varepsilon} \omega(t_j)|\rho_j(1)|^2 \ll_\varepsilon (1 + |t_j|)^\varepsilon.
\end{equation}
By work of Gelbart and Jacquet \cite{gelbartjacquetlift}, who generalized previous work of Shimura \cite{shimuraholomorphy}, we can write 
\begin{equation*}
    L(s; \pi_j \otimes \overline{\pi_j}) = \zeta(s)L(s; \Ad(\pi_j)),
\end{equation*}
where $L(s; \Ad(\pi_j))$ is the standard $L$-function associated to $\Ad(\pi_j)$, a cuspidal automorphic form of $\GL(3)$. The spectral average of $L(s; \Ad(\pi_j))$ can be estimated by using the spectral large sieve inequality of Deshouillers and Iwaniec \cite[Theorem 2]{deshouillersiwaniec82} together with \eqref{eq:estimationfirstfouriercoefficientintro}, giving
\begin{equation}\label{eq:lindelofonaverageforadjointLfunctionsintro}
    \sum_{|t_j| \leq T} |L(s; \Ad(\pi_j))|^2 \ll_\varepsilon T^{2 + \varepsilon} |s|^A
\end{equation}
for some $A > 0$ whose value is not important for this discussion. By Weyl's law, this estimate is as strong as the Lindelöf hypothesis on average. Another application of \eqref{eq:estimationfirstfouriercoefficientintro} and the previous discussion gives
\begin{equation}\label{eq:lindelofonaverageRankinselbergintro}
    \sum_{|t_j| \leq T} \omega(t_j)^2|\L(s; |u_j|^2)|^2 \ll_\varepsilon T^{2 + \varepsilon} |s|^A,
\end{equation}
where the value of $A >0$ may be larger. Luo and Sarnak obtained this estimate in the case when $\Gamma = \SL_2(\Z)$. Inserting it in \eqref{eq:estimateforS0afterapplyingkuznetsov} and optimizing the value of $N$ finishes the proof of Theorem \ref{thm:maintheorem1} in this case.

The main obstacle in generalizing the argument of Luo and Sarnak is the following: given an arbitrary congruence pair $(\Gamma, \chi)$, we need to construct an orthonormal basis $\{u_j\}_j$ whose Rankin--Selberg series are expressible in terms of $L$-functions. We solve this problem in Section \ref{sec:orthonormalbasis}, where we show that the orthonormal basis $\{u_j\}_j$ can be chosen in such a way that for each $j\geq 1$ there is a cuspidal automorphic form $\pi_j$ of $\GL(2)$ such that 
\begin{equation}\label{eq:keyapproxidentityrankinselberg}
    \omega(t_j)\L(s; |u_j|^2) \approx \sum_{\psi \modulo q} L(s; \pi_j \otimes \overline{\pi_j} \otimes \psi),
\end{equation} 
see \eqref{eq:naiverankinselbergdecomposition} and \eqref{eq:gelbartjacquetlift} below. After this structural identity is established we can adapt the argument of Luo and Sarnak and prove \eqref{eq:lindelofonaverageRankinselbergintro} for an arbitrary congruence character. Inserting this estimate in \eqref{eq:estimateforS0afterapplyingkuznetsov} and optimizing the value of $N$ we can finish the proof of the general case of Theorem \ref{thm:luosarnakbound}, see Section \ref{sec:proofofluosarnakbound}.

The proofs of Theorem \ref{thm:balkanovafrolenkovbound} and Theorem \ref{thm:soundyoungshortintervals} are quite similar to each other, and they are both based on the arguments of \cite{Soundararajan_2013}. In Section \ref{sec:bykovskiizagierseries} we define the \emph{Bykovskii--Zagier} series $Z(s; t)$, where $|t| \geq 3$, see \eqref{eq:defiofzagierbykovskiizeta} for the definition. The Dirichlet series $Z(s; t)$ has analytic continuation past a simple pole at $s = 1$ with residue
\begin{equation*}
    \text{Res}_{s = 1}Z(s; t) = \sum_{\substack{\{P\}_\Gamma\\
    \Tr(P) = t}} \frac{2 \log N(P_0) \chi(P)}{\pi \vol(\Gamma\backslash \Hyp)(N(P)^{1/2} - N(P)^{-1/2})} ,
\end{equation*} 
Thus, the hyperbolic contribution to the Selberg trace formula can be expressed as a weighted sum over residues of $Z(s; t)$. This observation goes back to Zagier \cite{zagierrankinselberg}. We can express the Bykovskii--Zagier series in terms of Dirichlet $L$-functions, roughly as
\begin{equation*}
    Z(s; t) \approx \sum_{\psi \modulo q} \frac{L(s, \psi)}{L(2s, \psi)}L(s, \psi_D \psi),
\end{equation*}
see the proof of Proposition \ref{prop:boundzagierzetafunction}. In this expression $t^2- 4 = Df^2$ for a fundamental discriminant $D >0$ and $\psi_D$ is the quadratic Dirichlet associated to the discrimimant $D$. By the definition of admissible exponent we have the bound 
\begin{equation}\label{eq:boundonzagierintro}
    Z(s; t) \ll_{q, \varepsilon} |t|^{2\theta + \varepsilon} |s|^A
\end{equation}
on $\text{Re}(s) = 1/2$. Using this bound we can adapt the original argument of Soundararajan and Young and prove Theorems \ref{thm:balkanovafrolenkovbound} and \ref{thm:soundyoungshortintervals} without much difficulty, see Section \ref{sec:shortintervals}.

\begin{remark}
Our proof of Theorem \ref{thm:balkanovafrolenkovbound} differs significantly from the arguments of \cite{balkanova_frolenkov}. Instead of following their method, we have adapted more directly the ideas of Soundararajan and Young from \cite{Soundararajan_2013}. As a result, our proof is shorter and simpler, since by appealing to the Selberg trace formula instead of the Bruggeman--Kuznetsov trace formula we arrive at the Bykovskii--Zagier functions more quickly than in \cite{balkanova_frolenkov}. In addition, the treatment of special functions is straightforward in our proof.
\end{remark}

\begin{remark}
In a follow-up paper we generalize the previous results to cocompact Fuchsian groups arising from indefinite quaternion algebras. 
\end{remark}

\subsection{Outline of the paper.}
In Section \ref{sec:reductiontocongruencecharacters} we show that the Chebyshev sums $\Psi(X; \cdot)$ are compatible with induction of class functions, see Lemma \ref{lem:compatibilitywithinduction}. From this observation and Artin's theorem on induced characters \cite[Corollary 9.2]{serrelinearreps} it follows that it is enough to prove Theorem \ref{thm:maintheorem1} and Theorem \ref{thm:soundyoungshortintervals} in the case of congruence characters.

In Section \ref{sec:spectralbackground} we state some background results on spectral theory of Maass forms of weight $0$ and $1$, including the Selberg trace formula and the Bruggeman--Kuznetsov trace formula. For the derivation of the Bruggeman--Kuznetsov we cite Proskurin \cite{proskurinarbitraryweightkuznetsov}, which treats the general case of arbitrary multiplier system. We face a small technical problem because the hypothesis that \cite{proskurinarbitraryweightkuznetsov} imposes on test functions are too strict for our application. Since we are dealing with the special case of congruence characters, we can take advantage of Weil's bound on Kloosterman sum to relax these conditions. We dedicate Section \ref{subsec:kuznetsovtraceformula} to carry out this argument in detail.

In Section \ref{sec:fouriercoefficientseisensteinseries} we bound the contribution from the continuous spectrum to the Bruggeman--Kuznetsov trace formula. The main idea is to reduce the estimate to the case of Hecke congruence subgroups and then use the explicit computations of \cite{youngeisensteinseries}. 

In Section \ref{sec:iwaniecslemma} we prove the identity in \eqref{eq:iwaniecslemmaintro}, generalizing an explicit formula of Iwaniec \cite[Lemma 1]{Iwaniec1984}. The original proof works verbatim after a few preliminary facts are collected from \cite{hejhal2}. 

In Section \ref{sec:orthonormalbasis} we prove the existence of an orthonormal basis of Maass cuspidal forms which are $(\Gamma, \chi)$-automorphic and for which an identity like \eqref{eq:keyapproxidentityrankinselberg} holds. Motivated by the results in \cite[Chapter 3]{shimurabook}, we study the action of a certain ring of Hecke operators $R(\Gamma(q), \Delta(q))$ on the larger space of $\Gamma(q)$-automorphic Maass forms, see \eqref{eq:defiofsemigroups}. We prove that the isotypic components for this action are the direct sums of oldspaces of newforms in the same twist class. Since the action of $R(\Gamma(q), \Delta(q))$ commutes with the action of $\SL_2(\Z)$, the subspace of $(\Gamma, \chi)$-automorphic forms naturally inherits a decomposition into $R(\Gamma(q), \Delta(q))$-isotypic components, see Corollary \ref{cor:basisaslinearcombinationoftwists}. 

In Section \ref{sec:rankinselbergandadjointlfunction} we establish the properties of the series $\L(s; |u_j|^2)$ which feature in the proof of Theorem \ref{thm:luosarnakbound}, including the crucial estimate in \eqref{eq:lindelofonaverageRankinselbergintro}, see Theorem \ref{thm:informationaboutrankinselberg}. Using the orthonormal basis from the previous section, the proof of \eqref{eq:lindelofonaverageRankinselbergintro} naturally divides into two parts. On the one hand we need to bound the contribution from ``bad primes'' which divide the level $q$. In this direction we use the explicit orthonormal basis for oldspaces constructed by Schulze-Pillot and Yenirce \cite{Schulze_Pillot_Petersson} together with partial progress towards the Ramanujan conjecture. On the other hand, we can deal with the contribution from ``good primes'' by generalizing the original argument of Luo and Sarnak without much difficulty.

In Section \ref{sec:proofofluosarnakbound} we give the proof of Theorem \ref{thm:luosarnakbound}, following \cite{luosarnak} very closely. The preliminaries for this proof are the Bruggeman--Kuznetsov trace formula, Weil's bound on Kloosterman sums and Theorem \ref{thm:informationaboutrankinselberg}.

In Section \ref{sec:bykovskiizagierseries} we introduce the Bykovskii--Zagier zeta function $Z(s; t)$ and establish some of its properties, generalizing the discussion from \cite{Soundararajan_2013}. Finally, the proofs of Theorems \ref{thm:balkanovafrolenkovbound} and \ref{thm:soundyoungshortintervals} are found in Section \ref{sec:shortintervals}. As we explained earlier in this introduction, Theorem \ref{thm:maintheorem1}, the main result of this paper, follows immediately from Equation \eqref{eq:iwaniecslemmaintro}, Theorem \ref{thm:luosarnakbound} and Theorem \ref{thm:balkanovafrolenkovbound}.

\subsection{Notation.}
\begin{itemize}
    \item $X \geq 1$, $T \geq 1$ are large parameters.
    \item $\Gamma \subset \SL_2(\R)$ is a cofinite discrete group, $\{P\}_\Gamma$ a hyperbolic conjugacy class.
    \item Given $P \in \Gamma$, we let $C_\Gamma(P):= \{\gamma \in \Gamma: \gamma P = P \gamma\}$.
    \item Given a cusp $\a \in \R \cup \{\infty\}$ we let $\Gamma_\a := \{\gamma \in \Gamma : \gamma \a = \a\}$.
    \item $\Psi_\Gamma(X)$, $\Psi_\Gamma(X; f)$ are Chebyshev-type sums over hyperbolic conjugacy classes, see \eqref{eq:defiofprimegeodesicsum} and \eqref{eq:twistedgeodesicsum}.
    \item $\langle \cdot, \cdot \rangle_G$ is the inner product with respect to probability measure on the finite group $G$. 
    \item $\mathbbm{1}_{A}(\cdot)$ is the characteristic function of a subset $A \subset Y$, where the ambient set $Y$ is always well-understood from the context.
    \item $\theta > 0$ is any admissible subconvexity exponent, see Definition \ref{defi:admissibleexponent}.
    \item From Section \ref{sec:spectralbackground} onwards $(\Gamma, \chi)$ is a fixed congruence character of level $q$, see Definition \ref{defi:congruencepair}. We let $k \in \{0, 1\}$ be such that $\chi(-I) = (-1)^k$, $\ell \geq 1$ is such that $\Gamma_\infty = \{\pm \begin{psmallmatrix}
        1 & \ell \Z\\
        0 & 1
    \end{psmallmatrix}\}$ and $\alpha \in [0, 1)$ is defined by $\chi(\begin{psmallmatrix}
        1 & \ell\\
        0 & 1
    \end{psmallmatrix}) = e(-\alpha)$.
    \item $\Gamma_0(q), \Gamma_1(q), \Gamma_d(q), \Gamma(q)$ denote the following classical congruence subgroups of $\SL_2(\Z)$:
    \begin{equation*}
    \begin{aligned}
        \Gamma_0(q) & := \{\begin{psmallmatrix}
            a & b\\
            c & d
        \end{psmallmatrix} \in \SL_2(\Z) : c = 0 \modulo q\},\quad \Gamma_1(q) := \{\begin{psmallmatrix}
            a & b\\
            c & d
        \end{psmallmatrix} \in \Gamma_0(q) : a = d = 1 \modulo q\},\\
        \Gamma_d(q) & := \{\begin{psmallmatrix}
            a & b\\
            c & d
        \end{psmallmatrix} \in \Gamma_0(q): b = 0 \modulo q\}, \quad \, \, \, \, \Gamma(q) := \{\begin{psmallmatrix}
            a & b\\
            c & d
        \end{psmallmatrix} \in \Gamma_1(q): b = 0 \modulo q\}.  
    \end{aligned}
    \end{equation*}
    \item Given $\gamma \in M_2(\R)$ we write $a(\gamma), b(\gamma), c(\gamma), d(\gamma)$ for its entries, organized in such a way that $\gamma = \begin{psmallmatrix}
    a(\gamma) & b(\gamma)\\
    c(\gamma) & d(\gamma)
\end{psmallmatrix}$. We let $\Tr(\gamma)$ denote the trace of $\gamma$.
    \item A Dirichlet character $\psi$ modulo $q$ gives rise to a character on $\Gamma_0(q)$ (and also on $\Gamma_d(q)$) given by $\gamma \mapsto \psi(d(\gamma))$. When no confussion is possible we will use the same letter to refer to the Dirichlet character, and the character it induces on $\Gamma_0(q)$ and on $\Gamma_d(q)$.
    \item The cocycle $j_\gamma(z)$ is defined by Equation \eqref{eq:deficocyclej}.
    \item For the definition of the slash operator $\cdot |_k \cdot$ see \eqref{eq:slashoperator} and also \eqref{eq:slashoperatordoublecosets}. 
    \item $P$ is the hyperbolic Laplacian operator of weight $k$, see \eqref{eq:laplacianoperatorweightk}. 
    \item $\rho(u; n)$ denotes the $n$-th Fourier coefficients of the $P$-eigenfunction $u$, see \eqref{eq:generalfourierexpansion}.
    \item $E_\a(z; s; \Gamma, \chi)$, or more briefly $E_\a(z; s)$, is the Eisenstein series attached to the singular cusp $\a$ of $(\Gamma, \chi)$, see \eqref{eq:defofeisensteinseries}.
    \item $W_{\alpha, \beta}(z)$ is the Whittaker function of parameters $\alpha, \beta$, as in \cite[Equation (4.19)]{DFIsubconvexityartin}.
    \item $L_0^2(\Gamma, \chi)$ is the cuspidal part of $L^2(\Gamma, \chi)$. We denote an orthonormal basis of cuspidal Maass forms by $\{u_j\}_{j \geq 1}$, let the corresponding eigenvalues of $P$ be $\{\lambda_j\}_j$, which we parametrize as $\lambda_j = 1/4 + t_j^2$, see the discussion around \eqref{eq:basicorthonormalbasis}.
    \item $\S(T, X)$ and $\S_0(T, X)$ are certain exponential sums over cuspidal eigenvalues, see \eqref{eq:defspectralexponential} and \eqref{eq:smoothversionofS}, respectively.
    \item $S_{(\Gamma, \chi)}(m, n; c)$ are Kloosterman sums for $(\Gamma, \chi)$, see \eqref{eq:generalkloostermansum} for the definition.
    \item $\C_{it}(\Gamma, \chi)$ is the space of cuspidal Maass forms of congruence character $(\Gamma, \chi)$, of Laplace eigenvalue $1/4 + t^2$ and of weight $k \in \{0, 1\}$ such that $\chi(-I) = (-1)^k$, see \eqref{eq:cuspidalspacefixedspectraldata}.
    \item $\HH_{it}(M, \psi)$ is the set of Hecke-normalized Hecke--Maass cuspidal newforms of conductor $M$, nebentypus $\psi$, weight $k \in \{0, 1\}$ such that $\psi(-1) = (-1)^k$, and Laplace eigenvalue $1/4 + t^2$. 
    \item $\HH'_{it}(q)$ is defined as 
    \begin{equation*}
        \HH'_{it}(q) := \bigsqcup_{\substack{\psi \modulo q\\
\psi(-1) = (-1)^k}} \bigsqcup_{d \mid q^2} \HH_{it}(d, \psi).
    \end{equation*}
    \item For the definitions of $\HH'_{it}(q)/\sim$ and of $\T_f(q)$, see \eqref{eq:equivalencerelationtwist}.  
    \item $V_f(\Gamma_0(M))$ is the classical oldspace for a newform $f$ whose conductor divides $M$, see \eqref{eq:oldspaceGamma0}.
    \item $V_f(\Gamma_d(q))$ is an oldspace for $f \in \HH'_{it}(q)$, and $V_{[f]}(\Gamma_d(q))$ is a direct sum of such oldspaces for twists of $f$ modulo $q$, see \eqref{eq:oldspaceGammad} and \eqref{eq:oldspacetwists} respectively.
    \item $\Delta(q)$ and $\Delta'(q)$ are semigroups of $\GL_2^+(\Q)$ defined in \eqref{eq:defiofsemigroups}.
    \item $\L(s; |u_j|)^2$ is the naive Rankin--Selberg Dirichlet series attached to $u_j$, see \eqref{eq:naiverankinselberg}.
    \item $L(s; f_1 \otimes f_2)$ is the Rankin--Selberg $L$-function of the pair of newforms $f_1, f_2$.
    \item $L(s; \Ad(f)\otimes \psi)$ is the $\psi$-twist of the adjoint $L$-function attached to the newform $f$, see Equation \eqref{eq:gelbartjacquetlift}.
    \item $Z(s; t)$ is the Bykovskii--Zagier zeta function, see \eqref{eq:defiofzagierbykovskiizeta}.  
\end{itemize}

\subsection{Acknowledgements.}
The author would like to express his gratitude to his advisor Ian Petrow for helpful discussions and for carefully reading the present work. The author would also like to thank Yiannis Petridis for suggesting the problem to the author and for his guidance in the initial stages of the project.

\section{Reduction to congruence characters.}\label{sec:reductiontocongruencecharacters}

Let $\Gamma_1 \subset \Gamma_2 \subset \SL_2(\R)$ be cofinite discrete groups. It may be the case that $-I \notin \Gamma_2$. Note that $[\Gamma_2 : \Gamma_1] < \infty$ since both groups are cofinite. A class function $f$ on $\Gamma_1$ can be induced to a class function on $\Gamma_2$ by defining
\begin{equation*}
    \Ind_{\Gamma_1}^{\Gamma_2}(f)(\delta) := \sum_{\substack{\gamma \in \Gamma_1 \backslash \Gamma_2\\\gamma \delta \gamma^{-1} \in \Gamma_1}}f(\gamma \delta \gamma^{-1}).
\end{equation*}
The next lemma shows that the sums $\Psi(X; \cdot)$ introduced in \eqref{eq:chebyshevsumsgeneral} are compatible with induction of class functions.
\begin{lem}\label{lem:compatibilitywithinduction}
For all $X\geq 1$ we have 
\begin{equation}\label{eq:compatibilitywithinduction}
\Psi_{\Gamma_2}(X; \Ind_{\Gamma_1}^{\Gamma_2}(f)) = \Psi_{\Gamma_1}(X; f).
\end{equation}
\end{lem}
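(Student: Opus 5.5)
We compare both sides one conjugacy class at a time. First we rewrite the weights. For a hyperbolic $P\in\Gamma$ with $\Tr(P)>2$, the centralizer $C_\Gamma(P)$ is $\langle P_0\rangle$ or $\{\pm I\}\times\langle P_0\rangle$, according to whether $-I\notin\Gamma$ or $-I\in\Gamma$. Here $P_0$ is the primitive element with positive trace. Then $\log N(P)/\nu_\Gamma(P)=\log N(P_0)$, and this is the length of the closed geodesic $C_\Gamma(P)\backslash \mathrm{axis}(P)$. Consequently
\begin{equation*}
\Psi_\Gamma(X;f)=[\{\pm I\}\Gamma:\Gamma]\sum_{\substack{\{P\}_\Gamma\\ N(P)\le X,\ \Tr(P)>2}} \ell_\Gamma(P)\, f(P),
\end{equation*}
where $\ell_\Gamma(P)$ is the length of $C_\Gamma(P)\backslash\mathrm{axis}(P)$. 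The conditions $N(P)\le X$ and $\Tr(P)>2$ depend only on the element, so they are invariant under conjugation in $\Gamma_2$.

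Next we unfold the induced function. Fix a hyperbolic $P\in\Gamma_2$ with $\Tr(P)>2$ and write $C_2=C_{\Gamma_2}(P)$. The cosets $\Gamma_1\gamma$ with $\gamma P\gamma^{-1}\in\Gamma_1$ form a union of double cosets $\Gamma_1\gamma C_2$. Two elements $\gamma,\gamma'$ give $\Gamma_1$-conjugate elements $\gamma P\gamma^{-1}$ and $\gamma'P\gamma'^{-1}$ exactly when they lie in the same double coset. Hence these double cosets are in bijection with the $\Gamma_1$-classes $\{Q\}_{\Gamma_1}$ contained in $\{P\}_{\Gamma_2}\cap\Gamma_1$.

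For $Q=\gamma P\gamma^{-1}$, the double coset $\Gamma_1\gamma C_2$ contains exactly
\begin{equation*}
[C_2:\gamma^{-1}\Gamma_1\gamma\cap C_2]=[C_{\Gamma_2}(Q):C_{\Gamma_1}(Q)]
\end{equation*}
right $\Gamma_1$-cosets. Therefore
\begin{equation*}
\Ind_{\Gamma_1}^{\Gamma_2}(f)(P)=\sum_{\{Q\}_{\Gamma_1}\subset\{P\}_{\Gamma_2}}[C_{\Gamma_2}(Q):C_{\Gamma_1}(Q)]\,f(Q).
\end{equation*}
Substituting this into $\Psi_{\Gamma_2}(X;\Ind f)$ reorganizes the sum as a sum over all $\Gamma_1$-classes $\{Q\}_{\Gamma_1}$ with $N(Q)\le X$ and $\Tr(Q)>2$. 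The identity then reduces to the pointwise claim
\begin{equation*}
[\{\pm I\}\Gamma_2:\Gamma_2]\,[C_{\Gamma_2}(Q):C_{\Gamma_1}(Q)]\,\ell_{\Gamma_2}(Q)=[\{\pm I\}\Gamma_1:\Gamma_1]\,\ell_{\Gamma_1}(Q).
\end{equation*}

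The main obstacle is the bookkeeping of $\pm I$ in this claim. Write $\overline{C}$ for the image of a centralizer in $\mathrm{PSL}_2(\R)$. Then $\overline{C_{\Gamma_1}(Q)}$ is infinite cyclic and has index $m=[\overline{C_{\Gamma_2}(Q)}:\overline{C_{\Gamma_1}(Q)}]$ in $\overline{C_{\Gamma_2}(Q)}$. Since the geodesic for $\Gamma_1$ covers the one for $\Gamma_2$ $m$ times, $\ell_{\Gamma_1}(Q)=m\,\ell_{\Gamma_2}(Q)$. It then remains to compare indices before and after quotienting by $\pm I$:
\begin{equation*}
[C_{\Gamma_2}(Q):C_{\Gamma_1}(Q)]=m\cdot\frac{|C_{\Gamma_2}(Q)\cap\{\pm I\}|}{|C_{\Gamma_1}(Q)\cap\{\pm I\}|}=m\cdot\frac{|\Gamma_2\cap\{\pm I\}|}{|\Gamma_1\cap\{\pm I\}|}.
\end{equation*}
The first equality holds because $\{\pm I\}$ is central and the quotient map has kernel $C\cap\{\pm I\}$. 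The second holds because $\pm I$ commutes with $Q$, so $C_{\Gamma_i}(Q)\cap\{\pm I\}=\Gamma_i\cap\{\pm I\}$.

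Finally, $[\{\pm I\}\Gamma_i:\Gamma_i]=2/|\Gamma_i\cap\{\pm I\}|$. The ratio $|\Gamma_2\cap\{\pm I\}|/|\Gamma_1\cap\{\pm I\}|$ therefore cancels exactly against the ratio of the prefactors. This proves the pointwise claim, and summing over $\{Q\}_{\Gamma_1}$ gives \eqref{eq:compatibilitywithinduction}.
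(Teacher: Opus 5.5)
Your proof is correct and follows essentially the same route as the paper: both unfold $\Ind_{\Gamma_1}^{\Gamma_2}(f)$ class by class and get the multiplicity $[C_{\Gamma_2}(Q):C_{\Gamma_1}(Q)]$ by counting cosets. Your double-coset count is the paper's bijection $C_{\Gamma_1}(P)\backslash C_{\Gamma_2}(P)\leftrightarrow\{\gamma\in\Gamma_1\backslash\Gamma_2:\gamma P\gamma^{-1}\in\Gamma_1,\ \{\gamma P\gamma^{-1}\}_{\Gamma_1}=\{P\}_{\Gamma_1}\}$, and your bookkeeping with primitive lengths $\log N(P_0)$ and images in $\mathrm{PSL}_2(\R)$ is a repackaging of the paper's relation expressing $\nu_2(P)$ through $\nu_1(P)$, the centralizer index and the $\pm I$ factors.
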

This lemma generalizes Theorem 2.2 from \cite{venkov}, where it was assumed that $-I \in \Gamma$. We recall the proof for the convenience of the reader. 
\begin{proof}
For brevity we write $g = \Ind_{\Gamma_1}^{\Gamma_2}(f)$ in this proof. For a hyperbolic element $P \in \Gamma_1$ we let $\nu_1(P)$ be the positive integer such that $P = P_0^{\nu_1(P)}$ for a primitive element $P_0$ of $\Gamma_1$. Let $C_{\Gamma_1}(P)$ be the centralizer of $P$ in $\Gamma_1$. Since we have 
\begin{equation*}
    C_{\Gamma_1}(P) = \begin{dcases}
        \{\pm P_0^n : n \in \Z\}, & \text{ if }-I \in \Gamma_1,\\
        \{P_0^n : n \in \Z\}, & \text{ otherwise},
    \end{dcases}
\end{equation*}
it follows that
\begin{equation*}
\nu_1(P) = \frac{[\{\pm I\}\Gamma_1: \Gamma_1]}{2} \times [C_{\Gamma_1}(P) : \langle P \rangle].
\end{equation*} 
Similarly, if $Q \in \Gamma_2$ is hyperbolic and we write $Q = Q_0^{\nu_2(Q)}$ for a primitive hyperbolic element $Q_0$ of $\Gamma_2$, then 
\begin{equation*}
    \nu_2(Q) = \frac{[\{\pm I\}\Gamma_2: \Gamma_2]}{2} \times [C_{\Gamma_2}(Q) : \langle Q \rangle].
\end{equation*}
In particular, any hyperbolic $P \in \Gamma_1$ can be considered an element of both $\Gamma_1$ and $\Gamma_2$, and we have 
\begin{equation}\label{eq:sec2:auxeq1}
    \nu_2(P) = \nu_1(P) \cdot [C_{\Gamma_2}(P): C_{\Gamma_1}(P)] \cdot \frac{[\{\pm I\}\Gamma_2: \Gamma_2]}{[\{\pm I\}\Gamma_1: \Gamma_1]}.
\end{equation}

The identity in \eqref{eq:compatibilitywithinduction} is equivalent to
\begin{equation}\label{eq:sec2:auxeq2}
    [\{\pm I\}\Gamma_2: \Gamma_2]\times \sum_{\substack{\text{hyp. }\{Q\}_{\Gamma_2}\\N(Q) = X\\
    \Tr(P) > 2}} \frac{g(Q)}{\nu_2(Q)} = [\{\pm I\}\Gamma_1: \Gamma_1]\times\sum_{\substack{\text{hyp. }\{P\}_{\Gamma_1}\\
    N(P) = X\\
    \Tr(P) > 2}} \frac{f(P)}{\nu_1(P)}
\end{equation}
for every $X > 1$. Using the definition of $g$ we find that
\begin{equation}\label{eq:sec2:auxeq3}
\begin{aligned}
     \sum_{\substack{\text{hyp. }\{Q\}_{\Gamma_2}\\N(Q) = X\\
    \Tr(Q) > 2}} \frac{g(Q)}{\nu_2(Q)} & =  \sum_{\substack{\text{hyp. }\{Q\}_{\Gamma_2}\\N(Q) = X\\
    \Tr(Q) > 2}} \frac{1}{\nu_2(Q)} \sum_{\substack{\gamma \in \Gamma_2 /\Gamma_1\\ \gamma^{-1}Q\gamma \in \Gamma_1}}f(\gamma^{-1}Q\gamma)\\
    & = \sum_{\substack{\text{hyp. }\{P\}_{\Gamma_1}\\N(P) = X\\
    \Tr(P) > 2}} f(P) \cdot \frac{\#\{\gamma \in \Gamma_1 \backslash \Gamma_2 \mid \gamma P \gamma^{-1} \in \Gamma_1 \text{ and } \{\gamma P \gamma^{-1}\}_{\Gamma_1} = \{P\}_{\Gamma_1}\}}{\nu_2(P)}.
\end{aligned}
\end{equation}
It is easy to check that the inclusion $C_{\Gamma_2}(P) \hookrightarrow \Gamma_2$ induces a bijection
\begin{equation*}
    C_{\Gamma_1}(P) \backslash C_{\Gamma_2}(P) \longleftrightarrow \{\gamma \in \Gamma_1 \backslash \Gamma_2 \mid \gamma P \gamma^{-1} \in \Gamma_1 \text{ and } \{\gamma P \gamma^{-1}\}_{\Gamma_1} = \{P\}_{\Gamma_1}\},
\end{equation*}
so that \eqref{eq:sec2:auxeq3} can be simplified to
\begin{equation*}
    \sum_{\substack{\text{hyp. }\{Q\}_{\Gamma_2}\\N(Q) = X\\
    \Tr(Q) > 2}} \frac{g(Q)}{\nu_2(Q)} =\sum_{\substack{\text{hyp. }\{P\}_{\Gamma_1}\\N(P) = X\\
    \Tr(P) > 2}} f(P) \cdot \frac{[C_{\Gamma_2}(P): C_{\Gamma_1}(P)]}{\nu_2(P)}. 
\end{equation*}
If we combine this identity with \eqref{eq:sec2:auxeq1} we deduce \eqref{eq:sec2:auxeq2}, finishing the proof of the lemma.
\end{proof}

Suppose now that $q$ is a positive integer, $\Gamma$ is a congruence subgroup of level $q$, and $f$ is a class function on $\widetilde{\Gamma} = \Gamma(q)\backslash \Gamma$. We recall Artin's theorem on induced characters in the next lemma, see \cite[Section 9.2]{serrelinearreps}.
\begin{lem}[Artin]\label{lem:Artinstheorem}
Let $G$ be a finite group. Every character in $G$ is a linear combination with rational coefficients of characters of representations induced from one-dimensional representations of cyclic subgroups of $G$.
\end{lem}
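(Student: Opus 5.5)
The statement is Artin's induction theorem for a finite group $G$. The plan follows Serre's argument and works inside the rational span of induced characters.

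\textbf{Setup.} Let $\mathcal{R}(G)$ be the character ring and let $V \subset \Q\otimes\mathcal{R}(G)$ be the $\Q$-span of the characters $\Ind_C^G \phi$. Here $C$ ranges over cyclic subgroups of $G$ and $\phi$ over one-dimensional characters of $C$. We must show $V = \Q\otimes \mathcal{R}(G)$.

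\textbf{Reduction to permutation characters.} For each cyclic $C$, the rational span of the $\Ind_C^G\phi$ contains $\Ind_C^G \chi'$ for every character $\chi'$ of $C$, since every character of an abelian group is a sum of linear ones. So it suffices to show that every $\chi\in\mathcal{R}(G)$ is a rational combination of characters $\Ind_C^G\theta_C$, where $\theta_C$ runs over class functions on $C$ that are genuine virtual characters.

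\textbf{Key identity.} For each cyclic $A\subset G$, define $\theta_A(a) = |A|$ if $a$ generates $A$ and $0$ otherwise. Every $x\in G$ generates exactly one cyclic subgroup, so
\begin{equation*}
  |G| = \sum_{A\ \text{cyclic}} \Ind_A^G \theta_A,
\end{equation*}
which I will check by evaluating both sides at $x$ using the formula for induced class functions. Next I show that $\theta_A$ is a virtual character of $A$ with integer coefficients. By induction on $|A|$ one has $\theta_A = |A|\mathbbm{1}_A-\sum_{B\subsetneq A}\Ind_B^A\theta_B$, and $|A|\mathbbm{1}_A$ is the regular character minus nothing problematic. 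More directly, $\theta_A(a)=\sum_{d}\mu$-type Möbius expressions in characters, and the inner products $\langle \theta_A,\phi\rangle_A$ are Ramanujan sums, hence integers. This integrality check is the main technical point. I would verify it via the recursion $\theta_A = \sum_{B\subseteq A} \mu(|A/B|)\,|B|\cdot(\text{pullback of the trivial character of }B\text{ to }A\text{ restricted...})$, or simply by computing inner products with linear characters. Either route uses that $\theta_A$ takes values in $\Z$ and is stable under Galois action.

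\textbf{Conclusion.} Multiply the identity by $\chi$ and use the projection formula $\chi\cdot\Ind_A^G\theta_A = \Ind_A^G(\theta_A\cdot \chi|_A)$. This gives
\begin{equation*}
  |G|\chi = \sum_{A}\Ind_A^G\bigl(\theta_A\cdot\chi|_A\bigr).
\end{equation*}
Each $\theta_A\chi|_A$ is a virtual character of the cyclic group $A$, so by the reduction step it is an integer combination of linear characters. Dividing by $|G|$ expresses $\chi$ with rational coefficients, which proves the lemma. The only nonformal step is the integrality of $\theta_A$, and it is where I expect most care to be needed.
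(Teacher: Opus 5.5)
Your proof is correct. It is the explicit proof of Artin's theorem in Serre's Chapter 9: the identity $|G| = \sum_A \Ind_A^G\theta_A$, integrality of $\theta_A$, and the projection formula. The paper gives no argument of its own and simply cites Serre \S 9.2.

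The integrality step is less delicate than you suggest. Your recursion $\theta_A = |A|\mathbbm{1}_A - \sum_{B\subsetneq A}\Ind_B^A\theta_B$ is just the key identity applied to $G = A$, and it closes the step by induction on $|A|$. Indeed, $|A|\mathbbm{1}_A$ is simply $|A|$ copies of the trivial character (it has nothing to do with the regular character), and induction sends virtual characters to virtual characters. Two equivalent complete arguments are also available:
\begin{itemize}
\item the Möbius formula you were reaching for, which is $\theta_A = \sum_{B\subseteq A}\mu(|A/B|)\,|B|\,\Ind_B^A\mathbbm{1}_B$;
\item your Ramanujan-sum computation $\langle\theta_A,\phi\rangle_A = \sum_{a\text{ generating }A}\overline{\phi(a)}\in\Z$.
\end{itemize}
No Galois-stability input is needed for any of these.

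The other standard proof, also in Serre, goes by duality. One shows that restriction $\Q\otimes R(G)\to\bigoplus_C \Q\otimes R(C)$ is injective, since every element lies in a cyclic subgroup, and then dualizes via Frobenius reciprocity. Your route is slightly longer but gives a sharper conclusion: $|G|\chi$ is an \emph{integral} combination of the induced characters, so all denominators divide $|G|$. For the paper's use, where only rational constants $c_i$ are needed, either proof suffices.
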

Since characters of representations of a finite group span the space of class functions we deduce the following: there exist subgroups $\widetilde{\Gamma_i} \subset \widetilde{\Gamma}$, characters $\chi_i$ on $\widetilde{\Gamma_i}$ and constants $c_i$ such that 
\begin{equation}
    f = \sum_{i} c_i \Ind_{\widetilde{\Gamma_i}}^{\widetilde{\Gamma}} \chi_i.
\end{equation}
By Frobenius reciprocity we have 
\begin{equation*}
    \langle f, \mathbbm{1}_{\widetilde{\Gamma}}\rangle_{\widetilde{\Gamma}} = \sum_{i}  c_i \delta_{\chi_i  = 1}.
\end{equation*}
Let $\Gamma_i$ be the pullback of $\widetilde{\Gamma_i}$ via the projection $\Gamma \rightarrow \widetilde{\Gamma}$. Using Lemma \ref{lem:compatibilitywithinduction} we deduce that
\begin{equation}\label{eq:sec2:auxeq4}
    \Psi_{\Gamma}(X; f) = \sum_{i} c_i \Psi_{\Gamma_i}(X; \chi_i).
\end{equation}
If for some $i$ we have $\{\pm I\}\nsubseteq \Gamma_i$, we can consider the multiplicative characters $\chi_i^+$ and $\chi_i^-$ defined on $\{\pm I\}\Gamma$ by
\begin{equation*}
    \chi_i^+(\pm \gamma) = \chi_i(\gamma), \quad \chi_i^-(\pm \gamma) = \pm \chi_i(\gamma),
\end{equation*}
where $\gamma \in \Gamma_i$ is arbitrary. It is straightforward to check that
\begin{equation*}
    \Ind_{\Gamma_i}^{\{\pm I\}\Gamma_i}(\chi_i) = \chi_i^+ + \chi_i^-.
\end{equation*}
Therefore, using Lemma \ref{lem:compatibilitywithinduction} again we see that
\begin{equation*}
    \Psi_{\Gamma_i}(X; \chi_i) = \Psi_{\{\pm I\}\Gamma_i}(X; \chi^+_i) + \Psi_{\{\pm I\}\Gamma_i}(X; \chi^-_i).
\end{equation*}
From this observation it follows that we can assume that $\{\pm I\} \subset \Gamma_i$ for all the groups $\Gamma_i$ appearing in \eqref{eq:sec2:auxeq4}. According to Definition \ref{defi:congruencepair}, each $(\Gamma_i, \chi_i)$ is a congruence character of level $q$.

Putting together the arguments in this paragraph we arrive at the following conclusion.

\begin{prop}\label{prop:conclusionreductioncharacters}
Proving Theorem \ref{thm:maintheorem1} is equivalent to proving that
\begin{equation}\label{eq:estimatetobeprovenmaintheorem}
    \Psi_\Gamma(X; \chi) = \delta_{\chi = 1} X + O_{q, \theta,\varepsilon}(X^{2/3 + \theta/6 + \varepsilon})
\end{equation}
holds for all congruence characters $(\Gamma, \chi)$. Proving Theorem \ref{thm:soundyoungshortintervals} is equivalent to proving that  
\begin{equation}\label{eq:estimatetobeprovenshortintervals}
    \Psi_\Gamma(X + u; \chi) - \Psi_\Gamma(X; \chi) = \delta_{\chi = 1} u + O_{q, \theta, \varepsilon}(u^{1/2}X^{1/4 + \theta/2 + \varepsilon})
\end{equation}
holds for all congruence characters $(\Gamma, \chi)$ of level $q$.
\end{prop}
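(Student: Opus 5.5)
The plan has two directions. In the forward direction we take the character statements \eqref{eq:estimatetobeprovenmaintheorem} and \eqref{eq:estimatetobeprovenshortintervals} as given and deduce Theorems \ref{thm:maintheorem1} and \ref{thm:soundyoungshortintervals}. In the reverse direction we observe that a congruence character is a special case of the theorems.

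Forward direction. Fix $\Gamma$ of level $q$ and a class function $f$ on $\widetilde{\Gamma}$.
\begin{enumerate}
\item Apply Lemma \ref{lem:Artinstheorem} to the finite group $\widetilde{\Gamma}$. Since characters span the class functions, $f=\sum_i c_i \Ind_{\widetilde{\Gamma_i}}^{\widetilde{\Gamma}}\chi_i$ with finitely many terms. Both the constants $c_i$ and the data $(\widetilde{\Gamma_i},\chi_i)$ depend only on $f$ (hence on $q$).
\item Identify induction on $\widetilde{\Gamma}$ with induction from $\Gamma_i$ to $\Gamma$. Because $\Gamma(q)\subset\Gamma_i$, the map $\Gamma_i\backslash\Gamma\to\widetilde{\Gamma_i}\backslash\widetilde{\Gamma}$ is a bijection, so the inflation of $\Ind_{\widetilde{\Gamma_i}}^{\widetilde{\Gamma}}\chi_i$ equals $\Ind_{\Gamma_i}^{\Gamma}$ of the inflated $\chi_i$.
\item Lemma \ref{lem:compatibilitywithinduction} then gives \eqref{eq:sec2:auxeq4}, namely $\Psi_\Gamma(X;f)=\sum_i c_i\Psi_{\Gamma_i}(X;\chi_i)$.
\item For each $i$ with $-I\notin\Gamma_i$, replace $\Psi_{\Gamma_i}(X;\chi_i)$ by $\Psi_{\{\pm I\}\Gamma_i}(X;\chi_i^+)+\Psi_{\{\pm I\}\Gamma_i}(X;\chi_i^-)$. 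This uses the induction identity $\Ind_{\Gamma_i}^{\{\pm I\}\Gamma_i}\chi_i=\chi_i^++\chi_i^-$ and Lemma \ref{lem:compatibilitywithinduction} again.
\item Check that the new pairs are congruence characters in the sense of Definition \ref{defi:congruencepair}. We have $\Gamma(q)\subset\Gamma_i$, so $\{\pm I\}\Gamma(q)\subset\{\pm I\}\Gamma_i$, and $\chi_i^\pm$ is trivial on $\Gamma(q)$ because $\chi_i$ is and $-I\notin\Gamma(q)$. One also needs $\chi_i^\pm$ to be well defined, which holds because $-I\notin\Gamma_i$ means each element of $\{\pm I\}\Gamma_i$ is $\pm\gamma$ for a unique $\gamma\in\Gamma_i$.
\end{enumerate}

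Now apply the assumed estimate to each resulting congruence character and sum. There are finitely many terms, and the implied constants depend only on $q$, $\theta$, $\varepsilon$ and $f$. The main terms combine to $\sum c_i\,\delta_{\chi_i=1}X$.

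It remains to show this main term equals $\langle f,\mathbbm{1}_{\widetilde\Gamma}\rangle_{\widetilde\Gamma}X$. This is Frobenius reciprocity, $\langle \Ind\chi_i,\mathbbm{1}\rangle_{\widetilde{\Gamma}}=\langle\chi_i,\mathbbm{1}\rangle_{\widetilde{\Gamma_i}}=\delta_{\chi_i=1}$. In the splitting of step 4, exactly one of $\chi_i^+$ and $\chi_i^-$ can be trivial: $\chi_i^-(-I)=-1$, and $\chi_i^+$ is trivial exactly when $\chi_i$ is. So the splitting preserves the count $\delta_{\chi_i=1}$.

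The short-interval statement follows identically by taking differences $\Psi(X+u;\cdot)-\Psi(X;\cdot)$, since every identity above is linear in the class function and holds for each fixed $X$.

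Reverse direction. If Theorem \ref{thm:maintheorem1} (resp. \ref{thm:soundyoungshortintervals}) holds, take the congruence subgroup $\Gamma$ and $f=\chi$, viewed as a class function on $\widetilde{\Gamma}$; it is a class function because $\chi$ is a homomorphism into an abelian group. Then $\langle\chi,\mathbbm{1}\rangle=\delta_{\chi=1}$ by orthogonality of characters, which is exactly \eqref{eq:estimatetobeprovenmaintheorem} (resp. \eqref{eq:estimatetobeprovenshortintervals}).

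The main obstacle is bookkeeping rather than a new idea. The two points that need care are the well-definedness and triviality-on-$\Gamma(q)$ of $\chi_i^\pm$ in step 5, and making sure the main-term coefficient survives the splitting of step 4.
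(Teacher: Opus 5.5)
Your proposal is correct and follows the paper's own reduction. It applies Artin's theorem on $\widetilde{\Gamma}$, uses Lemma \ref{lem:compatibilitywithinduction} to pass to the sums $\Psi_{\Gamma_i}(X;\chi_i)$, splits via $\Ind_{\Gamma_i}^{\{\pm I\}\Gamma_i}\chi_i=\chi_i^++\chi_i^-$ when $-I\notin\Gamma_i$, and uses Frobenius reciprocity for the main term. Your extra checks fill in details the paper leaves implicit: the coset bijection identifying induction on $\widetilde{\Gamma}$ with induction on $\Gamma$, the fact that only $\chi_i^+$ can be trivial (so the main term survives the splitting), and the reverse direction with $f=\chi$.
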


\section{Spectral theory.}\label{sec:spectralbackground}
From now on $(\Gamma, \chi)$ denotes a congruence character and $k \in \{0, 1\}$ is such that $\chi(-I) = (-1)^k$. 

\subsection{Background.}
Refenrences for the material in this section are \cite[Section 4]{DFIsubconvexityartin} and \cite[Chapter 9]{hejhal2}.

For $\gamma \in \GL_2^+(\R)$ and $z \in \Hyp$ we let
\begin{equation}\label{eq:deficocyclej}
    j_\gamma(z) = \frac{c(\gamma)z + d(\gamma)}{|c(\gamma)z + d(\gamma)|}.
\end{equation}
The slash operator $|_k$ is defined for
 $\gamma \in \GL_2^+(\R)$ and $h: \Hyp \rightarrow \CC$ by
\begin{equation}\label{eq:slashoperator}
    (h|_k \gamma)(z) := h(\gamma z) \left(\frac{c(\gamma)z + d(\gamma)}{|c(\gamma)z + d(\gamma)|}\right)^{-k} = h(\gamma z)j_\gamma(z)^{-k}.
\end{equation}
This formula provides a right action of $\GL_2^+(\R)$ on functions on $\Hyp$. The Laplacian of weight $k$ is denoted by $P$, so that in cartesian coordinates we have
\begin{equation}\label{eq:laplacianoperatorweightk}
    P := -y^2(\partial_x^2 + \partial_y^2) + iky\partial_x.
\end{equation}
Let $d\mu(z)$ denote the usual volume form on $\Hyp$, which in Cartesian coordinates is expressed as
\begin{equation*}
        d\mu(z) = \frac{dx\, dy}{y^2}.
\end{equation*}
We let $\vol(\Gamma\backslash \Hyp)$ denote the volume of a fundamental domain of $\Gamma$ with respect to this measure. We denote by $L^2(\Gamma, \chi)$ the Hilbert space of functions $h: \Hyp \rightarrow \CC$, 
which obey the transformation property
\begin{equation*}
    h|_k \gamma = \chi(\gamma)h, \text{ for all }\gamma \in \Gamma 
\end{equation*}
and are square integrable on $\Gamma\backslash \Hyp$, that is
\begin{equation*}
    \frac{1}{\vol(\Gamma\backslash \Hyp)}\int_{\Gamma\backslash \Hyp}|h(z)|^2 d\mu(z)
< \infty.
\end{equation*}
The inner product is computed with respect to probability measure, so that
\begin{equation}\label{eq:innerproductmaasforms}
    \langle h_1,h_2 \rangle := \frac{1}{\vol(\Gamma\backslash \Hyp)}\int_{\Gamma \backslash \Hyp} h_1(z)\overline{h_2(z)} d\mu(z).
\end{equation}
Note that this normalization differs from that in other sources. 
\begin{defi}
Given two congruence characters, we write $(\Gamma_1, \chi_1)\subset (\Gamma_2, \chi_2)$ when $\Gamma_1 \subset \Gamma_2$ and $\chi_2 \mid_{\Gamma_1} = \chi_1$.
\end{defi}
The normalization \eqref{eq:innerproductmaasforms} is convenient because whenever we have an inclusion of congruence characters $(\Gamma', \chi')\subset (\Gamma, \chi)$, then the natural inclusion 
\begin{equation*}
    L^2(\Gamma, \chi) \hookrightarrow L^2(\Gamma', \chi')
\end{equation*}
is an isometry.

\subsubsection{Fourier expansion.}
The $\Gamma$-stabilizer of the cusp at $\infty$ is of the form $\Gamma_\infty = \{\pm \begin{psmallmatrix}
    1 & x\\
    0 & 1 \end{psmallmatrix}: x \in \ell \Z\}$ for some $\ell = \ell(\Gamma) \geq 1$ which divides $q$. Since $\chi$ is trivial on $\Gamma(q)$ we have 
    \begin{equation}\label{eq:defialpha}
        \chi(\begin{psmallmatrix}
            1 & \ell\\
            0 & 1
        \end{psmallmatrix}) = e(-\alpha),
    \end{equation}
for a unique $\alpha \in \{0, \ell/q, 2\ell/q \ldots, 1 - \ell/q\}$. Suppose that $u \in C^2(\Hyp)$ has the following three properties:
\begin{itemize}
    \item $u|_k \gamma = \chi(\gamma)u$ for $\gamma \in \Gamma$.
    \item $Pu = (1/4 + t^2)u$ for some $t$. We can and will assume that, either $\text{Re}(t) > 0$ or else $\text{Im}(t) \geq 0$.
    \item $|u(z)| \leq y^A  + y^{-B}$ for all $z = x + iy \in \Hyp$ and some $A, B \geq 0$.
\end{itemize}
Then we have a Fourier expansion
\begin{equation}\label{eq:generalfourierexpansion}
    u(z) = \rho(u; 0^+)\frac{y^{1/2 + it} + y^{1/2 - it}}{2} + \rho(u; 0^-)\frac{y^{1/2 + it} - y^{1/2 - it}}{2it} + \sum_{n \in \Q - \{0\}} \frac{\rho(u; n)}{|n|^{1/2}}W_{\frac{k\sgn(n)}{2}, it}(4\pi |n|y)e(nx),
\end{equation}
where the function $W_{\alpha, \beta}(z)$ is the Whittaker function, as in \cite[Equation (4.19)]{DFIsubconvexityartin}, and where $(y^{1/2 + it} - y^{1/2 - it})/(2it)$ is interpreted as $y^{1/2}\log y$ when $t = 0$. The Fourier expansion \eqref{eq:generalfourierexpansion} has the following properties:
\begin{itemize}
    \item It converges absolutely, uniformly on compacta  of $\Hyp$.
    \item $\rho(u; n) = 0$ unless $n \in \frac{1}{\ell}\Z - \frac{\alpha}{\ell}$.
\end{itemize}
Note that the Fourier coefficients $\rho(u; n)$ do not depend on the congruence character $(\Gamma, \chi)$, only on the function $u$.

\subsubsection{Spectral decomposition.}
The Hilbert space $L^2(\Gamma, \chi)$ has an orthogonal decomposition
\begin{equation}\label{eq:orthogonaldecompositionspectrum}
    L^2(\Gamma, \chi) = L_0^2(\Gamma, \chi) \oplus L^2_\text{res}(\Gamma, \chi)\oplus L^2_{\text{cont}}(\Gamma, \chi),
\end{equation}
which arises from the spectral resolution of the operator $P$. The spaces $L_0^2$, $L^2_\text{res}$ and $L^2_\text{cont}$ are called, respectively, the cuspidal, the residual and the continuous parts of the spectrum of $P$. We will recall some information about each of these spaces.

\paragraph{The residual part.}
If $\chi$ is the trivial character, then $L^2_\text{res}(\Gamma, \chi)$ is one-dimensional space of constant functions. Otherwise, if $\chi$ is nontrivial, then $L^2_\text{res}(\Gamma, \chi) = \{0\}$ is the trivial zero-dimensional space. Note that $k = 1$ implies that $\chi$ is nontrivial. These facts depend on the fact that $(\Gamma, \chi)$ is a congruence character.

\paragraph{The continuous part.}
The continuous part of the spectrum is spanned by the Eisenstein series $z \mapsto E_\a(z; 1/2 + it)$ for $t \in \R$. We now recall how these are defined. 

Note that, since $\Gamma$ is a congruence subgroup of $\SL_2(\Z)$,  the cusps of $\Gamma$ are the points of $\P^1(\Q) = \Q \cup \{\infty\}$. We let $[\a]_\Gamma$ denote the $\Gamma$-orbit of cusps containing $\a$, that is 
\begin{equation*}
    [\a]_\Gamma := \{\b \in \P^1(\Q): \b = \gamma \a \text{ for some }\gamma \in \Gamma\}.
\end{equation*}
There are only finitely-many different $\Gamma$-orbits. For a cusp $\a$ of $\Gamma$ we let $\Gamma_\a := \{\gamma \in \Gamma: \gamma \a = \a\}$ be its stabilizer. This group is of the form $\Gamma_\a = \{\pm T_\a^n : n \in \Z\}$, where $T_\a$ is a parabolic element with positive trace. We let $\sigma_\a \in \SL_2(\R)$ be any matrix such that $\sigma_\a \infty = \a$ and
\begin{equation*}
    \sigma^{-1}_\a T_\a \sigma_\a = \begin{pmatrix}
        1 & 1\\
        0 & 1
    \end{pmatrix}.
\end{equation*}
We define $\alpha_\a \in [0, 1)$ by the equation
\begin{equation*}
    \chi(T_\a) = e(-\alpha_\a),
\end{equation*}
in particular $\alpha = \alpha_\infty$. We say that $\a$ is a \emph{singular cusp} of $(\Gamma, \chi)$ when $\chi(T_\a) = 1$. We let $\kappa$ denote the number of different $\Gamma$-orbits of singular cusps of $(\Gamma, \chi)$. We choose representatives $\a_1, \ldots, \a_{\kappa}$ of these orbits with the condition that, if $\infty$ is singular, then it is chosen as a representative of its class. Let $f_s: \Hyp \rightarrow \CC$ denote the function $f_s(z) = \text{Im}(z)^s$. Then, to a singular cusp we can attach the Eisenstein series $E_\a(z; s)$, depending on $(z, s)\in \Hyp \times \CC$, defined for $\text{Re}(s) > 1$ by the absolutely convergent series 
\begin{equation}\label{eq:defofeisensteinseries}
    E_\a(z; s) = \sum_{\gamma \in \Gamma_\a \backslash \Gamma} \overline{\chi(\gamma)}\left[f_s|_k (\sigma_\a^{-1}\gamma)\right](z).
\end{equation}
By construction, it follows that the identity 
\begin{equation*}
    E_\a(z, s)|_k \gamma = \chi(\gamma)E_\a(z, s)
\end{equation*}
holds for any $\gamma \in \Gamma$. Also, we note that the definition of $E_\a(z, s)$ does not depend on the choice of $\sigma_\a$. On the other hand, if $\b = \gamma \a$ for $\gamma \in \Gamma$, then we have 
\begin{equation}\label{eq:transformationdifferentrepresentativeeisenstein}
    E_\b(z;s) = \overline{\chi(\gamma)}E_\a(z; s),
\end{equation}
so that $E_\a(z;s)$ depends very mildly on the representative of the equivalence class $[\a]_\Gamma$. We will use the more complete notation $E_\a(z; s; \Gamma, \chi)$ when we want to emphasize the dependence on $(\Gamma, \chi)$. The Eisenstein series can be continued meromorphically to $\CC$. On the region $\text{Re}(s) \geq 1/2$ the only possible singularity can occur at $s = 1$ (we use here that $(\Gamma, \chi)$ is a congruence character). When $\chi$ is the trivial character there is a simple pole at $s = 1$ with residue
\begin{equation*}
    \text{Res}_{s = 1}E_\a(z; s) = \frac{1}{\vol(\Gamma \backslash \Hyp)}.
\end{equation*}
If $\chi$ is nontrivial, then $E_\a(z; \cdot)$ is holomorphic at $s = 1$. The Eisenstein series $E_\a(z; s)$ has a Fourier expansion at $\infty$ as in \eqref{eq:generalfourierexpansion}. For simplicity we will write $\rho_{\a, t}(n)$ instead of $\rho(E_\a(z; 1/2 + it); n)$.

We can arrange the Eisenstein series in a column vector $\E(z; s) := (E_{\a_i}(z; s))_i$ of size $\kappa\times 1$. Then, there exists a square matrix $\Phi(s)$ of size $\kappa \times \kappa$ such that the identity
\begin{equation}\label{eq:scatteringmatrixfunctionalequation}
    \E(z;s) = \Phi(s)\E(z; 1-s)
\end{equation}
holds away from possible poles. The matrix $\Phi(s)$ is called the \emph{scattering matrix} of $(\Gamma, \chi)$, and its entries can be expressed in terms of the constant term of the Fourier expansions of the different Eisenstein series at the different singular cusps. We know that $\Phi(s)$ is unitary on the critical line $\text{Re}(s) = 1/2$. The \emph{scattering determinant} is the meromorphic function
\begin{equation}\label{eq:scatteringdeterminant}
    \phi(s) := \det \Phi(s),
\end{equation}
which is nonzero and holomorphic on a neighbourhood of the line $\text{Re}(s) = 1/2$. The logarithmic derivative of this function expresses the contribution of the continuous part of the spectrum to the Selberg trace formula, see Lemma \ref{lem:selbergtraceformula} further down. Thus, for analytic applications it is important to control the size of $\phi'/\phi$. It turns out that for a congruence character $(\Gamma, \chi)$ the scattering determinant can be expressed explicitly in terms of completed Dirichlet $L$-functions. Recall that, if $\psi$ is a primitive Dirichlet character such that $\psi(-1) = (-1)^k$, the completed Dirichlet $L$-function of $\psi$ is given by   
\begin{equation*}
    \Lambda(s; \psi) := \pi^{-\frac{s + k}{2}}\Gamma\left(\frac{s + k}{2}\right)\sum_{n = 1}^\infty \frac{\psi(n)}{n^s}
\end{equation*}
when $\text{Re}(s) > 1$, and otherwise is defined by analytic continuation. 
\begin{lem}\label{lem:basicformscattering}
For each $i \in \{1, \ldots, \kappa\}$ there is a primitive Dirichlet character $\psi_i$ with $\psi_i(-1) = (-1)^k$, a finite set of primes $S_i$, and for each $p \in S_i$ a rational function $H_{i, p}(X)$ such that 
\begin{equation*}
    \phi(s) = \prod_{i = 1}^\kappa\left(\frac{\Lambda(2s-1; \psi_i)}{\Lambda(2s; \psi_i)} \prod_{p \in S_i} H_{i, p}(p^{-s})\right).
\end{equation*}
\end{lem}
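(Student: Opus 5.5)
We reduce to the case of the full principal congruence subgroup and then compute the scattering matrix there by representation theory of the finite group $\SL_2(\Z/q\Z)$. First, pass from $(\Gamma,\chi)$ to $\{\pm I\}\Gamma(q)$ (or $\Gamma(q)$ with the appropriate weight $k$ and central character). The space spanned by all Eisenstein series of $\Gamma(q)$ at fixed $s$ carries an action of $\widetilde{\Gamma}=\Gamma(q)\backslash\SL_2(\Z)$. The Eisenstein series of $(\Gamma,\chi)$ are exactly the $\chi$-isotypic vectors for $\Gamma$, up to the normalizing factors in \eqref{eq:transformationdifferentrepresentativeeisenstein}: average $E_{\a}(z;s;\Gamma(q))$ over $\Gamma_{\a}\backslash\Gamma$ against $\overline{\chi}$. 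The scattering matrix $\Phi(s)$ of $(\Gamma,\chi)$ is then the restriction of the $\Gamma(q)$ scattering operator to this isotypic subspace. This operator commutes with the $\SL_2(\Z)$-action, so $\phi(s)$ is a determinant of a block of an $\SL_2(\Z)$-equivariant operator.

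Next, diagonalize this equivariant operator. The cusps of $\Gamma(q)$ form $\Gamma(q)\backslash\P^1(\Q)\simeq \pm N\backslash\SL_2(\Z/q\Z)$, where $N$ is the unipotent upper triangular subgroup. The space of Eisenstein series is therefore the induced representation from the Borel, with functions on $B\backslash G$ twisted by a character of the diagonal torus $(\Z/q\Z)^\times$. Decomposing according to Dirichlet characters $\psi$ of the torus (with $\psi(-1)=(-1)^k$), one obtains Eisenstein series of Hecke type $E(z,s;\psi_1,\psi_2)$ with $\psi_1\psi_2$ fixed. Their constant terms are the classical ones: $y^s$ plus $y^{1-s}$ times an expression of the shape
\[
\frac{\Lambda(2s-1;\psi^*)}{\Lambda(2s;\psi^*)}\prod_{p\mid q}(\text{rational in }p^{-s}),
\]
where $\psi^*$ is the primitive character inducing $\psi_1\overline{\psi_2}$ or a similar combination. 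This can be verified by the standard unfolding computation of constant terms via Ramanujan/Gauss sums, as in Young's explicit computations for Hecke congruence groups \cite{youngeisensteinseries}. On each $\psi$-isotypic piece the scattering operator therefore acts as this scalar factor composed with a finite intertwining operator on the local induced representations at $p\mid q$. Its determinant contributes only rational functions $H_{i,p}(p^{-s})$.

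Finally, take the determinant over the $\chi$-isotypic subspace, which has dimension $\kappa$. It factors as a product over a basis adapted to the $\psi$-decomposition. Each basis element contributes one factor $\Lambda(2s-1;\psi_i)/\Lambda(2s;\psi_i)$ times local rational factors, giving the lemma with $\psi_i(-1)=(-1)^k$ from the parity requirement.

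The main obstacle is the bookkeeping that the $\chi$-isotypic subspace decomposes compatibly with the $\psi$-decomposition into exactly $\kappa$ pieces, each carrying a single global ratio of $L$-functions. The local intertwining matrices at $p\mid q$ may mix different $\psi$ with the same primitive part, and one must see that they only affect the $p$-factors. I would handle this by working adelically: the Eisenstein series come from $\Ind(|\cdot|^{s-1/2}\mu_1\otimes|\cdot|^{1/2-s}\mu_2)$, the global intertwining operator factors as $\prod_v M_v(s)$, and the unramified places give the $L$-ratio. The finitely many ramified places act on finite-dimensional $K(q)$-fixed spaces by matrices whose entries are rational in $p^{-s}$, so their determinants are rational in $p^{-s}$.
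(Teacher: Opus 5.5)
Your argument is essentially the computation the paper relies on. The paper proves this lemma only by citing Reznikov's \S2.7, and your route matches it: you realize the $(\Gamma,\chi)$ Eisenstein series as the $\chi$-isotypic part of the $\Gamma(q)$ Eisenstein space, then factor the intertwining operator into $\Lambda(2s-1;\psi)/\Lambda(2s;\psi)$ from the unramified places times finite matrices rational in $p^{-s}$ at $p\mid q$.

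Two points need to be stated correctly. First, the operator sends the $\mu$-piece to the $\overline{\mu}$-piece rather than preserving pieces, and there is no mixing of distinct characters mod $q$ with a common primitive part; this is harmless, since the $L$-ratios still come one per basis vector. Second, for non-product $\widetilde{\Gamma}\subset\prod_{p\mid q}\SL_2(\Z/p^{e_p}\Z)$ the $\chi$-isotypic subspace is not a tensor product of local subspaces, so the restricted determinant of $\bigotimes_{p\mid q}N_p(s)$ is a priori only a rational function of several variables $p^{-s}$; you need the $\SL_2(\Z/p^{e_p}\Z)$-equivariance of each local operator $N_p(s)$ together with Schur's lemma to obtain the one-variable factors $H_{i,p}(p^{-s})$.
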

\begin{proof}
The formula follows from the discussion in \cite[Paragraph 2.7]{reznikoveisensteinmatrix}.
\end{proof}

From Lemma \ref{lem:basicformscattering} it follows easily that
\begin{equation}\label{eq:meanvalueestimatescatteringdeterminant}
    \int_{|t-T| \leq 1} \left|\frac{\phi'(1/2 + it)}{\phi(1/2 + it)}\right| \, dt \ll_{q} \log T.
\end{equation}

\paragraph{The cuspidal part.}
We know that $L_0^2(\Gamma, \chi)$ decomposes discretely as an orthonormal sum
\begin{equation}\label{eq:basicorthonormalbasis}
    L_0^2(\Gamma, \chi) = \overline{\bigoplus_{j\geq 1}} \CC u_j,
\end{equation}
where $P u_j = \lambda_j u_j$ for some $\lambda_j > 0$ and where each eigenvalue $\lambda_j$ appears with finite multiplicity. As is customary, we write $\lambda_j = 1/4 + t_j^2$. When $\lambda_j \neq 1/4$ we need to make a choice of sign for $t_j$. We will assume that, either $\text{Im}(t_j) > 0$, or else $t_j > 0$.
We recall some facts about these eigenvalues.
\begin{enumerate}
    \item [1)] If $k = 0$, then since $\lambda_j > 0$ we have that $t_j$ is either real or purely imaginary with $|\text{Im}(t_j)|< 1/2$. The Selberg eigenvalue conjecture predicts that $t_j$ is always real. The best progress towards this conjecture is that $|\text{Im}(t_j)| \leq 7/64$, or, equivalently, that $\lambda_j \geq 1/4 - (7/64)^2$. The proof can be found in the second appendix to \cite{kimramakrishnansarnak2003}.
    \item [2)] If $k = 1$, then we know that $\lambda_j \geq 1/4$ \cite[Equation (4.40)]{DFIsubconvexityartin}, so we always have $t_j \in \R$. When $t_j = 0$ the function $y^{-1/2}u_j(z)$ is an holomorphic cusp form of weight $1$.
\end{enumerate}
Quantitatively, we need the following coarse version of Weyl's law
\begin{equation}\label{eq:basicweyllaw}
    \#\{t_j \in [T, T + 1]\} \asymp_q  T,
\end{equation}
which follows from Theorem 2.28 in \cite{hejhal2} and Equation \eqref{eq:meanvalueestimatescatteringdeterminant} above. At the cusp $\infty$ the cuspidal eigenfunction $u_j$ has a Fourier expansion as in \eqref{eq:generalfourierexpansion}, with the particularity that $\rho(u_j; 0^+) = \rho(u_j; 0^-) = 0$. If the choice of orthonormal basis $\{u_j\}_{j\geq 1}$ is understood from the context we will write $\rho_{j}(n)$ instead of $\rho(u_j; n)$.

\subsection{Smoothing the exponential sums.}\label{subsec:smoothingtheexponentialsum}
The exponential sums $S(T, X)$ with sharp cut-off are not directly accesible to trace formulae, since test functions on the spectral side are always smooth. To overcome this difficulty, Iwaniec introduced the smooth sums
\begin{equation}\label{eq:smoothversionofS}
    \S_0(T, X) := \sum_{t_j \geq 0} X^{it_j}\exp(-t_j/T). 
\end{equation}
Following the appendix to \cite{luosarnak}, we can write
\begin{equation*}
    \S(T, X) = \int_{-1}^1 \widehat{g}(\xi) \S_0(T, X\exp(-2\pi \xi)) \, d\xi + O(T \log T),
\end{equation*}
for a certain function $\widehat{g}(\xi)$ which satisfies $\int_{-1}^1 |\widehat{g}(\xi)| \, d\xi = O(\log T)$. In this argument the only property of the sequence $t_j$ that is used is the weak version of Weyl's law in \eqref{eq:basicweyllaw}. 

\begin{prop}\label{prop:conclusionsmoothingsum}
It is enough to prove the bounds of Theorem \ref{thm:luosarnakbound} and Theorem \ref{thm:balkanovafrolenkovbound} for $\S_0(T, X)$ instead of $\S(T, X)$. 
\end{prop}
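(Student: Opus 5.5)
The plan is to make the smoothing identity quoted just before the statement precise, and then to observe that both target bounds are stable under averaging over $X\exp(-2\pi\xi)$ with $|\xi|\le 1$.

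\textbf{Step 1: construct $\widehat g$.} Following the appendix to \cite{luosarnak}, fix $T\geq 2$ and a parameter $\Delta = T^{-1}$. The starting point is the Fourier inversion
\begin{equation*}
\exp(-t/T)X^{it}\cdot \exp(t/T)\,\mathbbm{1}_{[0,T]}(t) = X^{it}\mathbbm{1}_{[0,T]}(t).
\end{equation*}
We want to write $\exp(t/T)\mathbbm{1}_{[0,T]}(t)$, up to a small error, as $\int_{-1}^1 \widehat g(\xi)e(-\xi t)\,d\xi$ for $0\le t$. For this, take $g(t)=\exp(t/T)\,h(t)$, where $h$ is a smoothed indicator of $[0,T]$. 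Its edges are smoothed at scale $1$, so that $h=\mathbbm{1}_{[0,T]}$ outside two windows $[-1,1]$ and $[T-1,T+1]$. Truncating the Fourier transform $\widehat g$ to $|\xi|\le1$ with a Fejér-type kernel, the standard estimate for a function of bounded variation with $O(1)$ jumps smoothed at scale $1$ gives
\begin{equation*}
\int_{-1}^1|\widehat g(\xi)|\,d\xi\ll \log T.
\end{equation*}
It also gives a pointwise error $O(\min(1,(1+\operatorname{dist}(t,\{0,T\}))^{-1}))$ for $t$ away from the edges, plus $O(1)$ inside the edge windows. Here one uses that $\exp(t/T)\le e$ on $[0,T]$ and that its derivative is $O(1/T)$, so it contributes only bounded variation.

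\textbf{Step 2: derive the identity and control the error.} Multiply by $X^{it_j}\exp(-t_j/T)$ and sum over $t_j\ge0$, noting that $X^{it}e(-\xi t)=(X e^{-2\pi\xi})^{it}$. This gives
\begin{equation*}
\S(T,X)=\int_{-1}^1\widehat g(\xi)\,\S_0(T,Xe^{-2\pi\xi})\,d\xi+E.
\end{equation*}
The error is
\begin{equation*}
E\ll\sum_{t_j\ge0}\exp(-t_j/T)\,\min\bigl(1,|t_j-T|^{-1}+|t_j|^{-1}\bigr).
\end{equation*}
The finitely many $t_j$ that are not real (which occur only when $k=0$, with $|\mathrm{Im}\,t_j|\le 7/64$) contribute $O_q(1)$. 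We then split into unit intervals and apply \eqref{eq:basicweyllaw}:
\begin{itemize}
\item the range $t_j\le 2T$ gives $\sum_{n\le 2T} n/(1+|n-T|)\ll T\log T$;
\item the range $t_j>2T$ gives $\sum_{n>2T}(n/n)\,e^{-n/T}\ll T$.
\end{itemize}
Hence $E=O_q(T\log T)$. This bookkeeping, and in particular getting the decay of the pointwise error near the edges right, is the main obstacle. Everything else is formal.

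\textbf{Step 3: transfer the bounds.} Suppose \eqref{eq:luosarnakbound} or \eqref{eq:balkanovafrolenkovbound} holds for $\S_0(T,Y)$ uniformly for $1\le T\le Y^{1/2}$. For $|\xi|\le1$ we have $Y=Xe^{-2\pi\xi}\asymp X$. The constraint $T\le Y^{1/2}$ may fail by a constant factor, but the bounds are monotone in $T$ and their proofs do not use sharp constants, so we either allow $T\le cY^{1/2}$ or absorb the discrepancy into the implied constant. The right-hand sides change only by bounded factors, so
\begin{equation*}
\S(T,X)\ll \log T\cdot(\text{bound})+T\log T.
\end{equation*}
Finally, $T\log T$ is dominated by both target bounds. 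For \eqref{eq:luosarnakbound}, $T^{5/4}X^{1/8}\ge T$. For \eqref{eq:balkanovafrolenkovbound}, $X^{1/4}T^{1/2}\ge T$ since $T\le X^{1/2}$. The factor $\log T$ is absorbed into $(XT)^\varepsilon$. This proves the proposition.
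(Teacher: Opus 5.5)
Your proposal is correct and follows the paper's own route: the paper simply quotes, from the appendix of \cite{luosarnak}, the identity $\S(T,X)=\int_{-1}^1\widehat g(\xi)\,\S_0(T,Xe^{-2\pi\xi})\,d\xi+O(T\log T)$ with $\int_{-1}^1|\widehat g|\ll\log T$, using only \eqref{eq:basicweyllaw}, and you have filled in the construction, the edge-decay bookkeeping and the transfer of bounds. The one loose point is the range mismatch $T\le X^{1/2}$ versus $T\le e^{\pi}Y^{1/2}$, and it can be fixed cleanly: damp with $T_1=\min(T,e^{-\pi}X^{1/2})\asymp T$ in place of $T$, so that $e^{t/T_1}=O(1)$ on $[0,T]$ and $T_1\le (Xe^{-2\pi\xi})^{1/2}$ for $|\xi|\le 1$, and then use that both target bounds are increasing in $T$ and $X$, which is the monotonicity you allude to.
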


\subsection{Selberg trace formula.}\label{subsec:selbergtraceformula}
We say that a function $h$, defined on an horizontal strip $|\text{Im}(z)|\leq 1/2 +\delta$ for some $\delta > 0$, is \emph{admissible} if it is even, holomorphic on the strip and for some $C > 0$ satisfies the bound $|h(z)| \leq C(1 + |z|)^{-2 - \delta}$ there.
Its inverse Fourier transform is denoted by $g$, so that 
\begin{equation}\label{eq:pairtestfunctionselbergtraceformula}
    g(r) = \frac{1}{2\pi}\int_\R e^{irt}h(t) \, dt, \quad \text{ and } \quad h(t) = \int_\R e^{irt} g(r) \, dr.
\end{equation}

Given $\theta \in \R$ we define $k(\theta) := \begin{psmallmatrix}
    \cos \theta & \sin \theta\\
    -\sin \theta & \cos \theta.
    \end{psmallmatrix}$. 
Given an elliptic element $R \in \SL_2(\R)$, there is a unique $\theta(R)$ satisfying $0 < |\theta(R)| < \pi$ and such that $R$ is conjugate to $k(\theta(R))$ in $\SL_2(\R)$. If $R \in \Gamma$ is elliptic, we let $m(R)$ denote the cardinality of the centralizer of $R$ in $\Gamma$. Note that $m(R) < \infty$. 

\begin{lem}[Selberg, Hejhal]\label{lem:selbergtraceformula}
Let $(\Gamma, \chi)$ be a congruence character, $k \in \{0, 1\}$ and $h$ an admissible function. Then the following equality holds
\begin{equation*}
\begin{aligned}
\Lambda_\text{Res}(h) + \Lambda_\text{Cusp}(h) & = \Lambda_\text{Id}(h) + \Lambda_{\text{Hyp}}(h) + \Lambda_\text{Ell}(h) + \Lambda_\text{Par-Cont}(h).
\end{aligned}
\end{equation*}
where each $h \mapsto \Lambda_*(h)$ is a distribution on admissible functions given as follows.\newline 
\begin{equation*}
    \Lambda_\text{Res}(h) = \delta_{\chi = 1}h(i/2), \quad      \Lambda_\text{Cusp}(h) = \sum_j h(t_j),
\end{equation*}
\begin{equation*}
    \Lambda_\text{Id}(h) = \begin{dcases}
    \frac{\vol(\Gamma\backslash \Hyp)}{4\pi}
        \int_\R h(r) r \tanh \pi r\, dr,  &\text{ if }k = 0,\\
    \frac{\vol(\Gamma \backslash \Hyp)}{4\pi}\int_\R h(r) r\coth \pi r\, dr, & \text{ if }k = 1.
    \end{dcases}
\end{equation*}
\begin{equation*}
    \Lambda_\text{Hyp}(h) = \sum_{\text{hyp.} \{P\}_\Gamma}\frac{\chi(P) \log N(P_0)}{N(P)^{1/2} - N(P)^{-1/2}} g(\log N(P)),
\end{equation*}
\begin{equation*}
    \Lambda_\text{Ell}(h) = \begin{dcases}
        \sum_{\substack{\text{ell.}\{R\}_\Gamma\\0 < \theta(R)< \pi}} \frac{i \chi(R)}{m(R) \sin \theta(R) }\int_\R \frac{g(u)}{e^{u/2 + i\theta(R)} - e^{-u/2 - i\theta(R)}}\, du, & \text{ if }k = 0,\\
        \sum_{\substack{\text{ell.}\{R\}_\Gamma\\0 < \theta(R)< \pi}} \frac{i\chi(R)}{2m(R) \sin \theta(R) }\int_\R g(u)\left(1 - \frac{i \sin 2\theta(R)}{\cosh u - \cos 2\theta(R)}\right)\, du, & \text{ if }k = 1.
    \end{dcases}
\end{equation*}
When $k = 0$ we have
\begin{equation*}
\begin{aligned}
    \Lambda_\text{Par-Cont}(h) & = -g(0) \sum_{\text{nonsing. } \a}\log|1 - e(\alpha_\a)|  + \frac{1}{4}h(0) \Tr(I - \Phi(1/2))\\
    & + \frac{1}{4\pi}\int_\R h(t) \frac{\phi'}{\phi}(1/2 + it)\, dt -\kappa\left( g(0) \log 2  + \frac{1}{2\pi}\int_\R h(r) \frac{\Gamma'}{\Gamma}(1 + ir)\, dr\right),
\end{aligned}
\end{equation*}
and when $k = 1$ we have  
\begin{equation*}
\begin{aligned}
    \Lambda_\text{Par-Cont}(h) & = -g(0) \sum_{\text{nonsing. } \a}\log|1 - e(\alpha_\a)| - \frac{h(0)}{2}\sum_{\text{nonsing. }\a}(1/2 - \alpha_\a) + \frac{h(0)}{4}\Tr(I - \Phi(1/2))\\
    & + \frac{1}{4\pi}\int_\R h(t) \frac{\phi'}{\phi}(1/2 + it) \, dt - \kappa\left(\frac{1}{2}\int_0^\infty g(u)\tanh(u/4)\, du + g(0)\log 2 + \frac{1}{2\pi}\int_\R h(r) \frac{\Gamma'}{\Gamma}(1 + ir)\, dr\right).
\end{aligned}
\end{equation*}
All sums and integrals converge absolutely when $h$ is admissible.
\end{lem}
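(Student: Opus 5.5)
The plan is to deduce the Lemma from Hejhal's general Selberg trace formula for a cofinite group with a unitary multiplier system of arbitrary real weight (\cite[Chapter 9, Theorems 5.1 and 6.x]{hejhal2}; see also \cite[Section 4]{DFIsubconvexityartin} for the weight conventions). Our pair $(\Gamma,\chi)$ with $\chi(-I)=(-1)^k$, $k\in\{0,1\}$, is exactly such a multiplier system of weight $k$, and the compatibility condition at $-I$ is the consistency condition that Hejhal imposes. The first step is a dictionary between the two normalizations. It has to cover three things. First, our slash operator \eqref{eq:slashoperator} uses the factor $j_\gamma(z)^{-k}$, whereas Hejhal uses $e^{-2im\arg(cz+d)}$; these match with $m=k/2$, and the sign conventions for the Laplacian \eqref{eq:laplacianoperatorweightk} must be aligned with his. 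Second, the spectral parameter $\lambda=1/4+t^2$ and the Fourier pair \eqref{eq:pairtestfunctionselbergtraceformula} must match his $h,g$. Third, our inner product is normalized with probability measure; this does not affect the trace formula but does affect how $\vol(\Gamma\backslash\Hyp)$ appears.

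Next I would verify the spectral side, namely that $\Lambda_\text{Res}(h)=\delta_{\chi=1}h(i/2)$. Hejhal's formula contains $\sum h(r)$ over all discrete eigenvalues, including residual ones coming from poles of $\E(z;s)$ in $(1/2,1]$. So I would show that the only possible pole of the Eisenstein series on $\text{Re}(s)\ge 1/2$ is at $s=1$, and that it occurs only for $\chi$ trivial. This uses Lemma \ref{lem:basicformscattering}: $\phi(s)$ is a ratio of completed Dirichlet $L$-functions times finitely many Euler factors, and $\Lambda(2s;\psi_i)$ does not vanish for $\text{Re}(s)\ge 1/2$ except possibly through the finite factors $H_{i,p}$. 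Those factors also need to be shown pole-free in $(1/2,1)$; I would check this from the explicit formulas in \cite{reznikoveisensteinmatrix}. For $k=1$ there is no residual spectrum at all, since $\chi$ is nontrivial. All remaining discrete eigenvalues are cuspidal, giving $\Lambda_\text{Cusp}$.

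On the geometric side I would take the identity, hyperbolic, elliptic and parabolic contributions from Hejhal term by term. The identity term with $\tanh$ for $k=0$ and $\coth$ for $k=1$ is the standard weight-$m$ Plancherel density, and for $m=1/2$ it becomes $r\coth\pi r$. For the hyperbolic term, the classes $\{P\}$ and $\{-P\}$ both occur, and the convention $\chi(P)$, together with the condition $\chi(-I)=(-1)^k$, makes them combine into the stated sum with weight $\log N(P_0)/(N(P)^{1/2}-N(P)^{-1/2})$. The elliptic terms are Hejhal's formula evaluated at weight $0$ and weight $1$; for weight $1$ the integrand simplifies to $1-i\sin2\theta/(\cosh u-\cos2\theta)$. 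The parabolic and continuous contributions split into singular cusps (the $\kappa$-terms, the digamma integral, $\phi'/\phi$ and $\Tr(I-\Phi(1/2))$) and nonsingular cusps (the $\log|1-e(\alpha_\a)|$ terms, plus for $k=1$ the extra $(1/2-\alpha_\a)h(0)$ term and the $\tanh(u/4)$ integral). Absolute convergence for admissible $h$ follows as in Hejhal, using the decay $(1+|z|)^{-2-\delta}$ together with \eqref{eq:basicweyllaw} and \eqref{eq:meanvalueestimatescatteringdeterminant}.

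The main obstacle I expect is the bookkeeping in the parabolic and elliptic terms for $k=1$. Hejhal's formulas are stated for real weight with $\alpha_\a$ conventions that may differ from ours by $\alpha\leftrightarrow 1-\alpha$ or by the sign of the weight. These differences change the sign of terms like $(1/2-\alpha_\a)$ and the orientation $\theta(R)$ versus $-\theta(R)$ in the elliptic term. My first check would be to specialize to a simple case, for example $\Gamma_0(4)$ with a nontrivial odd character, and compare against the version in \cite[Section 4]{DFIsubconvexityartin} or against Weyl's law asymptotics. This would fix each sign before writing the general dictionary.
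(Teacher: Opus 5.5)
Your route is the paper's route. The paper's proof is a single sentence: after renormalizing, the formula is the special case $k\in\{0,1\}$ of \cite[Chapter 9, Theorem 6.3]{hejhal2}. Your dictionary (weight $m=k/2$, the $\tanh\pi r$ and $\coth\pi r$ densities, combining $\{P\}$ with $\{-P\}$ via $\chi(-I)=(-1)^k$, fixing the $\alpha_\a$ and $\theta(R)$ conventions) is exactly what those ``simple manipulations'' consist of.

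The one sub-step that does not work as written is your derivation of $\Lambda_\text{Res}(h)=\delta_{\chi=1}h(i/2)$ from Lemma \ref{lem:basicformscattering}. Residual eigenvalues come from poles of the matrix $\Phi(s)$ (equivalently, of the Eisenstein series) in $(1/2,1]$, and the determinant $\phi=\det\Phi$ does not detect these reliably. Suppose $\text{Res}_{s=s_0}\Phi$ has rank $r$. Then $\phi$ has a pole of order exactly $r$ at $s_0$ if and only if $\Phi(s)^{-1}=\Phi(1-s)$ is regular at $s_0$. Otherwise the pole is partly or wholly cancelled in the determinant. So the fact that $\phi$ has no pole in $(1/2,1)$ does not by itself exclude residual spectrum there.

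There are two ways to close this.
\begin{itemize}
\item Take the description of $L^2_\text{res}(\Gamma,\chi)$ as a known input, which is what the paper does in Section 3.
\item Argue with the Eisenstein series directly. By \eqref{eq:relationeisensteincongruence}, the poles of $E_\a(z;s;\Gamma,\chi)$ are among those of the Eisenstein series of $(\{\pm I\}\Gamma(q),\sgn_q^k)$. These split into $(\Gamma_d(q),\psi)$-isotypic pieces, and each piece is conjugate to $(\Gamma_0(q^2),\psi)$. There the explicit formulas of \cite{youngeisensteinseries} show that the only pole in $\text{Re}(s)>1/2$ is at $s=1$, occurring for trivial nebentypus, with constant residue. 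A nonzero constant lies in $L^2(\Gamma,\chi)$ only when $\chi=1$.
\end{itemize}

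For $k=1$ no argument is needed. The reason, however, is that $\lambda\geq 1/4$ on $L^2(\Gamma,\chi)$, so no eigenvalue $s_0(1-s_0)$ with $s_0\in(1/2,1]$ can occur. The nontriviality of $\chi$ alone only rules out $s_0=1$.
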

\begin{proof}
After a few simple manipulations, the formula is a particular case of Theorem 6.3 in chapter 9 of \cite{hejhal2}.
\end{proof}
Note that $\Lambda_{\text{Res}}(h) = 0$ when $k = 1$, since $\chi \neq 1$ in that case.

\subsection{Kloosterman sums.}
Recall that $\Gamma_\infty = \{\pm \begin{psmallmatrix}
    1 & x\\
    0 & 1 \end{psmallmatrix}: x \in \ell\Z\}$, with $\chi(\begin{psmallmatrix}1 & \ell\\
        0 & 1
    \end{psmallmatrix}) = e(-\alpha)$, where $\alpha \in \{0, \ell/q, 2\ell/q \ldots, 1 - \ell/q\}$. For any $n,m \in \Q$ and $c \in \Z_{\geq 1}$, we define the Kloosterman sums
    \begin{equation}\label{eq:generalkloostermansum}
        S_{(\Gamma, \chi)}(m, n; c) := \begin{dcases}
        \sum_{\substack{[\gamma] \in \Gamma_\infty \backslash \Gamma/\Gamma_\infty\\
        c(\gamma) = c}}\overline{\chi(\gamma)}e\left(\frac{m a(\gamma) + n d(\gamma)}{c}\right), & \text{ if }m, n \in \frac{1}{\ell}\Z - \frac{\alpha}{\ell},\\
        0, & \text{ otherwise.}
        \end{dcases}
    \end{equation}
The trivial bound is $|S_{(\Gamma, \chi)}(m, n; c)| = O_q(c)$, but thanks to Weil's bound on classical Kloosterman sums we can do much better. 

\begin{lem}\label{lem:weilbound}
If $m, n \neq 0$, then for any $\varepsilon > 0$ we have
\begin{equation*}
 S_{(\Gamma, \chi)}(m, n; c) \ll_{q, \varepsilon} \gcd(qm, qn, c)^{1/2}c^{1/2 + \varepsilon}.
\end{equation*}
\end{lem}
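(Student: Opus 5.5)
The plan is to reduce the twisted Kloosterman sum $S_{(\Gamma,\chi)}(m,n;c)$ to a bounded number (depending only on $q$) of classical twisted Kloosterman sums for $\Gamma(q)$, and then apply Weil's bound. First I would enlarge the summation domain. Since $\Gamma(q)\subset\Gamma$ has index $O_q(1)$ and $\chi$ is trivial on $\Gamma(q)$, write $\Gamma=\bigsqcup_{\delta}\Gamma(q)\delta$ over finitely many representatives. The double coset space $\Gamma_\infty\backslash\Gamma/\Gamma_\infty$ with $c(\gamma)=c$ is parametrized by pairs $(a \bmod \ell c,\, d \bmod \ell c)$ with $\gcd(d,c)=1$ and the congruence conditions modulo $q$ that define membership in $\Gamma$. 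Indeed, given $c$ and $d$, the $a$ is determined modulo $c$ by $ad\equiv 1 \pmod c$, up to shifts by $\Gamma_\infty$. The summand $\overline{\chi(\gamma)}e((ma+nd)/c)$ is well defined on double cosets precisely because $m,n\in\frac1\ell\Z-\frac\alpha\ell$.

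Next, $\chi(\gamma)$ depends only on $\gamma \bmod q$, so I would expand $\overline{\chi(\gamma)}$ in terms of the residue of $\gamma$ modulo $q$. Concretely, I would sort the double cosets according to the class of $\gamma \bmod q$, i.e.\ by the residues of $a,b,d \bmod q$ (with $c$ fixed). Writing $c=c_1c_2$ with $c_1$ composed of primes dividing $q$ and $\gcd(c_2,q)=1$, the Chinese remainder theorem factors the sum into a $c_1$-part and a $c_2$-part. The $c_1$-part is bounded trivially by $O_q(c_1)$. This costs only $c_1^{1/2}$ relative to Weil's bound; that loss must then be absorbed into $\gcd(qm,qn,c)^{1/2}c^{\varepsilon}$, which is not automatic when $c_1$ is a large power of primes dividing $q$. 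So for the $c_1$-part I would instead use the standard stationary-phase/Weil-type bound for Kloosterman sums with character modulo prime powers, $\ll \gcd(\cdot)^{1/2}c_1^{1/2+\varepsilon}$ (as in Iwaniec's bound for $\Gamma_0(q)$ with nebentypus).

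The $c_2$-part becomes, after clearing denominators by multiplying $m,n$ by $q$, a classical Kloosterman sum $S(\bar{q}^{\,*}qm,\, \bar{q}^{\,*}qn; c_2)$, possibly with a residue-class restriction modulo $q$ that I would detect with additive characters modulo $q$ at cost $O_q(1)$. Weil's bound then gives $\ll \gcd(qm,qn,c_2)^{1/2}c_2^{1/2}\tau(c_2)$.

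I expect the main obstacle to be making this CRT/character bookkeeping clean. In particular, detecting the condition "$\gamma\bmod q$ lies in a given class" in a way that factors over $c_1c_2$ while keeping the $\gcd$ factor. I would handle this uniformly by passing through $\Gamma(q)$: a double coset of $\Gamma$ splits into $O_q(1)$ double cosets of $\Gamma_\infty(q)\backslash\Gamma(q)\delta/\Gamma_\infty(q)$. Each resulting sum is a Kloosterman sum attached to the cusp pair $(\infty,\delta\infty)$ of $\Gamma(q)$, and it can be conjugated to a classical Kloosterman sum of modulus dividing $qc$ with entries in $\frac1q\Z$ times $q$. Weil's bound applied to these $O_q(1)$ classical sums then yields the stated estimate.
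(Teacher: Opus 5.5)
Your architecture matches the paper's. You unfold to sums over $a,d \bmod cq$ grouped by $\tilde\gamma=\gamma\bmod q$, split by the Chinese remainder theorem, apply Weil to the factor coprime to $q$, and treat the primes $p\mid q$ separately. You also correctly see that the trivial bound there fails when $p$ divides $c$ to a high power. But the nontrivial bound at $p\mid q$ is the whole content of the lemma, and your proposal never establishes it. With $p^{r}\,\|\,q$ and $p^{s}\,\|\,cq$, $s>r$ (i.e.\ $p\mid c$), the local factor is
\begin{equation*}
\sum_{\substack{a \bmod p^{s}\\ a\equiv a(\tilde\gamma)\bmod p^{r}}} e\Big(\frac{Ma+Nw\bar a}{p^{s}}\Big),\qquad w=1+b(\tilde\gamma)c,
\end{equation*}
where $M,N$ are $qm,qn$ times $p$-adic units. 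This is neither a classical Kloosterman sum nor a character-twisted one, so the ``standard'' bound you invoke does not apply as stated. The uniform fix in your last paragraph is false: no conjugation removes the congruence restriction on $a$. The sum attached to a coset $\Gamma(q)\delta$ is precisely a product of such restricted local sums with a classical sum modulo the part of $c$ coprime to $q$. Conjugating by $\mathrm{diag}(q,1)$ lands in $\Gamma_0(q^2)$ only when $\delta\in\Gamma_d(q)$. Even then $d$ is confined to a residue class modulo $q$, which gives a combination of nebentypus sums, not one classical sum.

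Your instinct that a prime-power stationary-phase argument is needed is right, and it is what the paper does. Set $p^{u}=(qm,qn,p^{s})$, $qm=p^{u}m_1$, $qn=p^{u}n_1$. The trivial bound suffices when $s<2r+u$. Otherwise the paper substitutes $a\mapsto a(1+xp^{v})$ with $v=\lceil (s-u)/2\rceil$, and sums over $x$ to confine $a$ to solutions of $a^{2}m_1\equiv wn_1 \bmod p^{s-u-v}$. Counting these by Hensel's lemma gives $\ll_q p^{(s+u)/2}$.

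Within your own framework there is an equally short repair, and it is where your additive-character idea actually belongs. After the CRT split, the factor coprime to $q$ is an unrestricted classical sum; all the conditions modulo $q$ live at the primes $p\mid q$. Detecting $a\equiv a_0\bmod p^{r}$ additively writes the local factor as $p^{-r}\sum_{h\bmod p^{r}}e(-ha_0/p^{r})\,S(M+hp^{s-r},Nw;p^{s})$, with classical Kloosterman sums $S(x,y;p^{s})$. You must then check that the shifted frequencies do not inflate the gcd. If $(M+hp^{s-r},N,p^{s})>p^{s-r}$, then $p^{s-r}$ already divides $(M,N)$. So in all cases $(M+hp^{s-r},N,p^{s})\le p^{r}(M,N,p^{s})$, and Weil's bound modulo $p^{s}$ gives $\ll_q (qm,qn,p^{s})^{1/2}p^{s/2}$. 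Without this, or an equivalent argument at the primes dividing $q$, your proposal only proves the easy part of the lemma.
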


\begin{proof}
Of course, we can assume that $m, n \in \ell^{-1}\Z - \alpha/\ell$. We let $m' := qm, n' := qn$, which are nonzero integers. After a small manipulation we find that 
\begin{equation}\label{auxeq:kloosterman:1}
    S_{(\Gamma, \chi)}(m, n; c)  = \frac{\ell^2}{q^2}\sum_{\tilde{\gamma} \in \tilde{\Gamma}}\overline{\chi(\tilde{\gamma})}\sum_{\substack{a, d \text{ mod }cq\\
    ad = 1 \modulo c\\
    \begin{psmallmatrix}
        a & (ad-1)/c\\
        c & d
    \end{psmallmatrix} = \tilde{\gamma} \text{ mod }q}} e\left(\frac{m'a + n'd}{cq}\right).
\end{equation}
We will estimate each inner sum separately. We may assume that $c = c(\widetilde{\gamma}) \modulo q$, since otherwise the inner sum vanishes. We can write $q = \prod_{i= 1}^t p_i^{r_i}$, where $t\geq 1$, and where each $p_i$ is prime and each is $r_i$ positive for $1 \leq i \leq t$. Write also $c = c_0 \prod_{i = 1}^t p_i^{s_i - r_i}$, where $(c_0, q) = 1$ and the product ranges over the primes dividing $q$. We have
$cq = c_0 \prod_i p_i^{s_i}$. We define
\begin{equation*}
    c_0' = c_0, \quad c'_i  = p_i^{s_i} \text{ for }1\leq i \leq t,
\end{equation*}
so that 
\begin{equation*}
    cq = \prod_{i=0}^t c_i', \quad (c_i', c_j') = 1 \text{ for } i\not = j.
\end{equation*}
We define also 
\begin{equation*}
    R_i = \prod_{j \neq i} c_j',
\end{equation*}
and let $\overline{R_i}$ be any integer which is the inverse of $R_i$ when considered modulo $c_i'$. From the Chinese remainder theorem we see that each inner sum in \eqref{auxeq:kloosterman:1} factors as 
\begin{equation*}
    \sum_{\substack{a, d \text{ mod }cq\\
    ad = 1 \modulo c\\
    \begin{psmallmatrix}
        a & (ad-1)/c\\
        c & d
    \end{psmallmatrix} = \tilde{\gamma} \text{ mod }q}} e\left(\frac{m'a + n'd}{cq}\right) = S^{\tilde{\gamma}}_0(m'\overline{R_0}, n' \overline{R_0}; c_0) \times \prod_{i = 1}^t S^{\tilde{\gamma}}_i(m'\overline{R_i}, n' \overline{R_i}; c; p_i^{s_i}),
\end{equation*}
where 
\begin{equation*}
S^{\tilde{\gamma}}_0(m'\overline{R_0}, n' \overline{R_0}; c_0) = \sum_{\substack{a, d \text{ mod }c_0\\
ad = 1 \text{ mod }c_0}} e\left(\frac{m'\overline{R_i}a + n'\overline{R_i}d}{c_0}\right)
\end{equation*} 
is the classical Kloosterman sum and for $1\leq i \leq t$ we have
\begin{equation*}
S^{\tilde{\gamma}}_i(m'\overline{R_i}, n' \overline{R_i}; c; p_i^{s_i}) = \sum_{\substack{a, d \text{ mod }p_i^{s_i}\\
(a, d) = (a(\tilde{\gamma}), d(\tilde{\gamma})) \text{ mod }p_i^{r_i}\\
ad = 1 + b(\tilde{\gamma})c \text{ mod }p_i^{s_i}}} e\left(\frac{m' \overline{R_i}a + n' \overline{R_i}d}{p_i^{s_i}}\right).
\end{equation*}
The Weil bound gives
\begin{equation*}
S^{\tilde{\gamma}}_0(m'\overline{R_0}, n' \overline{R_0}; c_0)     \ll_\varepsilon (m', n', c_0)^{1/2}c_0^{1/2 + \varepsilon}.
\end{equation*}
For $1 \leq i \leq t$ we write $(m', n', p_i^{s_i}) = p_i^{u_i}$ as well as $m' = m_i' p_i^{u_i}$, $n' = n_i' p_i^{u_i}$. If $s_i < 2r_i + u_i$, we can estimate the sum $S^{\tilde{\gamma}}_i(m'\overline{R_i}, n' \overline{R_i}; c; p_i^{s_i})$ trivially (our bounds are allowed to depend on $q$). Thus, we can assume that $s_i \geq 2r_i + u_i$ for the remainder of this proof. Under this assumption we have $p_i \mid c$, so that $w := 1 + b(\widetilde{\gamma})c$ is a unit modulo $p_i^{s_i}$, and both $a(\widetilde{\gamma})$ and $d(\widetilde{\gamma})$ are units modulo $p_i^{r_i}$. With this notation we can rewrite 
\begin{equation*}
    S^{\tilde{\gamma}}_i(m'\overline{R_i}, n' \overline{R_i}; c; p_i^{s_i}) = \sum_{\substack{a \modulo p_i^{s_i}\\
    a = a(\widetilde{\gamma}) \modulo p_i^{r_i}}} e\left(\frac{m_i' \overline{R_i} a + wn_i' \overline{R_i} \overline{a}}{p_i^{s_i - u_i}}\right) = p_i^{u_i} \sum_{\substack{a \modulo p_i^{s_i - u_i}\\
    a = a(\widetilde{\gamma}) \modulo p_i^{r_i}}} e\left(\frac{m_i' \overline{R_i} a + wn_i' \overline{R_i} \overline{a}}{p_i^{s_i - u_i}}\right).
\end{equation*}
If $v_i = \lceil (s_i - u_i)/2\rceil$ and $a_1 = a(1 + xp^{v_i})$, then its inverse modulo $p_i^{s_i - u_i}$ is given by $\overline{a_1} = \overline{a}(1 - xp^{v_i})$. It follows that 
\begin{equation}
\begin{aligned}
    S^{\tilde{\gamma}}_i(m'\overline{R_i}, n' \overline{R_i}; c; p_i^{s_i}) & = p^{u_i} \sum_{\substack{a \modulo p_i^{v_i}\\
    a = a(\widetilde{\gamma}) \modulo p_i^{r_i}}} e\left(\frac{m_i' \overline{R_i} a + wn_i' \overline{R_i} \overline{a}}{p_i^{s_i - u_i}}\right)\sum_{x \modulo p^{s_i - u_i - v_i}} e\left(\frac{x(m_i'\overline{R_i}a - wn_i'\overline{R_i}\overline{a})}{p_i^{s_i - u_i - v_i}}\right)\\
    & \leq p^{s_i - v_i} \#\{a \modulo p_i^{v_i} : a^2 m_i' = wn_i' \modulo p^{s_i - u_i - v_i}\}\\
    & \ll p_i^{v_i + u_i},
\end{aligned}
\end{equation}
where we have used Hensel's lemma in the last step. Putting together the bounds for $i = 0$ and $1 \leq i \leq t$ completes the proof of the lemma.
\end{proof}

\subsection{Preliminary version of the Bruggeman--Kuznetsov trace formula.}

Fix an orthonormal basis $\{u_j\}_{j \geq 1}$ of $L_0^2(\Gamma, \chi)$ as in the discussion after \eqref{eq:basicorthonormalbasis}, so that $P u_j = (1/4 + t_j^2)u_j$.  Fix also representatives $\a_1, \ldots, \a_\kappa$ of the equivalence classes of singular cusps of $(\Gamma, \chi)$. Recall that $\ell \geq 1$ is the width of the cusp $\infty$, so that $\Gamma_\infty = \{\pm \begin{psmallmatrix}
1 & x\\
0 & 1
\end{psmallmatrix} : x \in l\Z\}$: Recall also the Fourier coefficients introduced in \eqref{eq:generalfourierexpansion}. Note that we write $\rho_j(n)$ instead of $\rho(u_j; n)$, as well as $\rho_{\a, t}(n)$ instead of $\rho(E_\a(\cdot; 1/2 + it); n)$. 

The following preliminary version of the Bruggeman--Kuznetsov is already very convenient for some applications, like upper bounding the Fourier coefficients on average.
\begin{lem}\label{lem:preliminarykuznetsov}
For positive $m,n \in \frac{1}{\ell}\Z - \frac{\alpha}{\ell}$ and for any real number $r$ we have 
\begin{equation}\label{eq:prekuznetsov}
\begin{aligned}
    & \frac{\ell}{\vol(\Gamma\backslash \Hyp)}\sum_{j \geq 1} \frac{\overline{\rho_j(m)} \rho_j(n)}{\cosh \pi (r - t_j) \cosh \pi (r + t_j)} + \sum_{i = 1}^{\kappa}\frac{\ell}{4\pi}\int_\R \frac{\overline{\rho_{\a_i, t}(m)}\rho_{\a_i, t}(n)\, dt}{\cosh \pi (r - t)\cosh \pi (r + t)}\\
    & = \frac{|\Gamma\left(1 - \frac{k}{2} - ir\right)|^2}{4\pi^3}\left(\delta_{m =  n}+ \sum_{c = 1}^\infty \frac{S_{(\Gamma, \chi)}(m, n; c)}{c \ell }I\left(\frac{4\pi \sqrt{mn}}{c}; r\right)\right).
\end{aligned}
\end{equation}
In this sum we have
\begin{equation*}
    I(x; r) = -2x \int_{-i}^{i}(-i\zeta)^{k-1}K_{2ir}(\zeta x)\, d\zeta,
\end{equation*}
where the contour is a semicircle contained in the right half-plane. 
\end{lem}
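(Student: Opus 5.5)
The plan is to derive \eqref{eq:prekuznetsov} in the standard way, by computing the inner product of two Poincaré series in two ways: spectrally, and by unfolding. Take the Poincaré series of weight $k$ for $(\Gamma,\chi)$ at the cusp $\infty$,
\begin{equation*}
U_m(z;s) := \sum_{\gamma\in\Gamma_\infty\backslash\Gamma}\overline{\chi(\gamma)}\,\bigl[y^s e(mx)\bigr]\big|_k\gamma\,(z),\qquad \text{Re}(s)>1 .
\end{equation*}
This is well defined precisely because $m\in\frac1\ell\Z-\frac\alpha\ell$, which makes $y^se(mx)$ transform under $\Gamma_\infty$ by $\chi$. One could cite Proskurin's general theorem, but its test-function hypotheses are too strict for our purposes. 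So the plan is to follow his route (equivalently, \cite[Section 4]{DFIsubconvexityartin}) directly with this specific kernel.

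\textbf{Geometric side.} Unfold $\langle U_m(\cdot;s_1),U_n(\cdot;\overline{s_2})\rangle$ against the Bruhat decomposition $\Gamma_\infty\backslash\Gamma/\Gamma_\infty$. The identity cosets give the diagonal term $\delta_{m=n}$ times a Gamma-factor integral. The cosets with $c(\gamma)=c>0$ give $S_{(\Gamma,\chi)}(m,n;c)$ times
\begin{equation*}
\int_0^\infty\!\!\int_\R y^{s_1+s_2-2}\Bigl(\tfrac{\overline{z'}}{|z'|}\Bigr)^{k} e\Bigl(-\tfrac{m}{c^2z'}-nx\Bigr)\,dx\,dy, \qquad z'=x+iy,
\end{equation*}
up to normalisation. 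The width $\ell$ and the factor $\vol(\Gamma\backslash\Hyp)^{-1}$ come in through the unfolding and the probability normalisation of the inner product.

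\textbf{Spectral side.} Expand $U_n$ with Parseval over $L^2_0\oplus L^2_{\text{cont}}$. The residual term vanishes because $n\neq0$. The coefficients are $\langle U_m,u_j\rangle$, which unfold to $\overline{\rho_j(m)}$ times a Mellin transform of the Whittaker function $W_{k/2,it_j}$; the same holds for the Eisenstein series with $\rho_{\a_i,t}(m)$, weighted by $\frac{1}{4\pi}\int$.

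\textbf{Specialisation.} Following Kuznetsov, specialise to $s_1=\overline{s_2}$ on a line, or equivalently integrate over a one-parameter family, so that the product of the two Whittaker Mellin transforms becomes
\begin{equation*}
\bigl|\Gamma(1-\tfrac k2-ir)\bigr|^2\,\bigl(\cosh\pi(r-t_j)\cosh\pi(r+t_j)\bigr)^{-1},
\end{equation*}
up to constants. On the geometric side, the same specialisation turns the double integral into the Bessel-type transform $I(4\pi\sqrt{mn}/c;r)$. The route I would try first is to use the known closed form of the integral $\int_{-\infty}^{\infty}(1+x^2)^{-\sigma}\bigl(\frac{1-ix}{|1-ix|}\bigr)^k e(\dots)\,dx$ in terms of $K$-Bessel functions, and then the representation $K_{2ir}(\zeta x)$ integrated along the semicircle. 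Alternatively, one can take as test function the Kuznetsov kernel whose Bessel transform is exactly $I(x;r)$ and verify that its spectral transform is the stated $\cosh$-ratio.

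\textbf{Main obstacle: convergence and continuation.} Unfolding is only valid for $\text{Re}(s)>1$, whereas the target identity sits at the edge. The step is to show that
\begin{equation*}
\sum_c \frac{|S_{(\Gamma,\chi)}(m,n;c)|}{c}\,\bigl|I(4\pi\sqrt{mn}/c;r)\bigr|
\end{equation*}
converges absolutely. Here $I(x;r)\ll x$ for small $x$, uniformly in real $r$: the $K_{2ir}$ singularity on the contour is integrable and the $-2x$ factor gives decay. Combined with Lemma~\ref{lem:weilbound}, the terms are $O(c^{-3/2+\varepsilon})$. This is exactly where Weil's bound replaces Proskurin's stronger hypotheses.

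On the spectral side, the $\cosh^{-1}$ weights decay like $e^{-2\pi|t_j|}$, which together with \eqref{eq:basicweyllaw} and the polynomial bounds on $\rho_j(m)$ (or positivity, when $m=n$) gives absolute convergence. One then continues analytically or takes the limit in $s$ by dominated convergence, using these uniform bounds. Exceptional $t_j$ with $\text{Im}(t_j)\neq0$ cause no issue, since the kernel is still finite there.
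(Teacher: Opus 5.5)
Your strategy is the one behind the paper's proof. The paper simply invokes Duke--Friedlander--Iwaniec's Proposition~5.2, or Proskurin's Lemma~3 at $\sigma=1$: compute $\langle U_m(\cdot,s),U_n(\cdot,s)\rangle$ spectrally and by unfolding for $s=\sigma+ir$, $\sigma>1$, then let $\sigma\to1^+$. As you set it up, however, the spectral step fails.

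A Poincar\'e series built from $y^se(mx)$ is not in $L^2(\Gamma,\chi)$. Near $\infty$ it equals $y^se(mx)+O(y^{1-\sigma})$, and $\int^\infty y^{2\sigma-2}\,dy=\infty$. So for $m=n$, the case used in Lemma~\ref{lem:upperboundfouriercoefficients}, the inner product diverges and Parseval is unavailable. Correspondingly, $\langle U_m,u_j\rangle$ would involve the Mellin transform of $W_{k/2,it_j}$ alone. That is not the Gamma quotient you need, and it decays too slowly in $t_j$ (for $k=0$ only like $e^{-\pi|t_j|/2}$) to produce the $\cosh$ ratio.

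You need $y^se(mz)$, i.e.\ summands $\overline{\chi(\gamma)}\,j_\gamma(z)^{-k}\,\mathrm{Im}(\gamma z)^s e(m\gamma z)$. Then $U_m(\cdot,s)\in L^2$ for $\mathrm{Re}(s)>1$, and
\begin{equation*}
\langle U_m(\cdot,s),u_j\rangle=\frac{\ell\,\overline{\rho_j(m)}}{\vol(\Gamma\backslash\Hyp)\,m^{1/2}}\,(4\pi m)^{1-s}\,\frac{\Gamma(s-\frac12+it_j)\,\Gamma(s-\frac12-it_j)}{\Gamma(s-\frac k2)} .
\end{equation*}
At $s=1+ir$ its squared modulus carries $\pi^2\bigl(\cosh\pi(r-t_j)\cosh\pi(r+t_j)\,|\Gamma(1-\frac k2+ir)|^2\bigr)^{-1}$. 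So the Gamma factor sits in the denominator, not multiplying as you wrote, which is why it ends up as a multiplier on the Kloosterman side.

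With both series at the same $s$, the unfolded integrand for modulus $c$ becomes
\begin{equation*}
c^{-2s}\,y^{2\sigma-2}\,|z'|^{-2s}\,\bigl(\overline{z'}/|z'|\bigr)^k\, e\Bigl(-\frac{m}{c^2z'}-n\overline{z'}\Bigr).
\end{equation*}
Your expression drops $|z'|^{-2s}$ and has $e(-nx)$ in place of $e(-n\overline{z'})$, mixing the two conventions.

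Smaller points:

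\textbf{Size of the coefficients.} The individual $\rho_j(m)$ are not polynomially bounded: $|\rho_j(m)|^2$ is typically of size $e^{\pi|t_j|}$, and only $\omega(t_j)|\rho_j(m)|^2$ is polynomially bounded. The averaged bound you would want is Lemma~\ref{lem:upperboundfouriercoefficients}, which the paper deduces from this very lemma. So pass to $\sigma=1$ by positivity for $m=n$ (monotone, then dominated, convergence in $j$ and $t$) and by Cauchy--Schwarz for $m\neq n$.

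\textbf{Bound on $I$.} The uniform bound is $I(x;r)\ll x(1+|\log x|)$, from $|K_{2ir}(w)|\le K_0(\mathrm{Re}\,w)$, not $\ll x$; at $r=0$ the logarithm is really there. This is harmless, since with Lemma~\ref{lem:weilbound} the $c$-series still converges absolutely.

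\textbf{Proskurin's hypotheses.} His restrictions on test functions play no role in this lemma, which involves one explicit kernel. They matter only for Proposition~\ref{prop:forwardkuznetsov}. Your alternative of feeding a kernel into the general formula would be circular, since that proposition rests on Lemma~\ref{lem:upperboundfouriercoefficients}.

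\textbf{The Bessel evaluation.} The genuinely computational step is identifying the $\sigma\to1^+$ limit of the geometric integral with $I(4\pi\sqrt{mn}/c;r)$ and the prefactor $|\Gamma(1-\frac k2-ir)|^2/4\pi^3$. Your plan only gestures at this; either carry it out or cite it, as the paper does.
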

\begin{proof}
The same proof as that of \cite[Proposition 5.2]{DFIsubconvexityartin} is valid in our context. We can also quote \cite[Lemma 3]{proskurinarbitraryweightkuznetsov}, specialized at $\sigma = 1$, which deals with the case of arbitrary real weight. Note that our normalization of the inner product of $L^2(\Gamma, \chi)$ as well as that of the Fourier coefficients differs from these two references.
\end{proof}

It is convenient to introduce the spectral weights
\begin{equation}\label{eq:weightskuznetsov}
    \omega(t) := \begin{dcases}
        \frac{\pi}{\cosh \pi t}, & \text{ if }k = 0,\\
        \frac{\pi t}{\sinh \pi t}, & \text{ if }k = 1.
    \end{dcases}
\end{equation}
Note that for $|\text{Im}(t)| \leq 7/64$ we have
\begin{equation*}
    |\omega(t)| \asymp (1 + |t|)^k \exp(-|t|/T).
\end{equation*}
We can use Lemma \ref{lem:preliminarykuznetsov} to upper bound averages of Fourier coefficients.

\begin{lem}\label{lem:upperboundfouriercoefficients}
For positive $m,n \in \frac{1}{\ell}\Z - \frac{\alpha}{\ell}$ and for $T \geq 1$ we have the following upper bound:
\begin{equation*}
    \sum_{|t_j - T|\leq 1}\omega(t_j)|\rho_j(n)\rho_j(m)| + \sum_{i = 1}^\kappa\int_{||t|-T|\leq 1}\omega(t) |\rho_{\a_i, t}(n, t)\rho_{\a_i, t}(m)| \, dt \ll T.
\end{equation*}
\end{lem}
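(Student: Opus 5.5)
By Cauchy--Schwarz, $|\rho_j(n)\rho_j(m)| \leq \frac12(|\rho_j(n)|^2+|\rho_j(m)|^2)$, and similarly for the Eisenstein coefficients. So it suffices to prove the diagonal case $m=n$:
\begin{equation*}
    \sum_{|t_j - T|\leq 1}\omega(t_j)|\rho_j(n)|^2 + \sum_{i = 1}^\kappa\int_{||t|-T|\leq 1}\omega(t) |\rho_{\a_i, t}(n)|^2 \, dt \ll T,
\end{equation*}
with the implied constant allowed to depend on $q$ and $n$. For this we apply Lemma \ref{lem:preliminarykuznetsov} with $m=n$ and $r=T$. Every term on the left-hand side of \eqref{eq:prekuznetsov} is then nonnegative: for the exceptional $t_j$ (purely imaginary, $|\text{Im}(t_j)|\le 7/64$) we have $\cosh\pi(T-t_j)\cosh\pi(T+t_j)=|\cosh \pi(T+t_j)|^2>0$. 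Hence we may drop everything except $|t_j-T|\le1$ and $||t|-T|\le1$. In that range $\cosh\pi(T-t)\cosh\pi(T+t)\asymp e^{2\pi T}$, while $\omega(t)\asymp T^k e^{-\pi T}$. Therefore
\[
\omega(t)\asymp T^k e^{\pi T}\,\bigl(\cosh\pi(T-t)\cosh\pi(T+t)\bigr)^{-1},
\]
and the left-hand side of the claim is bounded by $T^k e^{\pi T}$ times the left-hand side of \eqref{eq:prekuznetsov}. For the Eisenstein part we use that $\rho_{\a,-t}$ and $\rho_{\a,t}$ are related by the unitary scattering matrix, so the range $t\approx -T$ is controlled by the range $t\approx T$.

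Next we bound the right-hand side. Stirling gives $|\Gamma(1-\frac k2-iT)|^2\asymp T^{1-k}e^{-\pi T}$, so the prefactor $T^k e^{\pi T}$ times it is $\asymp T$. The diagonal term $\delta_{m=n}$ therefore contributes $O(T)$, exactly the claimed bound. It remains to show that
\[
\sum_{c\ge1}\frac{|S_{(\Gamma,\chi)}(n,n;c)|}{c}\,\bigl|I(4\pi n/c;T)\bigr| \ll 1
\]
uniformly in $T$.

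This Kloosterman term is the main obstacle, since it requires a uniform estimate for $I(x;T)$. Set $x=4\pi n/c\ll_n 1$. I would deform the semicircle contour to the segment $[-i,i]$ along the imaginary axis plus small arcs near $0$, writing $\zeta=iy$. On the imaginary axis, $K_{2iT}(ixy)$ is expressed through Hankel functions, for which $e^{\pi T}|K_{2iT}(\pm ixy)|$ is bounded. Near $0$, the power series $K_{2iT}(w)\approx \Gamma(\pm 2iT)(w/2)^{\mp 2iT}$ contributes $|\Gamma(2iT)|\approx T^{-1/2}e^{-\pi T}$. The target is a bound of the form
\[
|I(x;T)|\ll x\,(1+|\log x|)\,T^{-1/2}
\]
for $x\le 1$; any bound of the shape $x^{1/2+\delta}$ would also suffice. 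Given this, Lemma \ref{lem:weilbound} gives $|S(n,n;c)|\ll c^{1/2+\varepsilon}$ (with $\gcd$ and $q$-dependence absorbed), so the $c$-sum is at most $\sum_c c^{-1/2+\varepsilon}\, c^{-1}\log c<\infty$. If the uniform Bessel bound proves delicate, the fallback is a cruder bound $|I(x;T)|\ll x^{1/2}T^{A}$. Since Weil still gives convergence, this would yield $O(T^{A'})$, which is weaker than claimed; so the plan genuinely hinges on the $T$-uniform estimate for $I$ in the range $x\ll1$.
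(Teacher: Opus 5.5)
Your reduction is the paper's: Cauchy--Schwarz to reach $m=n$, positivity of the left side of \eqref{eq:prekuznetsov} (including the exceptional $t_j$), and Stirling for $|\Gamma(1-\frac k2-iT)|^2$. You evaluate at the single point $r=T$, whereas the paper integrates $r^ke^{\pi r}$ times \eqref{eq:prekuznetsov} over $[T-2,T+2]$. This is harmless, since $\cosh\pi(T-t)\cosh\pi(T+t)\asymp e^{2\pi T}$ for $||t|-T|\le 1$. Because this weight is even in $t$, the range $t\approx -T$ of the Eisenstein integral already appears on the left side, so no scattering-matrix argument is needed.

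The gap is the step you flag yourself: you never establish a $T$-uniform bound for $I(x;T)$, and your sketch would not deliver one as written. In particular, $e^{\pi T}|K_{2iT}(\pm iy)|$ is not bounded. From $K_{2iT}(iy)=\frac{\pi}{2i\sinh 2\pi T}\bigl(e^{\pi T}J_{-2iT}(y)-e^{-\pi T}J_{2iT}(y)\bigr)$ and $|\Gamma(1-2iT)|^{-1}\asymp T^{-1/2}e^{\pi T}$, one gets $|K_{2iT}(iy)|\asymp T^{-1/2}$ for $0<y\ll 1$. Likewise, near $0$ the factor $|(w/2)^{\mp 2iT}|=e^{\pm 2T\arg w}$ reaches $e^{\pi T}$ on the imaginary axis and cancels the $e^{-\pi T}$ coming from $\Gamma(\pm 2iT)$. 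You do not need any decay in $T$ anyway.

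The missing idea is an elementary majorant. For $\text{Re}(z)>0$ and $r\in\R$ we have $K_{2ir}(z)=\int_0^\infty e^{-z\cosh u}\cos(2ru)\,du$, which gives $|K_{2ir}(z)|\le K_0(\text{Re}(z))$. On the unit semicircle $\zeta=e^{i\theta}$, where $|(-i\zeta)^{k-1}|=1$, this yields
\[
|I(x;r)|\le 2x\int_{-\pi/2}^{\pi/2}K_0(x\cos\theta)\,d\theta\ll x\,(1+|\log x|)
\]
uniformly in $r\in\R$. Here you use $K_0(y)\ll 1+|\log y|$ and the integrability of $\log\cos\theta$. This is exactly the bound the paper uses. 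Combined with Lemma~\ref{lem:weilbound}, it makes the Kloosterman term $O_{q,n}(1)$, and your argument then closes with no contour deformation or Hankel asymptotics.
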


\begin{proof}
Without loss of generality we can assume that $T \geq 10$, and by the Cauchy-Schwarz inequality we can assume that $n = m$, so that the summands and integrands on the left hand side are nonnegative. We are going to integrate equality \eqref{eq:prekuznetsov}, multiplied by $r^k e^{\pi r}$, over $r \in [T-2, T + 2]$.
Note that 
\begin{equation}\label{eq:auxiliarylowerboundweights}
    \int_{T-2}^{T + 2}\frac{r^k e^{\pi r}\, dr}{\cosh \pi (r - t)\cosh\pi (r + t)} \gg T^k e^{-\pi T}
\end{equation}
whenever $||t|-T| \leq 1$. Regarding the special function $I(x; r)$, we can use the trivial bound 
\begin{equation*}
    I(x; r) \ll x \int_{-i}^{i}K_0(x\text{Re}(\zeta))\, d\zeta \ll x |\log x|
\end{equation*}
where we used that $K_0(x) \ll |\log x|$ as $x \to 0^+$, see \cite[\href{https://dlmf.nist.gov/10.31.E2}{(10.31.2)}]{dlmf}. Using Lemma \ref{lem:weilbound} we get 
\begin{equation*}
\sum_{c = 1}^\infty \frac{S_{(\Gamma, \chi)}(m, n; c)}{c\ell}I\left(\frac{4\pi \sqrt{m n}}{c}; r\right) \ll_{q, m, n} 1.    
\end{equation*}
Thus, after multiplying equality \eqref{eq:prekuznetsov} with $m = n$ by $r^k e^{-\pi r}$ and integrating over $r \in[T-2, T + 2]$ we obtain
\begin{equation*}
T^k e^{-\pi T}\left(\sum_{|t_j - T|\leq 1}|\rho_j(m)|^2 + \sum_{i = 1}^\kappa \int_{||t|-T|\leq 1} |\rho_{\a_i, t}(m)|^2\right) \ll_{q, m, n} \int_{T - 2}^{T + 2}r^k e^{\pi r} |\Gamma(1 - k/2 -ir)|^2 \, dr \ll T,
\end{equation*}
where we have used Stirling's formula. The proof of the lemma is finished after noting that $\omega(t) \asymp T^k e^{-\pi T}$ when $||t|-T|\leq 1$.
\end{proof}

Lemma \ref{lem:preliminarykuznetsov} can be used to show that the Fourier coefficients of cusp forms enjoy very strong cancellation properties. It is useful to recast this cancellation as a large sieve inequality, as in the following lemma, which will be used in Section \ref{sec:rankinselbergandadjointlfunction}.

\begin{lem}\label{lem:largesieveinequality}
Let $\{v_j\}$ be an orthonormal basis of $L_0^2(\Gamma_0(M), \psi)$, with Fourier coefficients $\rho_j(n)$. Let $(a(n))_{n \geq 1}$ an arbitrary sequence of complex numbers. Then, for $T \geq 1$ and $N \geq 1$ we have
\begin{equation*}
    \sum_{|t_j| \leq T} \omega(t_j)\left|\sum_{n \leq N}a_n \rho_j(n)\right|^2 \ll_{M, \varepsilon} (T^2 + N^{1 + \varepsilon}) \sum_{n \leq N} |a_n|^2.
\end{equation*}
\end{lem}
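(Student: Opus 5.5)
The plan is to follow the Deshouillers--Iwaniec proof of the spectral large sieve, using Lemma \ref{lem:preliminarykuznetsov} with $(\Gamma,\chi) = (\Gamma_0(M),\psi)$ as the main tool.

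\textbf{Reduction to dyadic windows.} By positivity it suffices to bound the contribution of $t_j$ in a unit window $|t_j - R|\leq 1$, uniformly in $1\leq R\leq T$, by
\begin{equation*}
\ll (R + N^{1+\varepsilon}/T)\sum_n |a_n|^2.
\end{equation*}
Summing over $O(T)$ windows then gives $(T^2+N^{1+\varepsilon})\sum|a_n|^2$. The finitely many exceptional eigenvalues and the range $R\ll 1$ can be handled within one bounded window.

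\textbf{Applying the pre-Kuznetsov formula.} Multiply \eqref{eq:prekuznetsov} by $\overline{a_m}a_n$, sum over $m,n\leq N$, multiply by $r^k e^{\pi r}$ and integrate over $r\in[R-2,R+2]$, exactly as in the proof of Lemma \ref{lem:upperboundfouriercoefficients}. By \eqref{eq:auxiliarylowerboundweights} and positivity, the left side dominates
\begin{equation*}
\omega(R)^{-1}R^{k}e^{-\pi R}\sum_{|t_j-R|\leq1}\omega(t_j)\Big|\sum_n a_n\rho_j(n)\Big|^2 .
\end{equation*}
The Eisenstein part is nonnegative and can simply be dropped. On the right side, Stirling's formula gives $\int r^k e^{\pi r}|\Gamma(1-k/2-ir)|^2dr\asymp R$, so the diagonal term $\delta_{m=n}$ contributes $\ll R\sum|a_n|^2$, which is the desired size.

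\textbf{The Kloosterman term, which is the main obstacle.} We must show that
\begin{equation*}
\sum_c \frac{1}{c}\sum_{m,n}\overline{a_m}a_n S(m,n;c)\,\widetilde{I}(4\pi\sqrt{mn}/c)\ll (1+N^{1+\varepsilon}/R^2)\sum|a_n|^2,
\end{equation*}
where $\widetilde{I}$ denotes the $r$-averaged kernel. I would split at $c\asymp N/R$.
\begin{itemize}
\item[(i)] For large $c$ (small argument $x=4\pi\sqrt{mn}/c\ll R$), the averaged Bessel integral should be small, of size roughly $x/R$ or better. Expanding $K_{2ir}$ and integrating over $r$ gives oscillation $e^{ir\log x}$, and averaging over $r$ produces the decay. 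Combined with Weil's bound (Lemma \ref{lem:weilbound}), which is multiplicative in $c$ and has $\gcd$ factors controlled by $M$, and with $\sum_{m,n}|a_ma_n|\sqrt{mn}\leq N\sum|a_n|^2$ via Cauchy--Schwarz, this gives $\ll N^{1+\varepsilon}/R\cdot\sum_c c^{-3/2}(\ldots)$. This part is acceptable.
\item[(ii)] For small $c$ the Weil bound alone is too lossy. Here I would open the Kloosterman sum as a sum over $a,d \bmod c$ and use the integral representation of $\widetilde{I}$ in terms of $e(\pm\sqrt{mn}\cosh\xi/c)$-type exponentials, separating variables via a Mellin transform in $\sqrt{mn}$. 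This reduces the term to the classical additive large sieve
\begin{equation*}
\sum_{c\leq C}\sum_{a \bmod c}^*\Big|\sum_n a_n e(an/c)n^{i\tau}\Big|^2\ll (C^2+N)\sum|a_n|^2,
\end{equation*}
with $C\asymp N/R$. Dividing by the normalization $R$ gives $(N^2/R^2+N)/R$ in the worst case. Carrying the $c$-weights $1/c$ and the kernel size $\ll R^{-1}$ in the transition range more carefully should instead give the needed $N^{1+\varepsilon}/R$ after the window sum.
\end{itemize}
Getting the precise uniform asymptotics of $\widetilde{I}(x)$ in both the $k=0$ and $k=1$ cases (the $(-i\zeta)^{k-1}$ factor) is the technical heart. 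If (ii) proves too delicate, I would settle for the weaker bound $(T^2+N^{1+\varepsilon}T^{\varepsilon})$, which suffices for the applications in Section \ref{sec:rankinselbergandadjointlfunction}.
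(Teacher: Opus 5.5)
\textbf{Comparison with the paper.} The paper does not prove this inequality. It quotes Deshouillers--Iwaniec \cite[Theorem 2]{deshouillersiwaniec82} for $k=0$ and trivial $\psi$, and Drappeau \cite[Proposition 4.7]{sarydrappeau} for general $k\in\{0,1\}$ and nebentypus. Your attempt to reprove it breaks down at the first step, the reduction to unit windows.

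\textbf{The unit-window targets are false.} For any $t_{j_0}$ in the window $|t_j-R|\le 1$, take $a_n=\overline{\rho_{j_0}(n)}$. Then the window sum is at least $\omega(t_{j_0})\sum_{n\le N}|\rho_{j_0}(n)|^2\cdot\|a\|^2$. By Rankin--Selberg (compare the residue \eqref{eq:residuenaiverankinselbergproductlseries}), this factor is $\asymp N$ once $N$ is large compared with $t_{j_0}$. Consequences:
\begin{itemize}
\item Your target $(R+N^{1+\varepsilon}/T)\|a\|^2$ fails: fix $R$ and let $T\ge N\to\infty$.
\item The target implicit in your Kloosterman paragraph, $R+N^{1+\varepsilon}/R$, also fails when $R$ is a small power of $N$.
\item The best possible constant for a single unit window is $\gg R+N^{1-o(1)}$. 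Summing any correct window bound over $O(T)$ windows therefore gives only $T^2+TN^{1+\varepsilon}$.
\end{itemize}
The large sieve gains precisely because the $N$-term is paid once for the whole range $|t|\le T$, through near-orthogonality across the entire spectrum. You must apply the Kuznetsov formula once, with a single weight that is $\gg1$ on all of $|t|\le T$. For example, integrate \eqref{eq:prekuznetsov} against $r^ke^{\pi r}$ over all of $[0,T]$, which makes the diagonal $\asymp T^2$. Then show that the resulting off-diagonal form $\sum_c c^{-1}\sum_{m,n}\overline{a_m}a_n S(m,n;c)\,\mathcal{I}_T(4\pi\sqrt{mn}/c)$ is $\ll N^{1+\varepsilon}\|a\|^2$, where $\mathcal{I}_T$ is the $r$-averaged kernel.

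\textbf{The off-diagonal bound is not supplied.} Even with the setup corrected, the proposal does not prove the needed Kloosterman estimate:
\begin{itemize}
\item In (i), the decay of the averaged kernel is asserted, not proved.
\item In (ii), you yourself find that the additive large sieve gives $(N^2/R^2+N)/R$, which is too large. You then only say that more careful bookkeeping ``should'' repair this.
\end{itemize}
This bilinear Kloosterman bound is the entire content of the Deshouillers--Iwaniec theorem. It requires uniform asymptotics of the Bessel kernel for both $k=0$ and $k=1$, a separation of variables, and the additive large sieve over moduli $c\ll N/T$.

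\textbf{Fallback and recommendation.} Your fallback $T^2+N^{1+\varepsilon}T^{\varepsilon}$ would indeed suffice for Lemma \ref{lem:largesieveadjoint}. However, it is not the stated lemma, and it is equally out of reach by the window route. The sound course is the paper's: cite the known spectral large sieve.
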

\begin{proof}
For $k = 0$ and $\psi = 1$ this lemma is contained in \cite[Theorem 2]{deshouillersiwaniec82}. The result was later extended to arbitrary $k \in \{0, 1\}$ and arbitrary Dirichlet character $\psi$ in \cite[Proposition 4.7]{sarydrappeau}.
\end{proof}
\subsection{Bruggeman--Kuznetsov trace formula.}\label{subsec:kuznetsovtraceformula}
Let $\varphi \in C^2(0, \infty)$ satisfy, for some $\delta > 0$, the bounds
\begin{equation}\label{eq:boundsnear0}
    \varphi(x) \ll x^{1/2 +\delta}, \quad \varphi'(x) \ll x^{-1 + \delta}, \quad \varphi''(x) \ll x^{-2 + \delta},\quad \text{ as }x \to 0^+,
\end{equation}
as well as the bounds
\begin{equation}\label{eq:boundsnearinfty}
    \varphi(x) \ll x^{-1/2 - \delta}, \quad \varphi'(x)\ll x^{-1/2 - \delta}, \quad \varphi''(x) \ll x^{-3/2 - \delta}, \quad \text{ as }x \to \infty.
\end{equation}
Let us write
\begin{equation*}
\varphi_*(x) := \varphi(x) - \int_0^1\xi x J_k(\xi x)\left(\int_0^\infty J_k(\xi y) \varphi(y)\, dy \right)\, d\xi.
\end{equation*}
For $\text{Im}(r) < 1/2$ define
\begin{equation}\label{eq:defihintermsofvarphi}
    h(t) := \begin{dcases}
        \frac{2 \pi i }{\sinh(\pi t)}\int_0^\infty (J_{2it}(y) - J_{-2it}(y))\varphi(y) \frac{dy}{y}, & \text{ if }k = 0,\\
        \frac{2 \pi i}{\cosh(\pi t)}\int_0^\infty (J_{2it}(y) + J_{-2it}(y)) \varphi(y)\frac{dy}{y}, & \text{ if }k = 1.
    \end{dcases}
\end{equation}

We are now ready to state the Bruggeman--Kuznetsov trace formula.

\begin{prop}\label{prop:forwardkuznetsov}
Let the notation be as in Lemma \ref{lem:preliminarykuznetsov}. Suppose that $\varphi \in C^2(0, \infty)$ satisfies the bounds \eqref{eq:boundsnear0} and \eqref{eq:boundsnearinfty} for some $\delta > 0$. Then, for positive $m,n \in \frac{1}{\ell}\Z - \frac{\alpha}{\ell}$ we have the equality
\begin{equation}\label{eq:kuznetsovtraceformula}
\begin{aligned}
    & \frac{\ell}{\vol(\Gamma\backslash \Hyp)} \sum_j \overline{\rho_j(m)} \rho_j(n) \omega(t_j)h(t_j) + \sum_{i = 1}^\kappa\frac{\ell}{4\pi}\int_\R \overline{\rho_{\a_i, t}(m)}\rho_{\a_i, t}(n) \omega(t)h(t)\, dt\\
    & = \delta_{m = n}\frac{i^k}{2\pi}\int_0^\infty J_k(x)\varphi(x)\, dx + \sum_{c = 1}^\infty \frac{S_{(\Gamma, \chi)}(m, n; c)}{c\ell}\varphi_*\left(\frac{4\pi\sqrt{mn}}{c}\right),
\end{aligned}
\end{equation} 
where all sums and integrals converge absolutely.
\end{prop}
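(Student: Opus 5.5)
The plan is to derive Proposition \ref{prop:forwardkuznetsov} from the preliminary formula of Lemma \ref{lem:preliminarykuznetsov} by the standard Kuznetsov inversion, as in Proskurin \cite{proskurinarbitraryweightkuznetsov} and \cite[Section 5]{DFIsubconvexityartin}. The only new point is that $\varphi$ need not satisfy Proskurin's stronger decay hypotheses. We compensate using the Weil-type bound of Lemma \ref{lem:weilbound}, which makes the Kloosterman side converge absolutely under the weaker conditions \eqref{eq:boundsnear0}--\eqref{eq:boundsnearinfty}.

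\textbf{Step 1: smooth, compactly supported $\varphi$.} Suppose first that $\varphi\in C_c^\infty(0,\infty)$. Multiply \eqref{eq:prekuznetsov} by a suitable kernel in $r$ and integrate along the line $r\in\R$. Concretely, we represent $\varphi$ through the Kontorovich--Lebedev/Neumann-type expansion
\[
\varphi(x)=\int_\R (\ldots)\, I(x;r)\, d\mu(r)+\text{(Bessel $J_k$ correction)},
\]
and this representation is exactly what produces $h$ in \eqref{eq:defihintermsofvarphi}. On the spectral side, the combination
\[
\int \frac{(\ldots)\,dr}{\cosh\pi(r-t)\cosh\pi(r+t)}
\]
evaluates to $\omega(t)h(t)$. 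On the arithmetic side, the integral of $I(x;r)$ reconstructs $\varphi(x)$ minus its projection onto the $J_k$-part, which is $\varphi_*$. The diagonal term gives $\tfrac{i^k}{2\pi}\int_0^\infty J_k\varphi$. For this class of $\varphi$ all interchanges are justified by Proskurin's lemma, so the formula \eqref{eq:kuznetsovtraceformula} holds; we quote it from \cite{proskurinarbitraryweightkuznetsov}, adjusting normalizations.

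\textbf{Step 2: size bounds.} Next we bound both sides in terms of the constants in \eqref{eq:boundsnear0}--\eqref{eq:boundsnearinfty}.

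For the Kloosterman side, we show $\varphi_*(x)\ll x^{1/2+\delta'}$ as $x\to0$ and $\varphi_*(x)\ll x^{-1/2-\delta'}$ as $x\to\infty$. The correction term is $x\int_0^1\xi J_k(\xi x)\hat\varphi(\xi)\,d\xi$, where $\hat\varphi(\xi)=\int_0^\infty J_k(\xi y)\varphi(y)\,dy$ is the Hankel transform. Near $x=0$ it is $O(x^{1+k})$, and near $x=\infty$ it is controlled by integrating by parts using the derivative bounds. Lemma \ref{lem:weilbound} then gives
\[
\sum_c c^{-1/2+\varepsilon}\,|\varphi_*(4\pi\sqrt{mn}/c)|<\infty,
\]
since for large $c$ the argument is small and the terms are $\ll c^{-1/2+\varepsilon}c^{-1/2-\delta'}$.

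For the spectral side, we bound $h(t)$ by integrating by parts twice in $y$ against $J_{\pm 2it}(y)/y$. This should give
\[
h(t)\ll (1+|t|)^{-2-\delta''},
\]
and also holomorphy for $|\mathrm{Im}\,t|<1/2$, using the small-$x$ bound $\varphi\ll x^{1/2+\delta}$. Combined with Lemma \ref{lem:upperboundfouriercoefficients} applied over dyadic unit intervals, the spectral sum and the Eisenstein integral converge absolutely. The exceptional $t_j$ with $|\mathrm{Im}\,t_j|\le 7/64$ are handled by the holomorphy of $h$.

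\textbf{Step 3: approximation.} Given a general $\varphi$, choose $\varphi_n\in C_c^\infty$ approximating $\varphi$ such that the bounds \eqref{eq:boundsnear0}--\eqref{eq:boundsnearinfty} hold uniformly in $n$ with a slightly smaller $\delta$, and $\varphi_n\to\varphi$ pointwise. Truncation by smooth cutoffs at $1/n$ and $n$ does this. Then $h_n\to h$ and $(\varphi_n)_*\to\varphi_*$ pointwise, with the uniform majorants from Step 2. Dominated convergence on both sides of \eqref{eq:kuznetsovtraceformula} then yields the identity for $\varphi$.

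The main obstacle is Step 2 on the spectral side. We need a decay bound for $h(t)$ strong enough to beat the $T$ growth in Lemma \ref{lem:upperboundfouriercoefficients}, namely $h(t)\ll|t|^{-2-\delta}$, and it must hold uniformly along the approximating sequence. For this we would use the Mellin–Barnes representation of $J_{2it}$ and derive the decay from the derivative bounds via two integrations by parts, following the treatment in \cite{DFIsubconvexityartin}.
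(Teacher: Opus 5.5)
Your overall architecture matches the paper's. You quote Proskurin for $C_c^\infty$ test functions, prove a priori bounds for both sides, and approximate. On the Kloosterman side you use Lemma~\ref{lem:weilbound}; on the spectral side you use $h(t)\ll(1+|t|)^{-2-\delta}$ together with Lemma~\ref{lem:upperboundfouriercoefficients}. Your dominated-convergence Step~3 is equivalent to the paper's argument: the difference of the two sides is bounded by $\sum_i\|\varphi\|_{i,\delta}$, and $C_c^\infty$ is dense for these seminorms.

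The gap is in the one estimate you flag as the crux, and the mechanism you describe does not give it. Since $\varphi$ is only $C^2$, you can integrate by parts at most twice. The paper does this via Bessel's equation $y^2J_{2it}''+yJ_{2it}'+(y^2+4t^2)J_{2it}=0$, which produces the factor $(y^2+4t^2)^{-1}$. Under \eqref{eq:boundsnearinfty}, the resulting integrals can only be bounded by $O(|t|^{-3/2-\delta})$. For instance, $\int_1^{|t|}y|\varphi''(y)|(y^2+4t^2)^{-1}\,dy$ is of this order when $|\varphi''(y)|\asymp y^{-3/2-\delta}$.

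Pair this with the bound Poisson's integral gives for free, $|J_{2it}(y)|\le\sqrt{\cosh 2\pi t}$, and you only get $h(t)\ll|t|^{-3/2-\delta}$. Against the $\ll T$ of Lemma~\ref{lem:upperboundfouriercoefficients}, the spectral side becomes $\sum_T T^{-1/2-\delta}$, which diverges for small $\delta$.

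The missing ingredient is the uniform bound \eqref{eq:uniformboundonJBesselfunction}, $J_{2it}(y)\ll(1+|t|)^{-1/2}e^{\pi|t|}$ for all $y>0$, which supplies the extra $|t|^{-1/2}$. The paper proves it from Poisson's formula as follows:
\begin{itemize}
\item After the substitution $r\mapsto 1-r$, the phase is $2t\log r+2t\log(2-r)\pm yr$, whose second derivative is $\asymp|t|/r^2$ on $(0,1]$.
\item Van der Corput's second-derivative lemma on each dyadic range $r\in[A,2A]$ then gives $A^{1/2}(1+|t|)^{-1/2}$.
\item Summing over $A=2^{-i}$ gives the bound.
\end{itemize}
Your Mellin--Barnes suggestion might recover this saving, but as written it is not carried out, and it is exactly what makes the spectral side converge.

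Two smaller points. First, cutting $\varphi$ off with smooth functions at $1/n$ and $n$ produces $C_c^2$ functions, not $C_c^\infty$. Before invoking Step~1 you must also mollify and check that the bounds survive; the paper does this by convolving with an approximate identity. Second, your claim $\varphi_*(x)\ll x^{-1/2-\delta'}$ as $x\to\infty$ need not hold. Integrating $\xi xJ_k(\xi x)$ by parts in $\xi$ leaves the boundary term $J_{k+1}(x)\hat\varphi(1)$, of size $x^{-1/2}$. You do not need this claim, though: only the finitely many $c\le 4\pi\sqrt{mn}$ give arguments $\ge 1$.
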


After some manipulations, \cite[Equation (84)]{proskurinarbitraryweightkuznetsov} shows that \eqref{eq:kuznetsovtraceformula} holds for a class of test functions which includes $C_c^\infty(0, \infty)$. In the discussion below we will use an approximation argument to extend the identity in \eqref{eq:kuznetsovtraceformula} to all $\varphi \in C^2(0, \infty)$ satisfying the bounds \eqref{eq:boundsnear0} and \eqref{eq:boundsnearinfty} for some $\delta > 0$. We start by collecting a couple of lemmas. 
\begin{lem}
For $t \in \R$ we have
\begin{equation}\label{eq:uniformboundonJBesselfunction}
    J_{2it}(x) \ll (1 + |t|)^{-1/2}e^{\pi |t|}
\end{equation}
uniformly in $x > 0$. 
\end{lem}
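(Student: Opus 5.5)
We need to bound $J_{2it}(x)$ for real $t$, uniformly in $x>0$, by $(1+|t|)^{-1/2}e^{\pi|t|}$. Since $J_{-2it}(x)=\overline{J_{2it}(x)}$ for real $t$ and $x>0$, it is enough to treat $t\ge 0$. For bounded $t$ (say $t\le 1$) the claim is just that $J_{2it}(x)$ is bounded on $x>0$, uniformly for such $t$; for large $t$ the real content is the saving $(1+|t|)^{-1/2}$.

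I will use an integral representation valid for all $x>0$. The natural choice is the Schläfli-type formula
\begin{equation*}
J_\nu(x)=\frac{1}{\pi}\int_0^\pi \cos(x\sin\tau-\nu\tau)\,d\tau-\frac{\sin(\nu\pi)}{\pi}\int_0^\infty e^{-x\sinh u-\nu u}\,du,
\end{equation*}
with $\nu=2it$. This formula holds for $\mathrm{Re}(x)>0$, and for purely imaginary $\nu$ both integrals converge.

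\textbf{Second term.} Here $|\sin(2\pi i t)|=\sinh(2\pi t)\asymp e^{2\pi t}$, while the integrand $e^{-x\sinh u}e^{-2itu}$ has modulus $e^{-x\sinh u}$. Bounding it by absolute values would lose a factor of size $x^{-1}$ as $x\to 0$, and would give only $e^{2\pi t}$ instead of $e^{\pi t}$. So this route is too crude, and I abandon it in favour of a contour representation.

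Instead I would use the Mellin–Barnes or Hankel-type representation
\begin{equation*}
J_\nu(x)=\frac{1}{2\pi i}\int_{(c)}\frac{\Gamma(-s)}{\Gamma(\nu+s+1)}\Big(\frac{x}{2}\Big)^{\nu+2s}\,ds,
\end{equation*}
or, more cleanly, the formula
\begin{equation*}
J_\nu(x)=\frac{(x/2)^\nu}{\Gamma(\nu+1/2)\Gamma(1/2)}\int_{-1}^1(1-s^2)^{\nu-1/2}\cos(xs)\,ds,
\end{equation*}
valid for $\mathrm{Re}\,\nu>-1/2$, so in particular for $\nu=2it$. With $\nu=2it$ we have $|(x/2)^{2it}|=1$, and by Stirling
\begin{equation*}
|\Gamma(1/2+2it)|^{-1}\asymp e^{\pi t}.
\end{equation*}
The remaining integral is
\begin{equation*}
\int_{-1}^1(1-s^2)^{-1/2}e^{2it\log(1-s^2)}\cos(xs)\,ds.
\end{equation*}
It is trivially $O(1)$ uniformly in $x$, which already gives $J_{2it}(x)\ll e^{\pi t}$ and settles small $t$.

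\textbf{Extra saving for large $t$.} The main obstacle is to get the additional factor $t^{-1/2}$ uniformly in $x$. I would write $\cos(xs)=\tfrac12(e^{ixs}+e^{-ixs})$, so the phase is $2t\log(1-s^2)\pm xs$. Its second derivative in $s$ is
\begin{equation*}
-4t\,\frac{1+s^2}{(1-s^2)^2},
\end{equation*}
which has absolute value $\ge 4t$ throughout $(-1,1)$. The van der Corput second-derivative lemma then gives $\ll t^{-1/2}$ for the oscillatory integral, after one handles the amplitude $(1-s^2)^{-1/2}$.

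That amplitude is monotone on each half-interval but singular at $\pm1$. To deal with it I would split off the ranges $1-|s|\le\eta$, which contribute $O(\eta^{1/2})$ trivially. On the rest I would apply van der Corput with amplitude variation $O(\eta^{-1/2})$, using the enhanced second derivative $\gg t\eta^{-2}$ near the ends. This gives roughly $\eta^{-1/2}(t\eta^{-2})^{-1/2}$ there, and $\ll t^{-1/2}$ on the middle range. Optimizing $\eta$ (for instance with a dyadic decomposition towards $\pm1$, using the second derivative $\asymp t\,\delta^{-2}$ on dyadic pieces at distance $\delta$ from the endpoint) should give $O(t^{-1/2})$ uniformly in $x$. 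Multiplying by $e^{\pi t}$ then yields \eqref{eq:uniformboundonJBesselfunction}.
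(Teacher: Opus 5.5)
Your proposal is correct and follows the paper's proof: Poisson's integral representation, $|\Gamma(1/2+2it)|^{-1}\asymp e^{\pi|t|}$, and the second-derivative van der Corput bound on dyadic pieces at distance $\delta$ from the endpoints. On those pieces $|\phi''|\asymp |t|\delta^{-2}$ and the amplitude is $\asymp\delta^{-1/2}$, so each piece contributes $\delta^{1/2}|t|^{-1/2}$, and summing over $\delta=2^{-i}$ gives $O(|t|^{-1/2})$. Note that the dyadic decomposition you mention at the end is the essential step: a single cut at $1-|s|=\eta$ would only give $|t|^{-1/4}$.
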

\begin{proof}
    Recall Poisson's formula for the $J$-Bessel function \cite[\href{https://dlmf.nist.gov/10.9.E4}{(10.9.4)}]{dlmf}
\begin{equation*}
    J_{2it}(x) = \frac{2(x/2)^{2it}}{\pi^{\frac{1}{2}}\Gamma\left(2it + \frac{1}{2}\right)} \int_0^1 (1 - r^2)^{2it - \frac{1}{2}} \cos(xr) \, dr.
\end{equation*}
By Stirling's formula it is enough to show that the integral is bounded by $(1 + |t|)^{-1/2}$ uniformly on $x > 0$.
It is convenient to change variables so that 
\begin{equation*}
    \int_0^1 (1 - r^2)^{2it - 1/2}\cos(xr) \, dr = \int_0^1 r^{-1/2}(2-r)^{-1/2} e^{i(2t \log r + 2t \log (2-r))} \cos(x(1-r))\, dr.
\end{equation*}
Consider the phase function $\phi_{\pm}(r) = 2t \log r + 2t \log (2-r) \pm xr$. Note that 
$|\phi_{\pm}''(r)| \asymp  |t|/r^2$ for $0 < r \leq 1$. Let $0 < A \leq 1$. Thus, by Van der Corput's lemma \cite[Proposition 2.6.7]{grafakosclassical} we have 
\begin{equation*}
    \int_A^{A + y}  e^{i\phi_{\pm}(r)}\, dr \ll A(1 + |t|)^{-1/2}
\end{equation*}
whenever $0 < y \leq A$, uniformly in $A$. By integration by parts, it follows that  
\begin{equation*}
    \int_A^{2A} r^{-1/2}(2-r)^{-1/2} e^{i(2t \log r + 2t \log (2-r))} \cos(x(1-r))\, dr \ll A^{1/2}(1 + |t|)^{-1/2},
\end{equation*}
for $A \in (0, 1/2]$. The lemma follows after summing over $A = 2^{-i}$ with  $i \geq 1$.
\end{proof}

Define the following seminorms on $C^2(0, \infty)$:
\begin{equation*}
\begin{aligned}
    & \|\varphi\|_{1, \delta} := \sup_{x \in (0, 1]} |\varphi(x)|x^{-1/2 - \delta},\quad  \|\varphi\|_{2, \delta} := \sup_{x \in (0, 1]} |\varphi'(x)|x^{1 - \delta}, \quad \|\varphi\|_{3, \delta} := \sup_{x \in (0, 1]} |\varphi''(x)|x^{2 - \delta},\\
    & \|\varphi\|_{4, \delta} := \sup_{x \in [1, \infty)} |\varphi(x)| x^{1/2 + \delta}, \quad \|\varphi\|_{5, \delta} := \sup_{x \in [1, \infty)} |\varphi'(x)|x^{1/2 + \delta} \quad \|\varphi\|_{6, \delta} := \sup_{x \in [1, \infty)}|\varphi''(x)|x^{3/2 + \delta}.
\end{aligned}
\end{equation*} 
Note that these seminorms are monotonically increasing in $\delta$. We will use these seminorms to control the geometric and spectral side of the Bruggeman--Kuznetsov. We have the following approximation result.

\begin{lem}\label{lem:approximationc2function}
Let $\varphi \in C^2(0, \infty)$ be such that $\sum_i \|\varphi\|_{i, \delta_0} < \infty$ for some $\delta_0 > 0$. Then for any $0 < \delta < \delta_0$ there exists a sequence $\varphi_n \in C_c^\infty(0, \infty)$ such that 
\begin{equation*}
    \sum_{i = 1}^6 \|\varphi- \varphi_n\|_{i, \delta} \to 0.    
\end{equation*}
\end{lem}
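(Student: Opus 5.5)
The natural approach is a standard cutoff-and-mollify argument, done so that the weighted seminorms are controlled with a small loss in the exponent $\delta$. Fix $0<\delta<\delta_0$. Choose a smooth cutoff $\eta\in C^\infty(\R)$ with $\eta=0$ on $(-\infty,1]$, $\eta=1$ on $[2,\infty)$. For $n\ge 1$ set
\begin{equation*}
\chi_n(x):=\eta(nx)\,\bigl(1-\eta(x/n)\bigr),
\end{equation*}
so that $\chi_n$ is supported in $[1/n,2n]$ and equals $1$ on $[2/n,n]$. Its derivatives satisfy $|\chi_n^{(j)}(x)|\ll x^{-j}$ uniformly in $n$, since near $0$ they are $n^j\eta^{(j)}(nx)$ with $x\asymp 1/n$, and near $\infty$ they are $n^{-j}\eta^{(j)}(x/n)$ with $x\asymp n$. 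The function $\chi_n\varphi$ lies in $C^2_c(0,\infty)$. We then take a multiplicative mollification $\varphi_n:=(\chi_n\varphi)\ast_{\times}\rho_{\epsilon_n}$ with $\epsilon_n$ very small, which gives an element of $C_c^\infty(0,\infty)$.

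\textbf{Step 1: the truncation error.} On $[2/n,n]$ we have $\varphi-\chi_n\varphi=0$. On $(0,2/n]$ the Leibniz rule gives
\begin{equation*}
|(1-\chi_n)\varphi|\ll|\varphi|\ll \|\varphi\|_{1,\delta_0}x^{1/2+\delta_0}.
\end{equation*}
For the first derivative,
\begin{equation*}
|((1-\chi_n)\varphi)'|\ll |\varphi'|+x^{-1}|\varphi|\ll x^{-1+\delta_0},
\end{equation*}
where we used $x^{-1}\cdot x^{1/2+\delta_0}\le x^{-1+\delta_0}$ for $x\le 1$. The second derivative is bounded similarly by $x^{-2+\delta_0}$. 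Multiplying by the weights $x^{-1/2-\delta},x^{1-\delta},x^{2-\delta}$ and taking the supremum over $x\le 2/n$ gives $\ll n^{-(\delta_0-\delta)}\to 0$. On $[n,\infty)$ the same computation, using $x^{-1}\le 1$ and $x^{-2}\cdot x^{-1/2-\delta_0}\le x^{-3/2-\delta_0}$ together with the bounds \eqref{eq:boundsnearinfty}, gives an error $\ll n^{-(\delta_0-\delta)}$ in seminorms $4$, $5$ and $6$.

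\textbf{Step 2: the mollification error.} On the compact set $K=[1/(2n),4n]$ the function $\chi_n\varphi$ is $C^2$. Mollification converges in $C^2(K)$, and for small $\epsilon_n$ the support of $\varphi_n$ stays inside $K$. On $K$ the six weights are bounded by constants depending on $n$. Choosing $\epsilon_n$ so that the $C^2(K)$ error is less than $1/n$ times those constants makes every seminorm of $\chi_n\varphi-\varphi_n$ at most $1/n$. The triangle inequality then finishes the proof.

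\textbf{Main obstacle.} The only real subtlety is Step 1. Seminorms taken with the same exponent $\delta_0$ do not tend to zero, because the tails of $\varphi$ need not be small relative to the weight at that exact exponent. This is precisely why we pass to a strictly smaller $\delta<\delta_0$, which gains the factor $n^{-(\delta_0-\delta)}$. One must also check that the cross terms $\chi_n'\varphi$ and $\chi_n''\varphi$ fit within the required decay, and the bounds $x^{-1}\le 1$ or $x^{-1}\ll x^{-1}$ in the relevant ranges handle this.
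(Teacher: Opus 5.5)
Your proof is correct and follows essentially the same route as the paper. Both use a smooth cutoff at scales $1/n$ and $n$ with $|\chi_n^{(j)}(x)|\ll x^{-j}$, which gains the factor $n^{-(\delta_0-\delta)}$ precisely because $\delta<\delta_0$, and then mollify on a fixed compact set, where the weighted seminorms are controlled by $C^2$ sup norms; the only differences are cosmetic (you mollify multiplicatively rather than additively, and you should take $n\ge 2$ so that the two cutoff regions lie inside $(0,1]$ and $[1,\infty)$ respectively).
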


\begin{proof}
Let $\psi_0, \psi_\infty \in C^\infty(0, \infty)$ be functions such that $0 \leq \psi_0, \psi_\infty \leq 1$ and also
\begin{equation*}
    \psi_0(x) = \begin{dcases}
        0, & \text{ if } x \leq 1/2,\\
        1, & \text{ if } x \geq 1,
    \end{dcases}\quad \text{ as well as } \quad \psi_\infty(x) = \begin{dcases}
        1, & \text{ if }x \leq 1,\\
        0, & \text{ if }x \geq 2.
    \end{dcases}
\end{equation*}
For $N \geq 10$ define 
\begin{equation*}
    \varphi_N(x) := \varphi(x) \psi_0(xN)\psi_\infty(xN^{-1}),
\end{equation*}
so that 
\begin{equation*}
    \varphi_N(x) = \begin{dcases}
        0, & \text{ if } x \leq (2N)^{-1},\\
        \varphi(x), & \text{ if } N^{-1} \leq x \leq N,\\
        0, & \text{ if }x \geq 2N.
    \end{dcases}
\end{equation*}
One can prove that 
\begin{equation}\label{eq:approxoftestfunctioninV}
\begin{aligned}
\|\varphi - \varphi_N\|_{1, \delta} & \ll N^{\delta - \delta_0} \|\varphi\|_{1, \delta_0},\\
\|\varphi - \varphi_N\|_{2, \delta} & \ll N^{\delta - \delta_0}(N^{-1/2}\|\varphi\|_{1, \delta_0} + \|\varphi\|_{2, \delta_0}),\\
\|\varphi - \varphi_N\|_{3, \delta} & \ll N^{\delta - \delta_0}(N^{-1/2}\|\varphi\|_{1, \delta_0} + \|\varphi\|_{2, \delta_0} +  \|\varphi\|_{3, \delta_0}),\\
\|\varphi - \varphi_N\|_{4, \delta} & \ll N^{\delta - \delta_0}\|\varphi\|_{4, \delta_0},\\
\|\varphi - \varphi_N\|_{5, \delta} & \ll N^{\delta - \delta_0}(N^{-1}\|\varphi\|_{4, \delta_0} + \|\varphi\|_{5, \delta_0}),\\
\|\varphi - \varphi_N\|_{6, \delta} & \ll N^{\delta - \delta_0}(N^{-1}\|\varphi\|_{4, \delta_0} + \|\varphi\|_{5, \delta_0} +  \|\varphi\|_{6, \delta_0}).\\
\end{aligned}
\end{equation}
We will only explain the bound for $\|\cdot\|_{3, \delta}$, the remaining estimates being very similar. When $0 < x \leq 1$ the derivatives of $\psi_\infty(xN^{-1})$ vanish, so we have 
\begin{equation*}
    \varphi''(x) - \varphi_N''(x) = \varphi''(x)(1 - \psi_0(xN)\psi_\infty(xN^{-1})) - 2N\varphi'(x)\psi'_0(xN)\psi_\infty(xN^{-1}) - N^2 \varphi(x)\psi''_0(xN)\psi_\infty(xN^{-1}).
\end{equation*} 
When $0 < x \leq 1$, the quantity $(1 - \psi_0(xN)\psi_\infty(xN^{-1}))$ vanishes unless $x \leq N^{-1}$, thus we have
\begin{equation*}
    \sup_{x \in (0, 1]} |\varphi''(x)(1 - \psi_0(xN)\psi_\infty(xN^{-1}))| x^{2-\delta} \ll N^{\delta - \delta_0}\|\varphi\|_{3, \delta_0}.
\end{equation*}
Similarly, the derivatives of $\psi_0(xN)$ vanish unless $(2N)^{-1} \leq x \leq N^{-1}$, so that we have
\begin{equation*}
    \sup_{x \in (0, 1]} N|\varphi'(x)\psi_0'(xN)\psi_\infty(xN^{-1})| x^{2-\delta} \ll N^{\delta - \delta_0}\|\varphi\|_{2, \delta_0}, 
\end{equation*}
as well as 
\begin{equation*}
    \sup_{x \in (0, 1]} N^2 |\varphi(x)\psi''_0(xN)\psi_\infty(xN^{-1})| x^{2-\delta} \ll N^{-1/2 + (\delta - \delta_0)} \|\varphi\|_{1, \delta_0}.
\end{equation*}
This proves the inequality for $\|\cdot\|_{3, \delta}$ in  \eqref{eq:approxoftestfunctioninV}. Note that $\varphi_N$ is supported on $[(2N)^{-1}, 2N]$, but is only of class $C^2$. Fix $N \geq 10$. Choose $v \in C_c^\infty(0, \infty)$ with support contained in $[0, 1]$ and satisfying $\int_\R v(x) = 1$. For small $\varepsilon > 0$ define $v_\varepsilon(x):= \varepsilon^{-1}v(\varepsilon^{-1}x)$. Consider the convolution $\varphi_N * v_\varepsilon$, which is $C^\infty$ with support contained in $[(2N)^{-1} - \varepsilon, 2N + \varepsilon]$. If $0 < \varepsilon < (3N)^{-1}$, then we have 
\begin{equation*}
    \sum_{i = 1}^6 \|\varphi_N - \varphi_N * v_\varepsilon\|_{i, \delta} \ll N^{3/2 + \delta}\sum_{j = 0}^2\sup_{x \in [1/6N, 3N]}|(\varphi_N - \varphi_N * v_\varepsilon)^{(j)}(x)|.
\end{equation*}
We know that $(\varphi_N * v_\varepsilon)^{(j)}$ tends uniformly to $\varphi^{(j)}$, as $\varepsilon \to 0^+$, for $0 \leq j \leq 2$. Thus, if $\varepsilon$ is sufficiently small, the bounds of \eqref{eq:approxoftestfunctioninV} hold when $\varphi_N$ is replaced by $\varphi_N * v_{\varepsilon}$. This finishes the proof of the lemma.
\end{proof}

\begin{proof}[Proof of Proposition \ref{prop:forwardkuznetsov}]
We start by estimating the spectral and geometric sides of \eqref{eq:kuznetsovtraceformula} in terms of the seminorms $\|\cdot\|_{i, \delta}$, $1 \leq i\leq 6$. To estimate the geometric side first note that $J_k(x) \ll \min(x^k, x^{-1/2}) \ll \min(1, x^{-1/2})$. If $0 < \xi \leq 1$, it follows that 
\begin{equation*}
\begin{aligned}
    \int_0^\infty J_k(\xi y)\varphi(y) \, dy &  \leq \int_0^1 |\varphi(y)| \,dy + \int_1^{\xi^{-1}} |\varphi(y)|\, dy + \xi^{-1/2}\int_{\xi^{-1}}^\infty y^{-1/2}|\varphi(y)| \, dy\\
    & \ll_\delta \|\varphi\|_{1, \delta} + \xi^{-1/2 + \delta}\|\varphi\|_{4, \delta}
\end{aligned}
\end{equation*}
for any $0< \delta < 1/2$. This implies that
\begin{equation*}
    \int_0^1 \xi x J_k(\xi x)\left(\int_0^\infty J_k(\xi y)\varphi(y) \, dy\right)\, d\xi \ll_\delta x \left(\|\varphi\|_{1, \delta} + \|\varphi\|_{4, \delta}\right),
\end{equation*}
so that 
\begin{equation*}
    \varphi_*(x) = \varphi(x) + O_\delta\left(x(\|\varphi\|_{1, \delta} + \|\varphi\|_{4, \delta})\right)
\end{equation*}
for any $\delta > 0$ (recall that the seminorms $\|\cdot\|_{i, \delta}$ are monotonically increasing in $\delta$). Similarly, 
\begin{equation*}
\int_0^\infty J_k(x)\varphi(x) \, dx \ll_{\delta}  \|\varphi\|_{1, \delta} +  \|\varphi\|_{4, \delta}
\end{equation*}
for any $\delta > 0$. Using Lemma \ref{lem:weilbound} we obtain 
\begin{equation}\label{eq:apriorigeometricbound}
     \left|\int_0^\infty J_k(x)\varphi(x) \, dx \right| + \sum_{c \geq 1}\left|\frac{S_{(\Gamma, \chi)}(m, n; c)}{c\ell}\varphi_*\left(\frac{4\pi\sqrt{m n}}{c }\right)\right| \ll_{n, m, q, \delta}  \|\varphi\|_{1, \delta} + \|\varphi\|_{4, \delta}
\end{equation}
for any $\delta > 0$. To bound the spectral side we follow \cite{proskurinsummationformulas}. We want to prove the estimate
\begin{equation}\label{eq:estimatehfunction}
    h(t) \ll_\delta (1 + |t|)^{-2-\delta} \left(\sum_{i = 1}^6 \|\varphi\|_{i, \delta}\right)
\end{equation}
for any $\delta > 0$. This is clear for $|t| \leq 1$, so we can assume that $|t| \geq 1$. By Bessel's equation \cite[\href{https://dlmf.nist.gov/10.2.E1}{(10.2.1)}]{dlmf} we have
\begin{equation*}
    x^2 J_{2it}''(x) + xJ_{2it}'(x) + (x^2 + 4t^2)J_{2it}(x) = 0,
\end{equation*}
and after integration by parts we see that 
\begin{equation*}
   \int_0^\infty J_{2it}(y) \varphi(y)\frac{dy}{y} = -\int_0^\infty J_{2it}(y) \frac{d^2}{d^2 y}\left(\frac{y\varphi(y)}{y^2 + 4t^2}\right) \, dy + \int_0^\infty J_{2it}(y)\frac{d}{dy}\left(\frac{\varphi(y)}{y^2 + 4t^2}\right)\, dy. 
\end{equation*}
Using \eqref{eq:uniformboundonJBesselfunction} and the definition of the seminorms $\|\cdot\|_{i, \delta}$ we can estimate 
\begin{equation*}
\begin{aligned}
    e^{-\pi |t|}(1 + |t|)^{1/2}\int_0^\infty J_{2it}(y)\frac{d}{dy}\left(\frac{\varphi(y)}{y^2 + 4t^2}\right)\, dy  & \ll \int_0^1 \frac{|\varphi'(y)|}{|t|^2} + \frac{y|\varphi(y)|}{|t|^4}\, dy + \int_1^{|t|} \frac{|\varphi'(y)|}{|t|^2} + \frac{y|\varphi(y)|}{|t|^4}\, dy\\
    & + \int_{|t|}^\infty \frac{|\varphi'(y)|}{y^2} + \frac{|\varphi(y)|}{y^3}\, dy\\
    & \ll_\delta |t|^{-2}\|\varphi\|_{2, \delta} + |t|^{-4}\|\varphi\|_{1, \delta}\\
    & + |t|^{-3/2 - \delta}\|\varphi\|_{5, \delta} + |t|^{-5/2 - \delta}\|\varphi\|_{4, \delta}
\end{aligned}
\end{equation*}
for $|t| \geq 1$ and any $\delta > 0$. In the same way one can show that
\begin{equation*}
\begin{aligned}
e^{-\pi |t|}(1 + |t|)^{1/2}\int_0^\infty J_{2it}(y) \frac{d^2}{d^2 y}\left(\frac{y\varphi(y)}{y^2 + 4t^2}\right) \, dy & \ll_\delta |t|^{-4}\|\varphi\|_{1, \delta} + |t|^{-2}(\|\varphi\|_{2, \delta} + \|\varphi\|_{3, \delta})\\
& + |t|^{-5/2 - \delta}\|\varphi\|_{4, \delta} + |t|^{-3/2 - \delta}(\|\varphi\|_{5, \delta} + \|\varphi\|_{6, \delta})
\end{aligned}
\end{equation*}
for $|t| \geq 1$ and any $\delta > 0$. The bound in \eqref{eq:estimatehfunction} follows by combining these estimates with the definition of $h(t)$ from \eqref{eq:defihintermsofvarphi}.  If we combine \eqref{eq:estimatehfunction} with Lemma \ref{lem:upperboundfouriercoefficients} we arrive at the bound
\begin{equation}\label{eq:apriorispectralbound}
\begin{aligned}
& \sum_j \left|\overline{\rho_j(m)} \rho_j(n) \omega(t_j)h(t_j)\right| + \sum_{i = 1}^\kappa\frac{1}{4\pi}\int_\R \left|\overline{\rho_{\a_i, t}(m)}\rho_{\a_i, t}(n) \omega(t)h(t)\right| \, dt\\
&  \ll_{m, n, q, \delta} \sum_{i = 1}^6 \|\varphi\|_{i, \delta},
\end{aligned}
\end{equation}
valid for any $\delta > 0$. We are now ready to conclude the proof of Proposition \ref{prop:forwardkuznetsov}. After substracting the right-hand side from the left-hand side we can write the identity in \eqref{eq:kuznetsovtraceformula} as $J(\varphi) = 0$ for a linear functional $J$. Equations \eqref{eq:apriorigeometricbound} and \eqref{eq:apriorispectralbound} show that 
\begin{equation}\label{eq:aprioriestimatekuznetsov}
    |J(\varphi)| \ll_{\delta} \sum_{i = 1}^6 \|\varphi\|_{i, \delta},
\end{equation}
for any $\delta> 0$. Suppose that $\varphi_0 \in C^2(0, \infty)$ satisfies the bounds in \eqref{eq:boundsnear0} and in \eqref{eq:boundsnearinfty} for some $\delta_0 > 0$, and fix some $\delta_1$ such that $0< \delta_1 < \delta_0$. By Lemma \ref{lem:approximationc2function}, there exist functions $\varphi_n \in C_c^\infty(0, \infty)$ such that 
\begin{equation}\label{eq:testfunctionapproximated}
    \sum_{i = 1}^6 \|\varphi_0 - \varphi_n\|_{i, \delta_1} \to 0 \quad \text{ as }n \to \infty.
\end{equation}
By \cite[Equation (84)]{proskurinarbitraryweightkuznetsov} we know that $J(\varphi_n) = 0$ for each $n$. Thus, we deduce
\begin{equation*}
    J(\varphi_0) = J(\lim_n \varphi_n) = \lim_n J(\varphi_n) = 0,
\end{equation*}
where the exchange of limits is allowed by \eqref{eq:aprioriestimatekuznetsov} and \eqref{eq:testfunctionapproximated}. This finishes the proof of the proposition.
\end{proof}

\section{Fourier coefficients of Eisenstein series.}\label{sec:fouriercoefficientseisensteinseries}
In the proof of Theorem \ref{thm:luosarnakbound} we will apply Proposition \ref{prop:forwardkuznetsov} when $n = m$. In the spectral side of \eqref{eq:kuznetsovtraceformula} the contribution from the continuous spectrum can be written as
\begin{equation*}
    \frac{\ell}{4\pi}\int_{\R} h(t) \omega(t)\sum_{i = 1}^\kappa\left|\rho_{\a_i, t}(n)\right|^2 \, dt. 
\end{equation*}
Accordingly, for any $m \in \Q_{> 0}$ we let 
\begin{equation}\label{eq:defiofl2normfouriercoefficientseisenstein}
    \Sigma(m; t; \Gamma, \chi) := \vol(\Gamma\backslash \Hyp)\sum_{[\a]_\Gamma \text{ singular for }(\Gamma, \chi)}\left|\rho_{\a, t}(m)\right|^2,
\end{equation} 
where the sum is over inequivalent singular cusps of $(\Gamma, \chi)$. If we recall \eqref{eq:transformationdifferentrepresentativeeisenstein}, we see that this quantity is independent of the choice of representative for the class $[\a]_\Gamma$. 

The purpose of this section is to estimate $\Sigma(m; t; \Gamma, \chi)$, see Proposition \ref{prop:upperboundfouriercoefficentseisenstein} below. The idea is to deduce the estimate from the calculations of \cite{youngeisensteinseries}, which deal with the case $\Gamma = \Gamma_0(N)$.

\subsection{Hecke congruence subgroup.}

Consider $N \geq 1$ and let $\psi$ be a Dirichlet modulo $N$. Let $\chi_1, \chi_2$ be primitive characters modulo $q_1, q_2$ respectively, such that $q_1q_2 \mid N$ and $\chi_1\overline{\chi_2} = \psi$ when considered as Dirichlet characters modulo $N$. Define
\begin{equation*}
    E_{\chi_1, \chi_2}(z; s) := \frac{1}{2}\sum_{(c, d) = 1}\frac{(q_2y)^s\chi_1(c)\chi_2(d)}{|cq_2 z + d|^{2s}}\left(\frac{|cq_2z + d|}{cq_2z + d}\right)^k,
\end{equation*}
as in \cite[Equation (3.3)]{youngeisensteinseries}.
This definition is valid for $\text{Re}(s) > 1$, and can be analytically continued to a meromorphic function on $\CC$ satisfying a functional equation. If we let the completed Eisenstein series be
\begin{equation}\label{eq:completedeisenstein}
    E^*_{\chi_1, \chi_2}(z; s) := \frac{(q_2/\pi)^s}{i^{-k}\tau(\chi_2)}\Gamma(s + k/2)L(2s; \chi_1\chi_2)E_{\chi_1, \chi_2}(z; s),
\end{equation}
then we have a very neat Fourier expansion 
\begin{equation}\label{eq:fourierexpansioncompletedeisenstein}
    E^*_{\chi_1, \chi_2}(z; s) = e^*_{\chi_1, \chi_2}(y; s) + \sum_{n \neq 0} \frac{\lambda_{\chi_1, \chi_2}(n; s)}{|n|^{1/2}} \frac{\Gamma(s + \frac{k}{2})}{\Gamma(s + \frac{k}{2}\sgn(n))}W_{\frac{k}{2}\sgn(n), s - \frac{1}{2}}(4\pi |n|y)e(nx),
\end{equation}
see \cite[Proposition 4.1]{youngeisensteinseries}. In this equation $e^*_{\chi_1, \chi_2}(y; s)$ is the constant term and we have 
\begin{equation*}
    \lambda_{\chi_1, \chi_2}(n; s) :=\chi_2(\sgn(n))\sum_{ab = |n|} \chi_1(a)\overline{\chi_2}(b)\left(\frac{b}{a}\right)^{s-\frac{1}{2}}.
\end{equation*}
In particular, on the critical line these coefficients satisfy $|\lambda_{\chi_1, \chi_2}(n; 1/2 + it)| \leq \tau(n)$. 

Let $\psi$ a Dirichlet character modulo $N$, not necessarily primitive, which we identify with the character of $\Gamma_0(N)$ given by $\gamma \mapsto \psi(d(\gamma))$. The singular cusps of $(\Gamma_0(N), \psi)$ are described in \cite[section 5]{youngeisensteinseries}. We can always write $\a = \frac{1}{uf}$ where $(u, N) = 1$, $f\mid N$ and the conductor of $\psi$ divides $N/(f, N/f)$. The Eisenstein series associated to such a cusp can be related to the Eisenstein series associated to Dirichlet characters as follows:
\begin{equation*}
\begin{aligned}
E_{\frac{1}{uf}}(z; s; \Gamma_0(N), \psi) & = \frac{1}{(fN'')^s}\frac{1}{\varphi((f, N/f))}\sum_{q_1 \mid \frac{N}{f}}\sum_{q_2 \mid f}\, \, \,  \psum_{\substack{\chi_1 \modulo q_1\\
\chi_2 \modulo q_2\\
\chi_1 \overline{\chi_2} \simeq \psi}} \overline{\chi_1}(-u)\frac{L(2s, \chi_1 \chi_2)}{L(2s, \chi_1\chi_2 \chi_{0, N})}\\
& \quad \sum_{\substack{a \mid f\\
(a, q_2) = 1}}\sum_{\substack{b \mid \frac{N}{f}\\
(b, q_1) = 1}}\frac{\mu(a)\mu(b)\chi_1(b)\chi_2(a)}{(ab)^s}E_{\chi_1, \chi_2}\left(\frac{bf}{aq_2}z; s\right),
\end{aligned}
\end{equation*}
see \cite[Theorem 6.1]{youngeisensteinseries}. Here $N'' = N'/(f, N')$ and $\chi_{0, N}$ is the trivial character modulo $N$. Also, the notation $\chi_1 \overline{\chi_2} \simeq \psi$ means equality of the induced characters modulo $N$. Comparing this equation with \eqref{eq:completedeisenstein} and \eqref{eq:fourierexpansioncompletedeisenstein} we see that for $m \geq 1$ we have
\begin{equation*}
    \rho\left(E_{\frac{1}{uf}}(\cdot; 1/2 + it; \Gamma_0(N), \psi); m\right) \ll_{\varepsilon, N} \frac{m^\varepsilon}{L(1 + 2it; \chi_1 \chi_2) \Gamma(1/2 + it + k/2)} \ll_{\varepsilon, N} e^{\frac{\pi}{2}|t|}(1 + |t|)^{-k/2 + \varepsilon} m^\varepsilon.
\end{equation*}
Squaring and summing over the different cusps we deduce that 
\begin{equation}\label{eq:boundfouriereisensteinhecke}
    \Sigma(m; t; \Gamma_0(N), \psi) \ll_{N, \varepsilon} e^{\pi |t|}(1 + |t|)^{-k + \varepsilon} m^\varepsilon
\end{equation}
for $m \geq 1$.

\subsection{General case.}
Let $\a_1, \ldots, \a_\kappa$ be representatives of the inequivalent singular cusps of $(\Gamma, \chi)$. For $t \in \R$, we let 
\begin{equation*}
    \E_{it}(\Gamma, \chi) := \text{span}\{E_{\a_i}(\cdot; 1/2 + it; \Gamma, \chi) : 1 \leq i \leq \kappa\}.
\end{equation*}
When $t \neq 0$, the spanning set $\{E_{\a_i}(\cdot; 1/2 + it; \Gamma, \chi): 1 \leq i \leq \kappa\}$ is actually a basis of $\E_{it}(\Gamma, \chi)$. This can be deduced by computing the constant term  of the Fourier expansions of the different Eisenstein series at the different cusps and using that the scattering matrix $\Phi(s)$ is invertible on the critical line. Thus, if $t \neq 0$ we can introduce an inner product $\langle \cdot, \cdot \rangle$ on $\E_{it}(\Gamma, \chi)$ by letting
\begin{equation}\label{eq:innerproductEisensteinseries}
    \langle E_{\a_i}(\cdot; 1/2 + it; \Gamma, \chi), E_{\a_j}(\cdot; 1/2 + it; \Gamma, \chi)\rangle := \delta_{i =  j}\frac{1}{\vol(\Gamma\backslash \Hyp)},
\end{equation}
which agrees with \cite[Equation (8.1)]{youngeisensteinseries} up to normalization. Looking at \eqref{eq:defiofl2normfouriercoefficientseisenstein} we recognize that
\begin{equation}\label{eq:continuousfouriercoefficientsasl2norm}
    \Sigma(m; t; \Gamma, \chi) = \sup_{0 \neq f \in \E_{it}(\Gamma, \chi)}\frac{|\rho(f; m)|^2}{\langle f, f \rangle }.
\end{equation}
That is, the quantity $\Sigma(m; t; \Gamma, \chi)$ is the squared $L^2$-norm of the linear functional $f \mapsto \rho(f; m)$ on the finite-dimensional Hilbert space $\E_{it}(\Gamma, \chi)$.

Until the end of this section we use the notation $\Gamma_i(\a) := \{\gamma \in \Gamma_i : \gamma \a = \a\}$ and $[\a]_{\Gamma_i} := \{\gamma \a : \gamma \in \Gamma_i\}$, where $i \in \{1, 2\}$ and $\a \in \P^1(\Q)$. We also let $\sigma_i(\a)$ denote a scaling matrix for $\a \in \P^1(\Q)$ with respect to the group $\Gamma_i$. 
\begin{lem}
Suppose that we have $(\Gamma_1, \chi_1)\subset (\Gamma_2, \chi_2)$ for two congruence characters. Then, for $t \neq 0$, the natural inclusion $\E_{it}(\Gamma_2, \chi_2) \hookrightarrow \E_{it}(\Gamma_1, \chi_1)$ is an isometry.
\end{lem}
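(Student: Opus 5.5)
The plan is to express each Eisenstein series of $(\Gamma_2,\chi_2)$ as an explicit linear combination of Eisenstein series of $(\Gamma_1,\chi_1)$, and then verify directly that the inner products \eqref{eq:innerproductEisensteinseries} are preserved. Fix a singular cusp $\a$ of $(\Gamma_2,\chi_2)$.

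\emph{Step 1: unfolding.} Decompose the orbit $[\a]_{\Gamma_2}$ into $\Gamma_1$-orbits $[\b_1]_{\Gamma_1},\dots,[\b_r]_{\Gamma_1}$, with $\b_j=\delta_j\a$ for $\delta_j\in\Gamma_2$. This amounts to choosing double coset representatives for $\Gamma_1\backslash\Gamma_2/\Gamma_2(\a)$. For $\mathrm{Re}(s)>1$ we then split the sum over $\Gamma_2(\a)\backslash\Gamma_2$ in \eqref{eq:defofeisensteinseries} according to these double cosets. The stabilizer $\Gamma_1(\b_j)=\Gamma_1\cap\delta_j\Gamma_2(\a)\delta_j^{-1}$ has finite index $w_j$ in $\delta_j\Gamma_2(\a)\delta_j^{-1}$. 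The corresponding scaling matrices are related by $\sigma_1(\b_j)=\delta_j\sigma_2(\a)\begin{psmallmatrix}w_j&0\\0&1\end{psmallmatrix}$, up to a translation and a sign.

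The two cases are:
\begin{itemize}
\item If $\chi_1$ is nontrivial on the generator $T_{\b_j}$, then the $j$-th block sums to zero, because the character sum over the $w_j$ translates vanishes.
\item Otherwise, $\b_j$ is singular for $(\Gamma_1,\chi_1)$, and the block equals $\overline{\chi_2(\delta_j)}\,w_j^{-s}E_{\b_j}(z;s;\Gamma_1,\chi_1)$. The factor $w_j^{-s}$ comes from $f_s|_k\begin{psmallmatrix}w_j&0\\0&1\end{psmallmatrix}^{-1}=w_j^{-s}f_s$.
\end{itemize}
Combining the two cases, we obtain
\[E_\a(z;s;\Gamma_2,\chi_2)=\sum_{j\,:\,\b_j\text{ singular}}\overline{\chi_2(\delta_j)}\,w_j^{-s}E_{\b_j}(z;s;\Gamma_1,\chi_1).\]
By analytic continuation this identity holds at $s=1/2+it$. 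Distinct $\a$ give disjoint sets of $\Gamma_1$-classes, and the $\Gamma_1$-classes of singular cusps are exactly those lying over singular $\Gamma_2$-classes. Hence the inclusion map is well defined and injective into $\E_{it}(\Gamma_1,\chi_1)$.

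\emph{Step 2: norms.} By \eqref{eq:innerproductEisensteinseries}, the images of different $\a$ are orthogonal, and on the critical line $|w_j^{-s}|^2=w_j^{-1}$. So it remains to check that
\[\frac{1}{\vol(\Gamma_1\backslash\Hyp)}\sum_{j\text{ singular}} w_j^{-1}=\frac{1}{\vol(\Gamma_2\backslash\Hyp)},\qquad\text{i.e.}\qquad \sum_{j\text{ singular}}w_j=[\{\pm I\}\Gamma_2:\{\pm I\}\Gamma_1].\]
The second form follows from the first because $w_j=[\Gamma_2(\a):\delta_j^{-1}\Gamma_1(\b_j)\delta_j]$, with the $\pm I$ issue absorbed since both groups contain $-I$ by Definition \ref{defi:congruencepair}.

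\emph{Main obstacle.} The orbit–stabilizer count gives $\sum_{\text{all }j}w_j=[\Gamma_2:\Gamma_1]$. The difficulty is that nonsingular $\b_j$ seem to be missing from the sum above. I expect this to resolve as follows. Since $\chi_1=\chi_2|_{\Gamma_1}$ and $\chi_2(T_\a)=1$, every $\b_j$ over a singular $\a$ is automatically singular: $T_{\b_j}$ is conjugate to $T_\a^{w_j}$, so $\chi_1(T_{\b_j})=\chi_2(T_\a)^{w_j}=1$. Thus the vanishing case in Step 1 never occurs, and the identity holds. This conjugacy argument is the key point I would verify carefully, including the $\pm$ signs in the stabilizers when $k=1$.
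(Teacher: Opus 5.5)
Your argument is the paper's: split $[\a]_{\Gamma_2}$ into $\Gamma_1$-orbits, relate the scaling matrices, and write $E_\a(\cdot;s;\Gamma_2,\chi_2)$ as a combination of $\Gamma_1$-Eisenstein series. You then continue to $\text{Re}(s)=1/2$ and close with the orbit--stabilizer count $\sum_j w_j=[\Gamma_2:\Gamma_1]$, which is \eqref{eq:sumofrelativewidthofcusps}. Your resolution of the ``main obstacle'' is also correct, and the paper uses it tacitly. Since $-I\in\Gamma_1$, we have $\Gamma_1(\b_j)=\{\pm\delta_jT_\a^{n}\delta_j^{-1}: w_j\mid n\}$. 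Hence $\chi_1(T_{\b_j})=\chi_2(T_\a)^{\pm w_j}=1$, and the first case of your split never occurs.

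There is, however, a sign error in the exponent, and because of it Step 2 as written does not follow. Your own relation $\sigma_1(\b_j)=\delta_j\sigma_2(\a)\begin{psmallmatrix}w_j&0\\0&1\end{psmallmatrix}$ gives $\sigma_2(\a)^{-1}\delta_j^{-1}=\begin{psmallmatrix}w_j&0\\0&1\end{psmallmatrix}\sigma_1(\b_j)^{-1}$. So the matrix acting on $f_s$ is $\begin{psmallmatrix}w_j&0\\0&1\end{psmallmatrix}$ itself, not its inverse, and $f_s|_k\begin{psmallmatrix}w_j&0\\0&1\end{psmallmatrix}=w_j^{s}f_s$. The correct identity is
\[
E_\a(z;s;\Gamma_2,\chi_2)=\sum_j\epsilon_j\,w_j^{s}\,E_{\b_j}(z;s;\Gamma_1,\chi_1),\qquad |\epsilon_j|=1,
\]
which is \eqref{eq:relationeisensteincongruence} with $\delta=\delta_j^{-1}$. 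For trivial characters you can confirm the sign by comparing residues at $s=1$. This forces $\sum_j w_j/\vol(\Gamma_1\backslash\Hyp)=1/\vol(\Gamma_2\backslash\Hyp)$, which is consistent with $w_j^{s}$ and not with $w_j^{-s}$.

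With your $w_j^{-s}$, the first equation in Step 2 would amount to $\sum_j w_j^{-1}=[\Gamma_2:\Gamma_1]$, which is false. For $\Gamma_0(p)\subset\SL_2(\Z)$ the widths are $1$ and $p$, giving $1+1/p\neq p+1$. Your ``i.e.''\ silently replaces this by $\sum_j w_j=[\Gamma_2:\Gamma_1]$, which does not follow from the displayed equation. Once the coefficient is corrected, $|w_j^{s}|^2=w_j$ on the critical line and the norm identity is exactly the orbit--stabilizer count, so the proof closes.

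One minor overstatement: the singular $\Gamma_1$-classes are not exactly those lying over singular $\Gamma_2$-classes. If $\chi_2(T_\a)\neq1$ but $\chi_2(T_\a)^{w_j}=1$, then $\b_j$ is singular for $(\Gamma_1,\chi_1)$ over a nonsingular $\a$. Only the forward direction is needed for your argument, however.
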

\begin{proof}
Let $\a$ be a singular cusp for $(\Gamma_2, \chi_2)$.  The map $\gamma \mapsto \gamma^{-1}\a$ establishes a bijection
\begin{equation*}
    \Gamma_2(\a)\backslash \Gamma_2 \leftrightarrow [\a]_{\Gamma_2}.
\end{equation*}
Passing to $\Gamma_1$-orbits we have 
\begin{equation*}
    [\a]_{\Gamma_2} = \bigsqcup_{\delta \in \Gamma_2(\a)\backslash \Gamma_2 /\Gamma_1} [\delta^{-1}\a]_{\Gamma_1}, 
\end{equation*}
where we note that
\begin{equation}\label{eq:relationstabilizers}
    \Gamma_2(\delta^{-1}\a) = \delta^{-1}\Gamma_2(\a)\delta, \quad 
    \Gamma_1(\delta^{-1} \a) = \delta^{-1}\Gamma_2(\a)\delta \cap \Gamma_1,
\end{equation}
for $\delta \in \Gamma_2$. We also have the identity
\begin{equation}\label{eq:sumofrelativewidthofcusps}
    \sum_{\delta \in \Gamma_2(\a) \backslash \Gamma_2/\Gamma_1}[\Gamma_2(\delta^{-1}\a): \Gamma_1(\delta^{-1}\a)] =  \sum_{\delta \in \Gamma_2(\a) \backslash \Gamma_2/\Gamma_1}[\Gamma_2(\a): \Gamma_2(\a) \cap \delta\Gamma_1 \delta^{-1}] = [\Gamma_2: \Gamma_1].
\end{equation}
We let $\sigma_i(\a)$ denote a scaling matrix for $\a \in \P^1(\Q)$ with respect to the group $\Gamma_i$. Note that we can choose $\sigma_1(\a)$, $\sigma_2(\a)$ so that 
\begin{equation}\label{eq:relationscalingmatrix}
    \sigma_2(\a)^{-1} = \begin{psmallmatrix}
        [\Gamma_2(\a): \Gamma_1(\a)]^{1/2}  & 0\\
        0 & [\Gamma_2(\a): \Gamma_1(\a)]^{-1/2} 
    \end{psmallmatrix} \sigma_1(\a)^{-1}.
\end{equation}
From the definition \eqref{eq:defofeisensteinseries} of the Eisenstein series we have  
\begin{equation}\label{eq:relationeisensteincongruence}
\begin{aligned}
    E_\a(z; s; \Gamma_2, \chi_2) & = \sum_{\delta \in \Gamma_2(\a) \backslash \Gamma_2/\Gamma_1} \overline{\chi_2}(\delta)\sum_{\gamma \in \Gamma_1(\a)\backslash \Gamma_1} \overline{\chi_2}(\gamma) [f_s |_k (\sigma_2(\a)^{-1}\delta \gamma)](z)\\
    & = \sum_{\delta \in \Gamma_2(\a) \backslash \Gamma_2/\Gamma_1} \overline{\chi_2}(\delta)[\Gamma_2(\delta^{-1}\a): \Gamma_1(\delta^{-1}\a)]^s E_{\delta^{-1}\a}(z; s; \Gamma_1, \chi_1),
\end{aligned}
\end{equation}
where we have used \eqref{eq:relationscalingmatrix} and the fact that $\sigma_2(\a)^{-1}\delta = \sigma_2(\delta^{-1}\a)^{-1}$ for $\delta \in \Gamma_2$. Identity \eqref{eq:relationeisensteincongruence} is proved first for $\text{Re}(s) > 1$ and extended to $s \in \CC$ away from poles by analytic continuation. The proof of the lemma is finished by combining \eqref{eq:relationeisensteincongruence} for $\text{Re}(s) = 1/2$ with the definition of the formal inner product \eqref{eq:innerproductEisensteinseries} and using \eqref{eq:sumofrelativewidthofcusps}.
\end{proof}
If we combine the previous lemma with \eqref{eq:continuousfouriercoefficientsasl2norm} we obtain the following fact.
\begin{cor}\label{cor:monotonicityoffouriercoefficients}
Suppose that $(\Gamma_1, \chi_1)\subset (\Gamma_2, \chi_2)$ for two congruence characters. Then for $m \in \Q_{> 0}$ we have 
\begin{equation*}
\Sigma(m; t; \Gamma_2, \chi_2) \leq \Sigma(m; t; \Gamma_1, \chi_1).
\end{equation*}
\end{cor}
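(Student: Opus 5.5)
The corollary is the formal combination of the preceding lemma with the variational description \eqref{eq:continuousfouriercoefficientsasl2norm}. I would first treat $t \neq 0$, where the lemma applies, and handle $t = 0$ at the end.

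For $t \neq 0$, write $\E_2 := \E_{it}(\Gamma_2, \chi_2)$ and $\E_1 := \E_{it}(\Gamma_1, \chi_1)$. By the preceding lemma, $\E_2$ sits inside $\E_1$ as a subspace, and the inclusion is an isometry: $\langle f, f\rangle_{\E_2} = \langle f, f\rangle_{\E_1}$ for every $f \in \E_2$. The Fourier coefficient functional $f \mapsto \rho(f; m)$ depends only on the function $f$ and not on the congruence character. This was noted after \eqref{eq:generalfourierexpansion}, and it uses that both groups have the cusp $\infty$ and we expand in $e(nx)$ with $n \in \Q$. Hence the functional on $\E_2$ is exactly the restriction of the one on $\E_1$. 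Applying \eqref{eq:continuousfouriercoefficientsasl2norm} on both sides gives
\[
\Sigma(m; t; \Gamma_2, \chi_2) = \sup_{0 \neq f \in \E_2} \frac{|\rho(f; m)|^2}{\langle f, f\rangle} \leq \sup_{0 \neq f \in \E_1} \frac{|\rho(f; m)|^2}{\langle f, f\rangle} = \Sigma(m; t; \Gamma_1, \chi_1),
\]
since a supremum over a subset is at most the supremum over the whole set. Equivalently, the norm of the restriction of a functional to a subspace is at most the norm of the functional.

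The only step needing care is to confirm that \eqref{eq:continuousfouriercoefficientsasl2norm} holds. By \eqref{eq:innerproductEisensteinseries}, the vectors $\vol(\Gamma\backslash\Hyp)^{1/2}E_{\a_i}$ form an orthonormal basis of $\E_{it}(\Gamma,\chi)$. The squared norm of a linear functional is the sum of the squares of its values on an orthonormal basis, which is $\vol(\Gamma\backslash\Hyp)\sum_i |\rho_{\a_i,t}(m)|^2$, in agreement with \eqref{eq:defiofl2normfouriercoefficientseisenstein}. Singular cusps of $(\Gamma_2,\chi_2)$ remain singular for $(\Gamma_1,\chi_1)$, because the generator of $\Gamma_1(\a)$ is a power of $T_\a$ and $\chi_1$ is the restriction of $\chi_2$. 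This is implicitly used in the lemma and needs no further work.

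For $t = 0$, both sides are continuous in $t$: the Fourier coefficients of $E_\a(\cdot; 1/2+it)$ are continuous on the critical line, since the Eisenstein series are holomorphic near $\text{Re}(s) = 1/2$. So the inequality at $t = 0$ follows by letting $t \to 0$. Alternatively, $t = 0$ is a single point and irrelevant for the integrals in which $\Sigma$ is used.
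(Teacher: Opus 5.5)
Your proposal is correct and is the paper's argument: the preceding lemma gives an isometric inclusion $\E_{it}(\Gamma_2,\chi_2)\hookrightarrow\E_{it}(\Gamma_1,\chi_1)$, and \eqref{eq:continuousfouriercoefficientsasl2norm} expresses $\Sigma$ as the squared norm of $f\mapsto\rho(f;m)$, which can only decrease when the functional is restricted to a subspace. The paper leaves the case $t=0$ implicit, since the lemma is stated only for $t\neq 0$, and your continuity argument there is a valid supplement.
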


Suppose now that $\Gamma_1$ is normal in $\Gamma_2$. In this case, if $\delta \in \Gamma_2$ then we have $\delta^{-1}\Gamma_1(\a)\delta = \Gamma_1(\delta^{-1}\a)$. It follows that 
\begin{equation}\label{eq:smallcomputationindexstabilizer}
[\Gamma_2(\delta^{-1}\a): \Gamma_1(\delta^{-1}\a)] = [\delta^{-1}\Gamma_2(\a)\delta: \delta^{-1}\Gamma_1(\a)\delta] = [\Gamma_2(\a): \Gamma_1(\a)].
\end{equation}
Also, from the definition \eqref{eq:defofeisensteinseries} we see that 
\begin{equation*}
\begin{aligned}
    E_\a(\cdot; s; \Gamma_1, \chi_1)|_k \delta & = \sum_{\gamma \in \Gamma_1(\a) \backslash \Gamma_1} \overline{\chi(\gamma)}[f_s |_k \sigma_1(\a)^{-1}\gamma \delta] = \sum_{\gamma \in \Gamma_1(\delta^{-1}\a)\backslash \Gamma_1}\overline{\chi(\gamma)}[f_s|_k \sigma_1(\a)^{-1}\delta \gamma]\\
    & = E_{\delta^{-1}\a}(\cdot; s; \Gamma_1, \chi_1),
\end{aligned}
\end{equation*}
so that $\Gamma_2$ acts on $\E_{it}(\Gamma_1, \chi_1)$. Since we are assuming that $\Gamma_1$ is normal in $\Gamma_2$, it is clear that the action of $\Gamma_2$ preserves $\Gamma_1$-equivalence classes of cusps. Thus, the action of $\Gamma_2$ on $\E_{it}(\Gamma_1, \chi_1)$ is unitary with respect to the inner product \eqref{eq:innerproductEisensteinseries}. Using \eqref{eq:relationeisensteincongruence} and \eqref{eq:smallcomputationindexstabilizer} it is easy to recognize that $\E_{it}(\Gamma_2, \chi_2)$ is the $\chi_2$-isotypic component of $\E_{it}(\Gamma_1, \chi_1)$, that is,
\begin{equation*}
    \E_{it}(\Gamma_2, \chi_2) = \{f \in \E_{it}(\Gamma_1, \chi_1) : f|_k \gamma = \chi_2(\gamma)f \text{ for all }\gamma \in \Gamma_2\}.
\end{equation*}

It follows that if $\chi_2$ and $\chi_2'$ are different characters extending $\chi_1$ to $\Gamma_2$, then $\E_{it}(\Gamma_2, \chi_2)$ and $\E_{it}(\Gamma_2, \chi'_2)$ are orthogonal with respect to the inner product on $\E_{it}(\Gamma_1, \chi_1)$.

We now consider a particular case of this situation. Suppose that $\Gamma_1 = \{\pm I\}\Gamma(q)$ and $\chi_1 = \sgn_q^k$ is the character which is trivial on $\Gamma(q)$ and has parity $k$. Also, suppose that $\Gamma_2 = \Gamma_d(q)$. Since 
\begin{equation*}
    \Gamma_1 \backslash \Gamma_2 = \{\pm I\}\Gamma(q) \backslash \Gamma_d(q) \simeq \{\pm 1\} \backslash (\Z/q\Z)^\times,
\end{equation*}
we deduce that there is an orthogonal decomposition
\begin{equation*}
    \E_{it}(\{\pm I\}\Gamma(q), \sgn_q^k) = \bigoplus_{\substack{\psi \modulo q\\
    \psi(-1) = (-1)^k}}\E_{it}(\Gamma_d(q), \psi).
\end{equation*}
It follows that for any $m \in \Q - \{0\}$ we have
\begin{equation*}
    \Sigma(m; t; \{\pm I\}\Gamma(q), \sgn_q^k) = \sum_{\substack{\psi \modulo q\\
    \psi(-1) = (-1)^k}} \Sigma(m; t; \Gamma_d(q), \psi).
\end{equation*} 
Combined with Corollary \ref{cor:monotonicityoffouriercoefficients} we have arrived at the following conclusion.
\begin{lem}\label{lem:controlfouriercoefficients}
Let $(\Gamma, \chi)$ be a congruence character of level $q$ and weight $k$. Then, for any $m \in \Q - \{0\}$ we have
\begin{equation*}
    \Sigma(m; t; \Gamma, \chi) \leq \sum_{\substack{\psi \modulo q\\
    \psi(-1) = (-1)^k}} \Sigma(m; t; \Gamma_d(q), \psi).
\end{equation*}
\end{lem}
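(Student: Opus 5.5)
The plan is to sandwich $(\Gamma, \chi)$ between the two congruence characters appearing in the discussion just before the statement. Since $(\Gamma, \chi)$ has level $q$, we have $\{\pm I\}\Gamma(q) \subset \Gamma$ and $\chi$ is trivial on $\Gamma(q)$. Also $\chi(-I) = (-1)^k$. Hence the restriction of $\chi$ to $\{\pm I\}\Gamma(q)$ is exactly the character $\sgn_q^k$: it is trivial on $\Gamma(q)$ and takes the value $(-1)^k$ at $-I$. (When $q \leq 2$ we have $-I \in \Gamma(q)$, but parity is still forced, so this agrees with the definition of $\sgn_q^k$.) This gives the inclusion of congruence characters
\begin{equation*}
(\{\pm I\}\Gamma(q), \sgn_q^k) \subset (\Gamma, \chi).
\end{equation*}
Corollary \ref{cor:monotonicityoffouriercoefficients} then yields
\begin{equation*}
\Sigma(m; t; \Gamma, \chi) \leq \Sigma(m; t; \{\pm I\}\Gamma(q), \sgn_q^k).
\end{equation*}

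For the second step we use the identity established just above:
\begin{equation*}
\Sigma(m; t; \{\pm I\}\Gamma(q), \sgn_q^k) = \sum_{\substack{\psi \modulo q\\ \psi(-1) = (-1)^k}} \Sigma(m; t; \Gamma_d(q), \psi).
\end{equation*}
Combining the two displays proves the lemma for $t \neq 0$. The corollary was derived from the isometry lemma only for $m > 0$, but its proof uses only \eqref{eq:continuousfouriercoefficientsasl2norm} and the isometry $\E_{it}(\Gamma, \chi) \hookrightarrow \E_{it}(\{\pm I\}\Gamma(q), \sgn_q^k)$. Neither ingredient depends on the sign of $m$, so the same argument covers every $m \in \Q - \{0\}$.

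The main point needing care is the orthogonal decomposition behind the displayed identity. We need $\{\pm I\}\Gamma(q)$ to be normal in $\Gamma_d(q)$, which holds because $\Gamma(q)$ is normal in $\SL_2(\Z)$. We also need the characters of the quotient $\{\pm I\}\Gamma(q)\backslash\Gamma_d(q) \simeq \{\pm1\}\backslash(\Z/q\Z)^\times$ to be exactly the $\psi \modulo q$ of parity $k$, each extending $\sgn_q^k$. Since this quotient is abelian, $\E_{it}(\{\pm I\}\Gamma(q), \sgn_q^k)$ splits orthogonally into isotypic pieces. The squared norm of the functional $f \mapsto \rho(f;m)$ on an orthogonal direct sum is the sum of the squared norms on the summands, which is the sum of the $\Sigma$'s.

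Finally, the case $t = 0$, where the inner product \eqref{eq:innerproductEisensteinseries} is not defined, is handled directly from \eqref{eq:defiofl2normfouriercoefficientseisenstein}. Both sides are continuous in $t$, since the Eisenstein series are holomorphic on the critical line, so the inequality passes to $t = 0$ as a limit.
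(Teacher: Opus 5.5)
Your proposal is correct and follows the paper's argument: restrict $\chi$ to obtain $(\{\pm I\}\Gamma(q), \sgn_q^k) \subset (\Gamma, \chi)$, apply Corollary~\ref{cor:monotonicityoffouriercoefficients}, and then use the orthogonal decomposition of $\E_{it}(\{\pm I\}\Gamma(q), \sgn_q^k)$ into the isotypic pieces $\E_{it}(\Gamma_d(q), \psi)$ to rewrite the bound as the sum over $\psi$. Your two extra remarks fill in points the paper leaves implicit, and both are valid: the monotonicity argument does not depend on the sign of $m$, and the case $t=0$ follows by continuity in $t$.
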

Finally, note that
\begin{equation*}
    \begin{psmallmatrix}
        q & 0\\
        0 & 1
    \end{psmallmatrix}^{-1}\Gamma_d(q)\begin{psmallmatrix}
        q & 0\\
        0 & 1
    \end{psmallmatrix} = \Gamma_0(q^2),
\end{equation*}
from where it follows easily that 
\begin{equation}\label{eq:comparisonfourierGammadandGamma0}
    \Sigma(m; t; \Gamma_d(q), \psi) = q^{-1}\Sigma(qm; t; \Gamma_0(q^2), \psi).
\end{equation}
Combining this relation with \eqref{eq:boundfouriereisensteinhecke} and the asymptotic $\omega(t) \asymp e^{-\pi|t|}(1 + |t|)^k$ we deduce the following result.

\begin{prop}\label{prop:upperboundfouriercoefficentseisenstein}
Let $(\Gamma, \chi)$ be a congruence character of level $q$. Then, for $m \in q^{-1}\Z_{\geq 1}$ and $\varepsilon > 0$ we have 
\begin{equation*}
    \omega(t)\Sigma(m; t; \Gamma, \chi) \ll_{q, \varepsilon} (m(1 + |t|))^\varepsilon.
\end{equation*}
\end{prop}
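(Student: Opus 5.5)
The plan is to chain together the three reductions already established in this section and then insert the explicit bound for Hecke congruence subgroups. First, by Lemma \ref{lem:controlfouriercoefficients}, for any congruence character $(\Gamma, \chi)$ of level $q$ and weight $k$ we have
\begin{equation*}
    \Sigma(m; t; \Gamma, \chi) \leq \sum_{\substack{\psi \modulo q\\
    \psi(-1) = (-1)^k}} \Sigma(m; t; \Gamma_d(q), \psi).
\end{equation*}
The number of characters $\psi$ is at most $\varphi(q)$, so it suffices to bound each $\Sigma(m; t; \Gamma_d(q), \psi)$ individually, with an implied constant allowed to depend on $q$.

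Next I would apply the conjugation relation \eqref{eq:comparisonfourierGammadandGamma0} to get $\Sigma(m; t; \Gamma_d(q), \psi) = q^{-1}\Sigma(qm; t; \Gamma_0(q^2), \psi)$. Here I use the hypothesis $m \in q^{-1}\Z_{\geq 1}$: it guarantees that $qm$ is a positive integer, which is exactly the range in which \eqref{eq:boundfouriereisensteinhecke} applies. Invoking that bound with $N = q^2$ and the Dirichlet character $\psi$, viewed modulo $q^2$, gives
\begin{equation*}
    \Sigma(qm; t; \Gamma_0(q^2), \psi) \ll_{q, \varepsilon} e^{\pi |t|}(1 + |t|)^{-k + \varepsilon}(qm)^\varepsilon.
\end{equation*}

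Finally, I would multiply by $\omega(t)$, using $\omega(t) \asymp e^{-\pi |t|}(1 + |t|)^k$ for real $t$, which is immediate from \eqref{eq:weightskuznetsov}. The exponential factors and the powers $(1+|t|)^{\pm k}$ cancel, leaving $\ll_{q,\varepsilon} (m(1+|t|))^\varepsilon$ after absorbing $q^\varepsilon$ into the constant.

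The one point that needs care is making sure \eqref{eq:comparisonfourierGammadandGamma0} is compatible with the normalizations. The conjugation by $\begin{psmallmatrix} q & 0\\ 0 & 1\end{psmallmatrix}$ rescales $z \mapsto qz$, so Fourier coefficients at $m$ correspond to coefficients at $qm$ with a factor coming from the $|n|^{-1/2}$ normalization in \eqref{eq:generalfourierexpansion}. The volume factors in $\Sigma$ and in the inner product \eqref{eq:innerproductEisensteinseries} should be checked to match, and the singular cusps of the two groups should correspond bijectively under the conjugation. Any discrepancy would only change a constant depending on $q$, which is harmless for the stated bound. The point $t = 0$, where the Eisenstein series need not form a basis, has measure zero and can be handled by continuity or simply excluded.
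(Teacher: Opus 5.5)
Your proposal is correct and follows exactly the paper's route. You reduce to $(\Gamma_d(q),\psi)$ via Lemma \ref{lem:controlfouriercoefficients}, pass to $\Gamma_0(q^2)$ at the integer $qm$ via \eqref{eq:comparisonfourierGammadandGamma0}, insert \eqref{eq:boundfouriereisensteinhecke}, and cancel the factors $e^{\pm\pi|t|}$ and $(1+|t|)^{\pm k}$ using $\omega(t)\asymp e^{-\pi|t|}(1+|t|)^k$; your normalization check also works out exactly, since slashing by $V_q$ carries Eisenstein series to Eisenstein series (with rescaled scaling matrices) and preserves covolume, and $\rho(u|_k V_q; qm)=q^{1/2}\rho(u;m)$ gives precisely the factor $q^{-1}$.
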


\section{Explicit formula for $\Psi_\Gamma(X; \chi)$.}\label{sec:iwaniecslemma}
In this paragraph we extend the explicit formula for $\Psi_{\SL_2(\Z)}(X)$ introduced in \cite[Lemma 1]{Iwaniec1984} to a general congruence character. We just need to make a few observations to show that the same proof can be generalized.

We start by generalizing \cite[Lemma 4]{Iwaniec1984}. Recall that $(\Gamma, \chi)$ denotes a congruence character of level $q$. In particular, we assume that $\{\pm I\} \subset\Gamma$. For $x \geq 1$ we define $\pi_\Gamma(x) := \#\{\text{primitive }\{P_0\}_\Gamma:  N(P_0) \leq x, \Tr(P) > 2\}$. 
\begin{lem}
If $x^{1/2}(\log x)^2 < y < x$ we have
\begin{equation*}
    \pi_\Gamma(x + y) - \pi_\Gamma(x) \ll_q y.
\end{equation*}
\end{lem}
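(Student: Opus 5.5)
Following Iwaniec's Lemma 4, we plan to reduce to the trivial character and then read off an upper bound in short intervals from the Selberg trace formula with a smooth, nonnegative test function. First we observe that $\pi_\Gamma$ does not depend on $\chi$, so we may take $\chi$ trivial and $k=0$. Moreover it suffices to bound the weighted count
\begin{equation*}
\sum_{\substack{\{P\}_\Gamma\\ x < N(P)\le x+y}} \frac{\log N(P_0)}{N(P)^{1/2}-N(P)^{-1/2}},
\end{equation*}
restricted to primitive $P=P_0$, by $\ll_q y\,x^{-1/2}\log x$. Indeed, for primitive classes with $N(P_0)\asymp x$ each weight is $\asymp x^{-1/2}\log x$. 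Non-primitive classes only make the sum larger, since all terms are nonnegative.

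Next we would choose the test function. Let $g\ge 0$ be even, with $\hat g=h\ge 0$ on $\R$ (for instance, a convolution square of a bump), such that $g\ge 1$ on $[\log x,\log(x+y)]$. Concretely, take $g(r)=\eta(r-\log x)+\eta(r+\log x)$ with $\eta=\beta*\tilde\beta$, where $\beta$ is a bump of width $\Delta\asymp y/x$ scaled so that $\eta$ has height about $1$ on an interval of length $\Delta$. Then $h(t)=2\cos(t\log x)\,|\hat\beta(t)|^2$. This $h$ is not globally nonnegative, but it satisfies $|h(t)|\ll \Delta^{2}\|\beta\|_1^{-2}\ldots$; more simply, $|h(t)|\ll \Delta\min(1,(\Delta|t|)^{-A})$, and $h(t)$ stays holomorphic and bounded by $x^{|\mathrm{Im}\, t|}\Delta\min(1,(\Delta|t|)^{-A})$ in the strip. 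Since the hyperbolic terms are nonnegative, $\Lambda_{\mathrm{Hyp}}(h)$ dominates the desired sum. We therefore only need upper bounds for the other terms of Lemma~\ref{lem:selbergtraceformula}, each of which should come out as $O_q(\Delta x^{1/2}+\ldots)$.

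The remaining terms are handled as follows.
\begin{itemize}
\item $\Lambda_{\mathrm{Res}}(h)=h(i/2)\ll x^{1/2}\Delta$.
\item For small eigenvalues, $|h(t_j)|\ll x^{7/64}\Delta$; there are finitely many, so this is negligible.
\item For real $t_j$, we use Weyl's law \eqref{eq:basicweyllaw}: $\sum_j |h(t_j)|\ll \Delta\sum_{|t|\le \Delta^{-1}}(1+|t|)\ll \Delta^{-1}$.
\item $\Lambda_{\mathrm{Id}}(h)\ll \Delta^{-1}$ by the same computation.
\item For $\Lambda_{\mathrm{Par-Cont}}$, we use \eqref{eq:meanvalueestimatescatteringdeterminant} to get $\ll \Delta\cdot\Delta^{-1}\log(1/\Delta)=\log(1/\Delta)$, plus $g(0)$-type terms, which vanish since $g$ is supported away from $0$.
\item The elliptic terms are $\ll \int|g|\ll \Delta$.
\end{itemize}
Altogether the weighted count is $\ll x^{1/2}\Delta+\Delta^{-1}\log x$. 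With $\Delta=y/x$ this is $y x^{-1/2}+ (x/y)\log x$. Multiplying by $x^{1/2}/\log x$ gives $\pi_\Gamma(x+y)-\pi_\Gamma(x)\ll y/\log x + x^{3/2}/y$, and this is $\ll y$ precisely when $y\ge x^{3/4}$. That falls short of the claimed range.

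To reach the full range $y > x^{1/2}(\log x)^2$ we expect a sharper spectral treatment. The point is that the terms bounded by $\Delta^{-1}$ above really arise from $\sum_j h(t_j)$ with $h$ oscillating like $\cos(t\log x)$. Iwaniec's argument instead uses the trace formula with the classical nonnegative kernel and bounds everything by absolute values. This is the step we expect to be the main obstacle, and we would follow Iwaniec's Lemma 4 verbatim there: he obtains the bound $\ll y$ in this range via the exact hyperbolic-weight comparison with the identity contribution. The only inputs that differ in our setting are Weyl's law \eqref{eq:basicweyllaw}, the estimate \eqref{eq:meanvalueestimatescatteringdeterminant}, and finiteness of exceptional eigenvalues, all of which hold with constants depending on $q$. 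So, granting those, the proof transfers with $\ll_q$ in place of absolute constants.
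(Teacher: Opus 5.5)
There is a genuine gap. Your spectral computation is fine as far as it goes, but, as you note yourself, it only gives $\pi_\Gamma(x+y)-\pi_\Gamma(x)\ll_q y$ for $y\gg x^{3/4}$. The content of the lemma is the range $x^{1/2}(\log x)^2<y<x^{3/4}$, and your way of filling it does not work.

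Iwaniec's Lemma 4 is not a trace-formula argument that can be rerun for $\Gamma$ with Weyl's law \eqref{eq:basicweyllaw}, \eqref{eq:meanvalueestimatescatteringdeterminant} and finiteness of exceptional eigenvalues as the only inputs. Those are exactly the inputs your own computation used, and bounding the spectral side in absolute value with them cannot beat $y\ge x^{3/4}$. Iwaniec's bound is, as far as I recall, arithmetic and specific to $\SL_2(\Z)$. Hyperbolic classes of trace $t$ are counted by class numbers $h(d)$ with $du^2=t^2-4$. The condition $x<N(P)\le x+y$ confines $t$ to an interval of length $\asymp yx^{-1/2}$. The class number formula, $L(1,\chi_d)\ll\log x$, and a count of the $u$ with $u^2\mid t^2-4$ then give $\ll y+x^{1/2}(\log x)^2$. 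This is where the threshold comes from, and it has no verbatim analogue for a general $(\Gamma,\chi)$.

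The missing idea is not to redo any analysis for $\Gamma$, but to compare with $\SL_2(\Z)$ and quote Iwaniec's lemma as a black box there.

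First, $\Psi_\Gamma(x)\ll_q x$ shows that non-primitive classes contribute $O_q(x^{1/2})$. Hence
\[
\pi_\Gamma(x)=\sum_{\substack{\{P\}_\Gamma\\ N(P)\le x,\ \Tr(P)>2}}\frac{1}{\nu_\Gamma(P)}+O_q(x^{1/2}).
\]

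Next, apply the pointwise identity \eqref{eq:sec2:auxeq2} from the proof of Lemma \ref{lem:compatibilitywithinduction} with $f=\Ind_\Gamma^{\SL_2(\Z)}\mathbbm{1}$. It turns $\sum_{\{P\}_\Gamma}1/\nu_\Gamma(P)$ into $\sum_{\{P\}_{\SL_2(\Z)}}f(P)/\nu_{\SL_2(\Z)}(P)$ over the same norms. Moreover $0\le f\le[\SL_2(\Z):\Gamma]$.

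Using the first step once more for $\SL_2(\Z)$, you get
\[
\pi_\Gamma(x+y)-\pi_\Gamma(x)\ll_q \pi_{\SL_2(\Z)}(x+y)-\pi_{\SL_2(\Z)}(x)+x^{1/2}.
\]
Iwaniec's Lemma 4 bounds the right-hand side by $\ll y$. The extra $x^{1/2}$ is harmless because $y>x^{1/2}(\log x)^2$.
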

\begin{proof}
Recall that for a hyperbolic element $P \in \Gamma$ we let $\nu_\Gamma(P)$ be the positive integer such that $P = P_0^{\nu_\Gamma(P)}$ for a primitive $P_0 \in \Gamma$. By the basic estimate $\Psi_\Gamma(x) = O_q(x)$ we deduce that
\begin{equation}\label{eq:chebyshevsumisessentiallyoverprimes}
    \pi_\Gamma(x) = \sum_{\substack{\{P\}_\Gamma\\
    N(P) \leq x\\
    \Tr(P) > 2}} \frac{1}{\nu_\Gamma(P)} + O_q(x^{1/2}),
\end{equation}
the error term coming from the contribution of nonprimitive conjugacy classes. By \eqref{eq:sec2:auxeq2} within the proof of Lemma \ref{lem:compatibilitywithinduction} we know that
\begin{equation*}
\sum_{\substack{\{P\}_\Gamma\\
    N(P) \leq x\\
    \Tr(P) > 2}} \frac{1}{\nu_\Gamma(P)} = \sum_{\substack{\{P\}_{\SL_2(\Z)}\\
    N(P) \leq x\\
    \Tr(P) > 2}} \frac{f(P)}{\nu_{\SL_2(\Z)}(P)},
\end{equation*} 
where $f = \Ind_{\Gamma}^{\SL_2(\Z)}1$. Note that $|f(P)| \leq [\SL_2(\Z): \Gamma]$ for all $P$. Thus, using \eqref{eq:chebyshevsumisessentiallyoverprimes} twice, once for $\Gamma$ and once for $\SL_2(\Z)$, we deduce that  
\begin{equation*}
\pi_\Gamma(x + y) - \pi_\Gamma(x) = \sum_{\substack{\{P\}_{\SL_2(\Z)}\\
    x < N(P) \leq x + y\\
    \Tr(P) > 2}} \frac{f(P)}{\nu_{\SL_2(\Z)}(P)} + O_q(x^{1/2}) \ll_q \pi_{\SL_2(\Z)}(x + y) - \pi_{\SL_2(\Z)}(x) + x^{1/2} \ll y,
\end{equation*}
where the last inequality follows from \cite[Lemma 4]{Iwaniec1984}. 
\end{proof}

Now we generalize equations (23), (24) and (25) from \cite{Iwaniec1984}. 

For a hyperbolic element $P$ of $\Gamma$ define
\begin{equation*}
    \Lambda(P) = \frac{\log N(P_0)}{1 - N(P)^{-1}}
\end{equation*}
and consider the Dirichlet series 
\begin{equation*}
    D(s) := \sum_{\substack{\{P\}_\Gamma\\
    \Tr(P) > 2}} \frac{\Lambda(P) \chi(P)}{N(P)^s}
\end{equation*}
which converges absolutely for $\text{Re}(s) > 1$.
Since $\Tr(P) = \Tr(P^{-1})$ we have
\begin{equation*}
D(s) = \frac{1}{2}\sum_{\substack{\{P\}_\Gamma\\
    \Tr(P) > 2}} \frac{\Lambda(P) (\chi(P) + \overline{\chi(P)})}{N(P)^s},
\end{equation*}
so that $D(\overline{s}) = \overline{D(s)}$. In \cite[Chapter 10]{hejhal2} it is proven that $D(s)$ has meromorphic continuation to $\CC$ and its singularities are simple poles. Let us recall the precise description of these poles from \cite[Chapter 10, Theorem 2.16]{hejhal2}. Let $(\rho_n)_n$ enumerate the zeroes of the scattering determinant $\phi(s)$ of $(\Gamma, \chi)$. We know that $1/2 < \text{Re}(\rho_n) < 1$ for each $n$. Recall that $\{u_j\}_j$ is an orthonormal basis of cuspidal Maass forms so that $Pu_j = (1/4 + t_j^2)u_j$, with either $t_j \geq 0$ or else $\text{Im}(t_j) > 0$. In the region $\text{Re}(s) \geq -1/2$ the singularities of $D(s)$ are described by the formal expansion
\begin{equation}\label{eq:formalexpansionoflogarithmicselbergzeta}
    D(s) = \frac{\delta_{\chi = 1}}{s-1} + \sum_{j} \left(\frac{1}{s - 1/2 - it_j} + \frac{1}{s - 1/2 + it_j}\right) + \sum_{n}\frac{1}{s + \overline{\rho_n} - 1} + \frac{A}{s - 1/2} + \frac{B}{s},
\end{equation}
where the precise value of $A$ and $B$ is not important for our purposes. An effective version of this formal expansion is given in \cite[Chapter 10, Theorem 2.24(iv)]{hejhal2}. Precisely, for $s = \sigma + it$ with bounded $\sigma$ and $t \geq 10$ we have
\begin{equation}\label{eq:effecticexpansionoflogarithmicselbergzeta}
    D(s) = \sum_{|t_j - t| \leq 1}\frac{1}{s - 1/2 - it_j} + \sum_{|\text{Im}(\rho_n) - t| \leq 1}\frac{1}{s - (1 - \overline{\rho_n})} + O_q(t),
\end{equation}
generalizing \cite[Equation (20)]{Iwaniec1984}. Recall also that by Weyl's law we have $\#\{j : |t_j - T| \leq 1\}\ll_q T$. On the other hand, the poles $(1-\overline{\rho_n})$ are quite sparse when compared to the cuspidal eigenvalues. Indeed, from Lemma \ref{lem:basicformscattering} we can deduce that 
\begin{equation*}
    \#\{n : |\text{Im}(\rho_n)-T| \leq 1\} \ll_q \log T.
\end{equation*}

Then, exactly as in \cite[section 3]{Iwaniec1984} it follows that 
\begin{equation*}
    |D(\sigma + it)| \ll_{q, \varepsilon} |t|^{2\max(0, 1-\sigma)} \log|t| \quad \text{ for }|t| \geq 2, \sigma > \frac{1}{2} + \frac{1}{\log |t|},
\end{equation*}
that
\begin{equation*}
    D(-\varepsilon + it) \ll |t| + 1, \text{ for } 0 < \varepsilon < 1/4,
\end{equation*}
and that for any $T \geq 2$ there exists $\tau \in [T, T + 1]$ such that 
\begin{equation*}
    \int_0^1 |D(\sigma + i\tau)| \, d\sigma \ll T\log T.
\end{equation*}
These bounds generalize, respectively, (23), (24) and (25) from \cite{Iwaniec1984}. If we define 
\begin{equation*}
    \L(s) := D(s) - D(s + 1) = \sum_{\substack{\{P\}_\Gamma\\
    \Tr(P) > 2}} \frac{\log N(P_0)\chi(P)}{N(P)^s},
\end{equation*}
then the same bounds are true also for $\L(s)$. Collecting these observations we obtain the following result.

\begin{lem}[Iwaniec]\label{lem:iwaniecslemma}
Let $(\Gamma, \chi)$ be a congruence character of level $q$. Then, for any $X \geq 10$ and $1 \leq T \leq \sqrt{X}(\log X)^{-2}$, we have
\begin{equation*}
    \Psi_\Gamma(X; \chi) = \delta_{\chi = 1} X + \sum_{|t_j|\leq T}\left(\frac{X^{1/2 + it_j}}{1/2 + it_j} + \frac{X^{1/2 -it_j}}{1/2 - it_j}\right) + O_q\left(\frac{X}{T}\log^2 X\right).
\end{equation*}
\end{lem}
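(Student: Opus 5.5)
The plan is to follow Iwaniec's proof of \cite[Lemma 1]{Iwaniec1984} essentially verbatim, using the function $\L(s) = D(s) - D(s+1)$ and the bounds collected above in place of their $\SL_2(\Z)$ analogues.

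\textbf{Step 1: Perron's formula.} First I pass from $\Psi_\Gamma(X;\chi)$ to a sum over all hyperbolic classes weighted by $\log N(P_0)\chi(P)$. The two weights $\log N(P)/\nu_\Gamma(P)$ and $\log N(P_0)$ coincide, and the factor $[\{\pm I\}\Gamma:\Gamma]=1$ since $-I\in\Gamma$. I then apply a truncated Perron formula to $\L(s)$ on the line $\text{Re}(s) = 1 + 1/\log X$, with height $\tau \in [T, T+1]$ chosen as in the generalization of (25).

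The truncation error involves the classes with $N(P)$ within $X/T$ of $X$. The short-interval bound $\pi_\Gamma(x+y)-\pi_\Gamma(x)\ll_q y$ holds for $y \geq x^{1/2}(\log x)^2$, which is guaranteed by the hypothesis $T \leq \sqrt X(\log X)^{-2}$. Together with the standard dyadic estimate $\sum_{N(P)} \log N(P_0)\min(1, X/(T|X-N(P)|)) \ll_q (X/T)\log^2 X$, this bounds the truncation error by $O_q(X\log^2 X/T)$.

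\textbf{Step 2: Contour shift.} Next I move the contour to $\text{Re}(s) = -\varepsilon$ across the rectangle with horizontal sides at $\pm\tau$. The residues are read off from the formal expansion \eqref{eq:formalexpansionoflogarithmicselbergzeta}; the shift $D(s+1)$ is holomorphic for $\text{Re}(s) > -1/2$ except for finitely many bounded poles. This produces:
\begin{itemize}
\item $\delta_{\chi=1}X$ from the pole at $s=1$;
\item the terms $X^{1/2\pm it_j}/(1/2\pm it_j)$ for $|t_j| \leq \tau$ from the cuspidal poles;
\item $X^{1-\overline{\rho_n}}/(1-\overline{\rho_n})$ from the scattering poles;
\item $O(X^{1/2}\log X)$ from the poles at $1/2$ and $0$.
\end{itemize}

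The scattering poles need no extra input. Since $\text{Re}(1-\overline{\rho_n}) < 1/2$ and there are $\ll_q \log T'$ of them at height $T'$, their total contribution is $\ll X^{1/2}\log^2 T$, which is negligible. The finitely many exceptional $t_j$ (imaginary, $|\text{Im}\, t_j|\le 7/64$) are included in the main sum as stated. Replacing $\tau$ by $T$ changes the cuspidal sum by $\ll X^{1/2}\cdot T/T$ by Weyl's law \eqref{eq:basicweyllaw}, which is acceptable.

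\textbf{Step 3: Remaining integrals.} The left vertical integral is $\ll X^{-\varepsilon}\int_{-\tau}^{\tau}(|t|+1)/(|t|+1)\,dt \ll T X^{-\varepsilon}$, using the bound $D(-\varepsilon+it)\ll |t|+1$. The horizontal segments split at $\text{Re}(s) = 1$. The part with $0 \le \sigma \le 1$ is bounded by $X\,T^{-1}\int_0^1 |\L(\sigma\pm i\tau)|\,d\sigma \ll X\log T$, by the choice of $\tau$. That bound is too weak on its own, so I would refine it exactly as Iwaniec does: combine the local expansion \eqref{eq:effecticexpansionoflogarithmicselbergzeta} with the pointwise bound $|D(\sigma+it)|\ll t^{2(1-\sigma)}\log t$ for $\sigma>1/2+1/\log t$. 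This gives $\int X^\sigma |t|^{2(1-\sigma)}\log T/T\,d\sigma \ll (X/T + T)\log T$. On the range $\sigma \le 1/2 + 1/\log T$, I use the $\tau$-averaged bound to get $X^{1/2}\log T$.

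\textbf{Main obstacle.} I expect the horizontal integrals near the critical strip to be the delicate point. Since the constants in \eqref{eq:effecticexpansionoflogarithmicselbergzeta} and the mean bound depend only on $q$, Iwaniec's bookkeeping should transfer unchanged, with total error $O_q(X\log^2X/T)$, because $T\le\sqrt X$.
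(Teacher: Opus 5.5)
Your proposal is correct and takes essentially the paper's route: the paper's proof simply runs Iwaniec's argument for \cite[Lemma 1]{Iwaniec1984} using the generalized short-interval bound for $\pi_\Gamma$, the expansions \eqref{eq:formalexpansionoflogarithmicselbergzeta} and \eqref{eq:effecticexpansionoflogarithmicselbergzeta}, the sparsity of the scattering poles, and the analogues of (23)--(25), and these are exactly the inputs you use. One bookkeeping slip: on $0\le\sigma\le 1/2+1/\log T$ the horizontal integral is $\ll X^{1/2+1/\log T}\log T$, not $X^{1/2}\log T$; this is still $O(X\log^2X/T)$ for $3\le \log T\le \frac12\log X-2\log\log X$, by convexity of $u\mapsto u+\log X/u$ (which bounds it by its values at the endpoints of that range), and for bounded $T$ the lemma holds trivially.
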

\begin{proof}
Follows the proof of \cite[Lemma 1]{Iwaniec1984} with the changes described above.
\end{proof}

\section{Newforms and orthonormal basis of $L_0^2(\Gamma, \chi)$.}\label{sec:orthonormalbasis}
When $(\Gamma, \chi)$ is an arbitrary congruence character of level $q$, it is not obvious how to find a basis $\{u_j\}_j$ related to Hecke newforms. In this section we show that there exists a basis of $L_0^2(\Gamma, \chi)$ with the following property: for each $j$ there is a newform $f_j$ such that $u_j$ belongs to the sum of oldspaces of the twists $f_j \otimes \psi$, where $\psi$ ranges over Dirichlet characters modulo $q$. For a more precise statement see Corollary \ref{cor:basisaslinearcombinationoftwists} below. 

Let $t \in \R_{> 0} \cup i[0, 7/64]$ and consider the finite-dimensional space 
\begin{equation}\label{eq:cuspidalspacefixedspectraldata}
    \C_{it}(\Gamma, \chi) := \{u \in L_0^2(\Gamma, \chi) : Pu = (1/4 + t^2)u\}.
\end{equation}
Recall that we have fixed the weight $k \in \{0, 1\}$ such that $\chi(-I) = (-1)^k$, and we have dropped it from the notation. Let $\sgn_q^k$ be the character on $\{\pm I\}\Gamma(q)$ which is trivial on $\Gamma(q)$ and has weight $k$. For conciseness write $\C_{it}(\Gamma(q)) := \C_{it}(\{\pm I\}\Gamma(q), \sgn_q^k)$. Then we have a natural inclusion $\C_{it}(\Gamma, \chi) \subset \C_{it}(\Gamma(q))$ which is an isometry when the inner products are normalized as in \eqref{eq:innerproductmaasforms}.

\subsection{Newforms, oldspaces and twists.}
We recall some familiar facts about newforms and oldspaces. A detailed exposition for these results can be found in sections 4.5 and 4.6 of \cite{miyake}. Even though Miyake treats holomorphic modular forms, the theory for the nonholomorphic case is completely analogous.

Consider the congruence pair $(\Gamma_0(M), \psi)$, where $\psi$ is a Dirichlet character of conductor dividing $M$. For any $n \geq 1$ the Hecke operators $T(n; M, \psi)$ are defined on $\C_{it}(\Gamma_0(M), \psi)$ by the formula
\begin{equation}\label{eq:Heckeoperatorsexplicitformula}
    (T(n; M, \psi)u)(z) := \frac{1}{\sqrt{n}}\sum_{\substack{ad = n\\
    (a, M) = 1}}\psi(a) \sum_{b \modulo d} u\left(\frac{az + b}{d}\right).
\end{equation}
We use the abbreviated notation $T(n)$ when $M$ and $\psi$ are clear from the context. These operators preserve $\C_{it}(\Gamma_0(M), \psi)$, and the effect on the Fourier expansion is given by the formula
\begin{equation}\label{eq:heckeandfourierexpansion}
    \rho(T(n)u; m) =\sum_{\substack{a \mid (n, m)\\ (a, M) = 1}} \psi(a) \rho(u; m n/a^2),
\end{equation}
where $m \in \Z_{\geq 1}$. In particular, for any $n \geq 1$ we have
\begin{equation*}
    \rho(T(n)u; 1) = \rho(u; n).
\end{equation*}
We say that $f \in \C_{it}(\Gamma_0(M), \psi)$ is a \emph{newform} if it is an eigenfunction of $T(n; M, \psi)$ for all $n \geq 1$. In that case we say that $M$ is the \emph{conductor} of $f$, denoted $\q(f) = M$, and we also say that $\psi$ is the \emph{nebentypus} of $f$. By the strong multiplicity one property (see \cite{piatetski_shapiro_corvallis}) $f$ is also an eigenfunction of the involution $Q_{s, k}$ introduced in \cite[Proposition 4.5]{DFIsubconvexityartin}, with eigenvalue $\epsilon_f \in \{\pm 1\}$. It follows that the Fourier expansion of $f$ is 
\begin{equation}\label{eq:fourierexpansonnewform}
    f(z) = \rho(f; 1)\sum_{n = 1}^\infty\frac{\lambda_f(n)}{n^{1/2}}\left(W_{\frac{k}{2}, s-\frac{1}{2}}(4\pi n y)e(nx) +  \epsilon_f \frac{\Gamma\left(s + \frac{k}{2}\right)}{\Gamma\left(s - \frac{k}{2}\right)}W_{-\frac{k}{2}, s-\frac{1}{2}}(4\pi n y)e(-nx)\right),
\end{equation}
see \cite[equations (4.36), (4.70)]{DFIsubconvexityartin}.
Denote the set of normalized newforms of nebentypus $\psi$ and conductor $M$ by
\begin{equation}\label{eq:setofnormalizednewforms}
        \HH_{it}(M, \psi) := \{\text{newforms }f \in \C_{it}(\Gamma_0(N), \psi) \text{ such that }\rho_f(1) = 1\}.
\end{equation}
For a newform $f \in \HH_{it}(M_1, \psi)$, the oldspace of $f$ in $\Gamma_0(M_2)$ is defined by
\begin{equation}\label{eq:oldspaceGamma0}
    V_f(\Gamma_0(M_2)) := \spn\{z \mapsto f(d z) : d \mid (M_2/M_1)\} \subset \C_{it}(\Gamma_0(M), \psi).
\end{equation}
In particular, we set $V_f(\Gamma_0(M_2)) = \{0\}$ when $M_1 \nmid M_2$. The spanning set $\{z \mapsto f(dz): d\mid (M_2/M_1)\}$ is actually a basis of $V_f(\Gamma_0(M_2))$. The space $\C_{it}(\Gamma_0(M), \psi)$ has an orthogonal decomposition given by
\begin{equation*}
    \C_{it}(\Gamma_0(M), \psi) = \bigoplus_{d\mid M}\bigoplus_{f \in \HH_{it}(d, \psi)} V_f(\Gamma_0(M)).
\end{equation*}
The subspaces $V_f(\Gamma_0(M))$ can be characterized in terms of Hecke operators as follows: given $u \in \C_{it}(\Gamma_0(M), \psi)$, we have $u \in V_f(\Gamma_0(M))$ if and only if 
\begin{equation}\label{eq:characterizationoldspace}
    T(n; M, \psi)u = \lambda_f(n)u 
\end{equation}
holds whenever $(n, M) = 1$. This characterization follows from the strong multiplicity one property. 

Finally, let us recall the notion of \emph{twist} of a newform. Suppose given a Dirichlet character $\eta$, as well as a newform $f \in \HH_{it}(M, \psi)$. Then there exists a newform, denoted $f\otimes \eta$, such that 
\begin{equation}\label{eq:heckeeigenvaluetwist}
    \lambda_{f \otimes \eta}(n) = \lambda_f(n)\eta(n) 
\end{equation}
whenever $(n, \q(f)\q(\eta)) = 1$. Furthermore, we know that the nebentypus of $f\otimes \eta$ is $\eta^2 \psi$, and that 
\begin{equation}\label{eq:conductortwist}
    \q(f\otimes \eta) \, \text{ divides } \lcm(\q(f), \q(\eta)^2, \q(\eta)\q(\psi)).
\end{equation}

\subsection{Translation to $(\Gamma_d(q), \psi)$.} 
Since $\Gamma(q)\backslash \Gamma_d(q) \simeq (\Z/q\Z)^\times$ we have 
\begin{equation}\label{eq:orthogonaldecompositionintocharacters}
    \C_{it}(\Gamma(q)) = \bigoplus_{\substack{\psi \modulo q\\
    \psi(-1) = (-1)^k}} \C_{it}(\Gamma_d(q), \psi).
\end{equation}
Since $\Gamma_d(q) = \begin{psmallmatrix}
    q & 0\\
    0 & 1
\end{psmallmatrix}\Gamma_0(q^2)\begin{psmallmatrix}q & 0\\
    0 & 1
\end{psmallmatrix}^{-1}$, the remarks of the previous paragraph can be translated to the congruence character $(\Gamma_d(q), \psi)$. For convenience we let 
\begin{equation}\label{eq:specificsetofnewforms}
    \HH'_{it}(q) := \bigsqcup_{\substack{\psi \modulo q\\
\psi(-1) = (-1)^k}} \bigsqcup_{N \mid q^2} \HH_{it}(N, \psi)
\end{equation}
Given $f \in \HH'_{it}(q)$ we define
\begin{equation}\label{eq:oldspaceGammad}
    V_f(\Gamma_d(q)) := V_f(\Gamma_0(q^2))|_k \begin{psmallmatrix}
        q^{-1} & 0\\
        0 & 1
    \end{psmallmatrix} = \spn\{z \mapsto f(d z/q) : d \mid (q^2/\q(f))\}.
\end{equation}
We have an orthogonal decomposition
\begin{equation*}
    \C_{it}(\Gamma_d(q), \psi) = \bigoplus_{N \mid q^2}\bigoplus_{f \in \HH_{it}(N, \psi)} V_f(\Gamma_d(q)),
\end{equation*}
which combined with \eqref{eq:orthogonaldecompositionintocharacters} leads to 
\begin{equation}\label{eq:orthogonaldecompositionGamma}
    \C_{it}(\Gamma(q)) = \bigoplus_{f \in \HH'_{it}(q)}V_f(\Gamma_d(q)).
\end{equation}
Using \eqref{eq:conductortwist} one can check that if $f \in \HH'_{it}(q)$ and $\eta$ is a Dirichlet character modulo $q$, then $f\otimes \eta \in \HH'_{it}(q)$ as well. We let 
\begin{equation*}
    \T_f(q):= \{f\otimes \eta : \eta \text{ of level } q\}.
\end{equation*}
In the set $\HH'_{it}(q)$ we introduce the equivalence relation 
\begin{equation}\label{eq:equivalencerelationtwist}
    f \sim g \iff f = g \otimes \eta \text{ for some Dirichlet character }\eta \modulo q,
\end{equation} 
and denote by $\HH'_{it}(q)/\sim$ the set of equivalence classes under this relation. Thus, we have a decomposition
\begin{equation*}
    \HH'_{it}(q) = \bigsqcup_{[f] \in \HH'_{it}(q)/\sim } \T_f(q).
\end{equation*}
For $[f] \in \HH'_{it}(q)/\sim$ we define
\begin{equation}\label{eq:oldspacetwists}
    V_{[f]}(\Gamma_d(q)) := \bigoplus_{g \in \T_f(q)} V_g(\Gamma_d(q)).
\end{equation}
Then we have the orthogonal decomposition
\begin{equation}\label{eq:orthogonaldecompositionGammatwists}
    \C_{it}(\Gamma(q)) := \bigoplus_{[f] \in \HH'_{it}(q)/\sim} V_{[f]}(\Gamma_d(q)),
\end{equation}
the relevance of which will become clear in Proposition \ref{prop:decompositiontintoisotypictwists} below.

\subsection{Hecke operators as double cosets.}\label{subsec:heckeasdoublecoset}
For each semigroup $\Delta \subset \GL_2^+(\Q)$ which contains $\Gamma(q)$ we can construct the Hecke ring $R(\Gamma(q), \Delta)$, as in section 3.1 of \cite{shimurabook}. It has a $\Z$-basis consisting of the double cosets $\Gamma(q)\alpha\Gamma(q)$ as $\alpha$ ranges in $\Delta$, and the multiplication is given explicitly in equation (3.1.1) of \cite{shimurabook}. Very importantly, the ring $R(\Gamma(q), \Delta)$ acts linearly on $\C_{it}(\Gamma(q))$ on the right by the following recipe: given a decomposition $\Gamma(q)\alpha\Gamma(q) = \bigsqcup_{i} \Gamma(q)\alpha_i$ and given $u \in \C_{it}(\Gamma(q))$ we have  
\begin{equation}\label{eq:slashoperatordoublecosets}
    u |_k \Gamma(q)\alpha\Gamma(q) := \det(\alpha)^{-1/2}\sum_i u|_k \alpha_i.
\end{equation}
Since $\Gamma(q)$ is normal in $\Gamma$, we can simplify $\Gamma(q)\gamma \Gamma(q) = \Gamma(q)\gamma$ whenever $\gamma \in \Gamma$. It follows that we can characterize $\C_{it}(\Gamma, \chi)$ as
\begin{equation}\label{eq:formsasisotopycspace}
    \C_{it}(\Gamma, \chi) = \{u \in \C_{it}(\Gamma(q)): u|_k \Gamma(q) \gamma\Gamma(q) = \chi(\gamma)u \text{ for all }\gamma \in \Gamma\}.
\end{equation}
In words, $\C_{it}(\Gamma, \chi)$ is the $R(\Gamma(q), \Gamma)$-isotypic component of $\C_{it}(\Gamma(q))$ corresponding to $\chi$. In addition to $\Gamma$, the following two semigroups of $\GL_2^+(\Q)$ are relevant for us:
\begin{equation}\label{eq:defiofsemigroups}
\begin{aligned}
    \Delta'(q) & := \{\begin{psmallmatrix}
        a & b\\
        c & d
    \end{psmallmatrix} \in M_2(\Z) \cap \GL_2^+(\Q) : a = 1 \modulo q, b =  c = 0 \modulo q, (d, q) = 1\},\\
    \Delta(q) & := \{\alpha \in \Delta'(q) : \alpha = I_2 \modulo q\}.
\end{aligned}
\end{equation}
Note that $\Delta(q) \subset \Delta'(q)$. The structure of $R(\Gamma(q), \Delta'(q))$ is well-known. We collect a few observations, whose proofs can be found in \cite[Sections 3.3 and 3.5]{shimurabook}.

First of all, there is a bijection
\begin{equation}\label{eq:bijectioncoprimeheckeoperators}
    \{(a, d) \in \Z^2_{\geq 1}: a \mid d \text{ and }(d, q) = 1\} \longleftrightarrow \Gamma(q)\backslash \Delta'(q)/\Gamma(q)
\end{equation}
given by
\begin{equation}\label{eq:definitiongeneralheckeoperators}
    (a, d) \mapsto T'(a, d) := \Gamma(q) \sigma_a \begin{psmallmatrix}
        a & 0\\
        0 & d
    \end{psmallmatrix}\Gamma(q),
\end{equation}
where $\sigma_a$ is any matrix in $\SL_2(\Z)$ such that $\sigma_a = \begin{psmallmatrix}
    a^{-1} & 0\\
    0 & a
\end{psmallmatrix} \modulo q$.

The bijection from \eqref{eq:bijectioncoprimeheckeoperators} restricts to a bijection 
\begin{equation*}
    \{(a, d) \in \Z^2_{\geq 1}: a \mid d \text{ and }ad = 1 \modulo q\} \longleftrightarrow \Gamma(q)\backslash \Delta(q)/\Gamma(q),
\end{equation*}
which describes the $\Z$-basis of double cosets for $R(\Gamma(q), \Delta(q))$. 

For each prime $p \nmid q$, and $u \in \C_{it}(\Gamma_d(q), \psi)$ we have 
\begin{equation}\label{eq:actionbyscalarhecke}
    T'(p, p) u = p^{-1}\psi(p) u. 
\end{equation}
For $(n, q) = 1$ we let 
\begin{equation*}
    T'(n) := \sum_{\substack{ad = n\\
    a \mid d}}T'(a, d).
\end{equation*}

Then from Proposition 3.36 and Equation (3.5.6) in \cite{shimurabook}, it follows that 
\begin{equation}\label{eq:conjugationheckeoperators}
    T(n; q^2, \psi) \circ \left(\cdot  |_k \begin{psmallmatrix}
        q & 0\\
        0 & 1
    \end{psmallmatrix}\right) = \left(\cdot |_k \begin{psmallmatrix}
        q & 0\\
        0 & 1
    \end{psmallmatrix}\right) \circ T'(n) : \C_{it}(\Gamma_d(q), \psi)\rightarrow \C_{it}(\Gamma_0(q^2), \psi),
\end{equation}
so after a conjugation the operators $T'(n)$ are given by the familiar formula \eqref{eq:Heckeoperatorsexplicitformula}.\newline

\begin{lem}\label{lem:generatorsheckering}
We have the following facts:
\begin{enumerate}
    \item [i)] The ring $R(\Gamma(q), \Delta'(q))$ is a polynomial ring over $\Z$ on the variables $\{T'(p), T'(p, p)\}_{p \nmid q}$. 
    \item [ii)] The $\Q$-algebra $R(\Gamma(q), \Delta'(q)) \otimes_\Z \Q$ is a polynomial ring over $\Q$ on the variables $\{T'(p), T'(p^2)\}_{p \nmid q}$.   
    \item [iii)] The $\Q$-algebra $R(\Gamma(q), \Delta'(q)) \otimes_\Z \Q$ is a polynomial ring over $\Q$ on the variables $\{T'(1, p), T'(1, p^2)\}_{p \nmid q}$.   
\end{enumerate}
\end{lem}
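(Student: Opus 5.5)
We will deduce all three statements from Shimura's description of the Hecke ring in \cite[Sections 3.2--3.3]{shimurabook}, reducing to the familiar structure of the local Hecke rings at primes $p \nmid q$.

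\textbf{Part i).} First, by \cite[Proposition 3.32 and Theorem 3.34]{shimurabook} (the level-$\Gamma(q)$ analogue of the $\Gamma_0(N)$ theory) the ring $R(\Gamma(q), \Delta'(q))$ is commutative. It is also the restricted tensor product over $p \nmid q$ of the local rings $R_p$, where $R_p$ is spanned by the $T'(a,d)$ with $a \mid d$ and $d$ a power of $p$. This factorization comes from multiplicativity: $T'(a,d)T'(a',d') = T'(aa', dd')$ whenever $(d, d') = 1$. We prove it via the bijection \eqref{eq:bijectioncoprimeheckeoperators} and the elementary divisor theorem; the key input is strong approximation, i.e. surjectivity of $\SL_2(\Z) \to \SL_2(\Z/q\Z)$, which lets us pass from $\GL_2$-type double cosets to $\Gamma(q)$-double cosets. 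The factor $\sigma_a$ in \eqref{eq:definitiongeneralheckeoperators} is exactly what keeps the representatives inside $\Delta'(q)$, since they must satisfy $\equiv \begin{psmallmatrix}1&0\\0&*\end{psmallmatrix} \bmod q$. Locally, Shimura's Theorem 3.34 gives that $R_p = \Z[T'(p), T'(p,p)]$, with $T'(p)$ and $T'(p,p)$ algebraically independent. This comes from the recursion
\begin{equation*}
T'(p)T'(p^r) = T'(p^{r+1}) + p\,T'(p,p)T'(p^{r-1})
\end{equation*}
together with $T'(p^a, p^b) = T'(p,p)^a T'(1, p^{b-a})$ and $T'(p^{r}) = \sum_{a} T'(p^a, p^{r-a})$. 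Algebraic independence follows by comparing the degrees $\deg T'(a,d) = ad$ and counting the basis elements $T'(p^a,p^b)$ with $a \le b$.

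\textbf{Parts ii) and iii).} Both follow from i) by a change of variables over $\Q$, applied one prime at a time. For ii), the recursion at $r=1$ gives $T'(p)^2 = T'(p^2) + p\,T'(p,p)$. Hence $T'(p,p) = p^{-1}(T'(p)^2 - T'(p^2))$, so $\Q[T'(p), T'(p,p)] = \Q[T'(p), T'(p^2)]$. This is an invertible, triangular polynomial substitution, so independence is preserved. For iii), note that $T'(1,p) = T'(p)$ and $T'(p^2) = T'(1,p^2) + T'(p,p)$. Combining this with the relation for ii), we get $T'(1,p^2) = T'(p)^2 - (1+p)T'(p,p)$. This is again an invertible substitution over $\Q$ (we divide by $1+p$), so $\{T'(1,p), T'(1,p^2)\}$ are free generators. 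Finally, tensoring with $\Q$ commutes with the restricted tensor product.

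\textbf{Main obstacle.} The real content is the multiplicativity and local structure in i) for the group $\Gamma(q)$ rather than $\SL_2(\Z)$. That is, one has to check that the double cosets $\Gamma(q)\alpha\Gamma(q)$ with $\alpha \in \Delta'(q)$ decompose into left cosets exactly as the $\SL_2(\Z)$-double cosets do. We would prove this first, citing \cite[Lemma 3.29 and Proposition 3.31]{shimurabook}: the natural map $\Gamma(q)\backslash\Gamma(q)\alpha\Gamma(q) \to \SL_2(\Z)\backslash\SL_2(\Z)\alpha\SL_2(\Z)$ is a bijection for $\alpha \in \Delta'(q)$, by strong approximation. Once this is in hand, the structure constants match the classical ones and everything else is formal.
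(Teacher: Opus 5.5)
Your proposal is correct and follows the paper's route. Part i) is Shimura's Theorem 3.34, which the paper simply cites, while you also sketch its content via the bijection between left cosets of $\Gamma(q)\alpha\Gamma(q)$ and of $\SL_2(\Z)\alpha\SL_2(\Z)$, multiplicativity and the local recursion; parts ii) and iii) are exactly the paper's invertible changes of variables over $\Q$, coming from $T'(p)^2 = T'(p^2) + p\,T'(p,p)$ and $(p+1)T'(p,p) = T'(1,p)^2 - T'(1,p^2)$.

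The only wording to tighten is your ``degree'' $ad$. It should be the grading by determinant, not the number of left cosets. In that grading the monomials $T'(p)^iT'(p,p)^j$ of determinant $p^n$ are exactly as many as the basis double cosets $T'(p^a,p^{n-a})$, which is what gives algebraic independence.
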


\begin{proof}
Part i) follows from  \cite[Theorem 3.34]{shimurabook}. Note that we have the familiar multiplicativity relation 
\begin{equation}\label{eq:basicmultiplicativityrelation}
    T'(p)^2 = T'(p^2) + pT'(p, p),
\end{equation}
see \cite[Proposition 3.31]{shimurabook} and relation (2) in \cite[Theorem 3.24]{shimurabook}. Thus, part ii) follows from part i) and \eqref{eq:basicmultiplicativityrelation}. Since by definition, $T'(p) = T'(1, p)$ and $T'(p^2) = T'(1, p^2) + T'(p, p)$, it follows from \eqref{eq:basicmultiplicativityrelation} that 
\begin{equation}\label{eq:nebentypusintermsofeigenvalues}
    (p + 1)T'(p, p) = T'(1, p)^2 - T'(1, p^2).
\end{equation}
Part iii) follows from part i) and \eqref{eq:nebentypusintermsofeigenvalues}.
\end{proof}

\subsection{A decomposition into isotypic components.}

The characterization of the spaces $V_f(\Gamma_0(q^2))$ given in \eqref{eq:characterizationoldspace}, together with equation \eqref{eq:conjugationheckeoperators}, shows the following:
\begin{center} 
Equation \eqref{eq:orthogonaldecompositionGamma} is the decomposition of $\C_{it}(\Gamma(q))$ into $R(\Gamma(q), \Delta'(q))$-isotypic components. 
\end{center}

In particular, for each $f \in \HH'_{it}(q)$ and for each pair $(a,d)$ of positive integers such that $a\mid d$ and $(d, q) = 1$ there is a scalar $\lambda_f(a, d)$ such that 
\begin{equation}\label{eq:eigenvaluegeneralheckeoperator}
    T'(a, d)u = \lambda_f(a, d)u \quad \text{ for all }u \in V_f(\Gamma_d(q)).
\end{equation} 
What about the decomposition of $\C_{it}(\Gamma(q))$  into $R(\Gamma(q), \Delta(q))$-isotypic components? 
\begin{prop}\label{prop:decompositiontintoisotypictwists}
Equation \eqref{eq:orthogonaldecompositionGammatwists} is the decomposition of $\C_{it}(\Gamma(q))$ into $R(\Gamma(q), \Delta(q))$-isotypic components.
\end{prop}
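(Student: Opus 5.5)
We need to show two things. First, each $V_{[f]}(\Gamma_d(q))$ is stable under $R(\Gamma(q), \Delta(q))$ and this ring acts on it through a single character. Second, distinct classes $[f] \neq [g]$ give distinct characters of $R(\Gamma(q), \Delta(q))$. Since $\Delta(q) \subset \Delta'(q)$, we have $R(\Gamma(q), \Delta(q)) \subset R(\Gamma(q), \Delta'(q))$. By the remark preceding the proposition, $R(\Gamma(q), \Delta'(q))$ acts on $V_g(\Gamma_d(q))$ by the scalars $\lambda_g(a,d)$ of \eqref{eq:eigenvaluegeneralheckeoperator}. So stability is automatic, and everything reduces to comparing the eigenvalues $\lambda_g(a,d)$ for pairs $(a,d)$ with $a \mid d$ and $ad \equiv 1 \modulo q$, which index the $\Z$-basis of $R(\Gamma(q), \Delta(q))$.

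\textbf{Step 1: the character is constant on a twist class.} Let $g = f \otimes \eta$ with $\eta$ modulo $q$. By \eqref{eq:conjugationheckeoperators} and \eqref{eq:heckeeigenvaluetwist}, for $(n,q)=1$ the eigenvalue of $T'(n)$ on $V_g$ is $\lambda_g(n) = \eta(n)\lambda_f(n)$. By \eqref{eq:actionbyscalarhecke}, the eigenvalue of $T'(p,p)$ on $V_g$ is $p^{-1}\eta^2(p)\psi(p)$. Using Lemma \ref{lem:generatorsheckering} i) and the recursions from Shimura's multiplication rules, the eigenvalue of $T'(a,d)$ on $V_g$ is expressed through these generators. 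I expect the clean statement to be $\lambda_g(a,d) = \eta(ad)\lambda_f(a,d)$. Since every generator is homogeneous under twisting (weight $\eta(p)$ for $T'(p)$, weight $\eta(p^2)$ for $T'(p,p)$), each double coset $T'(a,d)$, being a polynomial in generators of total determinant $ad$, picks up the factor $\eta(ad)$. When $ad \equiv 1 \modulo q$ this factor is $1$. Hence $R(\Gamma(q),\Delta(q))$ acts on all of $V_{[f]}$ by one character.

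\textbf{Step 2: distinct classes give distinct characters.} Suppose $f,g \in \HH'_{it}(q)$ induce the same character of $R(\Gamma(q),\Delta(q))$. The main obstacle is to recover from this a Dirichlet character $\eta$ modulo $q$ with $\lambda_g(n) = \eta(n)\lambda_f(n)$ for almost all $n$; strong multiplicity one (as in \eqref{eq:characterizationoldspace}) then gives $g = f\otimes\eta$.

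I would argue as follows. For $(n,q)=1$, choose $m$ coprime to $q$ with $nm \equiv 1 \modulo q$, for instance a prime $m$ by Dirichlet. The operators $T'(a,d)$ with $ad \equiv 1$, such as $T'(1,p)T'(1,p')$ for primes $p p' \equiv 1$, and $T'(1,p^{\varphi(q)})$, lie in the ring, and their eigenvalues must agree for $f$ and $g$. Concretely, $\lambda_f(p)\lambda_f(p') = \lambda_g(p)\lambda_g(p')$ whenever $pp' \equiv 1 \modulo q$. The same holds for products $\lambda(p_1)\cdots\lambda(p_r)$ with $p_1\cdots p_r \equiv 1$, and similarly for the nebentypus values $T'(p,p)$ in such combinations.

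Fix a prime $p_0 \equiv 1$ with $\lambda_f(p_0) \ne 0$; such primes have positive density by Rankin--Selberg nonvanishing. Comparing gives $\lambda_g(p_0) = \lambda_f(p_0)$. For primes $p$ with $\lambda_f(p)\neq 0$, define $\eta(p) := \lambda_g(p)/\lambda_f(p)$. The product relations show that $\eta(p)$ depends only on $p \modulo q$ and is multiplicative on residue classes. It therefore defines a character of $(\Z/q\Z)^\times$, possibly after handling the classes where $\lambda_f$ vanishes often, which the positive-density argument should cover.

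Agreement at primes $p$ with $\lambda_f(p)=0$ requires care. Here I would use $T'(1,p^2)$-type relations together with \eqref{eq:nebentypusintermsofeigenvalues}: these show $\lambda_g(p)^2 = \eta(p)^2\lambda_f(p)^2$ up to the nebentypus matching, and hence $\lambda_g(p)=0$ too. Strong multiplicity one then gives $g = f \otimes \eta$, so $f \sim g$, completing the proof.
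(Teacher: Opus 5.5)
Your Step 1 is correct and is essentially the paper's forward direction. The paper uses the $\Q$-generators $T'(p), T'(p^2)$ of Lemma \ref{lem:generatorsheckering} ii). You use the $\Z$-generators $T'(p), T'(p,p)$ together with the grading of the Hecke ring by determinant. Both routes give $\lambda_{f\otimes\eta}(a,d)=\eta(ad)\lambda_f(a,d)$.

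Step 2, however, has a genuine gap, located exactly where you write ``should cover'' and ``requires care''. Take $f\in\HH'_{it}(q)$ with a nontrivial self-twist $f\otimes\chi=f$, where $\chi$ is a character modulo $q$. Such forms exist, for instance dihedral forms; compare $\HH^{\text{excep}}_{it}(q)$ in Lemma \ref{lem:exceptionalnewformsaresparse}.

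\begin{itemize}
\item By \eqref{eq:heckeeigenvaluetwist}, $\lambda_f(p)=\chi(p)\lambda_f(p)$ for $p\nmid q$. So $\lambda_f(p)=0$ for \emph{every} prime in each residue class $a$ with $\chi(a)\neq 1$.
\item On such classes no density argument can define $\eta(p)=\lambda_g(p)/\lambda_f(p)$.
\item Your product relations carry no information about $g$ there. The class $a^{-1}$ is again bad, so $\lambda_f(p)\lambda_f(p')=\lambda_g(p)\lambda_g(p')$ with $pp'\equiv 1 \modulo q$ only says $\lambda_g(p)\lambda_g(p')=0$.
\item Even your claim that $\eta$ depends only on $p \modulo q$ needs a prime with $\lambda_f\neq 0$ in the inverse class.
\item Your remedy via $T'(1,p^2)$ and \eqref{eq:nebentypusintermsofeigenvalues} fails as stated. $T'(1,p^2)$ and $T'(p,p)$ lie in $R(\Gamma(q),\Delta(q))$ only when $p^2\equiv 1 \modulo q$.
\end{itemize}

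To repair your argument you would need three further steps:
\begin{itemize}
\item Pair $T'(1,p^2)$ and $T'(p,p)$ with some $T'(1,p')$, where $p'\equiv p^{-2}$ and $\lambda_f(p')\neq 0$.
\item Justify that pairing: the classes on which $\lambda_f$ is not identically zero form a subgroup containing all squares, and each contains infinitely many primes with $\lambda_f(p)\neq 0$. This needs twisted Rankin--Selberg: $L(s; f\otimes\overline{f}\otimes\chi)$ has a pole iff $f\otimes\chi=f$, and such $\chi$ are quadratic.
\item Define $\eta$ on that subgroup and extend it to all of $(\Z/q\Z)^\times$.
\end{itemize}
This is substantial input that the proposal does not supply.

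The paper avoids every nonvanishing issue. The character on $R(\Gamma(q),\Delta(q))$ determines $\delta_{\{n\equiv 1 \modulo q\}}\lambda_f(1,n)=\varphi(q)^{-1}\sum_\eta A_{f\otimes\eta}(n)$. Hence equal characters give $\sum_\eta A_{f\otimes\eta}=\sum_\eta A_{g\otimes\eta}$. Linear independence of pairwise nonequivalent multiplicative functions then forces $A_{f\otimes\eta_1}\sim A_{g\otimes\eta_2}$ for some $\eta_1,\eta_2$. Strong multiplicity one gives $f=g\otimes\eta_2\overline{\eta_1}$.

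A purely algebraic fix of your Step 2 also works.
\begin{itemize}
\item The characters $\eta$ of $(\Z/q\Z)^\times$ act on $R(\Gamma(q),\Delta'(q))\otimes\CC$ by $T'(a,d)\mapsto\eta(ad)T'(a,d)$. These are ring automorphisms by the determinant grading, and the invariant subring is $R(\Gamma(q),\Delta(q))\otimes\CC$.
\item Suppose the eigencharacter $\phi_g$ differed from $\phi_f\circ\eta$ for every $\eta$. Since $\CC$-algebra characters are determined by their kernels, prime avoidance gives $x\in\ker\phi_g$ lying in no $\eta$-translate of $\ker\phi_f$.
\item The invariant element $\prod_\eta \eta\cdot x$ is then killed by $\phi_g$ but not by $\phi_f$, a contradiction.
\item Hence $\lambda_g(a,d)=\eta(ad)\lambda_f(a,d)$ for some $\eta$, and strong multiplicity one gives $g=f\otimes\eta$.
\end{itemize}
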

For use in the proof of this proposition we take the following definition from \cite{linearindependenceLfunctions}.
\begin{defi}
We say that two multiplicative functions $A_1, A_2: \Z_{\geq 1} \rightarrow \CC$ are \emph{equivalent} if there exists $M \geq 1$ such that $A_1(n) = A_2(n)$ whenever $(n, M) = 1$. Otherwise, we say that they are \emph{nonequivalent}.
\end{defi}
We know that pairwise nonequivalent multiplicative functions $A_1, \ldots, A_n$ are linearly independent, see \cite[Theorem 2]{linearindependenceLfunctions}.
\begin{proof}[Proof of Proposition \ref{prop:decompositiontintoisotypictwists}.]
Since the double cosets $T'(a, d)$ with $ad = 1 \modulo q$ form a $\Z$-basis of $R(\Gamma(q), \Delta(q))$ the result can be restated explicitly as follows: given $f, g \in \HH'_{it}(q)$, we have 
\begin{equation}\label{eq:tobeshowndecompositionisotypictwists}
     f = g \otimes \eta \iff \lambda_f(a, d) = \lambda_g(a, d) \text{ whenever }ad = 1 \modulo q.
\end{equation} 
By part ii) from Lemma \ref{lem:generatorsheckering}, the elements $\{T'(p), T'(p^2)\}_{p \nmid q}$ generate $R(\Gamma(q), \Delta'(q)) \otimes_\Z \Q$. Thus, it follows from \eqref{eq:heckeeigenvaluetwist} and \eqref{eq:conjugationheckeoperators} that 
\begin{equation}\label{eq:twistinggeneralheckeeigenvalues}
\lambda_{f \otimes \eta}(a, d) = \eta(ad)\lambda_{f}(a, d)
\end{equation}
for any $f \in \HH_{it}'(q)$, any Dirichlet character $\eta$ modulo $q$ and any $(a, d) \in \Z^2_{\geq 1}$ such that $a\mid d$ and $(d, q) = 1$. This proves the forward implication in \eqref{eq:tobeshowndecompositionisotypictwists}.  

To treat the other direction, given $h \in \HH'_{it}(q)$ we introduce the multiplicative function $A_h$ given by 
\begin{equation*}
    A_h(n):= \delta_{\{(n, q) = 1\}}\lambda_h(1, n).
\end{equation*}
We first claim that the equivalence class of $A_h$ determines $h \in \HH'_{it}(q)$ uniquely. Indeed, we know from part iii) of Lemma \ref{lem:generatorsheckering} that the elements $\{T'(1, p), T'(1, p^2)\}_{p \nmid q}$ generate $R(\Gamma(q), \Delta'(q)) \otimes_\Z \Q$. Thus, if $A_{h_1}$ and $A_{h_2}$ are equivalent, we deduce that there exists $M\geq 1$ such that $\lambda_{h_1}(n) = \lambda_{h_2}(n)$ whenever $(n, qM) = 1$. By the strong multiplicity one property we conclude that $h_1 = h_2$, as desired.

We are now ready to finish the proof of the proposition. Let $f, g \in \HH'_{it}(q)$.  By \eqref{eq:twistinggeneralheckeeigenvalues} and orthogonality of characters we have  
\begin{equation*}
    \frac{1}{\varphi(q)}\sum_{\eta \modulo q} A_{f \otimes \eta}(n) = \delta_{\{n = 1 \modulo q\}} \lambda_f(1, n),
\end{equation*}
and similarly for $g$. Thus, if $f, g \in \HH'_{it}(q)$ are such that $\lambda_f(a, d) = \lambda_g(a, d)$ whenever $ad = 1 \modulo q$, then we have the linear relation
\begin{equation*}
    \sum_{\eta \modulo q} A_{f\otimes \eta} = \sum_{\eta \modulo q} A_{g\otimes \eta}.
\end{equation*}
By \cite[Theorem 2]{linearindependenceLfunctions}, it must be that $A_{f\otimes \eta_1}$ and $A_{g \otimes \eta_2}$ are equivalent for some Dirichlet characters $\eta_1, \eta_2$. By the previous paragraph this implies that $f = g\otimes \eta$ for $\eta = \eta_2 \overline{\eta_1}$, thus proving the backward implication in \eqref{eq:tobeshowndecompositionisotypictwists} and finishing the proof of Proposition \ref{prop:decompositiontintoisotypictwists}.
\end{proof}

We also have a very relevant commutativity property.
\begin{lem}\label{lem:DeltaqcentralizesSL2}
The subrings $R(\Gamma(q), \Delta(q))$ and $R(\Gamma(q), \SL_2(\Z))$ centralize each other inside $R(\Gamma(q), \GL_2^+(\Q))$. 
\end{lem}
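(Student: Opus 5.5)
Since $\Gamma(q)$ is normal in $\SL_2(\Z)$, every double coset $\Gamma(q)\gamma\Gamma(q)$ with $\gamma\in\SL_2(\Z)$ equals the single coset $\Gamma(q)\gamma$. Hence $R(\Gamma(q),\SL_2(\Z))$ is the group ring $\Z[\Gamma(q)\backslash\SL_2(\Z)]$, and its product is $\Gamma(q)\gamma_1\cdot\Gamma(q)\gamma_2=\Gamma(q)\gamma_1\gamma_2$. Both rings are spanned over $\Z$ by double cosets, so it suffices to check that a single coset $\Gamma(q)\gamma$, $\gamma\in\SL_2(\Z)$, commutes with a single double coset $\Gamma(q)\alpha\Gamma(q)$, $\alpha\in\Delta(q)$.

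Using Shimura's multiplication rule (3.1.1), if $\Gamma(q)\alpha\Gamma(q)=\bigsqcup_i\Gamma(q)\alpha_i$, then
\begin{equation*}
\Gamma(q)\gamma\cdot\Gamma(q)\alpha\Gamma(q)=\Gamma(q)\gamma\alpha\Gamma(q),\qquad \Gamma(q)\alpha\Gamma(q)\cdot\Gamma(q)\gamma=\Gamma(q)\alpha\gamma\Gamma(q).
\end{equation*}
To justify the first identity, write $\gamma\Gamma(q)\alpha\Gamma(q)=\Gamma(q)\gamma\alpha\Gamma(q)$ by normality, and note that it is a single double coset with multiplicity one. The second identity follows from $\bigsqcup_i\Gamma(q)\alpha_i\gamma=\Gamma(q)\alpha\Gamma(q)\gamma=\Gamma(q)\alpha\gamma\Gamma(q)$, again using that $\gamma$ normalizes $\Gamma(q)$. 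The lemma therefore reduces to the set-theoretic identity
\begin{equation*}
\Gamma(q)\gamma\alpha\Gamma(q)=\Gamma(q)\alpha\gamma\Gamma(q).
\end{equation*}

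This identity is the main step. Since $\gamma\alpha=(\gamma\alpha\gamma^{-1})\gamma$ and $\gamma$ normalizes $\Gamma(q)$, it suffices to show that $\gamma\alpha\gamma^{-1}\in\Gamma(q)\alpha\Gamma(q)$. The plan is to use the elementary divisor description. Every $\alpha\in\Delta(q)$ satisfies $\alpha\equiv I\pmod q$ and $\det\alpha=ad$ with $ad\equiv1\pmod q$. By Shimura's Lemma 3.29 (the argument behind the bijection \eqref{eq:bijectioncoprimeheckeoperators}), the double coset $\Gamma(q)\beta\Gamma(q)$ of an integral $\beta$ with determinant prime to $q$ is determined by two data: its elementary divisors, equivalently its $\SL_2(\Z)$-double coset, and its class modulo $q$ in the appropriate sense. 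Concretely, $\Gamma(q)\backslash\Delta'(q)/\Gamma(q)$ is indexed by $(a,d)$ because the condition on the reduction mod $q$ pins down the rest.

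Now compare $\beta=\gamma\alpha\gamma^{-1}$ with $\alpha$. Both lie in $\SL_2(\Z)\alpha\SL_2(\Z)$, so they have the same elementary divisors $(a,d)$. Moreover $\beta\equiv\gamma I\gamma^{-1}=I\pmod q$, so $\beta\in\Delta(q)$. Applying the bijection restricted to $\Delta(q)$ then gives $\Gamma(q)\beta\Gamma(q)=T'(a,d)=\Gamma(q)\alpha\Gamma(q)$.

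If invoking Shimura's classification directly proves awkward, I would instead verify it by hand. Using strong approximation, the map $\SL_2(\Z)\to\SL_2(\Z/q)$ is surjective, and $\SL_2(\Z)=\Gamma(q)\cdot\{\text{lifts}\}$. Writing $\alpha=\delta_1\,\mathrm{diag}(a,d)\,\delta_2$ with $\delta_i\in\SL_2(\Z)$ and $\delta_1\mathrm{diag}(a,d)\delta_2\equiv I$, one adjusts the $\delta_i$ modulo $\Gamma(q)$ to match reductions. The obstacle is exactly this bookkeeping of reductions modulo $q$. It works precisely because $\alpha\equiv I$ is central modulo $q$; this is why the statement holds for $\Delta(q)$ but not for $\Delta'(q)$.
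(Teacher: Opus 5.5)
Your proof is correct and is essentially the paper's. Both arguments use normality of $\Gamma(q)$ in $\SL_2(\Z)$ to reduce the claim to the equality $\Gamma(q)\gamma\alpha\Gamma(q)=\Gamma(q)\alpha\gamma\Gamma(q)$, and then invoke Shimura's Lemma 3.29, which classifies $\Gamma(q)$-double cosets by their $\SL_2(\Z)$-double coset together with their reduction modulo $q$. The only cosmetic difference is that the paper applies Lemma 3.29(2) directly to $\gamma\alpha$ and $\alpha\gamma$, which are both congruent to $\gamma$ modulo $q$. You instead conjugate first, compare $\gamma\alpha\gamma^{-1}$ with $\alpha$ (both congruent to $I$), and conclude through the bijection with the $T'(a,d)$.
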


\begin{proof}
Let $\alpha \in \Delta_q$ and $\beta \in \SL_2(\Z)$. Since $\Gamma(q)$ is normal in $\SL_2(\Z)$ we have 
\begin{equation}\label{auxeq:multdoublecosets}
    \Gamma(q) \alpha \Gamma(q) \cdot \Gamma(q)\beta \Gamma(q) = \Gamma(q) \alpha \beta \Gamma(q), \quad \Gamma(q) \beta \Gamma(q) \cdot \Gamma(q)\alpha \Gamma(q) = \Gamma(q) \beta \alpha \Gamma(q).
\end{equation}
By the definition of $\Delta_q$ we have 
\begin{equation*}
    \alpha \beta = \beta = \beta \alpha \modulo q.
\end{equation*}
Since $\beta \in \SL_2(\Z)$ we have
\begin{equation*}
    \SL_2(\Z)\alpha \beta \SL_2(\Z) = \SL_2(\Z)\alpha\SL_2(\Z) = \SL_2(\Z)\beta \alpha \SL_2(\Z).
\end{equation*}
Therefore, by part (2) of \cite[Lemma 3.29]{shimurabook} we deduce that 
\begin{equation*}
    \Gamma(q)\alpha \beta\Gamma(q) = \Gamma(q)\beta \alpha\Gamma(q).
\end{equation*}
Combined with \eqref{auxeq:multdoublecosets}, this concludes the proof of the lemma.
\end{proof}

Thus, we have a commutative ring $R(\Gamma(q), \Delta(q))$ whose action on $\C_{it}(\Gamma(q))$ commutes with the action of $R(\Gamma(q), \Gamma)$. If we combine \eqref{eq:formsasisotopycspace} with Proposition \ref{prop:decompositiontintoisotypictwists} we obtain the following corollary, which is the main result of this section. 
\begin{cor}\label{cor:basisaslinearcombinationoftwists}
Let $(\Gamma, \chi)$ be a congruence pair of level $q$. Then, we have
\begin{equation*}
    \C_{it}(\Gamma, \chi) = \bigoplus_{[f] \in \HH'_{it}(q)/\sim}\left(V_{[f]}(\Gamma_d(q)) \cap \C_{it}(\Gamma, \chi)\right).
\end{equation*}   
\end{cor}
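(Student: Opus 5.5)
The plan is to combine the characterization \eqref{eq:formsasisotopycspace} of $\C_{it}(\Gamma, \chi)$ as an isotypic component for the action of $R(\Gamma(q), \Gamma)$ with Proposition \ref{prop:decompositiontintoisotypictwists} and the commutativity statement of Lemma \ref{lem:DeltaqcentralizesSL2}. The underlying principle is elementary linear algebra. If a finite-dimensional space $V$ carries commuting actions of two algebras $A$ and $B$, then every $A$-isotypic component of $V$ is $B$-stable. Consequently, any $B$-isotypic subspace $W \subset V$ splits as the direct sum of its intersections with the $A$-isotypic components. Here we take $V = \C_{it}(\Gamma(q))$, $A = R(\Gamma(q), \Delta(q))$ and $B = R(\Gamma(q), \Gamma)$.

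The first step is to check that the action of $R(\Gamma(q), \Gamma)$ on $\C_{it}(\Gamma(q))$ commutes with that of $R(\Gamma(q), \Delta(q))$. Lemma \ref{lem:DeltaqcentralizesSL2} says that the two subrings centralize each other inside $R(\Gamma(q), \GL_2^+(\Q))$, and $\Gamma \subset \SL_2(\Z)$. Since \eqref{eq:slashoperatordoublecosets} defines a right action of the full Hecke ring, hence a ring (anti)homomorphism into $\operatorname{End}(\C_{it}(\Gamma(q)))$, equality of products in the ring gives equality of the corresponding operators. The normalizing factor $\det(\alpha)^{-1/2}$ is multiplicative, so it causes no trouble.

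The second step uses Proposition \ref{prop:decompositiontintoisotypictwists}: the subspaces $V_{[f]}(\Gamma_d(q))$ are exactly the $R(\Gamma(q), \Delta(q))$-isotypic components. Each of them is cut out by a system of eigenvalues $\lambda(a,d)$ for the finitely many relevant generators, so it is a joint eigenspace. Joint eigenspaces are preserved by every operator commuting with all of $R(\Gamma(q), \Delta(q))$, in particular by each $\Gamma(q)\gamma\Gamma(q) = \Gamma(q)\gamma$ with $\gamma \in \Gamma$. Let $\pi_{[f]}$ denote the projection onto $V_{[f]}(\Gamma_d(q))$ along the decomposition \eqref{eq:orthogonaldecompositionGammatwists}. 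These projections commute with each $\Gamma(q)\gamma$ as well, because both sides of the decomposition are $\Gamma$-stable.

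The final step is to take $u \in \C_{it}(\Gamma, \chi)$ and write $u = \sum_{[f]} \pi_{[f]}u$. For $\gamma \in \Gamma$ we have
\begin{equation*}
(\pi_{[f]}u)|_k \Gamma(q)\gamma\Gamma(q) = \pi_{[f]}(u|_k\Gamma(q)\gamma\Gamma(q)) = \chi(\gamma)\pi_{[f]}u.
\end{equation*}
By \eqref{eq:formsasisotopycspace} this shows that $\pi_{[f]}u \in V_{[f]}(\Gamma_d(q)) \cap \C_{it}(\Gamma, \chi)$. This gives the inclusion "$\subset$". The reverse inclusion is trivial, and directness (indeed orthogonality) is inherited from \eqref{eq:orthogonaldecompositionGammatwists}.

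The only point requiring care is the first step. One must confirm that the action \eqref{eq:slashoperatordoublecosets} is genuinely multiplicative with respect to Shimura's product on the Hecke ring, so that commutation in the ring transfers to commutation of operators, and that the isotypic components are determined by finitely many operators. I expect both to be routine: the first follows from the standard compatibility in \cite[Section 3.1]{shimurabook}, and the second holds because $\C_{it}(\Gamma(q))$ is finite-dimensional.
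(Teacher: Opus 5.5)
Your proposal is correct and follows the same route as the paper. The paper gives only a one-line justification: Lemma \ref{lem:DeltaqcentralizesSL2} makes the actions of $R(\Gamma(q),\Delta(q))$ and $R(\Gamma(q),\Gamma)$ on $\C_{it}(\Gamma(q))$ commute, and then \eqref{eq:formsasisotopycspace} is combined with Proposition \ref{prop:decompositiontintoisotypictwists}. You fill in the linear-algebra step the paper leaves implicit: the $V_{[f]}(\Gamma_d(q))$ are joint eigenspaces, hence stable under each $\Gamma(q)\gamma$, so the projections $\pi_{[f]}$ commute with these operators and send $\C_{it}(\Gamma,\chi)$ into itself.
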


\section{Lindelöf bound for the spectral average of Rankin-Selberg convolutions.}\label{sec:rankinselbergandadjointlfunction}
Corollary \ref{cor:basisaslinearcombinationoftwists} guarantees the existence of an orthonormal basis $\{u_j\}_{j \geq 1}$ of $L_0^2(\Gamma, \chi)$ with the following properties:
\begin{enumerate}
    \item [1)] Each $u_j$ is an eigenfunction of the Laplacian. That is, we have $P u_j = \lambda_j u_j$ for some $\lambda_j > 0$. 
    \item [2)] The eigenvalues $\lambda_j$ are in increasing order. We write $\lambda_j = 1/4 + t_j^2$ with the convention that $t_j > 0$ or $\text{Im}(t_j) \geq 0$.    
    \item [3)] For each $j \geq 1$ there exists a newform $f_j$ such that $u_j \in V_{[f_j]}(\Gamma_d(q))$.
    \item [4)] The map $j \mapsto [f_j] \in \HH'_{it_j}(q)/\sim$ is $O_q(1)$-to-one.
\end{enumerate}
Part $4)$ follows from the fact that $\dim_\CC V_{[f]}(\Gamma_d(q)) \leq \varphi(q)\tau(q^2)$ for any $f \in \HH'_{it}(q)$, recall \eqref{eq:oldspaceGammad} and \eqref{eq:oldspacetwists}.

From the orthogonal decomposition in \eqref{eq:oldspacetwists} we see that, for each $j \geq 1$, we can write uniquely
\begin{equation}\label{eq:decompositionmaassformintocomponents}
    u_j = \sum_{g \in \T_{f_j}(q)} u_j^g, \quad  \text{ where }\quad u_j^g \in V_g(\Gamma_d(q)).
\end{equation} 
To estimate $\S_0(T;X)$ and derive Theorem \ref{thm:luosarnakbound} we will apply the Bruggeman-Kuznetsov trace formula from Proposition \ref{prop:forwardkuznetsov} for different values of $n = m$ and then remove the weights $|\rho_j(n)|^2$. To remove these weights we need to consider the \emph{naive} Rankin-Selberg $L$-series
\begin{equation}\label{eq:naiverankinselberg}
    \L(s; |u_j|^2) := \sum_{n \in \frac{1}{q}\Z_{\geq 1}} \frac{|\rho_j(n)|^2}{n^s}.
\end{equation}

The purpose of this section is to prove the following result. 
\begin{thm}\label{thm:informationaboutrankinselberg}
Let $\{u_j\}_{j \geq 1}$ be a basis of $L_0^2(\Gamma, \chi)$ with the properties above. For each $j\geq 1$ the series $\L(s; |u_j|^2)$ has meromorphic continuation to $\CC$. On the region $\text{Re}(s) \geq 1/2$ the unique singularity is a simple pole at $s = 1$ of residue $\omega(t)^{-1}$. Furthermore, there is $A > 0$ such that for all $T \geq 1$ we have 
\begin{equation}\label{eq:lindelofonaverage}
   \sum_{|t_j| \leq T} \omega(t_j)^2|\L(s; |u_j|^2)|^2 \ll_{q, \varepsilon} T^{2 + \varepsilon}|s|^{A},  
\end{equation}
on the critical line $\text{Re}(s) = 1/2$. 
\end{thm}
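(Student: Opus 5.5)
The plan is to make $\L(s;|u_j|^2)$ explicit in terms of the newforms in the twist class $\T_{f_j}(q)$. By \eqref{eq:decompositionmaassformintocomponents} and \eqref{eq:oldspaceGammad} we can write $u_j=\sum_{g\in\T_{f_j}(q)}\sum_{d\mid q^2/\q(g)} c_{j,g,d}\, g(dz/q)$. Then for $n\in\frac1q\Z_{\geq1}$ we get $\rho_j(n)=\sum_{g,d} c_{j,g,d}\,\rho_g(1)\,(d/q)^{1/2}\lambda_g(qn/d)$, and similarly for negative $n$ via the involution eigenvalue $\epsilon_g$ in \eqref{eq:fourierexpansonnewform}.

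\emph{Step 1: structure of the Dirichlet series.} Expanding $|\rho_j(n)|^2$ produces diagonal and cross terms $\lambda_g(m_1)\overline{\lambda_{g'}(m_2)}$ with $g'=g\otimes\eta$. Factor $m=m'm''$ with $(m',q)=1$ and $m''$ composed of primes dividing $q$. By \eqref{eq:heckeeigenvaluetwist} and Hecke multiplicativity, the coprime part is $|\lambda_{f_j}(m')|^2\overline{\eta}(m')$ up to a fixed twist. This gives a decomposition of the form
\begin{equation*}
\L(s;|u_j|^2)=\sum_{\eta \modulo q}\; \sum_{\text{finitely many }(g,d,d')} C_{j,\eta,g,d,d'}\; E^{(q)}_{j,\eta,g,d,d'}(s)\,\frac{L^{(q)}(s;f_j\otimes\overline{f_j}\otimes\eta)}{L^{(q)}(2s;\psi_j^\ast\eta^2)},
\end{equation*}
which is the precise form of \eqref{eq:keyapproxidentityrankinselberg}. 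Here the $E^{(q)}$ are finite products over $p\mid q$ of local power series in $p^{-s}$, and the $C$'s are quadratic expressions in $c_{j,g,d}\rho_g(1)$. By Gelbart--Jacquet \eqref{eq:gelbartjacquetlift}, $L^{(q)}(s;f\otimes\overline f\otimes\eta)=L^{(q)}(s;\eta)L^{(q)}(s;\Ad(f)\otimes\eta)$. Since the adjoint lift is entire (and, in the dihedral case, the extra factors are Hecke $L$-functions that are holomorphic for $\text{Re}(s)\geq 1/2$ except possibly at $s=1$), meromorphic continuation follows. The only pole in $\text{Re}(s)\geq1/2$ comes from $\eta$ trivial, through $\zeta(s)$. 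The local factors $E^{(q)}$ converge absolutely on $\text{Re}(s)\geq 1/2$ because Kim--Sarnak gives $|\lambda_g(p^k)|\ll p^{7k/64}$ and $7/32<1/2$; they are uniformly bounded there.

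\emph{Step 2: the residue.} I would compute the residue independently by Rankin--Selberg unfolding of $|u_j|^2$ against the weight-$0$ Eisenstein series of $\{\pm I\}\Gamma(q)$ at $\infty$. Its residue $1/\vol$, together with the normalization $\langle u_j,u_j\rangle=1$ and the Whittaker integral $\int_0^\infty|W_{\pm k/2,it}(4\pi y)|^2dy/y^2$, gives $\omega(t_j)^{-1}$ after the sum over $\pm n$ is taken. The $\sgn$ contributions should assemble into exactly $\pi/\cosh\pi t$, respectively $\pi t/\sinh \pi t$, as in \eqref{eq:weightskuznetsov}; this needs to be checked in the computation.

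\emph{Step 3: the spectral average.} First I control the coefficients: $\omega(t_j)\,|C_{j,\eta,g,d,d'}|\ll_{q,\varepsilon}(1+|t_j|)^\varepsilon$. For this I use the explicit orthonormal basis of oldspaces of Schulze-Pillot--Yenirce, which expresses $c_{j,g,d}$ through $\|g\|^{-1}$ times bounded local data. Then $\|g\|^{-2}\asymp\omega(t)|\rho_g(1)|^2$ is bounded above and below by $(1+|t|)^{\pm\varepsilon}$ via \eqref{eq:estimationfirstfouriercoefficientintro} (Iwaniec, Hoffstein--Lockhart), extended to level $q^2$ and nebentypus. By Cauchy--Schwarz and property 4) (the map $j\mapsto[f_j]$ is $O_q(1)$-to-one), \eqref{eq:lindelofonaverage} reduces to
\begin{equation*}
\sum_{\substack{f\in\HH'_{it}(q)\\ |t|\leq T}}|L(s;\Ad(f)\otimes\eta)|^2\ll_{q,\varepsilon}T^{2+\varepsilon}|s|^A ,
\end{equation*}
together with the polynomial bounds $|L(s;\eta)|\ll|s|^A$ and $|L^{(q)}(2s;\cdot)|^{-1}\ll_\varepsilon|s|^\varepsilon$ on $\text{Re}(s)=1/2$, the latter since $\text{Re}(2s)=1$. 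To prove the displayed bound I use an approximate functional equation for $L(s;\Ad f\otimes\eta)$, whose analytic conductor is $\ll_q T^2|s|^3$. This gives Dirichlet polynomials of length $N\ll T^{1+\varepsilon}|s|^{3/2}$. On the adjoint side their coefficients are $\lambda_f(n^2)\overline{\psi_f}(n)\eta(n)$, after dividing by $L(2s,\cdot)$, which is again harmless on $\text{Re}(s)=1/2$. Writing $\lambda_f(n^2)=\rho_f(n^2)/\rho_f(1)$ and inserting $\omega|\rho_f(1)|^2\gg T^{-\varepsilon}$, the large sieve inequality (Lemma \ref{lem:largesieveinequality}) on $(\Gamma_0(q^2),\psi)$, applied to the sparse sequence supported on squares, gives $(T^2+N^{2+\varepsilon})\sum_{n\leq N}n^{-1}\ll T^{2+\varepsilon}|s|^A$. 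For newforms of smaller level $d\mid q^2$ I embed them in level $q^2$; the $\ell^2$-normalization changes only by $O_q(1)$.

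I expect the main obstacle to be Step 3's coefficient control at the bad primes. Showing that the Schulze-Pillot--Yenirce change of basis and the local factors $E^{(q)}$ contribute only $O_q((1+|t|)^\varepsilon)$, uniformly in $j$, needs Ramanujan-type input. Here Kim--Sarnak's $7/64$ suffices precisely because the local series only need to converge at $\text{Re}(s)=1/2$. The delicate point is the possible cancellation in the normalization of oldforms: inverting the Gram matrix of $\{g(dz/q)\}_d$ must stay bounded. I would first try to establish this by showing that the Gram matrix, divided by $\|g\|^2$, is a polynomial in $p^{-1/2}\lambda_g(p)$ with determinant bounded below using $|\lambda_g(p)|\leq p^{7/64}+p^{-7/64}$.
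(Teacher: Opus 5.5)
Your architecture matches the paper's: twist-class decomposition, splitting into $L^{(q)}(s;g_1\otimes\overline{g_2})$ times bad-prime series controlled by the Schulze-Pillot--Yenirce orthogonal basis and Kim--Sarnak, norms via Iwaniec and Hoffstein--Lockhart, Gelbart--Jacquet, and an approximate functional equation followed by the large sieve on $\lambda_f(r^2)$. There is, however, a gap in Step 3.

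\textbf{Exceptional forms.} You apply the approximate functional equation to $L(s;\Ad(f)\otimes\eta)$ for every $f$ and $\eta$ as if it were entire. That fails when $f$ is exceptional and $\eta\neq 1$ lies in its stabilizer. If $f\otimes\eta_0=f$, then $\Ad(f)$ contains $\eta_0$ as an isobaric summand, so $L(s;\Ad(f)\otimes\eta_0)$ has a pole at $s=1$. You note this possibility in Step 1 but never return to it. The approximate functional equation then carries a polar term, the residue at $u=1-s$ of $\Lambda(s+u;F)G(u)u^{-1}/\gamma(s;F)$. On $\text{Re}(s)=1/2$ it has size about $|t_f|^{1/2}$ (from $\gamma(1;F)/\gamma(s;F)$) times $\text{Res}_{s=1}L(s;F)$. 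The large sieve does not see this term, and summed over the trivially counted $\asymp T^2$ forms it would give $T^{3}$.

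The paper closes this with Lemma \ref{lem:exceptionalnewformsaresparse}: there are only $O_q(T)$ exceptional forms with $|t|\le T$. The proof goes through Labesse--Langlands: such forms come from Hecke characters of finitely many quadratic fields with conductor dividing $q^2$. Imaginary fields force $t=0$, and for real fields the unit condition forces $t$ into an arithmetic progression. Each exceptional $L(s;\Ad(f)\otimes\psi)$ is then bounded individually by $(1+|t|)^{1/2+\varepsilon}(1+|s|)^A$, using Li's edge bound and Phragm\'en--Lindel\"of (Corollary \ref{cor:sumLfunctionsexceptional}). The approximate functional equation is reserved for regular forms (Proposition \ref{prop:lindelofforregularnewforms}), where $\Lambda$ is entire.

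Alternatively, you could show that the polar pairs never occur in your decomposition. For a diagonal pair $g_1=g_2$ take $\eta=1$; an off-diagonal pair $g_2=g_1\otimes\eta\neq g_1$ has $\eta$ outside the stabilizer. For such $\eta$, $\Lambda(s;\Ad(f)\otimes\eta)$ is entire even for dihedral $f$, because $\Ad(f)\otimes\eta$ then has no trivial isobaric constituent. Either way, an argument is needed.

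\textbf{Dependence of the weight on $t_f$.} The weight $V_s(\,\cdot\,;F)$ in the approximate functional equation depends on $f$ through $t_f$ in $\gamma(s;F)$. Lemma \ref{lem:largesieveinequality}, however, needs coefficients independent of $f$, so you must separate variables first. The paper restricts to $T\le t_f\le 2T$, using convexity when $|s|\ge T^{1-\varepsilon}$. It then replaces $V_s(y;F)$ by the $t_f$-free profile $W_s(y/t_f)$ via Stirling, and averages over an auxiliary $X\in(1/2,2)$ to absorb the scaling by $t_f$. A Mellin separation along $\text{Re}(u)=\varepsilon$ followed by Cauchy--Schwarz in $u$ would also work.

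\textbf{Step 2.} Your remark that the $\pm n$ contributions ``should assemble'' is correct once you use two facts. First, each $v\in V_g(\Gamma_d(q))$ satisfies
\[
\rho(v;-n/q)=\epsilon_g\frac{\Gamma(it+(1+k)/2)}{\Gamma(it+(1-k)/2)}\rho(v;n/q).
\]
Second, off-diagonal pairs $g_1\neq g_2$ have no pole at $s=1$. Together with $I(1,it,1)=\omega(t)$, this gives the residue $\omega(t_j)^{-1}$, exactly as in the paper.
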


More generally, for $v_1, v_2 \in L_0^2(\Gamma_d(q))$ we let 
\begin{equation*}
    \L(s; v_1, \overline{v_2}) := \sum_{n \in \frac{1}{q}\Z_{\geq 1}} \frac{\rho(v_1; n) \overline{\rho(v_2; n)}}{n^s}.
\end{equation*}
By \eqref{eq:decompositionmaassformintocomponents} we can write 
\begin{equation}\label{eq:naiverankinselbergdecomposition}
\L(s; |u_j|^2) = \sum_{g_1, g_2 \in \T_{f_j}(q)} \L(s; u_j^{g_1}, \overline{u_j^{g_2}}).
\end{equation}
Also, by \eqref{eq:heckeandfourierexpansion} and \eqref{eq:characterizationoldspace} we see that, for $g_i \in \T_{f_j}(q)$, we have
\begin{equation*}
    \rho(u_j^{g_i}; n_1 n_2/q) = \lambda_{g_i}(n_1)\rho(u_j^{g_i}; n_2/q),
\end{equation*}
where $n_1, n_2 \in \Z_{\geq 1}$ are such that $(n_1, q) = 1$ and $n_2 \mid q^\infty$. Thus, we can write 
\begin{equation}\label{eq:structuralformularankinselberg}
    \L(s; u_j^{g_1}, \overline{u_j^{g_2}}) = L^{(q)}(s; g_1 \otimes \overline{g_2})\L_q(s; u_j^{g_1},  \overline{u_j^{g_2}}),
\end{equation}
where 
\begin{equation}\label{eq:partialproductrankinselberg}
    L^{(q)}(s;  g_1 \otimes \overline{g_2}) := \sum_{(m, q) = 1}\frac{\lambda_{g_1}(m)\overline{\lambda_{g_2}(m)}}{m^s},
\end{equation}
and 
\begin{equation}\label{eq:definitionofbadfactorsrankinselberg}
    \L_q(s; u_j^{g_1},  \overline{u_j^{g_2}}) := q^s \sum_{m \mid q^\infty} \frac{\rho(u_j^{g_1}; m/q)\overline{\rho(u_j^{g_2}; m/q)}}{m^s}.
\end{equation}
In the following paragraphs we will collect information about $\L(s; u_j^{g_1}, \overline{u_j^{g_2}})$. We start by computing its residue at $s = 1$. 

\subsection{Analytic continuation and residue at $s = 1$.}
Let $g_1, g_2 \in \HH'_{it}(q)$ and $v_i \in V_{g_i}(\Gamma_d(q))$ for $i \in \{1, 2\}$. 
One can check from the expansion \eqref{eq:fourierexpansonnewform} and the construction of the space $V_{g_i}(\Gamma_d(q))$ that 
\begin{equation}\label{auxeq:negativefouriercoefficient}
    \rho(v_i; -n/q) = \epsilon_{g_i} \frac{\Gamma(it_j + (1 + k)/2)}{\Gamma(it_j + (1 - k)/2)} \rho(v_i; n/q)
\end{equation}
for any $n \geq 1$. Let $E_\infty(z; s) = E_\infty(z; s; \Gamma_d(q), \mathbbm{1}_{\Gamma_d(q)})$ denote the Eisenstein series of weight $0$ attached to the cusp $\infty$ of $\Gamma_d(q)$. Note that this cusp has width $q$, so $\sigma_\infty = \begin{psmallmatrix}
q^{1/2}& 0\\
0 & q^{-1/2}
\end{psmallmatrix}$. Using \eqref{auxeq:negativefouriercoefficient}, the Fourier expansion of $v_i$ from \eqref{eq:generalfourierexpansion} and the unfolding method one can check that  
\begin{equation}\label{eq:unfoldingeisenstein}
    \int_{\Gamma_d(q)\backslash \Hyp} v_1(z)\overline{v_2(z)} E_\infty(z; s) \, d\mu(z) = q^{1-s}\L(s; v_1, \overline{v_2}) \times I(s, it_j, \varepsilon_{g_1}\varepsilon_{g_2}), 
\end{equation}
where we define
\begin{equation*}
    I(s, \beta, \delta) := \int_0^\infty \left(W_{\frac{k}{2}, \beta}^2 (4\pi y) + \delta\left|\frac{\Gamma\left(\beta + \frac{1 + k}{2}\right)}{\Gamma\left(\beta + \frac{1- k}{2}\right)}\right|^2 W^2_{-\frac{k}{2}, \beta}(4\pi y)\right)y^{s-1}\frac{dy}{y},
\end{equation*}
with $\delta \in \{1, -1\}$. Equation \eqref{eq:unfoldingeisenstein} shows the claim about meromorphic continuation in Theorem \ref{thm:informationaboutrankinselberg}. Recall that $E_\infty(z; s)$ has a simple pole at $s = 1$ of residue $\vol(\Gamma_d(q)\backslash \Hyp)^{-1}$. Since the spaces $V_{g_1}(\Gamma_d(q))$ and $V_{g_2}(\Gamma_d(q))$ are orthogonal unless $g_1 = g_2$, it follows that $\L(s; v_1, \overline{v_2})$ is holomorphic at $s = 1$ unless $g_1 = g_2$. In the last case, $\L(s; v_1, \overline{v_2})$ may have a simple pole at $s = 1$ whose residue we are going to compute. 

Suppose that $g_1 = g_2$. Using the computation from \cite[Lemma 8.2]{DFIsubconvexityartin} (including the corrections from \cite[Section 12]{youngeisensteinseries}), together with the first Barnes lemma \cite[\href{https://dlmf.nist.gov/5.13.E3}{(5.13.3)}]{dlmf}, one can deduce that 
\begin{equation}\label{eq:archimedeanrankinselbergexplicitformula}
    I(s, \beta, 1) = \pi^{-s}\frac{\Gamma^2\left(\frac{s}{2}\right)\Gamma\left(\frac{s + k}{2} + \beta\right)\Gamma\left(\frac{s + k}{2} - \beta\right)}{\Gamma(s)}.
\end{equation}
In particular, we have $I(1, it, 1) = \omega(t)$, where $\omega(t)$ was introduced in \eqref{eq:weightskuznetsov}. It follows that
\begin{equation}\label{eq:residuenaiverankinselbergproductlseries}
    \text{Res}_{s = 1}\L(s; v_1, \overline{v_2}) = \frac{\langle v_1, v_2 \rangle}{\omega(t_j)}
\end{equation}
when $g_1 = g_2$. We let $g \in \T_{f_j}(q)$ and use this equality when $v_1 = v_2 = u_j^g$. By \eqref{eq:decompositionmaassformintocomponents} and orthogonality we have 
\begin{equation*}
    \langle u_j, u_j \rangle  = \sum_{g \in \T_{f_j}(q)} \langle u_j^{g}, u_j^g \rangle,
\end{equation*}
so we can deduce that
\begin{equation}\label{eq:computationresiduerankinselberg}
    \text{Res}_{s = 1}\L(s; |u_j|^2) = \frac{1}{\omega(t_j)},
\end{equation}
finishing the first part of the proof of Theorem \ref{thm:informationaboutrankinselberg}. We now start working towards the proof of the inequality in \eqref{eq:lindelofonaverage}.

\subsection{Good primes and adjoint $L$-function.}
Let $g_1, g_2 \in \HH'_{it}(q)$ be newforms, of nebentypus $\psi_1$ and $\psi_2$ respectively. Recall that these normalized newforms correspond to automorphic cuspidal representations $\pi_1, \pi_2$ of $\GL_2/\Q$, of conductor dividing $q^2$. For a prime $p \nmid q$ write
\begin{equation*}
1 - \lambda_{g_i}(p)X + \psi_i(p)X^2 = (1 - \alpha_{i, p}X)(1 - \beta_{i, p}X).
\end{equation*}
We know that there is an $L$-function 
\begin{equation*}
    L(s; g_1 \otimes \overline{g_2}) = \prod_p L_p(s; g_1 \otimes \overline{g_2}),
\end{equation*}
called the Rankin-Selberg convolution of $g_1$ and $g_2$, with the following properties:
\begin{itemize}
    \item  For primes $p$ such that $(p, q) = 1$ we have the explicit formula
\begin{equation*}
    L_p(s; g_1 \otimes \overline{g_2}) = \frac{1}{(1 - \alpha_1 \overline{\beta_1}p^{-s})(1 - \alpha_1 \overline{\beta_2}p^{-s})(1 - \alpha_2 \overline{\beta_1}p^{-s})(1 - \alpha_2 \overline{\beta_2}p^{-s})}.
\end{equation*}
    \item $L(s; g_1 \otimes \overline{g_2})$ has meromorphic continuation to the complex plane, with at most a simple pole at $s = 1$.
    \item In general, for any prime $p$ we have an expression 
\begin{equation*}
    L_p(s; g_1 \otimes \overline{g_2}) = \frac{1}{(1-\alpha_{1, p} p^{-s})(1-\alpha_{2, p} p^{-s})(1 - \alpha_{3, p} p^{-s})(1 - \alpha_{4, p} p^{-s})},
\end{equation*}
where, by the second appendix to \cite{kimramakrishnansarnak2003} we know that $\max(|\alpha_{i, p}|, |\alpha_{i, p}|^{-1}) \leq p^{7/32}$. 
\end{itemize} 
The precise computation of $L(s; g_1\otimes\overline{g_2})$ at primes $p \mid q$ depends on the local $p$-adic component of the automorphic representations $\pi_1, \pi_2$, see \cite{jacquetrankinselberg}, as well as Sections 1 and 2 in \cite{gelbartjacquetlift}. We warn the reader that the convention in those sources is that $L(s; g_1\otimes\overline{g_2})$ is the completed $L$-function, whereas for us it is just the product over finite primes. One can check that the Dirichlet series $L^{(q)}(s; g_1 \otimes \overline{g_2})$ in \eqref{eq:partialproductrankinselberg} is given by the product 
\begin{equation*}
    L^{(q)}(s; g_1 \otimes \overline{g_2}) = \prod_{p \nmid q} L_p(s; g_1 \otimes \overline{g_2}),
\end{equation*}
so that in particular we have
\begin{equation}\label{eq:rankinselbergisalmosteulerproduct}
    q^{-\varepsilon_1}|L(s; g_1 \otimes \overline{g_2})| \ll_{\varepsilon_1, \varepsilon_2} |L^{(q)}(s; g_1 \otimes \overline{g_2})| \ll_{\varepsilon_1, \varepsilon_2} q^{\varepsilon_1} |L(s; g_1 \otimes \overline{g_2})|
\end{equation}
on the half-plane $\text{Re}(s) \geq 7/32 + \varepsilon_2$, for any $\varepsilon_1, \varepsilon_2 > 0$.

Suppose that $g_1 = g_2 \otimes \psi$, where $\psi$ is a Dirichlet character modulo $q$. By work of Gelbart and Jacquet \cite{gelbartjacquetlift}, which generalizes previous work of Shimura \cite{shimuraholomorphy}, we know that 
\begin{equation}\label{eq:gelbartjacquetlift}
    L(s; g_1 \otimes \overline{g_2}) = L(s; \psi)L(s; \Ad(f_j)\otimes \psi),
\end{equation}
where $\Ad(f_j)$ is the adjoint lift of $f_j$ from $\GL(2)$ to $\GL(3)$.
Let $\delta \in \{0, 1\}$ be such that $\psi(-1) = (-1)^\delta$ and let $\epsilon \in \{0, 1\}$ be such that $(-1)^\epsilon = (-1)^{\delta + k}$. Let us write
\begin{equation}\label{eq:gammafactorinfinity}
    \gamma(s; \Ad(f_j)\otimes \psi):= \pi^{-\frac{3s}{2} - \epsilon - \frac{\delta}{2}} \Gamma\left(\frac{s + \delta}{2}\right)\Gamma\left(\frac{s + \epsilon}{2} + it_j\right)\Gamma\left(\frac{s + \epsilon}{2} - it_j\right).
\end{equation}
If we define 
\begin{equation}\label{eq:completedadjointlfunction}
    \Lambda(s; \Ad(f_j)\otimes \psi) := \gamma(s; \Ad(f_j)\otimes \psi)L(s; \Ad(f_j)\otimes \psi),
\end{equation}
then we have the functional equation
\begin{equation}\label{eq:functionalequationadjoint}
    \Lambda(s; \Ad(f_j)\otimes \psi) = W(\Ad(f_j)\otimes \psi)\q(\Ad(f_j)\otimes \psi)^{1/2 - s} \Lambda(1-s; \Ad(f_j)\otimes \overline{\psi}),
\end{equation}
where $|W(\Ad(f_j)\otimes \psi)| = 1$ and the conductor $\q(\Ad(f_j)\otimes \psi)$ divides $q^4$. Furthermore, the completed $L$-function $\Lambda(s; \Ad(f_j)\otimes \psi)$ is entire unless the following condition holds: there exists a Dirichlet character $\eta \neq 1$ such that $f_j \otimes \eta = f_j$ \cite[Theorem 9.3]{gelbartjacquetlift}. If this condition holds we say that the newform $f_j$ is \emph{exceptional}. Otherwise, we say that $f_j$ is \emph{regular}. Accordingly, we write 
\begin{equation*}
    \HH'_{it_j}(q) = \HH^{\text{reg}}_{it_j}(q) \bigsqcup \HH^{\text{excep}}_{it_j}(q)
\end{equation*}
where
\begin{equation*}
    \HH^{\text{excep}}_{it_j}(q) = \{f \in \HH'_{it_j}(q) : f  = f\otimes \eta\text{ for some } \eta \neq 1\}.
\end{equation*}
Denote the collection of newforms of spectral parameter $|t_j|\leq T$ by
\begin{equation*}
    \HH'(q; T) = \bigcup_{|t_j| \leq T} \HH'_{it_j}(q),
\end{equation*}
and define similarly $\HH^{\text{reg}}(q; T)$ and $\HH^{\text{excep}}(q; T)$. We will treat separately the contributions to the inequality in \eqref{eq:lindelofonaverage} of these two types of newforms.

\subsubsection{Norm of arithmetically normalized newforms.}
Recall that the newform $f_j \in \HH'_{it_j}(q)$ is normalized so that $\rho_{f_j}(1) = 1$, see \eqref{eq:setofnormalizednewforms} and \eqref{eq:specificsetofnewforms}. Let $v_1 = v_2 = v$, given by $v := f_j|_k \begin{psmallmatrix}
    q^{-1} & 0\\
    0 & 1
\end{psmallmatrix}$. Note that $v \in V_{f_j}(\Gamma_d(q))$. Also, in this case we have $\rho(v; n/q) = q^{-1/2}\lambda_{f_j}(n)$ for $n \geq 1$. Thus, 
\begin{equation*}
\L(s; v, \overline{v}) = q^{s-1}\sum_{n = 1}^\infty \frac{|\lambda_{f_j}(n)|^2}{n^s}.
\end{equation*}
If we apply \eqref{eq:residuenaiverankinselbergproductlseries} to $v_1 = v_2 = v$ we deduce that 
\begin{equation}\label{eq:newauxeq1}
    \text{Res}_{s = 1}\sum_{n = 1}^\infty \frac{|\lambda_{f_j}(n)|^2}{n^s} = \frac{\langle f_j, f_j \rangle}{\omega(t_j)}.
\end{equation}
By multiplicativity of Hecke eigenvalues we have 
\begin{equation}\label{eq:newauxeq2}
    \sum_{n = 1}^\infty \frac{|\lambda_{f_j}(n)|^2}{n^s} = \left(\sum_{n \mid q^\infty} \frac{|\lambda_{f_j}(n)|^2}{n^s}\right) L^{(q)}(s; f_j \otimes \overline{f_j}).
\end{equation} 
By the bound $|\lambda_{f_j}(n)| \ll_\varepsilon n^{7/64 + \varepsilon}$ from the second appendix to \cite{kimramakrishnansarnak2003}, we know that
\begin{equation}\label{eq:upperboundbadfactornewform}
    \sum_{n \mid q^\infty} \frac{|\lambda_{f_j}(n)|^2}{n^s} \ll_{\varepsilon} q^\varepsilon
\end{equation}
on $\text{Re}(s) \geq 7/32 + \varepsilon$. Combining \eqref{eq:newauxeq1}, \eqref{eq:newauxeq2} and \eqref{eq:upperboundbadfactornewform} with \eqref{eq:rankinselbergisalmosteulerproduct} and \eqref{eq:gelbartjacquetlift} we obtain 
\begin{equation}\label{eq:newauxeq3}
    q^{-\varepsilon} \frac{\langle f_j, f_j \rangle}{\omega(t_j)} \ll_\varepsilon L(1; \Ad(f_j)) \ll_\varepsilon q^{\varepsilon}\frac{\langle f_j, f_j \rangle}{\omega(t_j)}
\end{equation}
for any $\varepsilon > 0$. Also, by \cite{hoffsteinlockhart} we know that  
\begin{equation}\label{eq:newauxeq4}
    ((1 +|t_j|)q)^{-\varepsilon} \ll_\varepsilon L(1; \Ad(f_j)) \ll_\varepsilon  ((1 +|t_j|)q)^{\varepsilon},
\end{equation}
the upper bound having been established previously by Iwaniec \cite{iwaniecsmalleigenvalues} in the case $k = 0$. Combining \eqref{eq:newauxeq3} and \eqref{eq:newauxeq4} we arrive at the following lemma.
\begin{lem}\label{lem:normnormalizednewform}
For $j \geq 1$ and any $\varepsilon > 0$ we have 
\begin{equation*}
    ((1 +|t_j|)q)^{-\varepsilon} \ll_\varepsilon \frac{\langle f_j, f_j \rangle}{\omega(t_j)} \ll_\varepsilon((1 +|t_j|)q)^{\varepsilon}.
\end{equation*}
\end{lem}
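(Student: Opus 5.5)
The statement is Lemma \ref{lem:normnormalizednewform}, and it follows by chaining the two-sided estimates \eqref{eq:newauxeq3} and \eqref{eq:newauxeq4}. The only real work is to justify \eqref{eq:newauxeq3} carefully.

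\emph{Step 1: residue of the full Dirichlet series.} Take $v = f_j|_k \begin{psmallmatrix} q^{-1} & 0\\ 0 & 1\end{psmallmatrix} \in V_{f_j}(\Gamma_d(q))$. Then $\rho(v; n/q) = q^{-1/2}\lambda_{f_j}(n)$, so
\begin{equation*}
\L(s; v, \overline{v}) = q^{s-1}\sum_{n\geq 1} |\lambda_{f_j}(n)|^2 n^{-s}.
\end{equation*}
Applying \eqref{eq:residuenaiverankinselbergproductlseries} with $v_1 = v_2 = v$ gives \eqref{eq:newauxeq1}. Here I use that $\langle v, v\rangle = \langle f_j, f_j\rangle$, which holds by invariance of the probability-normalized inner product under the conjugation relating $\Gamma_d(q)$ and $\Gamma_0(q^2)$.

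\emph{Step 2: factor off the bad primes.} By \eqref{eq:newauxeq2} the series factors as a finite product of local series at $p \mid q$ times $L^{(q)}(s; f_j\otimes\overline{f_j})$. By \eqref{eq:gelbartjacquetlift} with $\psi$ trivial, and by \eqref{eq:rankinselbergisalmosteulerproduct}, the latter equals $\zeta(s)L(s;\Ad(f_j))$ up to the Euler factors at $p\mid q$. At $s = 1$ these $\GL(3)$ and $\GL(1)$ local factors are bounded above and below by $q^{\pm\varepsilon}$, using the Kim--Sarnak bound $p^{7/32}$ on the Satake parameters. Comparing residues then gives
\begin{equation*}
\frac{\langle f_j,f_j\rangle}{\omega(t_j)} = B_q(1)\, L(1;\Ad(f_j)) \times (\text{factor in } [q^{-\varepsilon}, q^{\varepsilon}]),
\end{equation*}
where $B_q(1) = \sum_{n\mid q^\infty}|\lambda_{f_j}(n)|^2 n^{-1}$.

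\emph{Step 3: control $B_q(1)$.} The upper bound $B_q(1)\ll q^\varepsilon$ is \eqref{eq:upperboundbadfactornewform}. For the lower bound, note that the $n=1$ term equals $1$ and all terms are nonnegative, so $B_q(1)\geq 1$. This two-sided control of $B_q(1)$ is the only point needing care: a lower bound on the bad factor is needed, and positivity supplies it for free. (Alternatively, $B_q(1)$ is a product of local $L$-factors of the form $\prod(1-\alpha p^{-1})^{-1}$ whose size is controlled by $7/32<1$.)

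\emph{Step 4: conclude.} Steps 1--3 give \eqref{eq:newauxeq3}. Inserting the Hoffstein--Lockhart/Iwaniec bounds \eqref{eq:newauxeq4} for $L(1;\Ad(f_j))$ and absorbing $q^{\pm\varepsilon}$ into $((1+|t_j|)q)^{\pm\varepsilon}$ yields the lemma.

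The main potential obstacle is the uniformity of \eqref{eq:newauxeq4} when $f_j$ is exceptional, i.e.\ dihedral. In that case $L(s;\Ad(f_j))$ factors as a product of Hecke $L$-functions, and the lower bound at $s=1$ must be quoted in its $\varepsilon$-form, which is ineffective but allowed here since the implied constants may depend on $q$ and $\varepsilon$. I would cite Hoffstein--Lockhart (with the Goldfeld--Hoffstein--Lieman appendix) for this case and note that dependence on $q$ is permitted.
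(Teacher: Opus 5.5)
Your proposal is correct and is essentially the paper's own argument. The paper also computes the residue for $v(z)=f_j(z/q)$ to get \eqref{eq:newauxeq1}, combines the factorization \eqref{eq:newauxeq2} with \eqref{eq:upperboundbadfactornewform}, \eqref{eq:rankinselbergisalmosteulerproduct} and \eqref{eq:gelbartjacquetlift} to reach \eqref{eq:newauxeq3}, and then inserts the Hoffstein--Lockhart/Iwaniec bounds \eqref{eq:newauxeq4}. Your remark that positivity gives $\sum_{n\mid q^\infty}|\lambda_{f_j}(n)|^2n^{-1}\geq 1$ supplies the lower bound on the bad-prime factor, which the paper uses only implicitly in the two-sided estimate \eqref{eq:newauxeq3}.
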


\subsubsection{Contribution from exceptional newforms.}
 By Weyl's law \eqref{eq:basicweyllaw} we have 
\begin{equation*}
    |\HH'(q; T)| \asymp_q T^2.
\end{equation*}
On the other hand, by the following lemma we know that the subfamily $\HH^{\text{excep}}(q; T)$ is quite thin.
\begin{lem}\label{lem:exceptionalnewformsaresparse}
For any $T \geq 1$ we have 
\begin{equation}\label{eq:boundexceptionalnewforms}
    |\HH^{\text{excep}}(q; T)| = O_q(T).
\end{equation}
\end{lem}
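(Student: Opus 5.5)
Fix $f \in \HH^{\text{excep}}_{it}(q)$ with $|t|\leq T$, and choose $\eta \neq 1$ modulo $q$ such that $f \otimes \eta = f$. The plan is to show that $f$ is a dihedral form, i.e.\ automorphically induced from a Hecke character of a quadratic field, and then to count such induced forms directly.

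\emph{Step 1 (reduction to a quadratic twist).} By \eqref{eq:heckeeigenvaluetwist} we have $\lambda_f(p) = \eta(p)\lambda_f(p)$ for all $p \nmid q$. Applying this to $\lambda_f(p)^2 - \lambda_f(p^2) = \psi(p)$ shows $\psi(p) = \eta(p)^2\psi(p)$. Hence $\eta^2 = 1$, and $\eta$ is a nontrivial quadratic character modulo $q$. There are only $O_q(1)$ such characters, so it suffices to fix one $\eta$ and count the $f \in \HH'(q;T)$ with $f \otimes \eta = f$.

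\emph{Step 2 (dihedral structure).} By the theorem of Labesse--Langlands (equivalently \cite[Theorem 9.3]{gelbartjacquetlift} together with the characterization of $\GL(2)$ cusp forms with a self-twist), $\pi_f \simeq \pi(\xi)$ is automorphically induced from a Hecke character $\xi$ of the quadratic field $K$ attached to $\eta$. Since the conductor of $\pi_f$ divides $q^2$, the conductor of $\xi$ has norm dividing $q^2/d_K$, which is bounded in terms of $q$. I expect $K$ to be real quadratic, because a Maass form with $t$ real or slightly imaginary is not holomorphic; imaginary quadratic $K$ would give holomorphic forms, or only weight-one forms with $t=0$, and these contribute $O_q(1)$. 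The archimedean component of $\xi$ at the two real places has the form $|x|^{i\nu}\sgn(x)^{a}$, and the spectral parameter satisfies $t = \nu/2$ up to a normalization depending on the regulator. The key step is to make this dictionary precise: the spectral parameter of $\pi(\xi)$ is determined linearly by the frequency $\nu$ of $\xi_\infty$.

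\emph{Step 3 (counting).} Hecke characters of $K$ with bounded conductor and infinity type as above form finitely many cosets (indexed by the finite ray class group) of the one-parameter family of unitary characters $\alpha \mapsto |\alpha/\alpha'|^{i\pi m/\log\epsilon_K}$ with $m \in \Z$, where $\epsilon_K$ is the fundamental unit. Hence the admissible $\nu$ lie in $O_q(1)$ arithmetic progressions of step $\asymp_q 1$, and the number of $\xi$ with $|\nu| \ll T$ is $O_q(T)$. Each $f$ arises from at most two characters $\xi$ (namely $\xi$ and its Galois conjugate). Summing over the $O_q(1)$ choices of $\eta$ and of $K$ gives $|\HH^{\text{excep}}(q;T)| = O_q(T)$, as claimed.

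The main obstacle is Step 2. One must verify the bounded-conductor claim and carry out the archimedean bookkeeping for weights $k=0$ and $k=1$; in particular, for $k=1$ one has to check the sign characters at infinity. I would cite the standard description of dihedral Maass forms (Maass's construction from Hecke Grössencharacters of real quadratic fields) rather than redo it.
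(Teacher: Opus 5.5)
Your proposal is correct and follows essentially the same route as the paper. Labesse--Langlands realizes $f$ as induced from a Hecke character of the quadratic field attached to the quadratic character $\eta$. The imaginary case forces $t=0$ and so contributes $O_q(1)$. In the real case, the unit condition puts the archimedean frequency into $O_q(1)$ arithmetic progressions, and finiteness of the ray class group bounds the number of characters for each frequency.

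Two small corrections. The frequency equals $\pm t$ exactly; the regulator only sets the step of the progression, not the relation between $t$ and the frequency. Also, the paper's definition allows any $\eta\neq 1$, not only characters modulo $q$; its conductor is still supported on the primes dividing $q$, so there remain $O_q(1)$ choices of $\eta$.
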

\begin{proof}
The idea is that exceptional newforms are parametrized by Hecke Größencharacters of quadratic extensions of $\Q$. Let $f \in \HH^{\text{excep}}(q; T)$ and consider $\eta \neq 1$ such that $f \otimes \eta = f$. By elementary considerations $\eta$ is a  quadratic Dirichlet character of conductor dividing a power of $q$. Fix one of the finitely many possibilities for $\eta$ and let $H$ be the quadratic extension of $\Q$ attached to it. This means that if $D_H$ is the discriminant of the quadratic extension we have 
\begin{equation*}
    |D_{H}| = \q(\eta), \quad \text{and}\quad \sgn(D_{H}) = \eta(-1).
\end{equation*}
Let $\A_{H}^\times$ denote the group of ideles over $H$. Recall that $\A_H^\times$ contains $H^\times$ embedded diagonally as a discrete subgroup. By work of Labesse and Langlands \cite[Proposition 6.5]{Labesse_Langlands_1979} we know that there exists a unitary Hecke Größencharacter $\theta$ of $\A_{H}^\times/H^\times$ such that the standard $L$-functions attached to $f$ and $\theta$ agree. We write $f = f(\theta)$ in this case. By comparing conductors of the respective $L$-functions we see that if $f = f(\theta)$ for $f \in \HH'_{it_j}(q)$, then $\q(\theta) \mid q^2$. Let us examine the archimedean components of $\theta$. First note that the gamma factor of the $L$-function attached to $f \in \HH'_{it_j}(q)$ is of the form 
\begin{equation}\label{eq:gammafactorinfinityoff}
    \Gamma_\R(s + it_j + k_1)\Gamma_\R(s - it_j + k_2),
\end{equation}
where $\Gamma_\R(s) := \pi^{-s/2}\Gamma(s/2)$ and $k_1, k_2 \in \{0, 1\}$ are such that $k_1 + k_2 = k \modulo 2$. 
We need to distinguish two cases.

Suppose first that the quadratic extension $H$ is imaginary. There is only one infinite place of $H$, which is complex. Thus, the archimedean component $\theta_\infty$ of $\theta$ is a unitary character of $\CC^\times$. There exist $r \in \R$ and $m \in \Z$ such that 
\begin{equation*}
    \theta_\infty(z) = |z|^{ir} (z/|z|)^m, \text{ for all }z \in \CC.
\end{equation*}
The gamma factor of $\theta$ is 
\begin{equation*}
    \Gamma_\CC(s + ir/2 + |m|/2),
\end{equation*}
where 
\begin{equation*}
    \Gamma_\CC(s) := 2 (2\pi)^{-s}\Gamma(s).
\end{equation*}
Comparing with \eqref{eq:gammafactorinfinityoff} we see that $f = f(\theta)$ is only possible if $t_j = r = m = 0$ and $k = 1$. By the finiteness of the class number, the number of characters $\theta$ of $H$ with fixed archimedean component and fixed conductor is bounded independently of the archimedean component. It follows that  
    \begin{equation*}
        |\{f \in \HH^{\text{excep}}(q; T): f = f(\theta) \text{ for some }\theta \text{ on } \A_{H}^\times/H^\times\}| = O_q(1)
    \end{equation*} 
in this case.

Suppose otherwise that $H$ is a totally real quadratic extension. Then there are two real places $v_1, v_2$, and we let $\theta_1, \theta_2$ be the components of $\theta$ at these places. Thus, $\theta_1, \theta_2$ are unitary characters on $\R^\times$. Then, for $i \in \{1, 2\}$ there exist $r_i \in \R$ and $m_i \in \{0, 1\}$ such that 
    \begin{equation*}
        \theta_i(x) = |x|^{ir_i} (x/|x|)^{m_i} \quad \text{ for all }x \in \R^\times. 
    \end{equation*}
    The gamma factor of the $L$-function attached to $\theta$ is given by 
    \begin{equation*}
        \Gamma_\R(s + ir_1 + m_1)\Gamma_\R(s + ir_2 + m_2). 
    \end{equation*}
Comparing with \eqref{eq:gammafactorinfinityoff} we deduce that $r_1 = -r_2 = \pm t_j$. Let $\O_H$ denote the ring of integers of $H$, let $\O^\times_{H}$ denote the group of units of $\O_H$ and define 
    \begin{equation*}
        \O_H^\times(q^2) := \{u \in \O_H: u = 1 \modulo q^2\}.
    \end{equation*}
    Let $\iota_i$ denote the embedding $\iota_i : H \hookrightarrow \R$ corresponding to the place $v_i$.  
    Since the conductor of $\theta$ divides $q^2$, we have 
    \begin{equation}\label{eq:auxeqarchimedeanfactor}
        \theta_1(\iota_1(u))\theta_2(\iota_2(u)) = 1 \text{ for all }u \in \O_H^\times(q^2). 
    \end{equation}
    By Dirichlet's unit theorem we know that $\O_H^\times(q^2)$ is finitely generated of rank one. Let $u \in \O_H^\times(q^2)$ be an element which is not a root of unity, and define $a_i := \log |\iota_i(u)|$, so that $a_i \neq 0$. Note that $a_1 = -a_2$ since $u$ is a unit. Recall that we know that $r_1 = -r_2$. From \eqref{eq:auxeqarchimedeanfactor} we deduce that 
    \begin{equation*}
        r_1 \in \frac{\pi}{2a_1}\Z.
    \end{equation*}  
Thus, if $f = f(\theta) \in \HH'_{it_j}(q)$ for some Größencharacter $\theta$ on $H^\times \backslash \A^\times_H$, then $t_j$ lies in an arithmetic progression. Also, by the finiteness of the class number, the number of characters $\theta$ of $H$ with fixed archimedean component and fixed conductor is bounded independently of the archimedean component. It follows that  
    \begin{equation*}
        |\{f \in \HH^{\text{excep}}(q; T): f = f(\theta) \text{ for some }\theta \text{ on } \A_{H}^\times/H^\times\}| = O_q(T)
    \end{equation*} 
in this case. Since there are only finitely many possibilities for $H$, the proof of the lemma is finished. 
\end{proof}

Let $f \in \HH_{it_j}^{\text{excep}}(q)$. Note that the archimedean analytic conductor of $L(1/2 + ir; \Ad(f)\otimes \psi)$ is $\asymp (1 + |r|)(1 + |r - 2t_j|)(1 + |r + 2t_j|)$. By work of Li \cite{liupperboundedge} we know good upper bounds for $L$-functions at the edge of the critical strip, in particular we have
\begin{equation*}
    L(1 + \varepsilon_2 + ir; \Ad(f)\otimes \psi) \ll_{q, \varepsilon_1, \varepsilon_2} ((1 + |r|)(1 + |r - t|)(1 + |r + t|))^{\varepsilon_1}
\end{equation*}
for any $\varepsilon_1, \varepsilon_2 > 0$. It follows by the Phrägmen--Lindelöf principle that
\begin{equation*}
    L(s; \Ad(f)\otimes \psi) \ll_q (1 + |t|)^{1/2 + \varepsilon}(1 + |s|)^A,
\end{equation*}
for some $A > 0$ and $\text{Re}(s) = 1/2$. When we combine this estimate with Lemma \ref{lem:exceptionalnewformsaresparse} we obtain the following corollary.

\begin{cor}\label{cor:sumLfunctionsexceptional}
There is $A > 0$ such that for any $T \geq 1$ and any $\psi \modulo q$ we have
\begin{equation*}
    \sum_{f_j \in \HH^{\text{excep}}(q; T)} |L(s; \Ad(f_j)\otimes \psi)|^2 \ll_q T^{2 + \varepsilon} (1 + |s|)^A, 
\end{equation*}
for $\text{Re}(s) = 1/2$.
\end{cor}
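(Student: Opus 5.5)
The corollary follows by combining the pointwise bound just derived with the count of exceptional newforms in Lemma \ref{lem:exceptionalnewformsaresparse}. Each summand is bounded individually, and the number of summands is controlled by the thinness of the exceptional family.

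\emph{Pointwise bound.} Fix $\psi \modulo q$ and $f_j \in \HH^{\text{excep}}_{it_j}(q)$ with $|t_j| \leq T$. The discussion preceding the corollary gives, on $\text{Re}(s) = 1/2$,
\begin{equation*}
    L(s; \Ad(f_j)\otimes \psi) \ll_{q, \varepsilon} (1 + |t_j|)^{1/2 + \varepsilon}(1 + |s|)^{A_0}
\end{equation*}
for some $A_0 > 0$. Let me recall how this arises, to make sure the constants are uniform.

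On the line $\text{Re}(s) = 1 + \varepsilon_2$ we use Li's edge bound, which is polynomially small in the analytic conductor. On $\text{Re}(s) = -\varepsilon_2$ we use the functional equation \eqref{eq:functionalequationadjoint}, whose conductor divides $q^4$. The ratio of gamma factors \eqref{eq:gammafactorinfinity} at $s$ and $1-s$ is, by Stirling, of size
\begin{equation*}
    \bigl((1 + |r|)(1 + |r - t_j|)(1 + |r + t_j|)\bigr)^{1/2 + \varepsilon_2}.
\end{equation*}
Phragm\'en--Lindel\"of then gives the convexity exponent $1/4$ of this analytic conductor on $\text{Re}(s) = 1/2$. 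This is at most $(1 + |t_j|)^{1/2}(1 + |s|)^{3/4}$, up to $\varepsilon$-powers.

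One point needs checking. Since $f_j$ is exceptional, $\Lambda(s; \Ad(f_j)\otimes \psi)$ may have poles, and these must be removed before applying Phragm\'en--Lindel\"of. By \cite[Theorem 9.3]{gelbartjacquetlift}, the poles occur only at $s = 0, 1$, and only when $\psi$ matches the character $\eta$ with $f_j \otimes \eta = f_j$. So I would multiply by $s(s-1)$ first; this costs only a factor absorbed into $(1 + |s|)^{A_0}$. Away from the critical line the gamma ratio is harmless, because $\text{Re}(s) = 1/2$ stays a bounded distance from the poles of the gamma factors. The exception is small $t_j$ with $|\text{Im}(t_j)| \leq 7/64$, but there all quantities are $O_q(1)$.

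\emph{Summation.} Squaring the pointwise bound gives
\begin{equation*}
    |L(s; \Ad(f_j)\otimes \psi)|^2 \ll_{q, \varepsilon} T^{1 + 2\varepsilon}(1 + |s|)^{2A_0}
\end{equation*}
uniformly for $f_j \in \HH^{\text{excep}}(q; T)$. By Lemma \ref{lem:exceptionalnewformsaresparse}, the number of such $f_j$ is $O_q(T)$. Multiplying, the total is $\ll_{q, \varepsilon} T^{2 + 2\varepsilon}(1 + |s|)^{2A_0}$. Renaming $\varepsilon$ and taking $A = 2A_0$ gives the claim. There are only finitely many $\psi$, so $A$ can be chosen uniformly in $\psi$.

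The only delicate step is the uniformity of the convexity bound: the implied constant must depend only on $q$, not on $f_j$. This holds because the finite conductor divides $q^4$ and Li's bound is uniform once the finite part of the conductor is fixed. Everything else is bookkeeping.
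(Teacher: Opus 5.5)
Your argument is correct and is essentially the paper's. You bound each term pointwise by convexity, $L(s;\Ad(f_j)\otimes\psi)\ll_q (1+|t_j|)^{1/2+\varepsilon}(1+|s|)^{A}$ on $\text{Re}(s)=1/2$ via Li's edge bound, the functional equation and Phragm\'en--Lindel\"of, then square and multiply by the count $|\HH^{\text{excep}}(q;T)|=O_q(T)$ from Lemma \ref{lem:exceptionalnewformsaresparse}. Your extra care in removing the possible pole at $s=1$ (when $\psi\eta$ is principal) before applying Phragm\'en--Lindel\"of fills in a detail the paper leaves implicit, and you handle it correctly.
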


\subsubsection{Regular newforms.}
The purpose of this section is to prove the following estimate, which complements Corollary \ref{cor:sumLfunctionsexceptional}. 
\begin{prop}\label{prop:lindelofforregularnewforms}
There is $A > 0$ such that for any $T \geq 1$ and any $\psi \modulo q$ we have
 \begin{equation*}
    \sum_{f_j \in \HH^{\text{reg}}(q; T)} |L(s; \Ad(f_j)\otimes \psi)|^2 \ll_{\varepsilon, q} T^{2+\varepsilon} (1 + |s|)^A
\end{equation*}
whenever $\text{Re}(s) = 1/2$.   
\end{prop}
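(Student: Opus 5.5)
The plan is to adapt the Luo--Sarnak argument: an approximate functional equation, followed by the spectral large sieve of Lemma \ref{lem:largesieveinequality}, with the harmonic weights handled by Lemma \ref{lem:normnormalizednewform}. For a regular newform $f_j$ the completed $L$-function $\Lambda(s;\Ad(f_j)\otimes\psi)$ is entire. It satisfies the functional equation \eqref{eq:functionalequationadjoint} with conductor dividing $q^4$ and gamma factor \eqref{eq:gammafactorinfinity}. On $\text{Re}(s)=1/2$ the analytic conductor is therefore $\asymp_q (1+|s|)(1+|s+2it_j|)(1+|s-2it_j|)\ll (1+|s|)^3T^2$. A standard approximate functional equation expresses $L(s;\Ad(f_j)\otimes\psi)$ as two smoothly weighted Dirichlet polynomials $\sum_n a_j(n)\psi(n) n^{-s}V(n/Y)$, one for $\psi$ and one for $\overline\psi$, of effective length $Y\ll (1+|s|)^{3/2}T^{1+\varepsilon}$. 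Here $a_j(n)$ are the coefficients of $L(s;\Ad(f_j))$, and the weight $V$ depends mildly on $t_j$ through the gamma ratio. I would remove this dependence by writing $V$ as a Mellin integral over a vertical line, at the cost of a factor $(1+|s|)^A$. By Cauchy--Schwarz in the Mellin variable, it then suffices to bound
\begin{equation*}
\sum_{f_j\in\HH^{\text{reg}}(q;T)}\Bigl|\sum_{n\le Y} a_j(n)\psi(n)b_n\Bigr|^2
\end{equation*}
for arbitrary coefficients $|b_n|\le n^{-1/2}$ and dyadic $Y\ll (1+|s|)^{3/2}T^{1+\varepsilon}$.

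The main obstacle is to bring this into the shape of Lemma \ref{lem:largesieveinequality}, since the large sieve controls GL(2) Fourier coefficients $\rho_j(n)$, whereas $a_j(n)$ is a GL(3) coefficient. Following Luo--Sarnak, I would use the factorization $\sum_n a_j(n)n^{-s} = L^{(q)}(s;f_j\otimes\overline{f_j})L^{(q)}(2s;\cdot)^{-1}\zeta^{(q)}(s)^{-1}\times(\text{finitely many bad Euler factors})$. At good primes this gives $a_j(n)=\sum_{m^2 l = n}\mu\text{-type}(m)\,\lambda_{f_j}(l^2)$ up to a convolution with the trivial character, because $\lambda_{f}(p^2)$ recovers $\alpha^2+\alpha\beta+\beta^2$ after dividing out the nebentypus. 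In practice I would work with the Dirichlet series $\sum_{(n,q)=1}\lambda_{f_j}(n^2)\psi(n)n^{-s}$, which equals $L^{(q)}(s;\Ad(f_j)\otimes\psi')$ times ratios of Dirichlet $L$-functions, where $\psi'$ is $\psi$ twisted by the nebentypus. The harmless $\zeta$ and $L(2s,\cdot)$ factors are absorbed via short divisor-sum convolutions and Cauchy--Schwarz. The bad Euler factors at $p\mid q$ are bounded by $q^{\varepsilon}$ using the Kim--Sarnak bounds on local parameters, exactly as in \eqref{eq:rankinselbergisalmosteulerproduct}. This reduces matters to sums of the form $\sum_{f_j}|\sum_{m\le Y}c_m\lambda_{f_j}(m^2)|^2$, with polynomial-length $Y$ in squares $m^2\le Y^2\ll (1+|s|)^3T^{2+\varepsilon}$.

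Now I apply Lemma \ref{lem:largesieveinequality} on $\Gamma_0(M)$ with $M\mid q^2$ and nebentypus fixed; there are finitely many choices of $M$ and nebentypus. Each newform $f_j$, normalized by its $L^2$ norm, is a member of an orthonormal basis, and its Fourier coefficients are $\rho(n)=\lambda_{f_j}(n)\langle f_j,f_j\rangle^{-1/2}$. Lemma \ref{lem:normnormalizednewform} gives $\omega(t_j)|\rho(n)|^2 \asymp \lambda_{f_j}(n)^2 (T q)^{\pm\varepsilon}$. Hence
\begin{equation*}
\sum_{|t_j|\le T}\Bigl|\sum_{m^2\le N}c_m\lambda_{f_j}(m^2)\Bigr|^2\ll T^\varepsilon (T^2+N^{1+\varepsilon})\sum_m |c_m|^2 .
\end{equation*}
With $N\ll (1+|s|)^3 T^{2+\varepsilon}$ and $\sum|c_m|^2\ll\sum_{m}m^{-2}\cdot(\log)\ll T^\varepsilon$, where the weights $n^{-1/2}$ at $n=m^2$ give $m^{-1}$, this yields $T^{2+\varepsilon}(1+|s|)^A$. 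The restriction to regular forms is only needed so that $L(s;\Ad(f_j)\otimes\psi)$ is entire. That ensures the approximate functional equation has no polar terms, and the polar terms are precisely what Corollary \ref{cor:sumLfunctionsexceptional} handles separately. The key points I must check are the uniformity of the approximate functional equation in $t_j$, including the small imaginary $t_j$, and that the $\Gamma_0(M)$ large sieve applies with a constant uniform in the finitely many $(M,\psi)$.
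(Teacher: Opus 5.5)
Your proposal is correct and shares the paper's skeleton. It uses an approximate functional equation and the good-prime identity $\lambda_{F_j}(n)=\psi(n)\sum_{n=du^2}\overline{\eta(d)}\lambda_{f_j}(d^2)$ of \eqref{eq:relationadjointLfunction}. In that identity the $u$-sum carries the unimodular weight $\psi(u^2)$, coming from the factor $L(2s,\psi^2)$, not a M\"obius-type weight; this does not affect the bounds. It then applies the large sieve of Lemma \ref{lem:largesieveinequality} to a sequence supported on squares $d^2\le Y^2$, with the harmonic weights removed by Lemma \ref{lem:normnormalizednewform}.

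The genuine difference is how you decouple the approximate functional equation weight from $t_j$. The paper restricts to $T\le t_j\le 2T$ and $|s|\le T^{1-\varepsilon}$, using convexity otherwise. It replaces $V_s(y;F_j)$ by $W_s(y/t_j)$ via Stirling, then integrates over the free parameter $X\in(1/2,2)$ so that the $t_j$-dependent scale becomes a uniform variable $r\in(1/4,4)$. You instead write the truncated weight as a Mellin integral on $\text{Re}(u)=\varepsilon$. You apply Cauchy--Schwarz in $u$ with a $t_j$-uniform Stirling bound for $\gamma(s+u;F_j)/\gamma(s;F_j)$, then use the large sieve for each fixed $u$. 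Your version is the more standard one and is uniform in $s$ and $t_j$: no dyadic split, no separate convexity range, and small $t_j$ cause no trouble. The price is a cruder but irrelevant power of $1+|s|$.

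Two bookkeeping points need correcting.

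First, after the approximate functional equation you only have Dirichlet polynomials. So the primes $p\mid q$ cannot be handled by a pointwise bound on Euler factors as in \eqref{eq:rankinselbergisalmosteulerproduct}; it must be done at the level of coefficients, as the paper does in Lemma \ref{lem:largesieveadjoint}. Write $n=n'n_0$ with $n'\mid q^\infty$ and $(n_0,q)=1$, then apply Cauchy--Schwarz over $n'$. This needs a bound on $\lambda_{F_j}(n')$. The paper passes to a twist-minimal representative to get $|\lambda_{F_j}(n')|\ll (n')^{\varepsilon}$. The Kim--Sarnak bound $(n')^{7/32+\varepsilon}$ also suffices, since the weight $(n')^{-1/2}$ makes the sum over $n'\mid q^\infty$ converge.

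Second, the coefficient of $\lambda_{f_j}(d^2)$ is $c_d=\sum_u b_{du^2}\ll d^{-1/2}\log Y$, not of size $d^{-1}$. Nevertheless $\sum_{d\le Y}|c_d|^2\ll(\log Y)^3$, so your final bound $T^{2+\varepsilon}(1+|s|)^A$ stands.
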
  
We will follow the original argument of \cite[Pages 220-221]{luosarnak}. We let $t_j$ be the spectral parameter of $f_j$, so that $f_j \in \HH^{\text{reg}}_{it_j}(q)$. We can break up the sum over $f_j$ into dyadic pieces with respect to the spectral parameter and assume that $T \geq 100$ and that $T \leq t_j \leq 2T$. Also, since the conductor $\q(\Ad(f_j) \otimes \psi)$ is a positive divisor of $q^4$, it can take only finitely many values. Thus, by a further partition of the family we can assume that $\q(\Ad(f_j) \otimes \psi)$ is a fixed divisor of $q^4$, which we denote simply by $\q$. Finally, it is enough to consider the case $|s| \leq T^{1-\varepsilon}$, since we can use the convexity bound on the complementary range $|s| \geq T^{1 - \varepsilon}$. 

For concinesses we abbreviate $F_j := \Ad(f_j)\otimes \psi$ and write $L(s; F_j) = \sum_{n = 1}^\infty \lambda_{F_j}(n)/n^s$.  By the assumption that $f_j$ is regular we know that $L(s; F_j)$ is an entire $L$-function. Also, we know that it satisfies the functional equation \eqref{eq:functionalequationadjoint}. Therefore, we can use the approximate functional equation \cite[Theorem 5.3]{iwanieckowalski} with the function $G(u) := e^{-u^2}$ and deduce that, for any $X > 1$, we have 
\begin{equation}\label{eq:approxfunctionaleq}
    L(s; F_j) = \sum_{n = 1}^\infty \frac{\lambda_{F_j}(n)}{n^s}V_s\left(\frac{n}{X\sqrt{\q}}; F_j\right) + \epsilon(F_j, s)\sum_{n = 1}^\infty \frac{\overline{\lambda_F(n)}}{n^{1-s}}V_{1-s}\left(\frac{nX}{\sqrt{\q}}; F_j\right),
\end{equation}
where $\text{Re}(s) = 1/2$, $|\epsilon(F_j, s)| = 1$ and 
\begin{equation}\label{eq:definitionweightfunctionapproxfunctionalequation}
    V_s(y; F_j):= \frac{1}{2\pi i}\int_{(3)}y^{-u}G(u)\frac{\gamma(s + u; F_j)}{\gamma(s; F_j)}\frac{du}{u}.
\end{equation}
The gamma factor $\gamma(s; F_j)$ was defined in \eqref{eq:gammafactorinfinity}. We will only need to apply \eqref{eq:approxfunctionaleq} for values $1/2 \leq X \leq 2$. Note that with our assumptions on the size of $s$ and $t_j$, the archimedean analytic conductor of $L(s; F_j)$ is comparable to $|s|T^2$. By \cite[Proposition 5.4]{iwanieckowalski} we have 
\begin{equation*}
     V_s(y; F_j) \ll_{B} \left(1 + \frac{y}{|s|^{1/2}T}\right)^{-B}
\end{equation*}
for any $B > 0$. Recall also that $\lambda_{F_j}(n) \ll_\varepsilon n^{7/32 + \varepsilon}$ (any polynomial bound would work here). It follows that 
\begin{equation}\label{eq:truncatedapproximatefunctionalequation}
    L(s; F_j) = \sum_{n \leq (\sqrt{\q|s|}T)^{1 + \varepsilon}} \frac{\lambda_{F_j}(n)}{n^s}V_s\left(\frac{n}{X\sqrt{\q}}; F_j\right) + \epsilon(F_j, s)\sum_{n \leq (\sqrt{\q|s|}T)^{1 + \varepsilon}} \frac{\overline{\lambda_F(n)}}{n^{1-s}}V_{1-s}\left(\frac{nX}{\sqrt{\q}}; F_j\right) + O_{\varepsilon, B}(T^{-B}),
\end{equation}
for any $B > 0$. If we use the very rapid decay of $G(u) = e^{-u^2}$ in Equation \eqref{eq:definitionweightfunctionapproxfunctionalequation} we can see that 
\begin{equation*}
    V_s(y; F_j) = \frac{1}{2\pi i}\int_{\substack{\text{Re}(u) = \varepsilon\\
    |u| \leq \log T}} y^{-u}G(u) \frac{\gamma(s + u; F_j)}{\gamma(s; F_j)}\frac{du}{u} + O_{\varepsilon, B}(y^{-\varepsilon}T^{-B}).
\end{equation*}
Note that $V_s(y; F_j)$ depends on $F_j$ via the spectral parameter $t_j$. We want to make this dependence as explicit as possible. By Stirling's formula we have 
\begin{equation*}
    \log \Gamma(s) = (s - 1/2)\log s - s + \frac{\log 2\pi}{2} + O(|s|^{-1}).
\end{equation*}
Using the power series expansion of $s \mapsto \log(1 + s)$ as well as the bounds $|s|\leq T^{1-\varepsilon}$, $|u| \leq \log T$ and $|t_j| \asymp T$ we deduce that 
\begin{equation*}
\begin{aligned}
    \log \Gamma((s + u + \epsilon)/2 \pm it_j) - \log \Gamma((s + \epsilon)/2 \pm it_j) & = \frac{u}{2} \log((s + \epsilon)/2 \pm it_j) + O((\log T)/T)\\
    & = \frac{u}{2}(\log t_j \pm i\frac{\pi}{2}) + O(\log T/T^\varepsilon).
\end{aligned}
\end{equation*}
Recalling \eqref{eq:gammafactorinfinity} it follows that
\begin{equation}\label{eq:approximateweightfunction}
    V_s(y; F_j) = W_s(y/t_j) + O_{\varepsilon, B}(y^{-\varepsilon} T^{-B}),
\end{equation}
where
\begin{equation}\label{eq:defiapproximateweightfunction}
    W_s(y) := \frac{1}{2\pi i}\int_{\substack{\text{Re}(u) = \varepsilon\\
    |u| \leq \log T}} (y \pi^{3/2})^{-u} G(u) \frac{\Gamma((s + u + \delta)/2)}{\Gamma((s + \delta)/2)}\frac{du}{u}.
\end{equation}
Note that $W_s(y)$ is independent of the spectral parameter $t_j$. Inserting \eqref{eq:approximateweightfunction} into \eqref{eq:truncatedapproximatefunctionalequation} it follows that 
\begin{equation}\label{eq:truncatedapproximatefunctionalequationv2}
    L(s; F_j) = \sum_{n \leq (\sqrt{\q|s|}T)^{1 + \varepsilon}} \frac{\lambda_{F_j}(n)}{n^s}W_s\left(\frac{n}{Xt_j \sqrt{\q}}\right) + \epsilon(F_j, s)\sum_{n \leq (\sqrt{\q|s|}T)^{1 + \varepsilon}} \frac{\overline{\lambda_F(n)}}{n^{1-s}}W_{1-s}\left(\frac{nX}{t_j \sqrt{\q}}\right) + O_{q, \varepsilon, B}(T^{-B}).
\end{equation}
We have proved this formula for $T \leq |t_j| \leq 2T$ and $X \in (1/2, 2)$. We can use the variable $X$ to relax the dependence on $t_j$. Indeed, if we take absolute values, square both sides of \eqref{eq:truncatedapproximatefunctionalequationv2} and integrate $X$ over $(1/2, 2)$ with respect to Lebesgue measure we obtain
\begin{equation*}
    |L(s; F_j)|^2 \ll \int_{1/4}^{4}\left|\sum_{n \leq (\sqrt{\q|s|}T)^{1 + \varepsilon}} \frac{\lambda_{F_j}(n)}{n^s}W_s\left(\frac{nr}{T\sqrt{\q}}\right)\right|^2\, dr + O_{q, \varepsilon, B}(T^{-B}).
\end{equation*}
After summing over $T \leq t_j \leq 2T$ we have
\begin{equation}\label{eq:truncatedapproximatefunctionalinequality}
    \sum_{T \leq |t_j| \leq 2T}  |L(s; F_j)|^2 \ll  \int_{1/4}^{4}  \sum_{T \leq t_j \leq 2T} \left|\sum_{n \leq (\sqrt{\q|s|}T)^{1 + \varepsilon}} \frac{\lambda_{F_j}(n)}{n^s}W_s\left(\frac{nr}{T\sqrt{\q}}\right)\right|^2 \, dr + O_{q, \varepsilon, B}(T^{-B}).
\end{equation}

\begin{lem}\label{lem:largesieveadjoint}
For any sequence $(\alpha_n)_n$ and $N \geq 1$ we have 
\begin{equation}\label{eq:largesieveadjoint}
    \sum_{T \leq t_j \leq 2T} \left|\sum_{n \leq N} \lambda_{F_j}(n)\alpha_n\right|^2\ll_{q, \varepsilon} (NT)^\varepsilon(T^2 + N^2)\sum_{r \leq N}\left(\sum_{u \leq \sqrt{N/r}} |\alpha_{ru^2}|\right)^2. 
\end{equation} 
\end{lem}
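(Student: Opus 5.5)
The plan is to follow Luo--Sarnak: write the adjoint coefficients in terms of squares of $\GL(2)$ Hecke eigenvalues, and then apply the spectral large sieve of Lemma \ref{lem:largesieveinequality}.

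\textbf{Step 1: identity for adjoint coefficients.} For $(n,q)=1$, the identity \eqref{eq:gelbartjacquetlift}, applied with $g_1 = f_j\otimes\psi$ and $g_2 = f_j$, gives at unramified primes
\[
L^{(q)}(s; F_j)\, L^{(q)}(s;\psi) = L^{(q)}(s; (f_j\otimes\psi)\otimes \overline{f_j}).
\]
Combining this with the Hecke relations yields a convolution identity. Up to the nebentypus twist $\overline{\psi_j}$, the coefficient $\lambda_{F_j}(n)$ is a combination of the terms $\psi(r)\lambda_{f_j}(r^2)$ over $n = r u^2$, with bounded multiplicative weights $\mu$-type in $u$. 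Concretely, I expect
\[
\lambda_{F_j}(n)=\sum_{ru^2=n} c(u)\,\psi(r)\,\overline{\psi_j}(r)\,\lambda_{f_j}(r^2),
\]
as for $\SL_2(\Z)$, where $|c(u)| \ll u^\varepsilon$ and $\psi_j$ is the nebentypus of $f_j$.

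The primes dividing $q$ form the first complication. Since $\q(F_j)\mid q^4$ and $|\lambda_{F_j}(p^k)| \ll p^{7k/32}$, I would split $n = n_1 n_2$ with $n_2\mid q^\infty$ and $(n_1,q)=1$. Cauchy--Schwarz over $n_2$ then costs only $N^\varepsilon$ after absorbing the bounded $q$-part, since the number of $n_2 \le N$ with $n_2\mid q^\infty$ is $\ll_q (\log N)^{\omega(q)}$. Inserting the identity, we get
\[
\sum_n \lambda_{F_j}(n)\alpha_n=\sum_{u}c(u)\sum_{r}\beta_{r,u}\,\lambda_{f_j}(r^2),
\qquad \beta_{r,u} := \alpha_{ru^2}\psi(r)\overline{\psi_j}(r).
\]
The nebentypus $\psi_j$ takes only finitely many values, so the family can be partitioned according to $\psi_j$. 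Cauchy--Schwarz over $u$, with the divisor-type weight absorbed into $N^\varepsilon$, reduces the problem to bounding
\[
\sum_j\Big|\sum_{r\le N}\gamma_r\lambda_{f_j}(r^2)\Big|^2, \qquad \gamma_r=\sum_u |\alpha_{ru^2}|\text{-weighted}.
\]

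\textbf{Step 2: large sieve on a lifted form.} Next I pass from $\lambda_{f_j}(m)$ to Fourier coefficients of an orthonormal basis. For $m = r^2$, set $v_j := f_j/\langle f_j,f_j\rangle^{1/2}$, viewed in $L^2_0(\Gamma_0(q^2),\psi_j)$. Then $\rho(v_j;m)=\lambda_{f_j}(m)\langle f_j,f_j\rangle^{-1/2}$. Lemma \ref{lem:normnormalizednewform} gives $\omega(t_j)|\rho(v_j;m)|^2 \asymp (qT)^{\pm\varepsilon}|\lambda_{f_j}(m)|^2$. The distinct normalized newforms $v_j$ are orthonormal and can be completed to an orthonormal basis of $L_0^2(\Gamma_0(q^2),\psi_j)$. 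Positivity then lets us enlarge the sum to the whole basis.

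We apply Lemma \ref{lem:largesieveinequality} with $M=q^2$, with the sequence supported on squares $m=r^2\le N^2$, and with $a_{r^2}=\gamma_r$. This gives the bound
\[
(T^2+N^{2+\varepsilon})\sum_r|\gamma_r|^2 .
\]
Since $\sum_r |\gamma_r|^2$ is exactly the right-hand side sum $\sum_r \big(\sum_u|\alpha_{ru^2}|\big)^2$, this yields \eqref{eq:largesieveadjoint}. The map $j\mapsto f_j$ is $O_q(1)$-to-one, so multiplicities only cost a constant.

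\textbf{Main obstacle.} The main obstacle is Step 1 at primes dividing $q$ and the exact form of the unramified identity. One needs a clean, uniformly bounded relation between $\lambda_{F_j}(n)$ and $\lambda_{f_j}(r^2)$ for $(n,q)=1$. The approach I would try first is a direct Euler-factor comparison at $p\nmid q$, namely
\[
\sum_k \lambda_{F}(p^k)X^k = (1-\psi\overline{\psi_j}(p)^{2}\ldots X^2)\sum_k\psi\overline{\psi_j}(p^k)\lambda_f(p^{2k})X^k,
\]
following Shimura's computation. The $q$-part is handled crudely, as described in Step 1, via the Kim--Sarnak bound together with the finiteness of $\{n_2\le N : n_2\mid q^\infty\}$ up to $N^\varepsilon$.
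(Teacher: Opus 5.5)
Your overall architecture matches the paper's: the Hecke identity with $n=ru^2$ at unramified primes, then the large sieve of Lemma \ref{lem:largesieveinequality} applied to $L^2$-normalized newforms via Lemma \ref{lem:normnormalizednewform}. The gap is in how you handle the primes dividing $q$. You split off $n_2\mid q^\infty$ and bound $\lambda_{F_j}(n_2)$ pointwise by Kim--Sarnak. The primes are bounded, but the exponents are not. Since $n_2$ runs up to $N$ and $|\lambda_{F_j}(p^k)|$ may be as large as $p^{7k/32}$, the sum $\sum_{n_2\le N,\, n_2\mid q^\infty}|\lambda_{F_j}(n_2)|^2$ is only $\ll N^{7/16+\varepsilon}$. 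So your Cauchy--Schwarz over $n_2$ costs a power of $N$, not $N^\varepsilon$. This is not an artifact of the estimate. Suppose $f_j$ is unramified at some $p\mid q$ (conductor $1$ is allowed in $\HH'_{it}(q)$), or is a ramified twist of a form unramified at $p$. Then $\Ad(f_j)\otimes\psi$, for $\psi$ unramified at $p$, is unramified at $p$ with local roots that, as far as we know, can be as large as $p^{7/32}$. In the second case $\lambda_{f_j}(p^m)=0$ for $m\ge 1$, so your identity gives no handle on these coefficients at all. A loss of $N^{7/16}$ with $N\approx |s|^{1/2}T$ would destroy the Lindel\"of-on-average bound of Proposition \ref{prop:lindelofforregularnewforms}.

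The missing idea is the paper's first reduction. Since $F_j$ depends only on the twist class of $f_j$, replace $f_j$ by a twist-minimal member of its twist class, and partition so that its conductor $M$ and nebentypus $\eta$ are fixed. Then split off only $n'\mid M^\infty$. At $p\mid M$, twist-minimality together with the unitarizability criterion for principal series forces every local root of $F_j$ to have absolute value at most $1$. Hence $|\lambda_{F_j}(n')|\ll (n')^{\varepsilon}$, and the Cauchy--Schwarz over $n'$ genuinely costs only $N^\varepsilon$. Every prime $p\nmid M$, including those dividing $q$, is covered by $\lambda_{F_j}(n)=\psi(n)\sum_{n=du^2}\overline{\eta(d)}\lambda_{f_j}(d^2)$. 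Such primes are therefore absorbed into the large sieve at level $M$ instead of being bounded pointwise.

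Two smaller corrections:
\begin{itemize}
\item The local relation is $\sum_k\lambda_{F_j}(p^k)X^k=(1-\psi(p)^2X^2)^{-1}\sum_k(\psi\overline{\eta})(p^k)\lambda_{f_j}(p^{2k})X^k$, so $c(u)=\psi(u)^2$.
\item Do not apply Cauchy--Schwarz over $u$: over $u\le\sqrt{N}$ that loses a power of $N$. Instead interchange the sums and feed $\gamma_r=(\psi\overline{\eta})(r)\sum_u\psi(u)^2\alpha_{rn'u^2}$ into the large sieve. The bound $|\gamma_r|\le\sum_u|\alpha_{rn'u^2}|$ then gives exactly the right-hand side.
\end{itemize}
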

\begin{proof}
Recall that $F_j = \Ad(f_j) \otimes \psi$. From \eqref{eq:gelbartjacquetlift} it is clear that $\Ad(f_j)$ and $F_j$ depend on $f_j$ only through its equivalence class of twists. Thus, it makes no difference to assume that $f_j$ is \emph{twist minimal}, meaning that $\q(f_j) \mid \q(f_j \otimes \psi')$ for any Dirichlet character $\psi'$. Also, we can break the $t_j$-sum into $O_q(1)$ different sums, so that the nebentypus and the conductor of $f_j$ are fixed in each sum. We denote the constant nebentypus by $\eta$, and the common conductor by $M$, respectively. In this proof we see $\eta$ and $\psi$ as primitive Dirichlet characters modulo $\q(\eta)$ and $\q(\psi)$ respectively.

For any prime $p$ we have  
\begin{equation*}
    \sum_{i = 0}^\infty \lambda_{f_j}(p^i) X^i  = \frac{1}{(1 - \alpha_1 X)(1 - \alpha_2 X)}
\end{equation*}
for some local roots $\alpha_1, \alpha_2$ depending on $p$. If $p \mid M$, then $\alpha \beta = 0$, otherwise $\alpha \beta = \eta(p)$. When $(p, M) = 1$, then $(p, \q(\Ad(f_j))) = 1$ as well and we have 
\begin{equation*}
    \sum_{i = 0}^\infty \lambda_{F_j}(p^i) X^i = \frac{1}{(1 - \psi(p)\alpha_1 \alpha_2^{-1} X)(1 - \psi(p)X)(1 - \psi(p)\alpha_1^{-1}\alpha_2 X)},
\end{equation*} 
see \cite[Proposition 1.4]{gelbartjacquetlift}. One can compute that
\begin{equation*}
    \lambda_{F_j}(p^i) = \psi(p^i)\sum_{r = 0}^{\lfloor i/2\rfloor} \overline{\eta(p^{i - 2r})}\lambda_{f_j}(p^{2(i-2r)}),
\end{equation*}
and by multiplicativity of $n \mapsto \lambda_{F_j}(n)$ we deduce that
\begin{equation}\label{eq:relationadjointLfunction}
     \lambda_{F_j}(n) = \psi(n)\sum_{\substack{d\mid n\\n = du^2}} \overline{\eta(d)}\lambda_{f_j}(d^2)
\end{equation}
whenever $(n, M) = 1$. On the other hand, when $p \mid M$ one can check that 
\begin{equation}
    \sum_{i = 0}^\infty \lambda_{F_j}(p^i) X^i = \frac{1}{(1 - \beta_1 X)(1 - \beta_2 X)(1 - \beta_3 X)}
\end{equation}
where $\max(|\beta_1|, |\beta_2|, |\beta_3|) \leq 1$, see \cite[Corollary 1.3 and Proposition 1.4]{gelbartjacquetlift} and see also the criterion for unitarizability of principal series \cite[Proposition 4.6.12]{bump}. Therefore, when $n \mid M^\infty$ we have 
\begin{equation}\label{eq:boundbadprimesadjoint}
    |\lambda_{F_j}(n)| \ll_\varepsilon n^\varepsilon. 
\end{equation}
Since $M \mid q^2$ we have
\begin{equation*}
    |\{n' \leq N: n'\mid M^\infty\}| \ll_q N^\varepsilon.
\end{equation*}
Using this estimate, multiplicativity of $n\mapsto \lambda_{F_j}(n)$ and \eqref{eq:boundbadprimesadjoint} it follows that
\begin{equation}\label{eq:auxeq:breakupoflargesieve}
    \left|\sum_{n \leq N} \lambda_{F_j}(n)\alpha_n\right|^2 \ll_q N^\varepsilon \sum_{\substack{n' \leq N\\
    n' \mid M^\infty}} \left|\sum_{\substack{n_0 \leq N/n'\\
    (n_0, M) = 1}} \lambda_{F_j}(n_0)\alpha_{n_0n'}\right|^2.
\end{equation}
Using relation \eqref{eq:relationadjointLfunction} it follows that 
\begin{equation*}
    \sum_{\substack{n_0 \leq N/n'\\
    (n_0, M) = 1}} \lambda_{F_j}(n_0)\alpha_{n_0n'} = \sum_{\substack{r \leq N/n'\\
    (r, M) = 1}} \lambda_j(r^2) \beta_{r, n'}
\end{equation*}    
where the sequence $(\beta_{r, n'})_r$ is given by 
\begin{equation*}
\beta_{r, n'} := \overline{\eta(r)} \psi(r)\sum_{\substack{u \leq \sqrt{N/n'r}\\
    (u, M) = 1}}\psi(u^2)\alpha_{rn'u^2}.
\end{equation*}
With the assumptions above, the collection $\{f_j: T \leq t_j \leq 2T\}$ is an orthogonal set of Maass forms in $L_0^2(\Gamma_0(M), \eta)$. Taking into account Lemma \ref{lem:normnormalizednewform}, we can apply the large sieve inequality of Lemma \ref{lem:largesieveinequality} and obtain 
\begin{equation*}
\begin{aligned}
    \sum_{T \leq t_j \leq 2T}\left|\sum_{\substack{r \leq N/n'\\
    (r, M) = 1}} \lambda_j(r^2) \beta_{r, n'}\right|^2 & \ll_{q, \varepsilon} (TN)^\varepsilon (T^2 + (N/n')^2)\sum_{r \leq N/n'} \left(\sum_{u \leq \sqrt{N/n'r}} |\alpha_{rn'u^2}|\right)^2,\\
    & \leq (TN)^\varepsilon (T^2 + N^2)\sum_{r \leq N} \left(\sum_{u \leq \sqrt{N/r}} |\alpha_{ru^2}|\right)^2.
\end{aligned}
\end{equation*}
The proof of the lemma is concluded by recalling \eqref{eq:auxeq:breakupoflargesieve} and the bound $|\{n' \leq N: n'\mid M^\infty\}| \ll_q N^\varepsilon$.
\end{proof}

Note from \eqref{eq:defiapproximateweightfunction} that $W_s(y) \ll_\varepsilon T^\varepsilon y^{-\varepsilon}$. For $y \in (1/4, 4)$ and $\text{Re}(s) = 1/2$ consider the sequence
\begin{equation*}
    \alpha_n := \frac{W_s\left(\frac{ny}{T \sqrt{\q}}\right)}{n^s}.
\end{equation*}
Letting $N = (\sqrt{\q|s|}T)^{1 + \varepsilon}$ it is clear that  
\begin{equation*}
    \sum_{r \leq N}\left(\sum_{u \leq \sqrt{N/r}}|\alpha_{ru^2}|\right)^2 \ll_{q, \varepsilon} (TN)^\varepsilon. 
\end{equation*}
Thus, if we apply Lemma \ref{lem:largesieveadjoint} in Equation \eqref{eq:truncatedapproximatefunctionalinequality} we obtain 
\begin{equation*}
    \sum_{T \leq |t_j| \leq 2T}  |L(s; F_j)|^2 \ll_{q, \varepsilon} |s|^{1 + \varepsilon} T^{2 + \varepsilon},
\end{equation*}
which completes the proof of Proposition \ref{prop:lindelofforregularnewforms}.

\subsection{Bad primes.}\label{subsec:badprimes}
Recall from \eqref{eq:definitionofbadfactorsrankinselberg} the series $\L_q(s; u_j^{g_1},  \overline{u_j^{g_2}})$. We are going to control it by bounding the Dirichlet coefficients directly, as in the following lemma. Recall that the newform $f \in \HH'_{it}(q)$ is normalized so that $\rho(f; 1) = 1$.
\begin{lem}\label{lem:boundfouriercoefficientoldspace}
Let $f \in \HH'_{it}(q)$, with conductor $\q(f) \mid q^2$, and let $v \in V_f(\Gamma_d(q))$, defined in \eqref{eq:oldspaceGammad}. Let $\theta > 0$ be a bound towards the Ramanujan conjecture. Then for any $n \geq 1$ and any $\varepsilon > 0$ we have
\begin{equation*}
    \rho(v; n/q) \ll_{\varepsilon} q^{1/2 + \varepsilon} n^{\theta + \varepsilon} \times  (q^2/\q(f), n)^{1/2 - \theta} \times \frac{\langle v, v\rangle^{1/2}}{\langle f, f \rangle^{1/2}}.
\end{equation*}
\end{lem}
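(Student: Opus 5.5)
The approach is to reduce everything to the oldspace $V_f(\Gamma_0(M))$ with $M=q^2$, where an explicit orthonormal basis is available. Conjugation by $\begin{psmallmatrix} q & 0\\ 0 & 1\end{psmallmatrix}$ sends $V_f(\Gamma_d(q))$ to $V_f(\Gamma_0(q^2))$ and $\rho(v;n/q)$ to a constant multiple of $\rho(\tilde v;n)$. Since the inner products are normalized with respect to probability measure, the map is an isometry up to $O_q(1)$ factors, and the ratio $\langle v,v\rangle/\langle f,f\rangle$ is unchanged. So it suffices to prove
\begin{equation*}
\rho(\tilde v; n) \ll_\varepsilon q^{1/2+\varepsilon} n^{\theta+\varepsilon}(M/\q(f),n)^{1/2-\theta}\,\frac{\langle \tilde v,\tilde v\rangle^{1/2}}{\langle f,f\rangle^{1/2}}
\end{equation*}
for $\tilde v\in V_f(\Gamma_0(M))$, where $\tilde v=\sum_{d\mid L} c_d f(dz)$ and $L=M/\q(f)$.

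I will express $\tilde v$ in the orthonormal basis of Schulze-Pillot and Yenirce \cite{Schulze_Pillot_Petersson}. Their basis is $\{f_\ell\}_{\ell\mid L}$, with $f_\ell=\sum_{d\mid \ell}\xi_\ell(d) f|B_d$. Here $f|B_d(z)=d^{1/2}f(dz)$, up to the slash normalization. The coefficients $\xi_\ell(d)$ are multiplicative, built from the Hecke eigenvalues $\lambda_f(p),\lambda_f(p^2)$ at primes $p\mid L$ together with Möbius-type factors, and satisfy $|\xi_\ell(d)|\ll_\varepsilon \ell^\varepsilon (\ell/d)^{\theta}$ or a similar bound, up to normalizing constants comparable to $1$ because $L(1,\Ad f)$-type local factors are bounded at $p\mid q$. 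The $f_\ell/\langle f,f\rangle^{1/2}$ are then orthonormal, so $\tilde v=\langle f,f\rangle^{-1/2}\sum_\ell a_\ell f_\ell$ with $\sum|a_\ell|^2=\langle\tilde v,\tilde v\rangle$. Cauchy--Schwarz over the $O_q(1)$ values of $\ell$ reduces the claim to bounding $|\rho(f_\ell;n)|$ for each $\ell\mid L$.

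Next, $\rho(f|B_d;n)=d^{1/2}\rho(f;n/d)\cdot d^{-1/2}$, up to the normalization of the Fourier coefficient in \eqref{eq:generalfourierexpansion}, where the $|n|^{-1/2}$ factor converts $f(dz)$ into a coefficient $d^{1/2}\lambda_f(n/d)$ when $d\mid n$ and $0$ otherwise. Hence
\begin{equation*}
\rho(f_\ell;n)=\sum_{d\mid(\ell,n)}\xi_\ell(d)\, d^{1/2}\lambda_f(n/d).
\end{equation*}
Using $|\lambda_f(m)|\ll m^{\theta+\varepsilon}$ (valid also at ramified primes, where $|\lambda_f(p)|\le 1$) gives a term of size $d^{1/2}(n/d)^{\theta+\varepsilon}=n^{\theta+\varepsilon}d^{1/2-\theta}\le n^{\theta+\varepsilon}(L,n)^{1/2-\theta}$. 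The $\xi_\ell(d)$ contribute $O_\varepsilon(q^{\varepsilon}\cdot q^{O(\theta)})$. I expect the stray factor to be absorbed into $q^{1/2+\varepsilon}$, possibly using $\ell\le q^2$ and $\theta<1/4$. The remaining $q^{1/2}$ comes from the scaling between $\Gamma_d(q)$ and $\Gamma_0(q^2)$ Fourier coefficients (the $q^{-1/2}$ versus $n/q$ conventions) and the volume normalization.

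The main obstacle is the precise form and size of the Schulze-Pillot--Yenirce coefficients $\xi_\ell(d)$ and their normalizing constants. These involve $\lambda_f(p)$ for $p\mid L$ possibly divided by quantities like $1-\lambda_f(p)^2/(p(1+1/p)^2)$, whose lower bound needs the Ramanujan bound $\theta<1/2$. I would first verify that these denominators are $\gg 1$ using $|\lambda_f(p)|\le 2p^{\theta}$ with $\theta=7/64$. I would then check that the numerators are $\ll_\varepsilon \ell^{\varepsilon}p^{\theta}$-type, so that all $q$-dependence is $q^{1/2+\varepsilon}$ after the crude bookkeeping above.
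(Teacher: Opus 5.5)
Your plan is essentially the paper's proof. You pass to $w=v|_kV_q\in V_f(\Gamma_0(q^2))$, which is an exact isometry; since $\rho(w;n)=q^{1/2}\rho(v;n/q)$, the rescaling actually gains $q^{-1/2}$ rather than costing $q^{1/2}$. You then expand $w$ in the Schulze-Pillot--Yenirce orthogonal basis $A(d;f)f=\sum_{r\mid d}c_{r,d}\,f|_kV_r$, whose squared norms are $\langle f,f\rangle$ up to factors $d^{\pm\varepsilon}$, insert $\rho(f|_kV_r;n)=r^{1/2}\lambda_f(n/r)$, and finish with Cauchy--Schwarz. The paper uses the sharper $c_{r,d}\ll(d/r)^{\theta-1/2+\varepsilon}$; your cruder bound on the coefficients loses a factor $q^{2\theta}$, which is harmlessly absorbed.

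One small correction to the check you planned. For a uniform lower bound on the norm factors $1-p|\lambda_f(p)|^2/(p+1)^2$ at primes $p\nmid\q(f)$, use $|\lambda_f(p)|\le p^{\theta}+p^{-\theta}$, which follows from $|\alpha_p\beta_p|=1$ and $p^{-\theta}\le|\alpha_p|\le p^{\theta}$. With $\theta=7/64$, your cruder bound $|\lambda_f(p)|\le 2p^{\theta}$ allows this factor to be negative at $p=2$, so the check would fail there.
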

 If $f \in \HH'_{it}(q)$ and $(n, \q(f)) = 1$, then we know that 
\begin{equation}\label{eq:heckeeigenvalueboundgoodprimes}
    |\lambda_f(n)| \ll_{\varepsilon} n^{7/64 + \varepsilon}
\end{equation}
by the second appendix to \cite{kimramakrishnansarnak2003}. Thus, we can take $\theta = 7/64 + \varepsilon$ in Lemma \ref{lem:boundfouriercoefficientoldspace}. If $n \mid \q(f)^\infty$ then we have the stronger bound 
\begin{equation}\label{eq:heckeeigenvalueboundbadprimes}
    |\lambda_f(n)| \leq 1,
\end{equation} 
which follows from the criterion for unitarizability of principal series, see \cite[Proposition 4.6.12]{bump}.

Lemma \ref{lem:boundfouriercoefficientoldspace} is an adaption of \cite[Theorem 5.1]{Schulze_Pillot_Petersson} to the nonholomorphic case. Since \cite{Schulze_Pillot_Petersson} uses the material from \cite[Sections 4.5 and 4.6]{miyake}, for the convenience of the reader we recall the setup in the latter. 

Let $N$ be a positive integer, and consider the semigroups $\Delta_0(N), \Delta^*_0(N) \subset \GL_2^+(\Q)$ defined by 
\begin{equation*}
\begin{aligned}
\Delta_0(N) & := \{\begin{psmallmatrix}
    a & b\\
    c & d
\end{psmallmatrix}\in M_2(\Z)\cap \GL_2^+(\Q) : c \equiv 0 \modulo N, (a, N) = 1\},\\
\Delta^*_0(N) & := \{\begin{psmallmatrix}
    a & b\\
    c & d
\end{psmallmatrix}\in M_2(\Z)\cap \GL_2^+(\Q) : c \equiv 0 \modulo N, (d, N) = 1\}.
\end{aligned}
\end{equation*}
Define the Hecke algebras 
\begin{equation*}
    R(N) := R(\Gamma_0(N), \Delta_0(N)), \quad R^*(N) := R(\Gamma_0(N), \Delta^*_0(N)).
\end{equation*}
A $\Z$-basis for these Hecke algebras is described by the bijections
\begin{equation*}
    \{(a, d) \in \Z^2_{\geq 1}: a \mid d \text{ and }(a, N) = 1\} \longleftrightarrow \Gamma_0(N)\backslash \Delta_0(N)/\Gamma_0(N)
\end{equation*}
and 
\begin{equation*}
    \{(d, a) \in \Z^2_{\geq 1}: a \mid d \text{ and }(a, N) = 1\} \longleftrightarrow \Gamma_0(N)\backslash \Delta^*_0(N)/\Gamma_0(N)
\end{equation*} 
given by
\begin{equation*}
    (a, d)  \mapsto T(a, d) := \Gamma_0(N) \begin{psmallmatrix}
        a & 0\\
        0 & d
    \end{psmallmatrix}\Gamma_0(N), \text{ and }
    (d, a) \mapsto T^*(d, a)  := \Gamma_0(N) \begin{psmallmatrix}
        d & 0\\
        0 & a
    \end{psmallmatrix}\Gamma_0(N),
\end{equation*}
respectively, see \cite[Lemma 4.5.2]{miyake}. For $n \geq 1$ we define 
\begin{equation*}
    T(n) := \sum_{\substack{ad = n\\
    a \mid d}}T(a, d), \text{ and } T^*(n) := \sum_{\substack{ad = n\\
    a \mid d}}T^*(d, a).
\end{equation*}

Let $\psi$ be a Dirichlet character modulo $N$. We extend $\psi$ to a function on $\Delta_0(N)$, still denoted by $\psi$. The extension is defined as follows: given $\alpha \in \Delta_0(N)$ we let  
\begin{equation*}
    \psi(\alpha) := \overline{\psi(a(\alpha))}.
\end{equation*}
We can also extend $\psi$ to a function on $\Delta^*_0(N)$, denoted by $\psi^*$. The extension is defined as follows: given $\beta \in \Delta_0^*(N)$ we let  
\begin{equation*}
    \psi^*(\beta) := \psi(d(\beta)).
\end{equation*}
Note that, in general, $\psi$ and $\psi^*$ do not agree on $\Delta_0(N) \cap \Delta_0^*(N)$. 

The Hecke ring $R(N)$ acts on $\C_{it}(\Gamma_0(N), \psi)$ by the following recipe: given a decomposition $\Gamma_0(N)\alpha\Gamma_0(N) = \bigsqcup_{i = 1}^r \Gamma_0(N)\alpha_i$, and given $u \in \C_{it}(\Gamma_0(N), \psi)$ we let 
\begin{equation*}
    u|_k \Gamma_0(N)\alpha\Gamma_0(N) := \det(\alpha)^{-1/2}\sum_{i = 1}^r \overline{\psi(\alpha_i)}f|_k \alpha_i.
\end{equation*} 
Similarly, the Hecke ring $R^*(N)$ acts on $\C_{it}(\Gamma_0(N), \psi)$ by the formula 
\begin{equation*}
    u|_k \Gamma_0(N)\alpha\Gamma_0(N) := \det(\alpha)^{-1/2}\sum_{i = 1}^r \overline{\psi^*(\alpha_i)}f|_k \alpha_i.
\end{equation*} 
In particular, for $(n, N) = 1$ we have 
\begin{equation*}
    u|_k T(n, n) = n^{-1}\psi(n) u, \quad u|_k T^*(n, n) = n^{-1}\overline{\psi(n)} u.
\end{equation*}
By \cite[Equation (4.5.26)]{miyake}, for $n \geq 1$ the action of $T(n)$ on $\C_{it}(\Gamma_0(N), \psi)$ is given by the familiar formula \eqref{eq:Heckeoperatorsexplicitformula}.

Suppose now that $f \in \HH_{it}(N, \psi)$ is a newform. By definition, $f$ is an eigenfunction of all the Hecke operators $T(n)$. By \cite[Theorem 4.5.9]{miyake}, the ring $R(N)$ is a polynomial ring over $\Z$ on the variables $\{T(p), T(p, p)\}_{p \nmid N} \cup \{T(p)\}_{p \mid N}$. It follows that $f$ is an eigenfunction of the operator $T(a, d)$ for each pair $(a, d)\in \Z_{\geq 1}^2$ with $a \mid d$ and $(a, N) = 1$. We denote the corresponding eigenvalues by $\lambda_f(a, d)$, so that 
\begin{equation*}
    f|_k T(a, d) = \lambda_f(a, d)f.
\end{equation*} 
One can check that when $(d, N) = 1$ these eigenvalues agree with those defined in \eqref{eq:eigenvaluegeneralheckeoperator}, but this fact is not necessary for our purposes. By \cite[Theorem 4.5.4(2)]{miyake}, we know that $T(a, d)$ and $T^*(d, a)$ are adjoint operators with respect to the inner product \eqref{eq:innerproductmaasforms} on $\C_{it}(\Gamma_0(N), \psi)$. It follows that we have 
\begin{equation*}
    f|_k T^*(d, a) = \overline{\lambda_f(a, d)}f
\end{equation*}
for $(a, d) \in \Z_{\geq 1}^2$ such that $a \mid d$ and $(a, N) = 1$. Furthermore, when $(d, N) = 1$ we know that 
\begin{equation*}
    \overline{\lambda_f(a, d)} = \overline{\psi(ad)}\lambda_f(a, d),
\end{equation*}
see \cite[Theorem 4.5.4(1)]{miyake}.

We end up this discussion by recalling that $n \mapsto \lambda_f(1, n)$ is multiplicative, and that we have the following relations:
\begin{enumerate}
    \item [i)] If $p \mid N$ and $k \geq 1$, we have \begin{equation*}
        \lambda_f(1, p^k) = \lambda_f(1, p)^k.
    \end{equation*} 
    \item [ii)] If $(p, N) = 1$ we have 
    \begin{equation*}
        \lambda_f(1,p)^2 = \lambda_f(1, p^2) + (1 + p^{-1})\psi(p).
    \end{equation*}
    \item [iii)] If $(p, N) = 1$ and $k \geq 2$ we have 
    \begin{equation*}
        \lambda_f(1, p)\lambda_f(1, p^k) = \lambda_f(1, p^{k + 1}) + \psi(p)\lambda_f(1, p^{k-1}).
    \end{equation*}
\end{enumerate}

Since $\lambda_f(1, p) = \lambda_f(p)$, it follows that if $\theta$ is an exponent towards the Ramanujan conjecture, then
\begin{equation}\label{eq:ramanujanhecke1eigenvalues}
    |\lambda_f(1, n)| \ll_\varepsilon n^{\theta + \varepsilon}
\end{equation}
for each $n \geq 1$.

\begin{defi}
For $r \in \R_{> 0}$ we let $V_r := \begin{psmallmatrix}
    r & 0\\
    0 & 1
\end{psmallmatrix}$.
\end{defi}
The following result is the nonholomorphic analogue of \cite[Theorem 2.7]{Schulze_Pillot_Petersson}.

\begin{lem}[Schulze-Pillot and Yenirce]\label{lem:innerproducdilations}
Let $f \in \HH_{it}(N, \psi)$. Then, for $m, n \geq 1$ we have 
\begin{equation}\label{eq:innerproductdilations}
    \langle f|_k V_m, f|_k V_n\rangle = \frac{\lambda_f\left(1, \frac{n}{d}\right)\overline{\lambda_f\left(1, \frac{m}{d}\right)}}{\left(\frac{mn}{d^2}\right)^{1/2}  \prod_{\substack{p \mid (mn/d^2)\\
    p \nmid N}}\left(1 + \frac{1}{p}\right)   } \langle f, f\rangle,
\end{equation}
where $d = (m, n)$. 
\end{lem}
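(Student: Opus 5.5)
We follow the strategy of \cite[Theorem 2.7]{Schulze_Pillot_Petersson}: express $f|_k V_m$ as a Hecke double coset acting on $f|_k V_n$ (or conversely), and then use adjointness of $T(a,d)$ and $T^*(d,a)$. First reduce to the case $d=(m,n)=1$. The map $u\mapsto u|_k V_d$ intertwines the inner products up to a constant, and we use the probability normalization \eqref{eq:innerproductmaasforms}. Conjugating by $V_d$ carries $\Gamma_0(N\cdot\text{lcm})$ to a commensurable group of the same covolume, since the probability normalization is invariant under passing to a common finite-index subgroup. Hence $\langle f|_k V_m, f|_k V_n\rangle=\langle f|_k V_{m/d}, f|_k V_{n/d}\rangle$. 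So assume $(m,n)=1$.

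Next, for coprime $m,n$ we compute $\langle f|_k V_m, f|_k V_n\rangle=\langle f|_k V_{mn}, f\rangle$ up to the same invariance, after slashing by $V_n^{-1}$ and using $V_mV_n^{-1}$ together with conjugation by a suitable element of $\SL_2(\Z)$ when $(m,n)=1$. It therefore suffices to treat $\langle f|_k V_r, f\rangle$ for $r\ge1$ by multiplicativity in $r$, i.e.\ to prove $\langle f|_kV_r,f\rangle=\overline{\lambda_f(1,r)}\,r^{-1/2}\prod_{p\mid r,\,p\nmid N}(1+1/p)^{-1}\langle f,f\rangle$, and then combine the $m$- and $n$-parts. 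For this, unfold over the double coset $\Gamma_0(Nr)\backslash\Gamma_0(N)V_r\Gamma_0(N)$ (or its $\Delta^*_0$ version). The integral of $f|_kV_r\cdot\overline f$ over $\Gamma_0(Nr)\backslash\Hyp$ is averaged over right translates by $\Gamma_0(N)$; this produces $\langle f|_kT^*(r,1)\text{-type operator},f\rangle$ divided by the number of cosets. The $\psi$-twists cancel against the character values by construction of $\psi^*$. The degree of $\Gamma_0(N)V_r\Gamma_0(N)$ is $r\prod_{p\mid r,p\nmid N}(1+1/p)$, and this gives the Euler factor. The operator obtained is $T^*(r,1)$ for $r$ coprime to $N$. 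For $p\mid N$ the double coset is a single coset class, and the factor is $1$.

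Then apply adjointness, $f|_kT^*(d,a)=\overline{\lambda_f(a,d)}f$, from \cite[Theorem 4.5.4]{miyake}. Together with the normalization $\det^{-1/2}$ in the action, this yields $\overline{\lambda_f(1,r)}\,r^{-1/2}$ times the degree ratio. For general $d$, conjugate by $\SL_2(\Z)$ so that both dilations involve only $m/d,n/d$. Then split $r=(m/d)(n/d)$, and use that the pairing of $f|_kV_{m'}$ and $f|_kV_{n'}$ for coprime $m',n'$ equals $\langle f|_kT^*(n',1), f|_kT^*(m',1)\rangle$-type expressions. This produces $\lambda_f(1,n/d)\overline{\lambda_f(1,m/d)}$.

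The main obstacle is the precise coset bookkeeping in the unfolding step. One must check that $\Gamma_0(N)V_r\Gamma_0(N)=\bigsqcup\Gamma_0(N)V_r\gamma_i$ with $\gamma_i$ running over $\Gamma_0(Nr)\cap V_r^{-1}\Gamma_0(N)V_r\backslash\Gamma_0(N)$. One must also check that the character factors $\overline{\psi^*(\alpha_i)}$ match $\chi$-equivariance so that each translate contributes identically. Finally, one must confirm that primes $p\mid N$ contribute no $(1+1/p)$ factor, since for those primes the index $[\Gamma_0(N):\Gamma_0(Np)]=p$ rather than $p+1$. The nonholomorphic weight-$k$ slash with $j_\gamma$ changes nothing here, since all identities are group-theoretic.
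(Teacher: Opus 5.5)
Your overall framework is the right one, and it is what the paper imports from Schulze-Pillot--Yenirce. The reduction to $(m,n)=1$ by slashing both arguments with $V_d^{-1}$ is correct. So is your one-variable formula $\langle f|_kV_r,f\rangle=\overline{\lambda_f(1,r)}\,r^{-1/2}\prod_{p\mid r,\,p\nmid N}(1+1/p)^{-1}\langle f,f\rangle$: unfold over $\Gamma_0(Nr)\backslash\Gamma_0(N)$, recognize $\Gamma_0(N)V_r\Gamma_0(N)=T^*(r,1)$, use $f|_kT^*(r,1)=\overline{\lambda_f(1,r)}f$ and $[\Gamma_0(N):\Gamma_0(Nr)]=r\prod_{p\mid r,\,p\nmid N}(1+1/p)$. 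This works uniformly in $r$, so primes $p\mid N$ need no separate treatment.

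\textbf{The gap.} The step that collapses the two-variable pairing to the one-variable one is false. You claim $\langle f|_kV_m,f|_kV_n\rangle=\langle f|_kV_{mn},f\rangle$ for coprime $m,n$. After slashing by $V_n^{-1}$ you are looking at $\langle f|_k\mathrm{diag}(m,n),f\rangle$. Replacing $\mathrm{diag}(m,n)$ by $\mathrm{diag}(mn,1)$ needs $\mathrm{diag}(m,n)=\gamma_1\,\mathrm{diag}(mn,1)\,\gamma_2$ with $\gamma_1,\gamma_2\in\Gamma_0(N)$; elements of $\SL_2(\Z)$ do not suffice, since $f$ is only $\Gamma_0(N)$-equivariant.
\begin{itemize}
\item If $p\mid(n,N)$, no such $\gamma_i$ exist: every matrix in $\Gamma_0(N)\,\mathrm{diag}(m,n)\,\Gamma_0(N)$ has lower-right entry divisible by $p$.
\item If $(mn,N)=1$, the $\gamma_i$ exist, but reducing modulo $N$ gives $d(\gamma_1)d(\gamma_2)\equiv n$. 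Hence the true relation is $\langle f|_kV_m,f|_kV_n\rangle=\psi(n)\langle f|_kV_{mn},f\rangle$.
\end{itemize}
Feeding your identity into the one-variable formula produces $\overline{\lambda_f(1,m)}\,\overline{\lambda_f(1,n)}$ rather than $\overline{\lambda_f(1,m)}\,\lambda_f(1,n)$. These differ as soon as $\psi$ is nontrivial; for instance $\overline{\lambda_f(1,n)}=\overline{\psi(n)}\lambda_f(1,n)$ when $(n,N)=1$. Your last paragraph has the slots reversed as well: $\langle f|_kT^*(n',1),f|_kT^*(m',1)\rangle$ equals $\overline{\lambda_f(1,n')}\lambda_f(1,m')\langle f,f\rangle$, which is the complex conjugate of the target. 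Also, $V_d\notin\SL_2(\Z)$, so ``conjugate by $\SL_2(\Z)$'' is not the right description of that reduction.

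\textbf{The fix.} Do not merge $m$ and $n$; unfold one argument at a time. For $(m,n)=1$ you have $\Gamma_0(Nm)\cap\Gamma_0(Nn)=\Gamma_0(Nmn)$ and $[\Gamma_0(Nn):\Gamma_0(Nmn)]=[\Gamma_0(N):\Gamma_0(Nm)]$. So representatives of $\Gamma_0(Nm)\backslash\Gamma_0(N)$ can be chosen inside $\Gamma_0(Nn)$, and they also represent $\Gamma_0(Nmn)\backslash\Gamma_0(Nn)$. Since $f|_kV_n$ is $\Gamma_0(Nn)$-equivariant, unfolding only the first argument gives
\[
\langle f|_kV_m,f|_kV_n\rangle=\frac{m^{1/2}}{[\Gamma_0(N):\Gamma_0(Nm)]}\langle f|_kT^*(m,1),f|_kV_n\rangle=\frac{\overline{\lambda_f(1,m)}}{m^{1/2}\prod_{p\mid m,\,p\nmid N}(1+1/p)}\,\overline{\langle f|_kV_n,f\rangle}.
\]
Applying your one-variable formula to $\langle f|_kV_n,f\rangle$ and conjugating yields $\lambda_f(1,n)$ without a bar. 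Coprimality of $m$ and $n$ then merges the two Euler products into $\prod_{p\mid mn,\,p\nmid N}(1+1/p)$. This asymmetry between the two slots is exactly what produces $\lambda_f(1,n/d)\overline{\lambda_f(1,m/d)}$, and it is lost in your reduction via multiplicativity in $r$.
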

\begin{proof}
Taking into account the previous discussion, the proof of this lemma is completely analogous to the proof of \cite[Theorem 2.7]{Schulze_Pillot_Petersson}. 
\end{proof}

If $N \mid q^2$, the set $\{f|_k V_d : d \mid (q^2/N)\}$ is a basis for the oldspace $V_f(\Gamma_0(q^2))$. Thanks to Lemma \ref{lem:innerproducdilations}, we can compute an orthogonal basis of $V_f(\Gamma_0(q^2))$. For this purpose, we introduce maps $\{A(n; f): C(\Hyp)\rightarrow C(\Hyp)\}_{n \geq 1}$, determined uniquely by the properties:
\begin{enumerate}
    \item [i)] $A(1; f)=  \text{Id}$.
    \item [ii)] $A(nm; f) = A(n; f)A(m; f)$ whenever $(n, m) = 1$.
    \item [iii)] For $u \in C(\Hyp)$, $p \mid N$ and $j \geq 1$ we have
    \begin{equation*}
        A(p^j; f)u := u|_k V_{p^j} - \frac{\overline{\lambda_f(1, p)}}{p^{1/2}}u|_k V_{p^{j-1}}.
    \end{equation*}
    \item [iv)] For  $u \in C(\Hyp)$, $p \nmid N$ and $j = 1$ we have
    \begin{equation*}
        A(p; f)u := u|_k V_p - \frac{\overline{\lambda_f(1, p)}}{p^{1/2} + p^{-1/2}}u.
    \end{equation*}
    \item [v)] For  $u \in C(\Hyp)$, $p \nmid N$ and $j \geq 2$ we have
    \begin{equation*}
        A(p^j; f)u := u|_k V_{p^j} - \frac{\overline{\lambda_f(1, p)}}{p^{1/2}}u|_k V_{p^{j-1}} + \frac{\overline{\psi(p)}}{p}u|_k V_{p^{j-2}}.
    \end{equation*}
\end{enumerate}

The following result is the nonholomorphic analogue of \cite[Theorem 3.2]{Schulze_Pillot_Petersson}.
\begin{lem}[Schulze-Pillot and Yenirce]\label{lem:orthogonalbasisschulzepillot}
Let $f \in \HH_{it}(N, \psi)$ such that $N \mid q^2$. Then $\{A(d; f)f : d \mid (q^2/N)\}$ is an orthogonal basis of $V_f(\Gamma_0(q^2))$ with $L^2$-norms given by 
\begin{equation}\label{eq:normorthogonalbasisoldspace}
\frac{\langle A(d; f)f, A(d; f)f \rangle }{ \langle f, f \rangle} = \prod_{p \mid (d, N)}\left(1 - \frac{|\lambda_f(1, p)|^2}{p}\right) \times \prod_{\substack{p \mid d\\p \nmid N}}\left(1 - \frac{|\lambda_f(1, p)|^2}{p + 1}\right)\times \prod_{\substack{p^2 \mid d\\ p \nmid N}} \left(1 - \frac{1}{p^2}\right).
\end{equation}
Furthermore, we have
\begin{equation}\label{eq:estimatenormorthogonalbasisoldspace}
    d^{-\varepsilon} \ll_{\varepsilon} \frac{\langle A(d; f)f, A(d; f)f \rangle}{ \langle f, f \rangle} \ll_{\varepsilon} d^\varepsilon,
\end{equation}
for $d \mid (q^2/N)$ and any $\varepsilon > 0$.
\end{lem}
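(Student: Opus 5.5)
The plan is to use Lemma \ref{lem:innerproducdilations} to compute the Gram matrix of the basis $\{f|_k V_e : e \mid (q^2/N)\}$, and to reduce everything to one prime at a time. The operators $A(n; f)$ are built multiplicatively from the $V_{p^j}$, and the inner product formula \eqref{eq:innerproductdilations} factors as a product over primes. We then check that $A(d;f)f$ and $A(d';f)f$ are orthogonal for $d \neq d'$ and compute $\langle A(d;f)f, A(d;f)f\rangle$. Concretely, write
\begin{equation*}
A(d;f)f = \sum_{e \mid d} c_e(d)\, f|_k V_e,
\end{equation*}
where the $c_e(d)$ are products of local coefficients. By \eqref{eq:innerproductdilations} the pairing
\begin{equation*}
\langle f|_k V_e, f|_k V_{e'}\rangle / \langle f, f\rangle
\end{equation*}
is a product over $p$ of local factors depending only on $v_p(e), v_p(e')$. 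Since both the coefficients $c_e(d)$ and these pairings factor over primes, so does $\langle A(d;f)f, A(d';f)f\rangle$. It therefore suffices to treat $d = p^j$, $d' = p^{j'}$ for a single prime $p$, with all other primes acting trivially.

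\textbf{Local computation at $p \mid N$.} For $i \geq 1$ the local Gram entries are
\begin{equation*}
G(a,b) = \lambda_f(1,p)^{b-a}\, p^{-(b-a)/2} \quad (a \leq b),
\end{equation*}
using relation i), $\lambda_f(1,p^k) = \lambda_f(1,p)^k$. With $A(p^j)$ defined as $V_{p^j} - \overline{\lambda}p^{-1/2}V_{p^{j-1}}$, a direct Gram–Schmidt check shows that $A(p^j)f$ is orthogonal to every $V_{p^i}f$ with $i < j$. Orthogonality of the $A(p^j)f$ among themselves follows from this. The norm of $A(p^j)f$ for $j \geq 1$ is
\begin{equation*}
1 - |\lambda_f(1,p)|^2/p,
\end{equation*}
which is the first factor in \eqref{eq:normorthogonalbasisoldspace}.

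\textbf{Local computation at $p \nmid N$.} Here
\begin{equation*}
G(a,b) = \overline{\lambda_f(1,p^{b-a})}\, p^{-(b-a)/2}\,(1+1/p)^{-1} \quad (b > a).
\end{equation*}
We verify that $A(p)f$ is orthogonal to $f$ and that $A(p^j)f$ for $j \geq 2$ is orthogonal to $V_{p^i}f$ for all $i < j$. The verification uses the Hecke recursions ii) and iii) to collapse the three-term combination; it is the nonholomorphic analogue of Schulze-Pillot–Yenirce's computation. The norms then come out as follows. For $j = 1$,
\begin{equation*}
1 - |\lambda_f(1,p)|^2/(p+1),
\end{equation*}
and for $j \geq 2$ an additional factor $(1 - p^{-2})$ appears. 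Computing these uses $|\psi(p)| = 1$ and $\overline{\lambda_f(1,p)} = \overline{\psi(p)}\lambda_f(1,p)$. Since $\{A(d;f)f\}$ is orthogonal, triangular in the basis $\{f|_kV_e\}$ with unit leading coefficient, and indexed by the same set, it is an orthogonal basis. This gives \eqref{eq:normorthogonalbasisoldspace}.

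\textbf{The estimate \eqref{eq:estimatenormorthogonalbasisoldspace}.} The upper bound is easy because each factor is at most $1$. For the lower bound we need each local factor to be $\gg 1$ uniformly, so that there are at most $\omega(d) \ll \log d/\log\log d$ factors, each bounded below by a constant $c>0$, giving $d^{-\varepsilon}$. For $p \mid N$, \eqref{eq:heckeeigenvalueboundbadprimes} gives $|\lambda_f(1,p)| \leq 1$, so the factor is at least $1 - 1/p \geq 1/2$. For $p \nmid N$, Kim–Sarnak gives $|\lambda_f(p)| \leq p^{7/64} + p^{-7/64}$, so
\begin{equation*}
|\lambda_f(p)|^2/(p+1) \leq (p^{7/32} + 2 + p^{-7/32})/(p+1) < 1,
\end{equation*}
which is bounded away from $1$ uniformly in $p$. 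For $p=2$ this gives roughly $(1.16 + 2 + 0.86)/3 \approx 1.34$, which is too big, so small primes need care. There, positivity of the Gram matrix forces the factor to be $>0$, but not uniformly. However, only primes dividing $q$ occur (since $d \mid q^2/N$), so we may instead bound the finitely many small-prime factors below by a constant depending on $q$; this is acceptable if implied constants may depend on $q$.

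The main obstacle is the exact algebraic orthogonality check at good primes with $j \geq 2$, where the recursion iii) must cancel precisely. A secondary difficulty is making the lower bound uniform at small good primes, if $q$-independence is required.
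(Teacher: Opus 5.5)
Your strategy is the Schulze-Pillot--Yenirce argument that the paper simply cites: take the Gram matrix of $\{f|_kV_e\}$ from Lemma \ref{lem:innerproducdilations}, factor it over primes, and do a local Gram--Schmidt using relations i)--iii). The orthogonality part works. At $p\nmid N$ the cases $j-i\ge 3$, $j-i=2$ and $j-i=1$ close up via iii), via ii), and via $\overline{\psi(p)}\lambda_f(1,p)=\overline{\lambda_f(1,p)}$, respectively. Keep one conjugation convention for $G(a,b)$ at all primes. Yours differs between $p\mid N$ and $p\nmid N$, and this matters because $\overline{\lambda_f(1,p^k)}\ne\lambda_f(1,p^k)$ in general.

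The gap is that you assert the good-prime norms instead of computing them, and they do not come out as you state. Write $\lambda=\lambda_f(1,p)$. Lemma \ref{lem:innerproducdilations} gives $\langle f|_kV_p,f\rangle=c\langle f,f\rangle$ with $c=\overline{\lambda}/(p^{1/2}+p^{-1/2})$. Hence
$\langle A(p;f)f,A(p;f)f\rangle/\langle f,f\rangle=1-|c|^2=1-p|\lambda|^2/(p+1)^2$.
For $j\ge 2$ you get, using ii),
\[
\frac{\langle A(p^j;f)f,\,f|_kV_{p^j}\rangle}{\langle f,f\rangle}=1-\frac{|\lambda|^2}{p+1}+\frac{\overline{\psi(p)}\lambda_f(1,p^2)}{p(p+1)}=\Bigl(1-\frac{1}{p^2}\Bigr)\Bigl(1-\frac{p|\lambda|^2}{(p+1)^2}\Bigr).
\]
So the $p\nmid N$ factor in \eqref{eq:normorthogonalbasisoldspace} must be $1-p|\lambda_f(1,p)|^2/(p+1)^2$. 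The displayed $1-|\lambda_f(1,p)|^2/(p+1)$ is a misprint and cannot be a squared norm. At $p=2$ it is negative whenever $|\lambda_f(1,2)|^2>3$, and neither Ramanujan nor Kim--Sarnak excludes that range. Your value $\approx 1.34$ at $p=2$ was exactly this symptom; it should have prompted you to do the computation.

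The same error breaks your lower bound. Positivity of the Gram matrix makes each factor positive for each individual $f$. It gives no constant uniform over the infinitely many newforms (with $t$ varying) for which \eqref{eq:estimatenormorthogonalbasisoldspace} is needed. So ``bound the finitely many small-prime factors below by a constant depending on $q$'' does not follow.

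With the corrected factor no special treatment of small primes is needed. Kim--Sarnak gives $|\lambda_f(1,p)|\le p^{7/64}+p^{-7/64}$ for $p\nmid N$, hence
$p|\lambda_f(1,p)|^2/(p+1)^2\le\bigl((p^{7/64}+p^{-7/64})/(p^{1/2}+p^{-1/2})\bigr)^2$.
This bound is decreasing in $p$, with maximum about $0.89$ at $p=2$. So every local factor is at least $1/10$, uniformly in $p$ and $f$, and the at most $2\omega(d)$ factors give $d^{-\varepsilon}$. The $p\mid N$ factors are handled by $|\lambda_f(1,p)|\le 1$, as you did.
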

\begin{proof}
The proof of the first claim is completely analogous to the proof of \cite[Theorem 3.2]{Schulze_Pillot_Petersson}. The second claim follows by combining  \eqref{eq:ramanujanhecke1eigenvalues} with \eqref{eq:normorthogonalbasisoldspace}.
\end{proof}

We are now ready to give the proof of Lemma \ref{lem:boundfouriercoefficientoldspace}.
\begin{proof}[Proof of Lemma \ref{lem:boundfouriercoefficientoldspace}]
Let $v \in V_f(\Gamma_d(q))$. Note that $w := v|_k V_q$ has the following properties: $\rho(w; n) = q^{1/2}\rho(v; n/q)$, $w \in V_f(\Gamma_0(q^2))$ and $\langle v, v\rangle = \langle w, w \rangle$. Thus, it will be enough to bound the Fourier coefficients of $w \in V_f(\Gamma_0(q^2))$. Furthermore, we can and will assume that $\langle w, w\rangle = 1$. 

From the definition of $A(d; f)$ we see that 
\begin{equation*}
    (A(d; f)f)(z) = \sum_{r \mid d} c_{r, d} f|_k V_r, 
\end{equation*}
where $c_{r, d}$ are certain coefficients which satisfy $c_{r, d} \ll_\varepsilon (d/r)^{\theta -1/2 + \varepsilon}$. Note that for positive integers $r, n \geq 1$ we have
\begin{equation*}
\rho(f|_k V_r; n) = \begin{dcases}
    r^{1/2}\lambda_f(n/r), & \text{ if }r \mid n,\\
    0, & \text{ otherwise}.
\end{dcases}
\end{equation*}
Therefore,
\begin{equation}\label{eq:auxiliaryestimatefouriercoefficent}
\begin{aligned}
    |\rho(A(d; f)f; n)| & \ll_\varepsilon \sum_{r \mid (d, n)}  r^{1/2}|\lambda_f(n/r)|(d/r)^{\theta -1/2 + \varepsilon}\\
    & \ll_\varepsilon n^{\theta + \varepsilon}d^{\theta - 1/2 + \varepsilon}\sum_{r \mid (d, n)}r^{1 -2\theta + \varepsilon}\\
    & \ll_\varepsilon  n^{\theta + \varepsilon}d^{\theta - 1/2 + \varepsilon} (d, n)^{1 - 2\theta + \varepsilon}.
\end{aligned}
\end{equation}

Since $\{A(d; f)f\}_{d \mid (q^2/N)}$ is an orthogonal basis of $V_f(\Gamma_0(q^2))$, given $w \in V_f(\Gamma_0(q^2))$ such that $\langle  w, w \rangle = 1$ we can write
\begin{equation*}
    w = \sum_{ d \mid (q^2/N)} a_d \frac{A(d; f)f}{\langle A(d; f)f, A(d; f)f\rangle^{1/2}},
\end{equation*}
for some $a_d$ with $\sum_{d}|a_d|^2 = 1$. By Cauchy's inequality we have
\begin{equation}\label{eq:secondauxiliaryboundfourier}
\begin{aligned}
    |\rho(w; n)|^2 & \leq \sum_{d \mid (q^2/N)}\frac{|\rho(A(d; f)f; n)|^2}{\langle A(d; f)f, A(d; f)f\rangle }\\
    & \ll_\varepsilon \langle f, f \rangle^{-1} \sum_{d \mid (q^2/N)} n^{2\theta + \varepsilon}d^{2\theta - 1 + \varepsilon} (d, n)^{2 - 4\theta + \varepsilon},
\end{aligned}
\end{equation}
where we have used estimates \eqref{eq:estimatenormorthogonalbasisoldspace} and \eqref{eq:auxiliaryestimatefouriercoefficent}. Using the trivial inequality $(d, n)^{2 - 4\theta + \varepsilon} \leq \sum_{r \mid (d, n)} r^{2 - 4\theta + \varepsilon}$ in \eqref{eq:secondauxiliaryboundfourier} and exchanging the order of summation we obtain
\begin{equation*}
\begin{aligned}
    |\rho(w; n)|^2 & \ll_\varepsilon \langle f, f \rangle^{-1}n^{2\theta + \varepsilon} \sum_{r \mid (q^2/N, n)} r^{1 - 2\theta + \varepsilon}\sum_{s \mid q^2/(Nr)} s^{2\theta - 1 + \varepsilon}\\
    & \ll_\varepsilon \langle f, f\rangle^{-1}n^{2\theta + \varepsilon} q^\varepsilon (q^2/N, n)^{1 - 2\theta + \varepsilon}.
\end{aligned}
\end{equation*}
The desired bound follows by taking square roots.
\end{proof}

Lemma \ref{lem:upperboundfouriercoefficients} allows us to control the contribution of the bad primes to $\L(s; |u_j|^2)$ on the critical line.
\begin{cor}\label{cor:boundrankinselbergbadprimes}
The Dirichlet series $\L_q(s; u_j^{g_1},  \overline{u_j^{g_2}})$ introduced in \eqref{eq:definitionofbadfactorsrankinselberg} converges absolutely on $\text{Re}(s) > 7/32$. On the critical line $\text{Re}(s) = 1/2$ it satisfies
\begin{equation*}
    |\L_q(s; u_j^{g_1},  u_j^{g_2})| \ll_{q} \frac{1}{\langle g_1, g_1 \rangle^{1/2}\langle g_2, g_2 \rangle^{1/2}}.
\end{equation*}
\end{cor}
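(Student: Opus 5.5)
The plan is to bound the Dirichlet coefficients of $\L_q(s; u_j^{g_1}, \overline{u_j^{g_2}})$ termwise using Lemma \ref{lem:boundfouriercoefficientoldspace}, and then sum over $m \mid q^\infty$. By \eqref{eq:definitionofbadfactorsrankinselberg}, the $m$-th coefficient is $\rho(u_j^{g_1}; m/q)\overline{\rho(u_j^{g_2}; m/q)}$. We apply Lemma \ref{lem:boundfouriercoefficientoldspace} to $v = u_j^{g_i} \in V_{g_i}(\Gamma_d(q))$ with $\theta = 7/64 + \varepsilon$, which is admissible by \eqref{eq:heckeeigenvalueboundgoodprimes}. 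This gives
\begin{equation*}
    |\rho(u_j^{g_i}; m/q)| \ll_{\varepsilon} q^{1/2+\varepsilon} m^{7/64+\varepsilon} (q^2/\q(g_i), m)^{1/2-\theta}\frac{\langle u_j^{g_i}, u_j^{g_i}\rangle^{1/2}}{\langle g_i, g_i\rangle^{1/2}}.
\end{equation*}
Since $u_j$ is a unit vector and the decomposition \eqref{eq:decompositionmaassformintocomponents} is orthogonal, we have $\langle u_j^{g_i}, u_j^{g_i}\rangle \le 1$. The gcd factor is at most $q^{2}$, so it is $O_q(1)$. Hence the $m$-th coefficient is $\ll_{q,\varepsilon} m^{7/32+2\varepsilon}\langle g_1,g_1\rangle^{-1/2}\langle g_2,g_2\rangle^{-1/2}$.

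Next we handle convergence. For $\text{Re}(s) = \sigma$ we must bound $\sum_{m \mid q^\infty} m^{7/32 + 2\varepsilon - \sigma}$. This sum factors as $\prod_{p\mid q}(1 - p^{7/32+2\varepsilon-\sigma})^{-1}$, a finite product of geometric series. It converges whenever $\sigma > 7/32 + 2\varepsilon$, and $\varepsilon$ is arbitrary, so $\L_q$ converges absolutely on $\text{Re}(s) > 7/32$. On $\text{Re}(s) = 1/2$, fix $\varepsilon$ small, say $\varepsilon = 1/100$. The product is then $O_q(1)$, and the prefactor $q^{s}$ has modulus $q^{1/2} = O_q(1)$. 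Multiplying gives the stated bound $\ll_q \langle g_1,g_1\rangle^{-1/2}\langle g_2,g_2\rangle^{-1/2}$.

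The one point needing care is that Lemma \ref{lem:boundfouriercoefficientoldspace} yields the exponent $m^{\theta+\varepsilon}$ uniformly in the spectral parameter $t_j$. The implied constants there come from \eqref{eq:ramanujanhecke1eigenvalues} and Lemma \ref{lem:orthogonalbasisschulzepillot}, and neither depends on $t$. This uniformity is what lets the bound hold uniformly in $j$ with only $q$-dependence, which is what is needed later. The normalization by $\langle g_i, g_i\rangle$ is left unresolved here; it is meant to be absorbed later via Lemma \ref{lem:normnormalizednewform} together with $\omega(t_j)$.
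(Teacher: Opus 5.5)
Your argument is correct and follows the route the paper intends. There the corollary is stated as an immediate consequence of Lemma \ref{lem:boundfouriercoefficientoldspace} with $\theta = 7/64+\varepsilon$; the pointer to Lemma \ref{lem:upperboundfouriercoefficients} just before the statement appears to be a typo. You bound each coefficient $\rho(u_j^{g_1}; m/q)\overline{\rho(u_j^{g_2}; m/q)}$ by $\ll_q m^{7/32+O(\varepsilon)}\langle g_1,g_1\rangle^{-1/2}\langle g_2,g_2\rangle^{-1/2}$ and sum the finite Euler product over $p \mid q$, which is exactly what is needed. The only slip is terminological: in this paper ``admissible'' refers to subconvexity exponents (Definition \ref{defi:admissibleexponent}), so $7/64+\varepsilon$ should be called a bound towards Ramanujan, not an admissible exponent.
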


\subsection{Proof of Theorem \ref{thm:informationaboutrankinselberg}.}
The only assertion of Theorem \ref{thm:informationaboutrankinselberg} yet to be proven is the inequality in \eqref{eq:lindelofonaverage}. Recall that 
\begin{equation*}
    \L(s; |u_j|^2) = \sum_{g_1, g_2 \in \T_{f_j}(q)} \L_q(s; u_j^{g_1}, \overline{u_j^{g_2}})L^{(q)}(s; g_1 \otimes \overline{g_2}).
\end{equation*}
By Lemma \ref{lem:normnormalizednewform}, Corollary \ref{cor:boundrankinselbergbadprimes} and formulas \eqref{eq:rankinselbergisalmosteulerproduct} and \eqref{eq:gelbartjacquetlift} and we have 
\begin{equation*}
    \omega(t_j)^2 |\L(s; |u_j|^2)|^2 \ll_{q, \varepsilon} T^\varepsilon \sum_{\psi \modulo q} |L(s; \psi)|^2|L(s; \Ad(f_j)\otimes \psi)|^2.
\end{equation*}
By the convexity bound $|L(s; \psi)|^2 \ll_{q, \varepsilon} (1 + |s|)^{1/2 + \varepsilon}$ on $\text{Re}(s) = 1/2$. Thus, summing over $t_j$ we have 
\begin{equation*}
    \sum_{|t_j| \leq T} \omega(t_j)^2 |\L(s; |u_j|^2)|^2 \ll_{q, \varepsilon} T^\varepsilon (1 + |s|)^{1/2 + \varepsilon}  \sum_{\psi \modulo q}  \sum_{|t_j| \leq T}|L(s; \Ad(f_j)\otimes \psi)|^2.
\end{equation*}
At this point the inequality of \eqref{eq:lindelofonaverage} follows from Corollary \ref{cor:sumLfunctionsexceptional} and Proposition \ref{prop:lindelofforregularnewforms}.

\section{Proof of Theorem \ref{thm:luosarnakbound}.}\label{sec:proofofluosarnakbound}
Let $X, T$ be sufficiently large with $T \leq X^{1/2}$. We assume that $T \geq X^{1/6}$, since otherwise Theorem \ref{thm:luosarnakbound} follows from the trivial bound \eqref{eq:trivialboundexponentialsum}. Let $2\beta = \log X + iT^{-1}$ and define
\begin{equation}\label{eq:deftestfunction}
    \varphi(x) = \frac{(-1)^k\sinh(\beta)}{4\pi} x\exp(ix\cosh \beta).
\end{equation}
In the proof of Theorem \ref{thm:luosarnakbound} we are going to apply Proposition \ref{prop:forwardkuznetsov} to this test function $\varphi$. Note that 
\begin{equation}
    \text{Im}(\cosh \beta) = \frac{X^{1/2}}{4T}\left(1 + O(T^{-2})\right),
\end{equation}
so that   
\begin{equation}\label{eq:testfunctionestimate}
    |\varphi(x)| \ll X^{1/2} x \exp(-x\frac{\sqrt{X}}{T})
\end{equation}
for all $x > 0$, provided that $T$ is sufficiently large. In particular, we see that $\varphi$ satisfies the bounds \eqref{eq:boundsnear0} and \eqref{eq:boundsnearinfty} above, so that we can apply Proposition \ref{prop:forwardkuznetsov} to $\varphi$. Combining \cite[\href{https://dlmf.nist.gov/10.22.E49}{(10.22.49)}]{dlmf} and \cite[\href{https://dlmf.nist.gov/15.4.E18}{(15.4.18)}]{dlmf} we have 
\begin{equation*}
    \int_0^\infty J_{2it}(y)y\exp(iy\cosh \beta)\frac{dy}{y} = \frac{i}{\sinh \beta} \exp(-(\pi + 2i\beta)t).
\end{equation*}
Therefore, 
\begin{equation}\label{eq:htestfunction}
    h(t) = \begin{dcases}
        \frac{\sinh(\pi + 2i\beta)t}{\sinh(\pi t)}, & \text{ if }k = 0,\\
        \frac{\cosh(\pi + 2i\beta)t}{\cosh(\pi t)}, & \text{ if }k = 1.
    \end{dcases}
\end{equation}
Note that in both cases we have  
\begin{equation}\label{eq:preciseevaluationh}
    h(t) = X^{i |t|}\exp(-|t|/T) + O(\exp(-2|t|(\pi - 1/T))), \quad t \in \R, |t| \geq 1/4.
\end{equation}
When $k = 0$ we have the bound
\begin{equation}\label{eq:boundforhwhenkis0}
    h(t) \ll X^{|\text{Im}(t)|} \log X, \quad t \in \CC, |t| \leq 1/4.
\end{equation}
Note that we cannot discard the existence of small eigenvalues for $(\Gamma, \chi)$, so we need to control $h(t)$ when $t$ is purely imaginary and $|\text{Im}(t)| \leq 7/64$. The bound \eqref{eq:boundforhwhenkis0} is used to handle this situation. When $k = 1$ there are no small eigenvalues and we only need to evaluate $h(t)$ for real $t$. In this case \eqref{eq:preciseevaluationh} is valid for all real $t$, and for small $t$ it simply asserts that $h(t) \ll 1$.

Moving on to the other terms in Proposition \ref{prop:forwardkuznetsov}, for any $\xi \in (0, 1]$ we can use \cite[\href{https://dlmf.nist.gov/10.22.E49}{(10.22.49)}]{dlmf} again to obtain
\begin{equation*}
\begin{aligned}
    \int_0^\infty J_k(\xi y)\varphi(y)\, dy  &=  \frac{(-1)^k \sinh \beta}{4\pi }\frac{(\xi/2)^k(k + 1)}{(-i\cosh \beta)^{2 + k}}F\left(1 + \frac{k}{2}, 1 +\frac{1 + k}{2}; 1 + k; \frac{\xi^2}{\cosh^2 \beta}\right)\\
    & \ll \xi^k X^{-(1 + k)/2}.
\end{aligned}
\end{equation*}
In particular, the contribution from the identity term in the Bruggeman-Kuznetsov trace formula is 
\begin{equation}\label{eq:identitytermestimate}
    \int_0^\infty J_k(x)\varphi(x) \, dx \ll X^{-(1 + k)/2},
\end{equation}
and the weight function of the Kloosterman sums is 
\begin{equation}\label{eq:weightkloostermansums}
\begin{aligned}
    \varphi_*(x) & = \varphi(x) - \int_0^1\xi x J_k(\xi x)\left(\int_0^\infty J_k(\xi y) \varphi(y)\, dy \right)\, d\xi\\
    & = \varphi(x) + O\left(X^{-(1 + k)/2} \min(x^{1 + k}, x^{1/2})\right),
\end{aligned}
\end{equation}
where we have used that $J_k(x) \ll \min(x^k, x^{-1/2})$. Combined with \eqref{eq:testfunctionestimate} and the estimate for Kloosterman sums from Lemma \ref{lem:weilbound} we deduce that 
\begin{equation*}
\sum_{c = 1}^\infty \frac{S_{(\Gamma, \chi)}(n, n; c)}{c \ell}\varphi_*\left(\frac{4\pi n}{c}\right) \ll_{q,\varepsilon} n^{1/2}T^{1/2}X^{1/4}(XTn)^\varepsilon.
\end{equation*}
Also, using Proposition \ref{prop:upperboundfouriercoefficentseisenstein} as well as formulas \eqref{eq:preciseevaluationh} and \eqref{eq:boundforhwhenkis0} we can estimate the contribution of the continuous spectrum by
\begin{equation*}
\sum_{i = 1}^\kappa\frac{\ell}{4\pi}\int_\R |\rho_{\a_i, t}(n)|^2 \omega(t)h(t)\, dt \ll_{q, \varepsilon} n^\varepsilon T^{1 + \varepsilon}\log X.
\end{equation*}
Thus, applying Proposition \ref{prop:forwardkuznetsov} to $\varphi$ we obtain 
\begin{equation}\label{eq:estimatecoefficientsproofofluosarnak}
    \sum_{t_j} |\rho_j(n)|^2 \omega(t_j)h(t_j)\ll_{q, \varepsilon} n^{1/2}T^{1/2}X^{1/4}(XTn)^\varepsilon.
\end{equation}

Let $v \in C_c^\infty(0, \infty)$ be a smooth bump function with $\int_\R v(x)\, dx = 1$. Let $N \geq 1$  be a parameter to be chosen in terms of $T, X$. By Mellin inversion
\begin{equation*}
\frac{1}{N}\sum_{n \in \frac{1}{q}\Z_{\geq 1}} v(n/N)\sum_{j \geq 1} |\rho_j(n)|^2 \omega(t_j)h(t_j) = \frac{1}{2\pi i}\int_{(\sigma)} \sum_{j \geq 1} \L(s; |u_j|^2) \omega(t_j) h(t_j) \widehat{v}(s) N^{s-1}\, ds,
\end{equation*}
where $\sigma > 1$ and $\L(s; |u_j|^2)$ was introduced in \eqref{eq:naiverankinselberg}. Moving the contour to $\text{Re}(s) = 1/2$ and picking up the pole at $s = 1$  
\begin{equation}\label{eq:keyidentityluosarnak}
\begin{aligned}
\frac{1}{N}\sum_{n \in \frac{1}{q}\Z_{\geq 1}} v(n/N)\sum_{t_j \geq 0} |\rho_j(n)|^2 \omega(t_j)h(t_j)  & = \sum_{j \geq 1} h(t_j)\\
&  + \frac{1}{2\pi i}\int_{(1/2)}\sum_{j \geq 1} \L(s; |u_j|^2) \omega(t_j) h(t_j) \widehat{v}(s) N^{s-1}\, ds,
\end{aligned}
\end{equation}
where we have used the first part of Theorem \ref{thm:informationaboutrankinselberg}. By \eqref{eq:preciseevaluationh} and the following discussion we see that  
\begin{equation}\label{eq:approximationtosmoothsum}
    \sum_{j \geq 1} h(t_j) = \S_0(T, X) + O_{q}(1 + \delta_{k = 0} X^{7/64} \log X),
\end{equation}
where the smoothed sum $\S_0(T, X)$ was introduced in \eqref{eq:smoothversionofS}. Note that $\widehat{v}(s)$ decreases faster than any polynomial on vertical strips. Thus, if we apply Cauchy-Schwarz and inequality \eqref{eq:lindelofonaverage} of Theorem \ref{thm:informationaboutrankinselberg} we see that 
\begin{equation}\label{eq:applicationlindelofonaverage}
\int_{(1/2)} \sum_{j \geq 1} \L(s; |u_j|^2) \omega(t_j) h(t_j) \widehat{v}(s) N^{s-1}\, ds \ll_{q, \varepsilon} N^{-1/2}T^{2 + \varepsilon}.
\end{equation}
Thus, inserting \eqref{eq:estimatecoefficientsproofofluosarnak}, \eqref{eq:approximationtosmoothsum} and \eqref{eq:applicationlindelofonaverage} into \eqref{eq:keyidentityluosarnak} we obtain 
\begin{equation*}
    \S_0(T, X) \ll_{q, \varepsilon} N^{-1/2}T^{2 + \varepsilon} + N^{1/2}T^{1/2}X^{1/4}(XTN)^\varepsilon.
\end{equation*}
Finally, optimizing the value of $N$ by letting $N = T^{3/2}X^{-1/4}$ we deduce 
\begin{equation*}
    \S_0(T, X) \ll_{q, \varepsilon} T^{5/4} X^{1/8 + \varepsilon}.
\end{equation*}
By Proposition \ref{prop:conclusionsmoothingsum}, this bound implies Theorem \ref{thm:luosarnakbound}.

\section{Bykovskii--Zagier series.}\label{sec:bykovskiizagierseries}
In this section we collect some preliminaries about the Bykovskii--Zagier series which we need for the proofs of Theorems \ref{thm:balkanovafrolenkovbound} and \ref{thm:soundyoungshortintervals}. 

\subsection{The sum $\Psi_\Gamma(X; \chi)$ as a sum of residues.}
Let $\{P\}_\Gamma$ be a hyperbolic $\Gamma$-conjugacy class and choose a representative $P$. For $\text{Re}(s) > 1$ define the Dirichlet series 
\begin{equation*}
    Z_P(s) := \ell^{-s}\sum_{\gamma \in C_\Gamma(P)\backslash \Gamma/\Gamma_\infty} \frac{1}{|c(\gamma^{-1}P\gamma)|^s},
\end{equation*}
which only depends on the $\Gamma$-conjugacy class of $P$. The following lemma will allow us to express the sum $\Psi_\Gamma(X; \chi)$ as a sum of residues of these Dirichlet series. In our arguments it plays a role which is analogous to the role played by the class number formula in \cite{Soundararajan_2013}. 
\begin{lem}\label{lem:hyperboliczetafunction}
The series $Z_P(s)$ converges absolutely for $\text{Re}(s) > 1$ and has meromorphic continuation to $\CC$. On the region $\text{Re}(s) \geq 1/2$ its only singularity is a simple pole at $s = 1$ of residue
\begin{equation}\label{eq:residuezetafunction}
    \text{Res}_{s = 1} Z_P(s) = \frac{2}{\pi}\frac{\log N(P_0)}{\vol(\Gamma\backslash \Hyp)(N(P)^{1/2}- N(P)^{-1/2})}.
\end{equation}
\end{lem}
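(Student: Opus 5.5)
Write $Q:=\sigma P\sigma^{-1}$, where $\sigma\in\SL_2(\R)$ diagonalizes $P$, so that $Q=\operatorname{diag}(N(P)^{1/2},N(P)^{-1/2})$. We also fix the notation $\gamma_\infty:=\begin{psmallmatrix}1&\ell\\0&1\end{psmallmatrix}$. The plan is to express $Z_P(s)$ as the inner product of an Eisenstein series with a hyperbolic Poincaré-type kernel, and then read off the continuation and the residue from the known analytic properties of $E_\infty(z;s)$.

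\emph{Step 1: a closed-form expression for $c(\gamma^{-1}P\gamma)$.} For $\gamma\in\Gamma$, write $\sigma\gamma=\begin{psmallmatrix}a&b\\c&d\end{psmallmatrix}$. A direct computation with $Q$ gives
\begin{equation*}
c(\gamma^{-1}P\gamma)=(N(P)^{1/2}-N(P)^{-1/2})\,c\,d\cdot(\pm1).
\end{equation*}
Hence $|c(\gamma^{-1}P\gamma)|=(N(P)^{1/2}-N(P)^{-1/2})\,|cd|$. Here $cd$ is a quadratic form in the bottom row of $\gamma$; it is the form attached to the geodesic of $P$. This is the analogue of the classical formula $\sum_{Q\sim}|a_Q|^{-s}$ of Zagier.

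\emph{Step 2: unfolding against the Eisenstein series.} Let $E_\infty(z;s)$ be the weight $0$ Eisenstein series for $\Gamma$ with trivial character; for $\Gamma$ the cusp $\infty$ has width $\ell$. We compute
\begin{equation*}
I(s):=\int_{C_\Gamma(P)\backslash\Hyp}E_\infty(z;s)\,d\mu_{\mathrm{geod}},
\end{equation*}
which is the integral of $E_\infty$ along the closed geodesic. Equivalently, by the standard trick, one can integrate along the axis of $P$ modulo $\langle P_0\rangle$. Unfolding $E_\infty$ and then regrouping the resulting sum over $C_\Gamma(P)\backslash\Gamma/\Gamma_\infty$ turns $I(s)$ into
\begin{equation*}
\sum_{\gamma}\int_{\mathbb R}\operatorname{Im}(\gamma^{-1}\sigma^{-1}(ie^{u}))^{s}\,du .
\end{equation*}
The inner integral evaluates through the beta integral $\int_{\R}(\cosh u)^{-s}\,du=\frac{\sqrt\pi\,\Gamma(s/2)}{\Gamma((s+1)/2)}$, and produces $|cd|^{-s}$, up to the factor $\ell^{-s}$ coming from the scaling matrix $\sigma_\infty$. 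Using Step 1 we therefore obtain
\begin{equation*}
\int_{\langle P_0\rangle\backslash \text{axis}}E_\infty(z;s)\,ds_{\mathrm{hyp}}=\frac{\sqrt\pi\,\Gamma(s/2)}{\Gamma(\frac{s+1}{2})}\,(N(P)^{1/2}-N(P)^{-1/2})^{s}\,Z_P(s).
\end{equation*}
Absolute convergence for $\operatorname{Re}(s)>1$ follows from the absolute convergence of $E_\infty$ there, since the geodesic segment is compact. The main bookkeeping obstacle lies in this step. It consists of matching the orbit sets: the sum is over $C_\Gamma(P)$ rather than $\langle P_0\rangle$, and $\pm I$ must be handled. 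This matching is what fixes the constant $2/\pi$. I would first verify the constant in the case $\Gamma=\SL_2(\Z)$ against Zagier's formula.

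\emph{Step 3: continuation and residue.} $E_\infty(z;s)$ continues meromorphically in $s$, uniformly on compact sets in $z$, and on $\operatorname{Re}(s)\ge1/2$ its only pole is at $s=1$, with residue $\vol(\Gamma\backslash\Hyp)^{-1}$. Since the geodesic is compact, $Z_P(s)$ inherits meromorphic continuation to $\CC$. On $\operatorname{Re}(s)\ge1/2$ the factors $\Gamma(s/2)/\Gamma((s+1)/2)$ and $(N^{1/2}-N^{-1/2})^{s}$ are holomorphic and nonvanishing, so the only possible singularity of $Z_P(s)$ in that region is the simple pole at $s=1$. The residue of the geodesic integral there is $\operatorname{length}/\vol(\Gamma\backslash\Hyp)=\log N(P_0)/\vol(\Gamma\backslash\Hyp)$. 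Dividing by $\sqrt\pi\,\Gamma(1/2)/\Gamma(1)=\pi$ and by $(N(P)^{1/2}-N(P)^{-1/2})$ gives the residue up to the factor $2$. That factor comes from the two-to-one relation between $\gamma$ and $-\gamma$, or equivalently from the two orientations in Step 2, and it yields \eqref{eq:residuezetafunction}.
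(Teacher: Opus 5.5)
Your strategy (computing the period of $E_\infty(z;s)$ along the closed geodesic of $P$, as in Hecke) is sound and differs from the paper's. As written, though, it does not establish the residue. The identity in Step 2 is off by a factor $2^{s}$, and the factor $2$ you supply in Step 3 comes from a place that contributes nothing.

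\emph{Step 1.} With $\sigma\gamma=\begin{psmallmatrix}a&b\\c&d\end{psmallmatrix}$ one finds $c(\gamma^{-1}P\gamma)=\pm(N(P)^{1/2}-N(P)^{-1/2})\,ac$. So the relevant quantity is the first column of $\sigma\gamma$, not the bottom row. The bottom row changes under $\gamma\mapsto\gamma\begin{psmallmatrix}1&\ell\\0&1\end{psmallmatrix}$, so it could not appear in a sum over $C_\Gamma(P)\backslash\Gamma/\Gamma_\infty$.

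\emph{Step 2.} Let $A_P$ be the axis of $P$ and $z=\sigma^{-1}(ie^{u})$. Then $\operatorname{Im}(\gamma^{-1}z)=e^{u}/(a^{2}+c^{2}e^{2u})$, and the substitution $e^{u}=|a/c|e^{v}$ turns this into $(2|ac|\cosh v)^{-1}$. Equivalently, the axis of $\gamma^{-1}P\gamma$ is a semicircle of radius $(N(P)^{1/2}-N(P)^{-1/2})/(2|c(\gamma^{-1}P\gamma)|)$. Hence
\begin{equation*}
\int_{C_\Gamma(P)\backslash A_P}E_\infty(z;s)\,ds=\frac{\sqrt{\pi}\,\Gamma(s/2)}{\Gamma(\frac{s+1}{2})}\left(\frac{N(P)^{1/2}-N(P)^{-1/2}}{2}\right)^{s}Z_P(s).
\end{equation*}
The left side has residue $\log N(P_0)/\vol(\Gamma\backslash\Hyp)$ at $s=1$, and $\sqrt{\pi}\,\Gamma(1/2)/\Gamma(1)=\pi$. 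Taking residues therefore gives exactly \eqref{eq:residuezetafunction}, with no further factor needed.

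\emph{Step 3.} The pair $\pm\gamma$ produces no factor of $2$. Since $-I\in\Gamma_\infty\cap C_\Gamma(P)$ acts trivially, and $\gamma^{-1}\Gamma_\infty\gamma\cap C_\Gamma(P)=\{\pm I\}$ (parabolic versus hyperbolic elements), the unfolding is a bijection onto $\sum_{\gamma\in C_\Gamma(P)\backslash\Gamma/\Gamma_\infty}\int_{A_P}$. The arc-length integral also involves no choice of orientation. If you kept the correct $2^{-s}$ together with your extra $2$, you would get $4/\pi$.

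The check you proposed confirms this. For $\Gamma=\SL_2(\Z)$ and $P=\begin{psmallmatrix}2&1\\1&1\end{psmallmatrix}$ one has $Z_P(s)=2\zeta_{\Q(\sqrt5)}(s)/\zeta(2s)$. Its residue is $24\log\epsilon/(\pi^2\sqrt5)$ with $\epsilon=(1+\sqrt5)/2$, which matches \eqref{eq:residuezetafunction} with $N(P_0)=\epsilon^4$.

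Once repaired, your argument compares well with the paper's. The paper pairs $E(z;s)$ over all of $\Gamma\backslash\Hyp$ with the kernel $K_P(z;\Phi)=\sum_{\gamma\in C_\Gamma(P)\backslash\Gamma}k(\gamma z,P\gamma z)$ built from a point-pair invariant. Since $K_P$ is supported in a tubular neighbourhood of the closed geodesic, this is a smoothed version of your period, and unfolding gives $Z_P(s)V(s;t;\Phi)$. The cost is that $V(s;t;\Phi)$ is only semi-explicit. The paper quotes Zagier for its continuation, varies $\Phi$ to get around its possible zeros, and evaluates it only at $s=1$ via $G(x;1)=\pi/2-\arctan x$.

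Your version instead yields the explicit factor $\sqrt{\pi}\,\Gamma(s/2)\Gamma(\frac{s+1}{2})^{-1}\bigl((N(P)^{1/2}-N(P)^{-1/2})/2\bigr)^{s}$. This factor is meromorphic on $\CC$ and holomorphic and nonvanishing on $\text{Re}(s)\geq 1/2$. So continuation, pole location (inherited from $E_\infty$, using that $\Gamma$ is congruence) and the residue all follow at once. The only extra input is compactness of the closed geodesic, to carry the continuation of $E_\infty$ under the integral.
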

\begin{proof}
Let $t := \Tr(P)$ so that $|t| > 2$. Recall that $N := N(P) > 1$ is defined implicitly by the identity $|t| = N^{1/2} + N^{-1/2}$ and let $\Delta := t^2 - 4 = (N^{1/2} - N^{-1/2})^2$. Let $\Phi \in C_c^\infty([0, \infty))$ and consider the pair-point invariant 
\begin{equation*}
    k(z, w):= \Phi\left(\frac{|z-w|^2}{\text{Im}(z)\text{Im}(w)}\right).
\end{equation*}
We can consider the automorphic kernel attached to the conjugacy class $\{P\}_\Gamma$, defined by
\begin{equation*}
    K_P(z; \Phi) :=  \sum_{\gamma \in C_\Gamma(P)\backslash \Gamma} k(\gamma z, P\gamma z).
\end{equation*}
Recall the Eisenstein series $E(z; s) := E_\infty(z, s; \Gamma, \mathbbm{1}_\Gamma)$ introduced in \eqref{eq:defofeisensteinseries}, given explicitly as 
\begin{equation*}
    E(z; s) = \sum_{\delta \in \Gamma_\infty \backslash \Gamma} \text{Im}(\ell^{-1} \delta z)^s = \ell^{-s} \sum_{\delta \in \Gamma_\infty \backslash \Gamma} \text{Im}(\delta z)^s.
\end{equation*}
We consider the inner product of $K_P(z; \Phi)$ and $E(z;s)$. If we unfold with respect to the Eisenstein series we have
\begin{equation}\label{auxeq:unfoldingeisensteinautomorphic}
\begin{aligned}
\int_{\Gamma\backslash \Hyp} K_P(z; \Phi)E(z; s)\, d\mu(z) & = \ell^{-s}\int_{\Gamma_\infty\backslash \Hyp} \sum_{\gamma \in C_\Gamma(P)\backslash \Gamma} k(\gamma z, P\gamma z) \, y^s d\mu(z)\\
 & = \ell^{-s}\sum_{\gamma \in C_\Gamma(P)\backslash \Gamma/\Gamma_\infty} \int_\Hyp k(z, \gamma^{-1}P\gamma z) y^s \, d\mu(z).\\
\end{aligned}
\end{equation}
Let us write
\begin{equation*}
    \gamma^{-1}P\gamma = \begin{pmatrix}
        a & b\\
        c & d
    \end{pmatrix}.
\end{equation*}
Note that $c \neq 0$ since $|t| > 2$ and $a, b, c, d$ are integers. If we let 
\begin{equation*}
    \sigma_1 := \begin{pmatrix}
|c|^{-1/2} & a|c|^{-1/2}\sgn(c)\\
 0 & |c|^{1/2}
    \end{pmatrix},
\end{equation*}
then a direct computation shows that 
\begin{equation*}
    \sigma_1^{-1}\gamma^{-1}P\gamma \sigma_1 = \begin{pmatrix}
        0 & -\sgn(c)\\
        \sgn(c) & t
    \end{pmatrix}.
\end{equation*}
Thus, performing the change of variables $z \mapsto \sigma_1 z$ we see that 
\begin{equation*}
\int_\Hyp k(z, \gamma^{-1}P\gamma z) y^s \, d\mu(z)  = |c|^{-s} \int_\Hyp k\left(z, \begin{psmallmatrix}
    0 & -\sgn(c)\\
    \sgn(c) & t
\end{psmallmatrix}z\right) y^s \, d\mu(z).
\end{equation*}
Furthermore, by the change of variables $z \mapsto -\overline{z}$ we see that 
\begin{equation*}
    \int_\Hyp k\left(z, \begin{psmallmatrix}
    0 & 1\\
    -1 & t
\end{psmallmatrix}z\right) y^s \, d\mu(z) =\int_\Hyp k\left(z, \begin{psmallmatrix}
    0 & -1\\
    1 & t
\end{psmallmatrix}z\right) y^s \, d\mu(z).
\end{equation*}
Thus, coming back to \eqref{auxeq:unfoldingeisensteinautomorphic} we deduce that 
\begin{equation}\label{eq:expressioninnerproductautomorphiceisenstein}
\int_{\Gamma\backslash \Hyp} K_P(z; \Phi)E(z; s)\, d\mu(z) = Z_P(s)V(s; t; \Phi),
\end{equation}
where 
\begin{equation*}
    V(s;t; \Phi) := \int_\Hyp k\left(z, \begin{psmallmatrix}
    0 & -1\\
    1 & t
\end{psmallmatrix}z\right) y^s \, d\mu(z).
\end{equation*}

Note that if we let 
\begin{equation*}
\sigma_2 := \begin{pmatrix}
    \frac{-t + \sqrt{t^2 - 4}}{2} & \frac{-t - \sqrt{t^2 - 4}}{2}\\
    1 & 1
\end{pmatrix},
\end{equation*}
then $\det(\sigma_2) = \Delta^{1/2}$ and, furthermore, we have 
\begin{equation*}
    \sigma_2^{-1}\begin{pmatrix}
    0 & -1\\
    1 & t
\end{pmatrix} \sigma_2 = \begin{pmatrix}
    N^{1/2} & 0\\
    0 & N^{-1/2}
\end{pmatrix}.
\end{equation*}
Thus, after changing variables $z \mapsto \sigma_2 z$ we have
\begin{equation*}
\begin{aligned}
    V(s;t; \Phi) & = \Delta^{s/2}\int_\Hyp k(z, Nz) \frac{y^s}{|z  + 1|^{2s}}\, d\mu(z)\\
    & = \Delta^{s/2}\int_\R\int_0^\infty \Phi\left(\frac{|(N - 1)z|^2}{Ny^2}\right) \frac{y^s}{((x + 1)^2 + y^2)^s} \frac{dx\, dy}{y^2}\\
    & = \Delta^{s/2}\int_\R \Phi(\Delta(1 + x^2))G(x; s) \, dx,
\end{aligned}
\end{equation*}
where 
\begin{equation*}
G(x; s) := \int_0^\infty \frac{y^s}{((1 + xy)^2 + y^2)^s} \frac{dy}{y}.
\end{equation*}
It is clear that this integral converges absolutely on $\text{Re}(s) > 0$, so that $V(s;t; \Phi)$ is holomorphic on $\text{Re}(s) > 0$. All the previous manipulations are justified when $\Phi \in C_c^\infty([0, \infty))$ is nonnegative and $s = \sigma > 1$. Thus, from \eqref{eq:expressioninnerproductautomorphiceisenstein} we deduce that the series defining $Z_P(s)$ is absolutely convergent for $\text{Re}(s) > 1$. 

The function $V(s;t; \Phi)$ has meromorphic continuation to $\CC$, see \cite[Proposition 4]{zagierrankinselberg}. By varying the test function $\Phi \in C_c^\infty([0, \infty))$ in equation \eqref{eq:expressioninnerproductautomorphiceisenstein} we deduce the assertion of the lemma regarding the meromorphic continuation of $Z_P(s)$. To compute the residue at $s = 1$, recall that $\text{Res}_{s = 1} E(z; s) = \vol(\Gamma\backslash \Hyp)^{-1}$. Thus, 
\begin{equation*}
\begin{aligned}
    \text{Res}_{s = 1}\int_{\Gamma\backslash \Hyp} K_P(z; \Phi)E(z; s)\, d\mu(z) & = \frac{1}{\vol(\Gamma\backslash \Hyp)} \int_{\Gamma\backslash\Hyp} K_P(z; \Phi) \, d\mu(z)\\
    & =  \frac{1}{\vol(\Gamma\backslash \Hyp)}\int_{C_\Gamma(P)\backslash \Hyp} k(z, Pz)\, d\mu(z).
\end{aligned}
\end{equation*}
Recall that $C_\Gamma(P) = \{\pm P_0^n: n \in \Z\}$ for a $\Gamma$-primitive hyperbolic element $P_0$. Letting $\sigma_3$ such that $\sigma_3^{-1}P\sigma_3 = \begin{psmallmatrix}
    N^{1/2}& 0\\
    0 & N^{-1/2}
\end{psmallmatrix}$ and changing variable $z \mapsto \sigma_3 z$ we see that 
\begin{equation*}
\begin{aligned}
    \text{Res}_{s = 1}\int_{\Gamma \backslash \Hyp} K_P(z; \Phi)E(z; s)\, d\mu(z) & = \frac{1}{\vol(\Gamma\backslash \Hyp)}\int_{1}^{N(P_0)}\int_{x \in \R} \Phi\left(\frac{|(N - 1)z|^2}{Ny^2}\right) \frac{dx \,  dy}{y^2}\\
    & = \frac{\log N(P_0)}{\vol(\Gamma\backslash \Hyp)} \int_\R \Phi(\Delta(1 + x^2))\, dx.
\end{aligned}
\end{equation*}
On the other hand, it is not difficult to show that $G(x; 1) = \frac{\pi}{2} - \arctan(x)$, which by parity considerations implies that
\begin{equation*}
    V(1; t; \Phi) = \frac{\pi \Delta^{1/2}}{2}\int_\R \Phi(\Delta(1 + x^2))\, dx.
\end{equation*}
Comparing with \eqref{eq:expressioninnerproductautomorphiceisenstein} and taking $\Phi$ such that $V(1; t; \Phi) \neq 0$ we obtain 
\begin{equation*}
    \text{Res}_{s = 1} Z_P(s) = \frac{2}{\pi}\frac{\log N(P_0)}{\vol(\Gamma\backslash \Hyp)\Delta^{1/2}},
\end{equation*}
as desired.
\end{proof}

It is a natural step to combine the series $Z_P(s)$ attached to the different conjugacy classes $\{P\}_\Gamma$ for which $\Tr(P)$ takes a fixed value. When $|t| \geq 3$ we define 
\begin{equation}\label{eq:defiofzagierbykovskiizeta}
    Z(s; t) := \sum_{\substack{\{P\}\\
    \Tr(P) = t}} \overline{\chi}(P)Z_P(s) = \ell^{-s}\sum_{c \geq 1} \frac{a(c; t)}{c^s},
\end{equation}
for certain coefficients $a(c; t)$. By Lemma \ref{lem:hyperboliczetafunction} the series $Z(s; t)$ has meromorphic continuation to $\text{Re}(s) \geq 1/2$ with at most a single pole at $s = 1$. Furthermore, by formula \eqref{eq:residuezetafunction} we have 
\begin{equation}\label{eq:formulaforprimegeodesicasresidues}
    \text{Res}_{s = 1}Z(s; t) = \sum_{\substack{\{P\}_\Gamma\\
    \Tr(P) = t}} \frac{2 \log N(P_0) \chi(P)}{\pi \vol(\Gamma\backslash \Hyp)(N(P)^{1/2} - N(P)^{-1/2})} ,
\end{equation}
where we have used that $\Tr(P) = \Tr(P^{-1})$ and $\chi(P^{-1}) = \overline{\chi(P)}$. In particular, we can express the sum $\Psi_\Gamma(X; \chi)$ as
\begin{equation}\label{eq:secondformulaforprimegeodesicasresidues}
    \Psi_\Gamma(X; \chi) = \frac{\vol(\Gamma\backslash \Hyp)\pi}{2} \sum_{2 < t \leq X^{1/2} + X^{-1/2}} (t^2 - 4)^{1/2} \text{Res}_{s = 1}Z(s; t).
\end{equation}

\subsection{Average over $t$ of the coefficients of $Z(s; t)$.}

Let us analyze the coefficients $a(c; t)$ more closely. For two elements of $\Gamma$, let us write $P_1 \sim_\ell P_2$ if they are conjugate by an element of $\Gamma_\infty = \{\pm \begin{psmallmatrix}
    1 & \ell\Z\\
    0 & 1
\end{psmallmatrix}\}$, and let us denote the resulting equivalence classes of $\Gamma$ by $\Gamma/\sim_\ell$. The coefficients $a(c; t)$ are given by
\begin{equation*}
    a(c; t) = a^+(c; t) + a^{-}(c; t),
\end{equation*}
where
\begin{equation*}
    a^{\pm}(c; t) := \sum_{\substack{[P] \in \Gamma/\sim_\ell\\
    c(P) = \pm c, \, \Tr(P) = t}} \overline{\chi(P)}.
\end{equation*}
Note that the map $P \mapsto \begin{psmallmatrix}
    1 & \ell\\
    0 & 1
\end{psmallmatrix}P$ establishes a bijection between the two sets
\begin{equation*}
    \{[P] \in \Gamma/\sim_\ell : c(P) = \pm c, \Tr(P) = t\} \longleftrightarrow \{[P] \in \Gamma/\sim_\ell : c(P) = \pm c, \Tr(P) = t \pm c\ell\}.
\end{equation*}
Since $\chi(\begin{psmallmatrix}
    1 & \ell\\
    0 & 1
\end{psmallmatrix}) = e(-\alpha)$, it follows that we have $a^{\pm}(c; t + c\ell) = e(\pm \alpha)a^{\pm}(c; t)$. Thus, we see that
$t \mapsto a^\pm(c; t)e(\mp \alpha t/c\ell)$ is periodic modulo $c\ell$. 

The coefficients $a^{\pm}(c; t)$ can be expressed in terms of Kloosterman sums.
\begin{lem}\label{lem:fourierinversoncoeff}
Let $c \geq 1$. Then 
\begin{equation}\label{eq:fourierinversoncoeff}
    a^{\pm}(c; t) = \frac{\chi(\pm I)}{c\ell}\sum_{n \modulo c\ell} S_{(\Gamma, \chi)}\left(\frac{n-\alpha}{\ell}, \frac{n-\alpha}{\ell}; c\right)e\left(\mp\frac{(n-\alpha)t}{c\ell}\right).
\end{equation}
\end{lem}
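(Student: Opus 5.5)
The plan is to prove the identity by finite Fourier inversion in the variable $t$, using the quasi-periodicity $a^{\pm}(c; t + c\ell) = e(\pm\alpha)a^{\pm}(c;t)$ established just above. Concretely, I will compute the twisted finite Fourier transform
\begin{equation*}
\widehat{a}^{\pm}(n) := \sum_{t \modulo c\ell} a^{\pm}(c; t)\, e\left(\pm\frac{(n-\alpha)t}{c\ell}\right),
\end{equation*}
which is well defined because the summand is periodic in $t$ modulo $c\ell$. I will then show that $\widehat{a}^{\pm}(n) = \chi(\pm I) S_{(\Gamma,\chi)}\left(\frac{n-\alpha}{\ell}, \frac{n-\alpha}{\ell}; c\right)$. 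Inverting via $\frac{1}{c\ell}\sum_{n \modulo c\ell} e\left(\mp\frac{(n-\alpha)(t-t')}{c\ell}\right) = \delta_{t\equiv t' \modulo c\ell}$ for integers $t,t'$ then gives \eqref{eq:fourierinversoncoeff}.

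For the key computation, I start from the definition of $a^{\pm}(c;t)$ and sum over $t \modulo c\ell$. This removes the trace constraint, so
\begin{equation*}
\widehat{a}^{\pm}(n) = \sum_{\substack{[P] \in \Gamma_\infty\backslash\Gamma/\Gamma_\infty\text{ (mod }\sim_\ell\text{, $t$ mod }c\ell)\\ c(P) = \pm c}} \overline{\chi(P)}\, e\left(\pm\frac{(n-\alpha)\Tr(P)}{c\ell}\right).
\end{equation*}
I will check that $\sim_\ell$-classes of $P$ with $c(P)=\pm c$ and trace taken modulo $c\ell$ correspond bijectively to double cosets $\Gamma_\infty P \Gamma_\infty$. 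Left and right translation by $\begin{psmallmatrix}1&\ell\\0&1\end{psmallmatrix}$ shift $a$ and $d$ by multiples of $c\ell$; conjugation shifts $a$ and $d$ oppositely and preserves the trace, while a one-sided translation changes the trace by $\pm c\ell$.

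For $c(P) = -c$, I replace $P$ by $-P$. This gives $c(-P)=c$, $\overline{\chi(-P)} = \chi(-I)\overline{\chi(P)}$ (recall $\chi(-I)=\pm1$) and $\Tr(-P) = -\Tr(P)$, which accounts for both the sign in the phase and the factor $\chi(\pm I)$.

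After this reduction the phase is $e\big(\frac{m(a+d)}{c}\big)$ with $m = \frac{n-\alpha}{\ell}\in \frac1\ell\Z-\frac\alpha\ell$, exactly the summand of $S_{(\Gamma,\chi)}(m,m;c)$ in \eqref{eq:generalkloostermansum}. I still need to check that this summand is well defined on double cosets $\Gamma_\infty\backslash\Gamma/\Gamma_\infty$. Translating $a \mapsto a + c\ell$ multiplies the phase by $e(m\ell)=e(-\alpha)$, and $\overline{\chi}$ changes by $\overline{\chi(\begin{psmallmatrix}1&\ell\\0&1\end{psmallmatrix})} = e(\alpha)$, so the two factors cancel. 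The same cancellation occurs for $d$, and for the $\pm I$ ambiguity in $\Gamma_\infty$.

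I expect the main obstacle to be this bookkeeping: confirming that the sum over $t \modulo c\ell$ of $\sim_\ell$-classes matches the double cosets exactly once, with no stray multiplicity from $\pm I \in \Gamma_\infty$, and tracking the sign conventions so that the phase comes out as $e(\mp\cdot)$ with the factor $\chi(\pm I)$. I will handle it by fixing the normalization $c(P) > 0$ first, treating the element $\begin{psmallmatrix}1&\ell\\0&1\end{psmallmatrix}$ acting on each side separately, and only then folding in $-I$.
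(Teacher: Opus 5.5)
Your proposal is correct and is essentially the paper's proof: you reduce $a^-$ to $a^+$ via $P\mapsto -P$, identify $\sim_\ell$-classes with $c(P)=c$ and trace in a fixed residue class mod $c\ell$ with the double cosets in $\Gamma_\infty\backslash\Gamma/\Gamma_\infty$, and invert the finite Fourier transform of the $c\ell$-periodic function $t\mapsto a^{+}(c;t)e(-\alpha t/c\ell)$. There are two harmless slips. First, the orthogonality relation you quote actually equals $e(\pm\alpha(t-t')/c\ell)\,\delta_{t\equiv t' \modulo c\ell}$, and the extra factor is cancelled exactly by the quasi-periodicity of $a^{\pm}$. Second, the $-I\in\Gamma_\infty$ ambiguity is not removed by a cancellation (for $k=1$ the factor $\overline{\chi}$ flips sign while the phase is unchanged) but by the normalization $c(\gamma)=c>0$, which you in fact plan to impose.
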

\begin{proof}
Since $- I \in \Gamma$, it follows that $a^{-}(c; t) = \chi(-I)a^+(c; -t)$. Thus, it is enough to prove the lemma for $a^+(c; t)$. 
Recalling the definition of Kloosterman sums from \eqref{eq:generalkloostermansum} and grouping the summands by their trace modulo $c\ell$ we see that 
\begin{equation*}
\begin{aligned}
    S_{(\Gamma, \chi)}\left(\frac{n-\alpha}{\ell}, \frac{n-\alpha}{\ell}; c\right) &  =\sum_{\substack{[\gamma] \in \Gamma_\infty \backslash \Gamma/\Gamma_\infty\\
        c(\gamma) = c}}\overline{\chi(\gamma)} e\left(\frac{(n-\alpha)(a(\gamma) + d(\gamma))}{c\ell}\right) \\
        & = \sum_{t \modulo c\ell}\sum_{\substack{[\gamma] \in \Gamma_\infty \backslash \Gamma/\Gamma_\infty\\
        c(\gamma) = c\\
        \Tr(\gamma) = t \modulo c\ell}}\overline{\chi(\gamma)}e\left(-\frac{\alpha t}{c\ell}\right)e\left(\frac{n t}{c\ell}\right).
\end{aligned}
\end{equation*}
By a direct computation $\Tr\left(\begin{psmallmatrix}
         1 & xl\\
        0 &  1
    \end{psmallmatrix}\gamma\right) = \Tr(\gamma) + xc(\gamma)l$ and $c\left(\begin{psmallmatrix}
         1 & xl\\
        0 &  1
    \end{psmallmatrix}\gamma\right) = c(\gamma)$. It follows that the natural projection map $\Gamma/\sim_\ell \rightarrow \Gamma_\infty\backslash \Gamma/\Gamma_\infty$ induces a bijection
\begin{equation*}
    \{[\gamma] \in \Gamma/\sim_\ell : c(\gamma) = c, \Tr(\gamma) = t\} \longleftrightarrow  \{[\gamma] \in \Gamma_\infty \backslash \Gamma /\Gamma_\infty : c(\gamma) = c, \Tr(\gamma) = t \modulo c\ell\}. 
\end{equation*} 
Thus, we obtain 
\begin{equation*}
    S_{(\Gamma, \chi)}\left(\frac{n-\alpha}{\ell}, \frac{n-\alpha}{\ell}; c\right) = \sum_{t \modulo c\ell } \left(a^+(c;t)e\left(\frac{-\alpha t}{c\ell}\right)\right)e\left(\frac{nt}{c\ell}\right).
\end{equation*}
The lemma follows from this identity by Fourier inversion.
\end{proof}

Thanks to Lemma \ref{lem:fourierinversoncoeff} we can obtain precise asymptotics for partial sums of the coefficients $a(c; t)$ over the variable $t$. The following result is a generalization of \cite[Lemma 2.3]{Soundararajan_2013}.
\begin{lem}\label{lem:sumofcoefficients}
If $c,z \geq 1$, then
\begin{equation}\label{eq:sumofcoefficients}
    \sum_{t \leq z} a(c; t) = (1 + \chi(-I))\delta_{\alpha = 0}\frac{S_{(\Gamma, \chi)}(0, 0; c)}{c\ell}  z + O_{q, \varepsilon}(c^{1/2 + \varepsilon}).
\end{equation}
\end{lem}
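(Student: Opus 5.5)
We insert the Fourier expansion of Lemma \ref{lem:fourierinversoncoeff} into the partial sum and exchange the order of summation. Since $a(c;t) = a^+(c;t) + a^-(c;t)$, we have
\begin{equation*}
    \sum_{t \leq z} a^{\pm}(c; t) = \frac{\chi(\pm I)}{c\ell}\sum_{n \modulo c\ell} S_{(\Gamma, \chi)}\left(\frac{n-\alpha}{\ell}, \frac{n-\alpha}{\ell}; c\right)\sum_{t \leq z} e\left(\mp\frac{(n-\alpha)t}{c\ell}\right),
\end{equation*}
where the $t$-sum runs over integers $t \leq z$ in the range where $a(c;t)$ is defined. The endpoint is harmless: the boundedly many $t$ with $|t|<3$ contribute at most $O_q(c)$, and we will see below that a cleaner normalisation absorbs them. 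We take $n$ in the representative range $0 \leq n < c\ell$ and split the terms according to whether the frequency $\beta_n := (n-\alpha)/(c\ell)$ is an integer, which happens exactly when $n = 0$ and $\alpha = 0$. In that case the inner geometric sum is $z + O(1)$. The Kloosterman sum is then $S_{(\Gamma,\chi)}(0,0;c)$, and the two signs together produce the main term $(1+\chi(-I))\delta_{\alpha=0} S_{(\Gamma,\chi)}(0,0;c)z/(c\ell)$, with error $O(|S(0,0;c)|/c) = O_q(1)$ coming from the trivial bound $S = O_q(c)$.

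For every other $n$ we use the geometric-sum bound
\begin{equation*}
\Bigl|\sum_{t\le z} e(\mp \beta_n t)\Bigr| \ll \|\beta_n\|^{-1},
\end{equation*}
where $\|\cdot\|$ denotes distance to the nearest integer. Since $\alpha \in [0,1)$, the numbers $\|\beta_n\|$ are comparable to $\min(n-\alpha,\, c\ell-n+\alpha)/(c\ell)$, with the single exception $n = 0$, $\alpha>0$. In that exceptional case $\alpha \geq \ell/q$, so $\|\beta_0\|^{-1} \ll_q c$. The non-main contribution is therefore bounded by
\begin{equation*}
\frac{1}{c}\sum_{n \modulo c\ell} \Bigl|S_{(\Gamma,\chi)}\Bigl(\tfrac{n-\alpha}{\ell},\tfrac{n-\alpha}{\ell};c\Bigr)\Bigr| \cdot \frac{c\ell}{\min(n-\alpha+\ell/q,\; c\ell - n+\alpha)}.
\end{equation*}
The argument $m = (n-\alpha)/\ell$ vanishes only in the main case, so Lemma \ref{lem:weilbound} applies and gives $|S| \ll_{q,\varepsilon} \gcd(q m, c)^{1/2} c^{1/2+\varepsilon}$, with $qm = q(n-\alpha)/\ell$ an integer. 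Summing the harmonic-type weights then reduces the problem to bounding
\begin{equation*}
c^{1/2+\varepsilon}\sum_{1\leq m' \ll c} \frac{\gcd(m', c)^{1/2}}{m'},
\end{equation*}
together with its symmetric counterpart near $n \approx c\ell$ and the $O_q(1)$ exceptional $n$, each of which contributes $O_q(c^{1/2+\varepsilon})$ by the trivial gcd bound. Grouping the sum according to $d = \gcd(m', c)$ gives $\sum_{d\mid c} d^{1/2}\sum_{m''\ll c/d} 1/(d m'') \ll \tau(c)\log c \ll c^{\varepsilon}$, which yields the claimed error $O_{q,\varepsilon}(c^{1/2+\varepsilon})$.

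The main obstacle is the bookkeeping near the resonant frequencies, namely small $n$ and $n$ close to $c\ell$. There one has to make sure that the shift by $\alpha$ does not destroy the $1/\|\beta_n\|$ decay. For this we use that $\alpha \in \{0, \ell/q, \dots\}$, so that $\|\beta_n\| \gg_q 1/c$ whenever $\beta_n$ is not an integer. We also have to check that the gcd factor in Lemma \ref{lem:weilbound} is only $O_q(1)$ for those $n$.
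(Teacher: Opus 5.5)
Your proposal is correct and follows essentially the same route as the paper's proof: expand $a^{\pm}(c;t)$ via Lemma~\ref{lem:fourierinversoncoeff}, take the main term from the single integral frequency ($n=0$, $\alpha=0$), and bound the remaining frequencies by $\|\beta_n\|^{-1}$ together with Lemma~\ref{lem:weilbound} and the divisor estimate for $\sum_{m}\gcd(m,c)^{1/2}/m$. You also treat the frequency $n=0$ with $\alpha>0$ explicitly, which the paper's final sum passes over. One small correction: your aside that the terms with $|t|<3$ cost $O_q(c)$ would not be absorbed into $O_{q,\varepsilon}(c^{1/2+\varepsilon})$, but the aside is unnecessary, since $a^{\pm}(c;t)$ is defined for every integer $t$ by its counting formula and Lemma~\ref{lem:fourierinversoncoeff} holds for all $t$ (and in any case those terms are $\ll_q c^{1/2}$, by counting roots of $a^2-ta+1\equiv 0 \modulo c$).
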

\begin{proof}
It is enough to look at the partial sums of $a^+(c; t)$, since the case of $a^{-}(c; t)$ is analogous and we have $a(c; t) = a^+(c; t) + a^{-}(c; t)$. By \eqref{eq:fourierinversoncoeff} we have  
\begin{equation*}
    \sum_{t \leq z} a^+(c; t) = \frac{1}{c\ell}\sum_{n \modulo c\ell}S_{(\Gamma, \chi)}\left(\frac{n-\alpha}{\ell},\frac{n-\alpha}{\ell}; c\right)\sum_{t\leq z} e\left(-\frac{(n-\alpha)t}{c\ell}\right).
\end{equation*}
When $\alpha = 0$, the term $n = 0$ gives the contribution
\begin{equation*}
\frac{S_{(\Gamma, \chi)}(0, 0;c\ell)}{c\ell}\sum_{t \leq z}1 =  \frac{S_{(\Gamma, \chi)}(0, 0;c\ell)}{c\ell}z + O_q(1).
\end{equation*}
When $n -\alpha \neq 0$ we have the familiar bound
\begin{equation*}
    \sum_{t \leq z}e\left(-\frac{(n-\alpha)t}{c\ell}\right) \ll ||(n-\alpha)/c\ell||^{-1}
\end{equation*}
where $||x||$ is the distance from $x$ to the nearest integer. Using this estimate together with the bound for Kloosterman sums from Lemma \ref{lem:weilbound} we obtain
\begin{equation*}
\begin{aligned}
    \sum_{t \leq z} a(c; t) & =\delta_{\alpha = 0}\frac{S_{(\Gamma, \chi)}(0, 0; c\ell)}{c\ell}  z + O_{q, \varepsilon}\left(c^{1/2 + \varepsilon}\sum_{1\leq n \leq \lfloor c\ell/2\rfloor } \left(\frac{((n-\alpha)q, c)^{1/2}}{n-\alpha} + \frac{((n+\alpha)q, c)^{1/2}}{n+\alpha}\right)\right)\\
    & = \delta_{\alpha = 0}\frac{S_{(\Gamma, \chi)}(0, 0; c\ell)}{c\ell}  z  + O_{q, \varepsilon}(c^{1/2 + \varepsilon}),
\end{aligned}
\end{equation*}
as desired.
\end{proof}

\subsection{Main term of the $t$-average.}

Suppose that $\alpha = 0$ and $k = 0$. In this case the cusp $\infty$ of $(\Gamma, \chi)$ is singular. Recall the Eisenstein series $E(z; s) =E_\infty(z; s; \Gamma, \chi)$, given as
\begin{equation*}
    E(z; s) = \ell^{-s}\sum_{\gamma \in \Gamma_\infty \backslash \Gamma} \overline{\chi}(\gamma)\text{Im}(\gamma z)^s,
\end{equation*}
where $l\geq 1$ is the width of the cusp $\infty$. We know that 
\begin{equation}\label{eq:constanttermeisenstein}
\int_{0}^{1}E(\ell(x + iy); s) \, dx = y^s +  \phi_{\infty, \infty}(s) y^{1-s}, 
\end{equation}
where 
\begin{equation}\label{eq:factorizationconstantterm}
    \phi_{\infty, \infty}(s) := \sqrt{\pi}\frac{\Gamma(s - 1/2)}{\Gamma(s)}\psi_{(\Gamma, \chi)}(s),
\end{equation}
and
\begin{equation*}
    \psi_{(\Gamma, \chi)}(s) := \ell^{-2s}\sum_{c \geq 1}\frac{S_{(\Gamma, \chi)}(0, 0; c)}{c^{2s}},
\end{equation*}
see \cite[Theorem 3.4]{iwaniec} for the case of trivial $\chi$, the proof of the general case being the same. Thus, $\phi_{\infty, \infty}(s)$ is the $(\infty, \infty)$-entry in the scattering matrix for $(\Gamma, \chi)$. By \eqref{eq:constanttermeisenstein} and \eqref{eq:factorizationconstantterm}, the function $\psi_{(\Gamma, \chi)}(s)$ inherits the holomorphicity properties of $E(z; s)$. In particular $\psi_{(\Gamma, \chi)}(s)$ is holomorphic on $\text{Re}(s) \geq 1/2$ except for a possible simple pole at $s = 1$ of residue
\begin{equation*}
    \text{Res}_{s = 1}\psi_{(\Gamma, \chi)}(s) = \frac{\delta_{\chi = 1}}{\pi \vol(\Gamma\backslash \Hyp)}.
\end{equation*}
In order to handle an average over $c$ of the main term of \eqref{eq:sumofcoefficients} using Mellin inversion we need to control the growth of $\psi_{(\Gamma, \chi)}(s)$ on vertical strips.
\begin{lem}\label{lem:growthofconstantterm}
Let $s = \sigma + it$. Then, for $\sigma \geq 1/2$ and $|t| \geq 1$ it holds that 
\begin{equation*}
    \psi_{(\Gamma, \chi)}(s) \ll_{q, \varepsilon} |s|^{\max(1-\sigma, 0) + \varepsilon}.
\end{equation*}
\end{lem}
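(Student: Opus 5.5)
We bound $\psi_{(\Gamma, \chi)}(s)$ through the scattering matrix, which for congruence characters is built from Dirichlet $L$-functions. By \eqref{eq:factorizationconstantterm},
\begin{equation*}
    \psi_{(\Gamma, \chi)}(s) = \pi^{-1/2}\frac{\Gamma(s)}{\Gamma(s - 1/2)}\phi_{\infty, \infty}(s),
\end{equation*}
and Stirling gives $\Gamma(s)/\Gamma(s-1/2) \asymp |s|^{1/2}$ for $|t| \geq 1$ and $\sigma$ in a bounded range. So it suffices to show that $\phi_{\infty,\infty}(s) \ll_{q,\varepsilon} |s|^{\max(1-\sigma,0) - 1/2 + \varepsilon}$. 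For large $\sigma$ the Dirichlet series defining $\psi_{(\Gamma,\chi)}$ converges absolutely, since $|S_{(\Gamma,\chi)}(0,0;c)| \ll_q c$. Hence the bound is immediate for $\sigma \geq 1 + \varepsilon$, where $\psi \ll 1$.

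The main step is an explicit description of $\phi_{\infty,\infty}(s)$, in the spirit of Lemma \ref{lem:basicformscattering} (Reznikov's computation) or of the reduction in Section \ref{sec:fouriercoefficientseisensteinseries}. Rather than rely only on the scattering determinant, we use the relation \eqref{eq:relationeisensteincongruence} with the cusp $\infty$ and $(\{\pm I\}\Gamma(q), \sgn_q^k) \subset (\Gamma, \chi)$. This writes $E_\infty(z;s;\Gamma,\chi)$ as a finite combination, with coefficients bounded by $O_q(q^{\sigma})$, of Eisenstein series for $\Gamma(q)$. Those in turn decompose over the characters $\psi \bmod q$ into Eisenstein series for $(\Gamma_d(q),\psi)$, which are conjugate to $\Gamma_0(q^2)$. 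For $\Gamma_0(q^2)$, Young's formula (\cite[Theorem 6.1]{youngeisensteinseries}, already quoted above) expresses each cusp Eisenstein series through the $E_{\chi_1,\chi_2}$. Their constant terms at $\infty$ contain $y^{1-s}$ times a coefficient of the form
\begin{equation*}
    \frac{\Lambda(2s-1;\chi_1\chi_2)}{\Lambda(2s;\chi_1\chi_2)}\times (\text{finite Euler factors at } p \mid q \text{ and powers } p^{\pm s}),
\end{equation*}
with $\chi_1\chi_2$ primitive of conductor dividing $q^2$. The upshot is that $\phi_{\infty,\infty}(s)$ is a finite sum, with $O_q(1)$ many terms, of expressions
\begin{equation*}
    \frac{\Gamma(s-1/2+k')}{\Gamma(s+k')}\,\frac{L(2s-1;\eta)}{L(2s;\eta)}\,R(s),
\end{equation*}
where $\eta$ is a Dirichlet character mod $q^2$ and $R(s)$ is a rational function of $p^{-s}$, $p \mid q$, bounded on $\sigma \geq 1/2$ uniformly in $t$. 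The finiteness and boundedness of these bad-prime factors on $\sigma \geq 1/2$ need to be checked, and this bookkeeping is the obstacle I expect. If it is troublesome, I would instead argue with the scattering matrix directly: $\Phi(s)$ is unitary on $\sigma = 1/2$, so $|\phi_{\infty,\infty}(1/2+it)| \leq 1$, and the $L$-function shape is needed only to control growth in the strip $1/2 \leq \sigma \leq 1+\varepsilon$.

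With this description, the bound follows from standard estimates. On $\sigma \geq 1/2$, $|t|\geq 1$, we have $1/L(2s;\eta) \ll \log(|t|+2)$ by the classical zero-free region and lower bounds at $\text{Re}(2s) \geq 1$. For the numerator, convexity gives $L(2s-1;\eta) \ll_{q,\varepsilon} |t|^{\max(1-\sigma,0)+\varepsilon}$, since $\text{Re}(2s-1) = 2\sigma-1 \in [0,1]$ produces the exponent $(1-(2\sigma-1))/2 = 1-\sigma$. The gamma ratio contributes $|s|^{-1/2}$. Multiplying by the factor $|s|^{1/2}$ from $\Gamma(s)/\Gamma(s-1/2)$ gives $\psi_{(\Gamma,\chi)}(s) \ll_{q,\varepsilon} |s|^{\max(1-\sigma,0)+\varepsilon}$. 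The case $\alpha=0$ with $k=1$ or with general $\chi$ is handled identically, with the weight-$k$ gamma factors from \eqref{eq:completedeisenstein}.
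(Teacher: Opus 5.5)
Your proposal is correct and is essentially the paper's argument. You write $\psi_{(\Gamma,\chi)}(s)$ as an $O_q(1)$-term combination of $L(2s-1,\eta)/L(2s,\eta)$ times bad-prime factors that are bounded in the strip $1/2\le\sigma\le 1+\varepsilon$, then use convexity for the numerator, the bound $1/L(w,\eta)\ll\log|\text{Im}(w)|$ on $\text{Re}(w)\ge 1$ for the denominator, and absolute convergence for $\sigma\ge 1+\varepsilon$.

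The only difference is where the structural formula comes from. The paper quotes Section 2.7 of \cite{reznikoveisensteinmatrix}, whereas you rebuild it from \eqref{eq:relationeisensteincongruence}, the conjugation $\Gamma_d(q)\to\Gamma_0(q^2)$ and \cite[Theorem 6.1]{youngeisensteinseries}. That costs more bookkeeping but uses only inputs already present in the paper.
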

\begin{proof}
From Section 2.7 in the proof of \cite[Theorem 1]{reznikoveisensteinmatrix}, it follows that $\psi_{(\Gamma, \chi)}(s)$ is a linear combination of functions of the form 
\begin{equation*}
    \frac{L(2s - 1, \chi)}{L(2s, \chi)} \prod_{p \in S}P_p(p^s) ,
\end{equation*}
where $L(s, \chi)$ is the Dirichlet $L$-function attached to a character $\chi$, $S$ is a finite set of primes, and $P_p(X)$ is a rational function for each $p \in S$. The lemma follows by standard bounds for Dirichlet $L$-functions.
\end{proof}
\begin{remark}
The same bound is true for a general cofinite group $\Gamma$ and a character $\chi: \Gamma \rightarrow S^1$. The idea is to apply the Phrägmen-Lindelöf principle. First, one uses the bound $S_{(\Gamma, \chi)}(0, 0; c) \ll_\Gamma c$ to deduce that $\psi_{(\Gamma, \chi)}(s)$ converges absolutely on $\text{Re}(s) > 1$. Since the scattering matrix $\Phi(s)$ is unitary on $\text{Re}(s) = 1/2$, its $(\infty, \infty)$-entry $\phi_{\infty, \infty}(s)$ satisfies $|\phi_{\infty, \infty}(s)| \leq 1$ on $\text{Re}(s) = 1/2$. By \eqref{eq:factorizationconstantterm} and Stirling's formula we deduce that $|\psi_{(\Gamma, \chi)}(s)| \ll |s|^{1/2}$ on $\text{Re}(s) = 1/2$. As an outcome of a proof of the analytic continuation of $E(z; s)$ one obtains some information about the growth of $\psi_{(\Gamma, \chi)}$ on vertical strips. For example, from (6.11), (6.12) and (6.15) in \cite[Proposition 6.1]{iwaniec} it follows that $|\psi_{(\Gamma, \chi)}(s)| = O_{q, \varepsilon}(\exp(|s|^{8 + \varepsilon}))$ for $\text{Re}(s) \geq 1/2$ and $|t| \geq 1$ (Iwaniec's proof assumes that $\chi$ is trivial, but the proofs in the general case are similar). At this point one can apply the Phrägmen-Lindelöf principle to deduce the bound of Lemma \ref{lem:growthofconstantterm}.
\end{remark}

We can now evaluate the sum over $c$ of the main term from \eqref{eq:sumofcoefficients}.
\begin{lem}\label{lem:zerothsumofkloostermansumevaluation}
Let $\omega \in C_c^\infty(0, \infty)$ be such that $\int_\R \omega(v)\, dv = 1$, and let $V > 0$ be a parameter. Then 
\begin{equation}\label{eq:zerothsumofkloostermansumevaluation}
    \frac{\vol(\Gamma\backslash \Hyp)\pi}{2V}\sum_{c \geq 1} \frac{S_{(\Gamma, \chi)}(0, 0; c)}{c\ell}\omega(c\ell/V) = \delta_{\chi = 1} + O_q(V^{-1}).
\end{equation}
\end{lem}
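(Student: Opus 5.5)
We will prove \eqref{eq:zerothsumofkloostermansumevaluation} by Mellin inversion, expressing the $c$-sum through the Dirichlet series $\psi_{(\Gamma,\chi)}$. We work in the setting of this subsection, $\alpha = 0$ and $k = 0$, so that $\infty$ is singular and $\psi_{(\Gamma,\chi)}$ is defined. Let $\widetilde{\omega}(w) := \int_0^\infty \omega(v)v^{w-1}\,dv$ be the Mellin transform of $\omega$. Since $\omega \in C_c^\infty(0,\infty)$, $\widetilde{\omega}$ is entire, satisfies $\widetilde{\omega}(1) = \int \omega = 1$, and decays faster than any polynomial on vertical strips. By Mellin inversion,
\begin{equation*}
\sum_{c \geq 1}\frac{S_{(\Gamma,\chi)}(0,0;c)}{c\ell}\,\omega(c\ell/V) = \frac{1}{2\pi i}\int_{(\sigma)} \widetilde{\omega}(w)V^{w}\,\ell^{-1-w}\sum_{c\geq 1}\frac{S_{(\Gamma,\chi)}(0,0;c)}{c^{1+w}}\,dw.
\end{equation*}
Here we take $\sigma > 1$. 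The interchange of sum and integral is justified by the trivial bound $S_{(\Gamma,\chi)}(0,0;c) \ll_q c$, which gives absolute convergence for $\text{Re}(w) > 1$. Comparing with the definition of $\psi_{(\Gamma,\chi)}$ with $2s = 1 + w$, the inner series times $\ell^{-1-w}$ is exactly $\psi_{(\Gamma,\chi)}((1+w)/2)$.

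Next, we shift the contour to $\text{Re}(w) = 0$, which corresponds to $\text{Re}(s) = 1/2$. The only singularity crossed is the possible simple pole of $\psi_{(\Gamma,\chi)}(s)$ at $s = 1$, that is at $w = 1$. Since $s = (1+w)/2$, the residue in $w$ equals $2\,\text{Res}_{s=1}\psi_{(\Gamma,\chi)} = 2\delta_{\chi=1}/(\pi\vol(\Gamma\backslash\Hyp))$. The pole therefore contributes $\widetilde{\omega}(1)V \cdot 2\delta_{\chi=1}/(\pi\vol(\Gamma\backslash\Hyp))$. After multiplying by $\vol(\Gamma\backslash\Hyp)\pi/(2V)$, this becomes exactly $\delta_{\chi=1}$.

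It remains to bound the integral on $\text{Re}(w) = 0$. There $|V^w| = 1$. By Lemma \ref{lem:growthofconstantterm}, $\psi_{(\Gamma,\chi)}(1/2 + i\tau/2) \ll_{q,\varepsilon} (1+|\tau|)^{1/2+\varepsilon}$ for $|\tau|$ large. The rapid decay of $\widetilde{\omega}$ then makes the integral $O_q(1)$, and after dividing by $V$ it contributes $O_q(V^{-1})$.

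The main obstacle is the bounded range $|\text{Im}(s)| \leq 1$ on $\text{Re}(s) = 1/2$, where Lemma \ref{lem:growthofconstantterm} does not apply directly. We handle it using the holomorphy of $\psi_{(\Gamma,\chi)}$ on $\text{Re}(s) \geq 1/2$ away from $s = 1$. In particular it has no pole at $s = 1/2$: by \eqref{eq:factorizationconstantterm}, $\phi_{\infty,\infty}(s)$ is bounded near $\text{Re}(s) = 1/2$ by unitarity of $\Phi$, and the factor $\Gamma(s-1/2)$ has a pole at $s = 1/2$, so $\psi_{(\Gamma,\chi)}$ must in fact vanish there. Hence $\psi_{(\Gamma,\chi)}$ is continuous, and so bounded, on this compact segment, and the bound there depends only on $q$ and $\omega$. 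Since the constants already depend on $\omega$ through $\widetilde{\omega}$, this gives the claimed error term.
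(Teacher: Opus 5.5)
Your proposal is correct and follows the same route as the paper: Mellin inversion converts the $c$-sum into $\psi_{(\Gamma,\chi)}((1+w)/2)$, you shift to $\text{Re}(w)=0$ using Lemma~\ref{lem:growthofconstantterm} and the rapid decay of $\widetilde{\omega}$, and you pick up the residue $\delta_{\chi=1}$ at $w=1$. Your extra treatment of the segment $|\text{Im}(s)|\leq 1$ fills in a detail the paper leaves implicit. There, continuity of $\psi_{(\Gamma,\chi)}$ already follows from its stated holomorphy on $\text{Re}(s)\geq 1/2$ away from $s=1$, so the detour through unitarity of $\Phi$ is not needed.
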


\begin{proof}
Let $\widehat{\omega}(s) := \int_0^\infty \omega(v)v^s \, dv/v$ be the Mellin transform of $\omega$. On any fixed vertical strip we have $|\widehat{\omega}(s)| \ll_A (1 + |s|)^{-A}$ for any $A > 0$. By Mellin inversion it follows that  
\begin{equation*}
    \frac{\vol(\Gamma\backslash \Hyp)\pi}{2V}\sum_{c \geq 1} \frac{S_{(\Gamma, \chi)}(0, 0; c)}{c\ell}\omega(c\ell/V) = \frac{\vol(\Gamma\backslash \Hyp)\pi}{4\pi i}\int_{(\sigma)} \psi_{(\Gamma, \chi)}\left(\frac{1 + s}{2}\right)\widehat{\omega}(s)V^{s-1}\, ds,
\end{equation*}
where $\sigma > 1$. Moving the contour to the left is justified by Lemma \ref{lem:growthofconstantterm} and the decay of $\widehat{\omega}(s)$. If we move the contour to $\text{Re}(s) = 0$ and pick up the residue at $s = 1$ when $\chi$ is trivial we deduce that 
\begin{equation*}
\begin{aligned}
\frac{\vol(\Gamma\backslash \Hyp)\pi}{2V}\sum_{c \geq 1} \frac{S_{(\Gamma, \chi)}(0, 0; c)}{c\ell}\omega(c\ell/V) & = \delta_{\chi = 1} + \frac{\vol(\Gamma\backslash \Hyp)\pi}{4\pi i}\int_{(0)} \psi_{(\Gamma, \chi)}\left(\frac{1 + s}{2}\right)\widehat{\omega}(s)V^{s-1}\, ds\\
& = \delta_{\chi = 1} + O_q(V^{-1}),
\end{aligned}
\end{equation*} 
as desired.
\end{proof}

\subsection{Estimate for $Z(s; t)$ on the critical line.}

In this section we estimate $Z(s; t)$ on the critical line. Let 
\begin{equation*}
    Z^{\pm}(s; t) := \ell^{-s}\sum_{c = 1}^\infty \frac{a^{\pm}(c; t)}{c^s}.
\end{equation*}
It is clear that 
\begin{equation}\label{eq:pmzetafunction}
    Z^{-}(s; t) = \chi(-I)Z^{+}(s; -t),
\end{equation}
and by \eqref{eq:defiofzagierbykovskiizeta} we have 
\begin{equation*}
    Z(s; t) = Z^{+}(s; t) + Z^{-}(s; t).
\end{equation*}
Recall Definition \ref{defi:admissibleexponent} regarding admissible exponents.

\begin{prop}\label{prop:boundzagierzetafunction}
Let $\theta> 0$ be any admissible exponent. There exists $A > 0$ such that for any $\varepsilon > 0$ the bound
    \begin{equation*}
        Z^{\pm}(s; t) \ll_{q, \theta, \varepsilon} (1 + |s|)^A t^{2\theta + \varepsilon} 
    \end{equation*}
holds uniformly for $|t| \geq 3$, $\text{Re}(s) \geq 1/2$ and $|s - 1| \geq \varepsilon$. 
\end{prop}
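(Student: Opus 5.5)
The plan is to make $Z^{+}(s;t)$ explicit as a finite combination of products of Dirichlet $L$-functions times harmless local factors, in the spirit of Zagier's classical identity for $\SL_2(\Z)$. Then the admissibility hypothesis of Definition \ref{defi:admissibleexponent} applies to the single factor carrying the quadratic character. By \eqref{eq:pmzetafunction} it suffices to treat $Z^{+}(s;t)$, since the bound is uniform in the sign of $t$.

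\emph{Step 1: unfolding $a^{+}(c;t)$ into a counting problem.} A class $[P]\in\Gamma/\sim_\ell$ with $c(P)=c$ and $\Tr(P)=t$ is represented by $P=\begin{psmallmatrix} a & b\\ c & t-a\end{psmallmatrix}$, with $a$ taken modulo $c\ell$ subject to $a(t-a)\equiv 1 \modulo c$; then $b$ is determined. Both the condition $P\in\Gamma$ and the value $\chi(P)$ depend only on $P \modulo q$, because $\Gamma\supset\Gamma(q)$ and $\chi$ is trivial on $\Gamma(q)$. Grouping by $\tilde\gamma = P \modulo q \in \widetilde\Gamma$ gives
\begin{equation*}
a^{+}(c;t)=\sum_{\substack{\tilde\gamma\in\widetilde\Gamma\\ \Tr(\tilde\gamma)\equiv t,\ c(\tilde\gamma)\equiv c \ (q)}}\overline{\chi(\tilde\gamma)}\, N(c;t,\tilde\gamma),
\end{equation*}
where $N(c;t,\tilde\gamma)$ counts the $a \modulo c\ell$ with $a^2-ta+1\equiv 0 \modulo c$ satisfying the congruence conditions modulo $q$ imposed by $\tilde\gamma$, including the one on $b=(a(t-a)-1)/c$. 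There are $O_q(1)$ classes $\tilde\gamma$, so it suffices to bound each resulting Dirichlet series.

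\emph{Step 2: factorisation.} Write $c=c_0c_1$ with $(c_0,q)=1$ and $c_1\mid q^\infty$. By the Chinese remainder theorem (as in the proof of Lemma \ref{lem:weilbound}), $N(c;t,\tilde\gamma)$ factors as $\rho_t(c_0)\cdot N_q(c_1,c_0;t,\tilde\gamma)$. Here $\rho_t(c_0)=\#\{x \modulo c_0: x^2-tx+1\equiv 0\}$ is multiplicative. The $q$-part depends on $c_0$ only through $c_0 \modulo q^{O(1)}$, via the inverse of $c_0$ modulo powers of the primes dividing $q$. This residual dependence, together with the condition $c\equiv c(\tilde\gamma)\modulo q$, is detected by Dirichlet characters $\psi$ modulo a fixed power of $q$. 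This yields a finite sum of products
\begin{equation*}
\Big(\sum_{c_1\mid q^\infty}\frac{N'_q(c_1)}{c_1^{s}}\Big)\Big(\sum_{(c_0,q)=1}\frac{\psi(c_0)\rho_t(c_0)}{c_0^{s}}\Big).
\end{equation*}
Write $t^2-4=Df^2$ with $D$ fundamental. The standard local computation of $\rho_t(p^k)$ (Zagier) gives
\begin{equation*}
\sum_{(c_0,q)=1}\frac{\psi(c_0)\rho_t(c_0)}{c_0^{s}}=\frac{L^{(q)}(s,\psi)\,L^{(q)}(s,\psi\psi_D)}{L^{(q)}(2s,\psi^2)}\,\prod_{p\mid f,\,p\nmid q}Q_p(p^{-s}),
\end{equation*}
where each $Q_p$ is a polynomial of degree $O(v_p(f))$ with bounded coefficients. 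On $\text{Re}(s)\ge 1/2$ this product is $\ll\tau(f)^{O(1)}\ll t^{\varepsilon}$.

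\emph{Step 3: bounds, and the main obstacle.} On $\text{Re}(s)\ge1/2$ with $|s-1|\ge\varepsilon$ the factors are handled as follows. $L(s,\psi)\ll|s|^{A}$ by convexity, the only pole being at $s=1$ for principal $\psi$. $1/L(2s,\psi^2)\ll\log(2+|s|)$, since $\text{Re}(2s)\ge1$; at $s=1/2$ with $\psi^2$ principal the zeta pole makes the reciprocal vanish rather than blow up. Finally $L(s,\psi\psi_D)\ll|s|^A\q(\psi_D)^{\theta}\le|s|^A t^{2\theta}$ by admissibility, after removing the finitely many Euler factors at $p\mid q$ at a cost of $O_q(1)$; this is where Phragmén--Lindelöf extends the bound from $\text{Re}(s)=1/2$ to the half-plane. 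The step I expect to be the real obstacle is the $q$-part $\sum_{c_1\mid q^\infty}N'_q(c_1)c_1^{-s}$, which must be shown to converge and to be $\ll t^{\varepsilon}$ on $\text{Re}(s)\ge1/2$. The plan there is a Hensel-type count at each $p\mid q$. For $k>v_p(t^2-4)+O_q(1)$ the number of solutions of $x^2-tx+1\equiv 0 \modulo p^k$ in a fixed class modulo $p^{O(1)}$ stabilises at $\ll p^{v_p(t^2-4)/2}$. For smaller $k$ the count is $\ll p^{k/2}$. Hence the local series is a Dirichlet polynomial of length $O(v_p(t^2-4))$ with terms $\ll 1$ at $\sigma=1/2$, plus a convergent geometric tail. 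This gives $\ll_q (1+v_p(t^2-4))^{O(1)}\ll t^{\varepsilon}$. Combining the three steps gives the proposition.
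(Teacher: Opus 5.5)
This is essentially the paper's proof (reduction to $Z^{+}$, grouping by $P \modulo q$, the Chinese remainder split $c=c_0c_q$ with Dirichlet characters absorbing the residual dependence on $c_0$, Zagier's formula, admissibility for $L(s;\psi_D\psi)$, and the Hensel-type bound $\ll_q (t^2-4,c)^{1/2}$ for the $q$-part), and it goes through once two small points are fixed. First, in Step 1, $P \modulo q$ is not an invariant of a $\sim_\ell$-class, since conjugation by $\begin{psmallmatrix}1&\ell\\0&1\end{psmallmatrix}$ in general changes it; so, as the paper does, you should first refine to classes under conjugation by $\begin{psmallmatrix}1&q\\0&1\end{psmallmatrix}\in\Gamma(q)$ (each $\sim_\ell$-class splits into exactly $q/\ell$ of them), i.e.\ count $a \modulo qc$ with an overall factor $\ell/q$, which is also what makes your CRT factorisation literally valid; second, in Step 2 the polynomials $Q_p$ coming from $\sigma_{1-2s}(f/d)$ do not have bounded coefficients (the coefficient of $p^{-2js}$ has size $p^{j}$), but every term has modulus at most $1$ on $\text{Re}(s)\ge 1/2$, so your bound $\tau(f)^{O(1)}$ still holds.
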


\begin{proof}
By \eqref{eq:pmzetafunction} it is enough to consider the case of $Z^+(s; t)$. We write $\gamma_1 \sim_q \gamma_2$ if the matrices are conjugate by an element of $\{\pm \begin{psmallmatrix}
    1 & q\Z\\
    0 & 1
\end{psmallmatrix}\}$, and let $\Gamma/\sim_q$ denote the corresponding equivalence classes. Note that we can write
\begin{equation*}
    Z^+(s; t) = \frac{\ell^{1-s}}{q}  \sum_{c = 1}^\infty \frac{b(c; t)}{c^s},
\end{equation*}
where 
\begin{equation*}
    b(c; t) = \sum_{\substack{[P] \in \Gamma/\sim_q\\
    c(P) = c, \, \Tr(P) = t}} \overline{\chi(P)}.
\end{equation*}
Recall the group $\widetilde{\Gamma} := \Gamma(q)\backslash \Gamma$. Since $\chi$ is trivial on $\Gamma(q)$, we can group matrices  by their reduction modulo $q$ and write
\begin{equation*}
    Z^+(s; t) = \frac{\ell^{1-s}}{q}\sum_{\substack{\widetilde{\gamma} \in \widetilde{\Gamma}\\
    \Tr(\widetilde{\gamma}) = t \modulo q}} \overline{\chi\left(\widetilde{\gamma}\right)} \sum_{c = 1}^\infty \frac{b(c; t; \widetilde{\gamma})}{c^s},
\end{equation*}
where 
\begin{equation*}
    b(c; t; \widetilde{\gamma}) := \left|\{[\gamma] \in \Gamma/\sim_q : \gamma = \widetilde{\gamma} \modulo q, c(\gamma) = c, \Tr(\gamma) = t\}\right|.
\end{equation*}
We can evaluate the cardinality $b(c; t; \widetilde{\gamma})$ by classifying equivalence classes $[\gamma] \in \Gamma/\sim_q$ according to the residue class $a(\gamma) \modulo qc$. Recall that 
\begin{equation*}
    \begin{pmatrix}
        1 & yq\\
        0 & 1
    \end{pmatrix}\begin{pmatrix}
        a & b\\
        c & d
    \end{pmatrix}\begin{pmatrix}
        1 & -yq\\
        0 & 1
    \end{pmatrix} = \begin{pmatrix}
        a + yqc & *\\
        c & d - yqc
    \end{pmatrix}.
\end{equation*}
It follows immediately that 
\begin{equation*}
  b(c; t; \widetilde{\gamma}) = \delta_{\{c = c(\widetilde{\gamma})\modulo q\}}\times |\{a \modulo qc: a = a(\widetilde{\gamma}) \modulo q \text{ and } a^2 - at + 1 = -b(\widetilde{\gamma})c \modulo qc\}|.
\end{equation*}
For each $c \geq 1$ we write $c = c_0 c_q$ where $(c_0, q) = 1$ and $c_q \mid q^\infty$. By the Chinese remainder theorem we can write
\begin{equation*}
    b(c; t; \widetilde{\gamma}) = b_0(c_0; t) \times b_q(c_q; t; \widetilde{\gamma}; c_0 \modulo q),
\end{equation*}
where 
\begin{equation*}
    b_0(c_0; t) := \left|\{a \modulo c_0 : a^2 -ta + 1 = 0 \modulo c_0\}\right|
\end{equation*}
and, for $r \in (\Z/q\Z)^\times$,
\begin{equation*}
    b_q(c_q; t; \widetilde{\gamma}; r) := \delta_{\{c_q r = c(\widetilde{\gamma}) \modulo q\}}\times \left|\{a \modulo q c_q: a = a(\widetilde{\gamma}) \modulo q \text{ and } a^2 - at + 1 = -b(\widetilde{\gamma})rc_q \modulo qc_q\}\right|.
\end{equation*}
Let us define 
\begin{equation*}
    Z_q(s; t; \widetilde{\gamma}; r) := \sum_{c \mid q^\infty} \frac{b_q(c; t; \widetilde{\gamma}; r)}{c^s},
\end{equation*}
as well as
\begin{equation*}
    Z_0(s; t; \psi) := \sum_{(c, q) = 1} \frac{b_0(c; t)\psi(c)}{c^s},
\end{equation*}
where $\psi$ is a Dirichlet character modulo $q$. Then by orthogonality of characters we have
\begin{equation}\label{eq:expressionforgeneralzagierzeta}
    Z^+(s; t) = \frac{\ell^{1-s}}{q\varphi(q)}\sum_{\substack{\widetilde{\gamma} \in \widetilde{\Gamma}\\
    \Tr(\widetilde{\gamma}) = t}}\overline{\chi(\widetilde{\gamma})}\sum_{a\in (\Z/q\Z)^\times}Z_q(s; t; \widetilde{\gamma}; a)\sum_{\psi \modulo q} \overline{\psi(a)}Z_0(s; t; \psi).
\end{equation}
Write $t^2 - 4 = Df^2$ where $D$ is a fundamental discriminant. According to \cite[Proposition 3]{zagiermodularformsfouriercoeff} we have
\begin{equation*}
    \sum_{c = 1}^\infty \frac{b_0(c; t)}{c^s} = \frac{\zeta(s)}{\zeta(2s)}L(s; \psi_D)\sum_{d \mid f} \mu(d)\left(\frac{D}{d}\right)d^{-s}\sigma_{1 - 2s}(f/d).
\end{equation*}
Here $\psi_D(\cdot) = \left(\frac{D}{\cdot}\right)$ is the Kronecker symbol and $\sigma_\nu(m) = \sum_{\substack{d \mid m\\d>0}} d^\nu$. Since $c \mapsto \psi(c)$ is completely multiplicative, it follows by standard manipulations with Dirichlet series that
\begin{equation*}
    Z_0(s; t; \psi) = H(s; t; \psi) \frac{L(s; \psi)}{L(2s; \psi)}L(s; \psi_D \psi)
\end{equation*}
where $H(s; t;\psi)$ is a Dirichlet series depending on $t$, $q$ and $\psi$ which is holomorphic on $\text{Re}(s) \geq 1/2$ and satisfies
\begin{equation*}
    H(s; t; \psi) \ll_{q} \log(3 + |s|)
\end{equation*}
in the same region, independently of $t$. By the definition of admissible exponent $\theta > 0$ we have, for some $A> 0$, the bound
\begin{equation*}
    L(s; \psi_D \psi) \ll_{q, \varepsilon} (t^2 - 4)^{\theta + \varepsilon}(1 + |s|)^A,
\end{equation*}
where $\text{Re}(s) \geq 1/2$. We deduce that, for some $A > 0$, we have
\begin{equation}\label{eq:boundtwistedzagierbykovskii}
     Z_0(s; t; \psi) \ll_{q, \varepsilon} (t^2 - 4)^{\theta + \varepsilon} (1 + |s|)^A
\end{equation}
in the region $\text{Re}(s) \geq 1/2$. It remains to bound $Z_q(s; t; \widetilde{\gamma}; r)$. It is easy to check that if $c \mid q^\infty$, we have $b(c; t; \widetilde{\gamma}; r) \ll_q (t^2 - 4, c)^{1/2}$. If we write $t^2 - 4 = \prod_{p} p^{e_p}$, then we have
\begin{equation}\label{eq:boundbadpartzagier}
    Z_q(s; t; \widetilde{\gamma}; r) \ll_q \prod_{p\mid q} \left(e_p  + \frac{1}{1 - p^{-1/2}}\right) \ll_{q, \varepsilon} t^\varepsilon
\end{equation}
in the region $\text{Re}(s) \geq 1/2$. The proof of Proposition \ref{prop:boundzagierzetafunction} is concluded by combining \eqref{eq:expressionforgeneralzagierzeta} with \eqref{eq:boundtwistedzagierbykovskii} and \eqref{eq:boundbadpartzagier}.
\end{proof}

\section{Applications of the Bykovskii-Zagier series.}\label{sec:shortintervals}

\subsection{Proof of Theorem \ref{thm:soundyoungshortintervals}.}\label{subsec:shortintervals}

Let $2 \leq u \leq X$. For conciness, let $q(y)$ be the function $q(y) := y^{1/2} + y^{-1/2}$. By equation \eqref{eq:secondformulaforprimegeodesicasresidues}, we can express the difference $\Psi_\Gamma(X + u; \chi) - \Psi_\Gamma(X; \chi)$ as
\begin{equation*}
    \Psi_\Gamma(X + u; \chi) - \Psi_\Gamma(X; \chi) = \frac{\vol(\Gamma\backslash \Hyp)\pi}{2}\sum_{q(X) < t \leq q(X + u)} (t^2 - 4)^{1/2}\text{Res}_{s = 1}Z(s; t).
\end{equation*}
Let $V > 0$ be a parameter to be chosen later, and let $\omega \in C_c^\infty(0, \infty)$ satisfy $\int_\R \omega(x) \, dx = 1$. Consider the approximation 
\begin{equation}\label{eq:approximationtogeometricsideshortintervals}
    \S_V := \frac{\vol(\Gamma\backslash \Hyp)\pi}{2V}\sum_{c\geq 1}\sum_{q(X) < t \leq q(X + u)} (t^2 - 4)^{1/2} a(c; t)\omega(c \ell/V).
\end{equation}
If we execute first the summation over $c$ and use Mellin inversion we obtain
\begin{equation*}
    \S_V = \frac{\vol(\Gamma\backslash \Hyp)\pi}{4\pi i}\int_{(\sigma)} \left(\sum_{q(X) < t \leq q(X + u)} (t^2 - 4)^{1/2}Z(s; t)\right) \widehat{\omega}(s)V^{s-1}\, ds,
\end{equation*}
where $\sigma > 1$. Moving the contour to $\sigma = 1/2$ and picking up the residue at $s = 1$ we get
\begin{equation*}
    \S_V =  \Psi_\Gamma(X + u; \chi) - \Psi_\Gamma(X; \chi)  + \frac{\vol(\Gamma\backslash \Hyp)\pi}{4\pi i}\int_{(1/2)} \left(\sum_{q(X) < t \leq q(X + u)} (t^2 - 4)^{1/2}Z(s; t)\right) \widehat{\omega}(s)V^{s-1}\, ds.
\end{equation*}
Note that $q(X) \sim X^{1/2}$ and $q(X + u) - q(X) \sim uX^{-1/2}/2$. Bounding $Z(s; t)$ via Proposition \ref{prop:boundzagierzetafunction} we get
\begin{equation}\label{eq:shortintervalssumoverc}
    \S_V = \Psi_\Gamma(X + u; \chi) - \Psi_\Gamma(X; \chi) + O_{q, \varepsilon}(u X^\theta V^{-1/2 + \varepsilon}).
\end{equation}
We now come back to \eqref{eq:approximationtogeometricsideshortintervals} and execute the summation over $t$. Using summation by parts and Lemma \ref{lem:sumofcoefficients} it follows that 
\begin{equation*}
\begin{aligned}
    \sum_{q(X) < t \leq q(X + u)} (t^2 - 4)^{1/2}a(c; t) & = \delta_{\alpha = 0}(1 + \chi(-I))\frac{S_{(\Gamma, \chi)}(0, 0; c)}{c \ell}\int_{q(X)}^{q(X + u)}(t^2 - 4)^{1/2}\,dt +  O_{q, \varepsilon}(c^{1/2 + \varepsilon} X^{1/2})\\
    & + O_{q, \varepsilon}\left(c^{1/2 + \varepsilon} \int_{q(X)}^{q(X + u)} 1 \, dt\right)\\
    & =  \delta_{\alpha = 0}(1 + \chi(-I))\frac{S_{(\Gamma, \chi)}(0, 0; c)}{2c \ell} u + O_{q, \varepsilon}(c^{1/2 + \varepsilon} X^{1/2}).
\end{aligned}
\end{equation*}
Therefore, the sum $\S_V$ can be evaluated as
\begin{equation*}
    \S_V = \delta_{\alpha = 0} \frac{\pi \vol(\Gamma\backslash \Hyp)(1 + \chi(-I))}{2}\sum_{c \geq 1}\frac{S_{(\Gamma, \chi)}(0, 0; c)}{2c \ell V}\omega(c \ell / V) u + O_{q, \varepsilon}(V^{1/2 + \varepsilon}X^{1/2}). 
\end{equation*}
If we use \eqref{eq:zerothsumofkloostermansumevaluation} we can simplify this formula to 
\begin{equation}\label{eq:shortintervalssumovert}
    \S_V = \delta_{\chi = 1} u + O_{q, \varepsilon}(V^{1/2 + \varepsilon} X^{1/2} + uV^{-1}).
\end{equation}
Comparing \eqref{eq:shortintervalssumoverc} and \eqref{eq:shortintervalssumovert} we obtain
\begin{equation*}
    \Psi_\Gamma(X + u; \chi) - \Psi_\Gamma(X; \chi) = \delta_{\chi = 1}u + O_{q, \varepsilon}(u X^{\theta}V^{-1/2 + \varepsilon} + V^{1/2 + \varepsilon} X^{1/2}).
\end{equation*}
If we optimize the error term by letting $V = uX^{-1/2 + \theta}$ we arrive at 
\begin{equation*}
    \Psi_\Gamma(X + u; \chi) - \Psi_\Gamma(X; \chi) = \delta_{\chi = 1} u + O_{q, \varepsilon}(u^{1/2} X^{1/4 + \theta/2 + \varepsilon}),
\end{equation*}
as desired.

\subsection{Proof of Theorem \ref{thm:balkanovafrolenkovbound}.}\label{subsec:proofofboundBF}

Let $X \geq 1$ sufficiently large, let $1 \leq T \leq X^{1/2}$ and consider $\beta$ defined by $2\beta = \log X + i/T$. We are going to apply the Selberg trace formula (Lemma \ref{lem:selbergtraceformula}) with the test function
\begin{equation*}
    h(t) = \frac{\sinh(\pi + 2i\beta)t}{\sinh(\pi t)}.
\end{equation*}
Recall that there are no small eigenvalues in the case $k = 1$. On the spectral side we get
\begin{equation*}
    \Lambda_{\text{Res}}(h) + \Lambda_{\text{cusp}}(h) = \delta_{\chi = 1}h(i/2) + \S_0(T, X) + O_q(1 + \delta_{k = 0} X^{7/64}\log X),
\end{equation*}
where we used \eqref{eq:approximationtosmoothsum}.
The Fourier transform of $h$ is easily computed to be 
\begin{equation*}
    g(x) = \frac{-i\sinh(2\beta)}{4}\frac{1}{\cosh^2(x/2) - \cosh^2(\beta)},
\end{equation*}
recall \eqref{eq:pairtestfunctionselbergtraceformula}. Note that $\text{Re}(4\cosh^2(\beta)) \sim X$ and $\text{Im}(4\cosh^2(\beta)) \sim X/T$. It follows easily that 
\begin{equation*}
\int_0^\infty |g(x)|\, dx \ll X \int_1^\infty \frac{1}{u + u^{-1} + 2 - 4\cosh^2(\beta)} \frac{du}{u} \ll \log X.
\end{equation*}
Also, we know that   
\begin{equation*}
    \int_{-R}^{R}\left|\frac{\phi'}{\phi}(1/2 + it)\right|\, dt + \int_{-R}^{R}\left|\frac{\Gamma'}{\Gamma}(1 + it)\right|= O(R\log R),
\end{equation*}
recall \eqref{eq:meanvalueestimatescatteringdeterminant} and Stirling's formula. At this point it is straightforward to check that
\begin{equation*}
    \Lambda_{\text{Id}}(h) = O(\log X), \quad \Lambda_{\text{Ell}}(h) = O\left( \int_\R |g(u)| \, du\right) = O(\log X), \quad \Lambda_{\text{Par} - \text{Cont}}(h) = O(\log X + T\log T).
\end{equation*}
Thus, an application of the Selberg trace formula to the congruence character $(\Gamma, \chi)$ and the test functions $h$ produces the identity
\begin{equation}\label{eq:evaluationselbergtraceformulav1}
    \delta_{\chi = 1}h(i/2) + \S_0(T, X) = \sum_{\substack{\{P\}_\Gamma\\
    \Tr(P) > 2}}\frac{\chi(P)\log N(P_0)}{N(P)^{1/2} - N(P)^{-1/2}}g(\log N(P)) + O(T\log T + \log X + \delta_{k = 0}X^{7/64})
\end{equation}
It remains to evaluate the hyperbolic contribution. Note that $\Tr(P)$ and $N(P)$ are related by $\Tr(P) = N(P)^{1/2} + N(P)^{-1/2}$. It follows that 
\begin{equation*}
    g(\log N(P)) = -\frac{i\sinh(2\beta)}{\Tr(P)^2 - 4\cosh^2 \beta}.
\end{equation*}
Accordingly, let us write 
\begin{equation}\label{eq:explicitformulaf}
    f(t) := -\frac{i\sinh(2\beta)}{t^2 - 4\cosh^2 \beta}.
\end{equation}
We have $f(2\cosh(x/2)) = g(x)$. It follows that 
\begin{equation}\label{eq:preciseevaluationfintegral}
    \int_{2}^\infty f(t)\, dt = \int_0^\infty g(x) \sinh(x/2)\, dx = \frac{h(i/2)}{2} + O(1). 
\end{equation}
Using \eqref{eq:formulaforprimegeodesicasresidues} we deduce that
\begin{equation*}
    \Lambda_{\text{Hyp}}(h) = \sum_{\substack{\{P\}_\Gamma\\
    \Tr(P) > 2}} \frac{\chi(P)\log N(P_0)}{N(P)^{1/2} - N(P)^{-1/2}}g(\log N(P)) = \frac{\vol(\Gamma\backslash \Hyp) \pi}{2} \sum_{t > 2} f(t)\text{Res}_{s = 1}Z(s; t).
\end{equation*}
Introduce a parameter $1 \leq V \leq X^{1/2}$, and let $\omega \in C_c^\infty((0, \infty))$ be such that $\int_\R \omega(x) \, dx = 1$. Consider the double sum
\begin{equation}\label{eq:approximationtogeometricside}
    \S_V := \frac{\vol(\Gamma\backslash \Hyp)\pi}{2V}\sum_{t > 2}\sum_{c\geq 1} a(c; t)f(t)\omega(c \ell/V),
\end{equation}
which approximates the hyperbolic contribution. If we argue as in Section \ref{subsec:shortintervals}, we can execute the sum over $c$, apply Mellin inversion, move the contour to the left and pick up the residue at $s = 1$, obtaining
\begin{equation*}
    \S_V = \Lambda_{\text{hyp}}(h) + O\left(V^{-1/2}\int_{(1/2)} \sum_{t > 2}\left|f(t) Z(s; t) \widehat{\omega}(s)\right| \, |ds|\right).
\end{equation*}
If we estimate $Z(s; t)$ by Proposition \ref{prop:boundzagierzetafunction}, and bound $f(t)$ directly from the explicit formula \eqref{eq:explicitformulaf} we obtain
\begin{equation}\label{eq:approxtogeometricsidesumoverc}
    \S_V = \Lambda_{\text{hyp}}(h) + O_\varepsilon(V^{-1/2} X^{1/2 + \theta + \varepsilon}).
\end{equation}
On the other hand, we can deal with \eqref{eq:approximationtogeometricside} differently by first summing over $t$. Using Lemma \ref{lem:sumofcoefficients} and summation by parts we arrive at 
\begin{equation*}
\begin{aligned}
    \sum_{t > 2}a(c; t)f(t) & = -(1 + \chi(-I))\delta_{\alpha = 0}\frac{S_{(\Gamma, \chi)}(0, 0; c)}{c \ell}\left(\int_2^\infty f'(t) t\, dt\right) + O_\varepsilon\left(c^{1/2 + \varepsilon} \int_2^\infty |f'(t)|\, dt\right)\\
    & = (1 + \chi(-I))\delta_{\alpha = 0}\frac{S_{(\Gamma, \chi)}(0, 0; c)}{c \ell}\left(\int_2^\infty f(t)\, dt\right) + O(1) +  O_\varepsilon\left(c^{1/2 + \varepsilon} \int_2^\infty |f'(t)|\, dt\right).
\end{aligned}
\end{equation*}
From the definition of $f$ it is easy to check that $\int_2^\infty |f'(t)|\, dt \ll T$. It follows that
\begin{equation*}
    \sum_{t > 2}a(c; t)f(t) = (1 + \chi(-I))\delta_{\alpha = 0}\frac{S_{(\Gamma, \chi)}(0, 0; c)}{2c \ell} h(i/2) + O_\varepsilon(c^{1/2 + \varepsilon}T),
\end{equation*}
where we have used \eqref{eq:preciseevaluationfintegral}. After summing over $c$ we get
\begin{equation}\label{eq:intermediateapproxtoSv}
    \S_V = \delta_{\alpha = 0}(1 + \chi(-I))\frac{\vol(\Gamma\backslash \Hyp)\pi}{2V}\sum_c \frac{S_{(\Gamma, \chi)}(0, 0; c)}{2c \ell}\omega(c \ell/V) h(i/2)+ O_\varepsilon(V^{1/2 + \varepsilon} T).
\end{equation}
Finally, applying Lemma \ref{lem:zerothsumofkloostermansumevaluation} we obtain
\begin{equation}\label{eq:approxtogeometricsidesumovert}
    \S_V = \delta_{\chi = 1}h(i/2) + O_\varepsilon(X^{1/2}V^{-1} + V^{1/2 + \varepsilon}T).
\end{equation}
If we put together \eqref{eq:approxtogeometricsidesumoverc} and \eqref{eq:approxtogeometricsidesumovert} we see that
\begin{equation*}
    \Lambda_{\text{Hyp}}(h) = \delta_{\chi = 1} h(i/2) + O_\varepsilon(X^{1/2 + \theta + \varepsilon}V^{-1/2} + V^{1/2 + \varepsilon}T).
\end{equation*}
We can optimize the error term by taking $V = X^{1/2 + \theta}T^{-1}$, obtaining
\begin{equation*}
    \Lambda_{\text{Hyp}}(h) = \delta_{\chi = 1} h(i/2) + O_\varepsilon(X^{1/4 + \theta/2+ \varepsilon}T^{1/2}).
\end{equation*}
Combining this formula with \eqref{eq:evaluationselbergtraceformulav1} we arrive at the estimate
\begin{equation*}
    \S_0(T, X) =  O_\varepsilon(X^{1/4 + \theta/2+ \varepsilon}T^{1/2}).
\end{equation*}
By Proposition \ref{prop:conclusionsmoothingsum}, this bound implies Theorem \ref{thm:balkanovafrolenkovbound}.

\printbibliography

\footnotesize
\textit{Email address}: \, \texttt{alberto.reche.23@ucl.ac.uk}\par\nopagebreak
\textsc{Department of Mathematics, University College London, 25 Gordon Street, London WC1H 0AY, United Kingdom}
  
\end{document}